\def\SS{\textbullet~}
\newcommand\subsec[1]{\SS\emph{#1.}~}
\def\Gchiral{G^+}
\def\Phichiral{\Phi^+}
\def\arc{\stackrel{\textstyle\frown}}
\def\bp{{\bar\partial}}
\def\bs{\bigskip}
\def\bfs{\boldsymbol}
\def\const{\textrm{const}}
\def\dist{\textrm{dist}}
\def\ms{\medskip}
\def\pa{\partial}
\def\sm{\setminus}
\def\ss{\smallskip}
\def\ti{\tilde}
\def\ve{\varepsilon}
\def\End{\mathrm{End}}
\def\Aut{   \mathrm{Aut}}
\def\cov{   \mathrm{cov}}
\def\GFF{   \mathrm{GFF}}
\def\id{    \mathrm{id}}
\def\Lin{   \mathrm{span}}
\DeclareMathOperator{\Sing}{Sing}
\def\SLE{  \mathrm{SLE}}
\def\supp{  \mathrm{supp}}
\def\trace{ \mathrm{Tr}}
\def\var{   \mathrm{var}}
\def\Pa{    \bfs{\pa}}
\def\Re{    \mathrm{Re}}
\def\Im{    \mathrm{Im}}
\def\AA{\mathcal{A}}
\def\EE{\mathcal{E}}
\def\FF{\mathcal{F}}
\def\HH{\mathcal{H}}
\def\LL{\mathcal{L}}
\def\NN{\mathcal{N}}
\def\OO{\mathcal{O}}
\def\RR{\mathcal{R}}
\def\VV{\mathcal{V}}
\def\XX{\mathcal{X}}
\def\YY{\mathcal{Y}}
\def\ZZ{\mathcal{Z}}
\def\bfA{\mathbf{A}}
\def\bfa{\mathbf{a}}
\def\bfB{\mathbf{B}}
\def\bfb{\mathbf{b}}
\def\bfC{\mathbf{C}}
\def\bfc{\mathbf{c}}
\def\bfJ{\mathbf{J}}
\def\bfj{\mathbf{j}}
\def\bfl{\mathbf{l}}
\def\bfp{\mathbf{p}}
\def\bfT{\mathbf{T}}
\def\bft{\mathbf{t}}
\def\C{\mathbb{C}}
\def\D{\mathbb{D}}
\def\E{\mathbf{E}}
\def\H{\mathbb{H}}
\def\P{\mathbf{P}}
\def\R{\mathbb{R}}
\def\Z{\mathbb{Z}}
\theoremstyle{plain}
\newtheorem*{fact*}{Fact}
\newtheorem*{thm*}{Theorem}
\newtheorem*{lem*}{Lemma}
\newtheorem*{cor*}{Corollary}
\newtheorem*{prop*}{Proposition}
\newtheorem*{claim*}{Claim}
\newtheorem{thm}{Theorem}[chapter]
\newtheorem{lem}[thm]{Lemma}
\newtheorem{cor}[thm]{Corollary}
\newtheorem{prop}[thm]{Proposition}
\theoremstyle{definition}
\newtheorem*{eg*}{Example}
\newtheorem*{egs*}{Examples}
\newtheorem*{def*}{Definition}
\theoremstyle{remark}
\newtheorem*{rmk*}{Remark}
\newtheorem*{rmks*}{Remarks}
\numberwithin{section}{chapter}
\numberwithin{equation}{chapter}
\numberwithin{figure}{chapter}
\def\@MRnumber{}
\def\@scanforMR#1 #2\endscan{%
    \def\@MRnumber{#1}%
    }
\def\reviewed#1{\ifx#1\stop \let\next=\relax \else \ifx#1(\advance\count255 by1 \else\let\next=\relax \fi \let\next=\reviewed \fi \next}
\def\MR#1{\relax
  \ifhmode\unskip\spacefactor3000 \space\fi
  \count255=0 \reviewed#1\stop
  \ifnum \count255>0
  {\@scanforMR#1\endscan\href{http://www.ams.org/mathscinet-getitem?mr=\@MRnumber}{MR#1}}
  \else
  {\href{http://www.ams.org/mathscinet-getitem?mr=#1}{MR#1}}
  \fi}
\def\@citestyle{\m@th\upshape\mdseries}
\def\citeform#1{{\normalfont#1}}
\def\@cite#1#2{{%
\@citestyle[\citeform{#1}\if@tempswa, #2\fi]}}
\newlength\savedwidth
\def\toprule{\noalign{\global\savedwidth\arrayrulewidth
\global\arrayrulewidth 1pt}%
\hline
\noalign{\global\arrayrulewidth\savedwidth}}
\def\midrule{\noalign{\global\savedwidth\arrayrulewidth
\global\arrayrulewidth .6pt}%
\hline
\noalign{\global\arrayrulewidth\savedwidth}}
\def\bottomrule{\noalign{\global\savedwidth\arrayrulewidth
\global\arrayrulewidth 1pt}%
\hline
\noalign{\global\arrayrulewidth\savedwidth}}
\newcommand\arXiv[1]{\href{http://arxiv.org/abs/#1}{arXiv:#1}}
\begin{document}

\frontmatter
\title{Gaussian free field and conformal field theory}

\author{Nam-Gyu Kang}
\address{Department of Mathematical Sciences, Seoul National University,\newline Seoul, 151-747, Republic of Korea}
\email{nkang@snu.ac.kr}
\thanks{The first author was partially supported by NRF grant 2010-0021628 and by the Research Settlement Fund for new faculty of Seoul National University. \newline
The second author was supported by NSF grant no. 1101735.}
\author{Nikolai~G.~Makarov}
\address{Department of Mathematics, California Institute of Technology,\newline Pasadena, CA 91125, USA}
\email{makarov@caltech.edu}

\subjclass[2010]{Primary 60J67, 81T40; Secondary 30C35}
\keywords{Conformal field theory, Schramm-Loewner Evolution}

\begin{abstract}
In these mostly expository lectures, we give an elementary introduction to conformal field theory in the context of probability theory and complex analysis.
We consider statistical fields, and define Ward functionals in terms of their Lie derivatives.
Based on this approach, we explain some equations of conformal field theory and outline their relation to SLE theory.
\end{abstract}

\maketitle
\setcounter{page}{2}

\tableofcontents

\mainmatter

\chapter*{Introduction} \label{ch: intro}
Conformal field theory (CFT) has different formulations as well as multiple applications. One of the best known applications concerns the theory of 2D lattice models at their critical points.
Borrowing ideas and intuition from quantum field theory, Belavin, Polyakov, and Zamolodchikov \cite{BPZ84} introduced an operator algebra formalism which relates some critical models to the representation theory of Virasoro algebra.

\ms The underlying objects of BPZ theory are correlation functions of certain ``fields," apparently smeared-out and renormalized continuum versions of random fields on a lattice.
The mathematical meaning of these objects is not completely clarified, but the focus is instead on the algebraic structure of ``local operators" which act on and are identified with the fields.
The main assumption of the theory is that the operators (or fields) behave nicely under ``conformal transformations."
The operators related to the so-called stress-energy tensor (defined as the local response of the action in the functional integral) play a special role in generating a Virasoro algebra representation whose central charge $c$ is the fundamental characteristic of a critical model.
Belavin, Polyakov, and Zamolodchikov showed that in the case of degenerate representations, the correlation functions satisfy a special type of linear differential equations.
Finally they defined a class of conformal theories (``minimal models") which describe and ``solve" (in a physically accepted sense) discrete critical models such as Ising, Potts, etc.

\ms The paper~\cite{BPZ84} had a great influence on the developments of conformal field theory.
The operator formalism, which does not depend on a specific (e.g., statistical) nature of the underlying fields, has been applied to a variety of other physical problems, see \cite{DFMS97}.
In mathematics, the study of abstract vertex algebras became an important part of modern representation theory \cite{FLM88}, \cite{Kac98}.

\ms A different approach to critical lattice models was proposed by Schramm \cite{Schramm00} who introduced stochastic Loewner evolution (SLE) as the only possible candidates for the scaling limits of interface curves in several such models.
His idea turned out to be very successful and led to the rigorous proofs of some important conjectures in statistical physics, in particular some very non-trivial predictions of CFT.
The work of Lawler-Schramm-Werner (\cite{LSW01b}, \cite{LSW01c}, \cite{LSW02a}, \cite{LSW02b}, \cite{LSW04}) and Smirnov (\cite{Smirnov01}, \cite{Smirnov10}) exemplifies the remarkable achievements of complex analytic/probabilistic methods.
In connection with their developments in the SLE theory, there has been some interest in interpreting the original CFT arguments in (less abstract) terms of statistical models, and more generally in understanding the precise relation between CFT and SLE, see e.g., \cite{FW03} and, on the physical side, \cite{BB02}, \cite{BB03}, and \cite{Cardy05}.

\ms The goal of these mostly expository lectures is to give an elementary introduction to CFT from the point of view of random or statistical fields.
More precisely, we will describe an (rather pedestrian) implementation of CFT in the specific case of statistical fields generated by certain non-random modification of the Gaussian free field (GFF).
Gaussian free field is the simplest (``trivial") example of Euclidean field theory; its mathematical aspects are well understood, see \cite{Simon74}, \cite{Janson97}.
The modifications of the Gaussian free field that we will consider in these lectures are implicit in the work of Schramm and Sheffield \cite{SS10} and explicit in the physical paper \cite{RBGW07}.
Related ideas are certainly present in the much earlier papers by Cardy \cite{Cardy84}, \cite{Cardy92}.

\ms We will only cover some starting points of the BPZ theory: we will accurately define and explain such basic concepts as Ward's identities, stress tensor, and vertex fields in terms of correlation functions of our random fields, but we will not reach the part of the theory concerning minimal models, and the only degeneracy we study will be of level two.
In Appendix~\ref{appx: Op algebra} we will briefly explain the relation of our constructions to the operator algebra formalism by explicitly describing some form of the ``operator-field correspondence."
In the last two lectures we will discuss connections with the SLE theory.

\ms It should be mentioned that we only consider the simplest conformal type of the theory -- the case of a simply connected domain with a marked point on the boundary, cf. \cite{Cardy84}, \cite{Cardy92}, and we only consider the Gaussian free field with Dirichlet boundary conditions.
This conformal type of CFT is relevant to the theory of chordal SLE.
The more traditional setting -- CFT in the full plane (\cite{BPZ84}) -- is somewhat more involved and will not be discussed here.

\ms Many computations in these lectures are completely standard from the CFT perspective -- we include them for the sake of consistency and to make the exposition self-contained.
We want to emphasize one more time that what we are considering is a very specific model of CFT, and modern physical and algebraic theories go so much further.
At the same time, we believe that this model is interesting in its own right, and its generalizations to more sophisticated conformal geometries may turn out to be quite non-trivial.

\renewcommand\chaptername{Lecture}
\chapter{Fock space fields} \label{ch: F-fields}

We introduce a class of random fields defined in a simply connected domain $D$ in the complex plane.
All our fields, which we call Fock space fields, are constructed from the Gaussian free field and its derivatives by means of Wick's calculus.
Fock space fields may or may not be distributional random fields but their correlation functions are well-defined, and we can think of the fields as functions in $D$ whose values are correlation functionals.

\ms Later, in Lecture \ref{ch: conf geom}, we will revise the definition so that the fields will have certain geometric/conformal properties in the sense that their values will depend on local coordinates (``conformal fields").
The functionals and fields that we consider in this first lecture are conformal fields expressed in the identity chart of $D.$
In Lecture \ref{ch: chiral} we will further extend the concept to include some ``multivalued" (chiral) fields.

\ms In the first two sections we recall some basic facts concerning the Gaussian free field, its Fock space, and Wick's calculus, see \cite{Simon74} and \cite{Janson97}.
In Section~\ref{sec: corr fcnl} and Section~\ref{sec: F-field} we define correlation functionals and Fock space fields (as functional-valued functions).
In Appendix~\ref{appx: F-field} we will comment on the probabilistic meaning of Fock space fields.

\ms\section{Gaussian free field} \label{sec: GFF}

\ms\SS A real-valued random variable $\xi$ is \emph{Gaussian} or \emph{normal} with mean $\mu$ and variance $\sigma^2$ if
$$\E[e^{it\xi}] = e^{i\mu t-\frac12\sigma^2t^2}.$$
A family (finite or infinite) of random variables is \emph{jointly} Gaussian if any finite linear combination is Gaussian.
The joint distribution of such a family is determined by the means and covariances of the random variables.
In particular, if $\xi_1,\cdots,\xi_n$ are centered (i.e., $\mu=0$) jointly Gaussian random variables, then
\begin{equation}\label{eq: Wick's}
\E[\xi_1\cdots \xi_n] = \sum\prod_k\E[\xi_{i_k}\xi_{j_k}],
\end{equation}
where the sum is over all partitions of the set $\{1,\cdots,n\}$ into disjoint pairs $\{i_k,j_k\}.$

\ms A complex-valued random variable is Gaussian if its real and imaginary parts are jointly Gaussian.
Clearly, the formula \eqref{eq: Wick's} holds for complex-valued jointly Gaussian variables as well.

\ms\SS A \emph{Gaussian field} \index{Gaussian!field} indexed by some \emph{real} Hilbert space $\HH_\R$ is an isometry
$$\HH_\R~\to~ L_\R^2(\Omega, \P),\qquad h\mapsto \xi_h$$
such that the image consists of centered Gaussian variables; here $(\Omega,\P)$ is some probability space.
Alternatively a Gaussian Hilbert space may be thought of as a closed subspace of $L^2(\Omega,\P)$ consisting of Gaussian (square integrable) random variables.
Complexifying, we can extend this map to an isometry
$$\HH:=\HH_\R+i\HH_\R~\to~ L^2(\Omega, \P),$$
which we also call a Gaussian field (indexed by $\HH$).

\ms One way to construct a Gaussian field is to choose an orthonormal basis $\{e_\alpha\}$ in $\HH_\R$ and a family $\{\xi_\alpha\}$ of independent standard normal variables on some probability space, and set $e_\alpha\mapsto \xi_\alpha.$
A Gaussian field indexed by $\HH_\R$ is unique up to an isomorphism of $L^2$-spaces.

\ms\SS Let $D$ be a planar domain with the Green's function $G=G_D(\zeta,z).$
For example, in the \emph{upper half-plane}
$$\H:=\{z:\Im\, z>0\},$$
we have
$$G_\H(\zeta,z) = \log\Big|\frac{\zeta-\bar z}{\zeta-z}\Big|.$$
The \emph{Gaussian free field} \index{Gaussian!free field $\Phi$} $\Phi$ in $D$ with \emph{Dirichlet boundary condition} is the Gaussian field indexed by the \emph{Dirichlet energy space} \index{Dirichlet energy space} $\EE(D),$
$$\Phi:\EE(D) \to L^2(\Omega,\P).$$
The Hilbert space $\EE(D)$ can be defined as the completion of \emph{test} functions $f\in C^\infty_0(D)$ with respect to the norm
\begin{equation} \label{eq: norm}
\|f\|^2_\EE=\iint 2G(\zeta,z)\,f(\zeta)\,\overline{f(z)}~dA(\zeta)dA(z),
\end{equation}
where $A$ is the area measure.

\ms\SS By definition, the \emph{$n$-point correlation function} of $\Phi,$
$$(z_1,\dots, z_n)~\mapsto ~\E[\Phi(z_1)\dots \Phi(z_n)],\qquad (z_j\in D,\; \textrm{points }z_j\textrm{ are distinct}),$$
is a unique \emph{continuous} function such that
\begin{equation} \label{eq: n-pt fcn}
\E[\Phi(f_1)\dots \Phi(f_n)]=\int f_1(z_1)\dots f_n(z_n)~\E[\Phi(z_1)\dots \Phi(z_n)]~dA(z_1)\dots dA(z_n)
\end{equation}
for all test functions $f_j$ with disjoint supports (here, in fact, for all test functions).
Note that $\E$ has different meanings in \eqref{eq: n-pt fcn}; $\E$ in the left-hand side is the expectation of random variables and $\E$ in the right-hand side means the correlation function.
(An alternative and more traditional notation for $\E$ in the right-hand side is $\langle \cdot\rangle.$)
It is clear that the 2-point correlation function is $2G(z_1,z_2)$ by polarization and it can be shown that 
$$\E[\Phi(z_1)\dots \Phi(z_n)]=\sum\prod_k~2G(z_{i_k},z_{j_k}),$$
exactly as in \eqref{eq: Wick's}.
In other words, we can think of $\Phi(z)$ as a ``generalized" Gaussian and use the symbolic representation $\Phi(f) = \int \Phi(z)f(z)\,dA(z)$ in the computation of correlations.

\ms \subsec{Derivatives of GFF}
The fields $J=\pa \Phi,$ $\bar J=\bp \Phi,$ \index{current!field $J,\widehat J$} and higher order derivatives are well-defined as Gaussian \emph{distributional} fields, e.g.,
$$J(f)=-\Phi(\pa f),\qquad f\in C^\infty_0(D),$$
so $J$ is a map $C_0^\infty(D)\to L^2(\Omega,\P)$ (or $J: \EE(D)\to L^2(\Omega,\P)$).
We can compute the correlation functions of the derivatives by differentiating the correlation functions of the Gaussian free field.
For example, for $\zeta\ne z$ we have
$$\E[J(\zeta)\Phi(z)]=2\partial_\zeta G(\zeta,z)=\frac{1}{\zeta-\bar z}-\frac{1}{\zeta-z} \quad {\rm in}\;\H;$$
and
$$\E[J(\zeta)J(z)]=2\partial_\zeta\partial_z G(\zeta,z)= -\frac1{(\zeta-z)^2} \quad {\rm in}\;\H.$$
The meaning of these expressions is similar to formula \eqref{eq: n-pt fcn}.

\ms\section{Fock space of Gaussian free field and Wick's multiplication} \label{sec: F-space}

\ms\SS For $n\ge 0,$ let $\HH^{\odot n}$ denote the $n$-th symmetric tensor power of a Hilbert space $\HH$; it is the completion of linear combinations of elements $f_1\odot\dots \odot f_n,$ (the order does not matter: $f\odot g\equiv g\odot f$), with respect to the scalar product
$$( f_1\odot\cdots\odot f_n,g_1\odot\cdots\odot g_n) = \sum_{\sigma\in S_n} \prod_{j=1}^n ( f_j, g_{\sigma(j)}),$$
where $S_n$ is the group of permutations of the set $\{1,\cdots,n\}.$
The (symmetric) \emph{Fock space} \index{Fock space} over $\HH$ is the Hilbert space direct sum
$$ {\rm Fock}(\HH)=\bigoplus _{n=0}^\infty \HH^{\odot n},\qquad (\HH^{\odot0}:=\C).$$
The algebraic direct sum $\sum_{n=0}^\infty \HH^{\odot n},$ the ``symmetric tensor algebra", is a commutative algebra with respect to the natural multiplication $\odot.$

\ms\subsec{Wiener chaos decomposition}
Let $\HH\to L^2(\Omega,\P)$ be a Gaussian field indexed by $\HH.$
If we identify $\HH$ with its image in $L^2(\Omega,\P)$ and denote by $\sigma(\HH)$ the $\sigma$-algebra generated by $\HH,$ then the Fock space over $\HH$ can be identified with $L^2:=L^2(\Omega,\sigma(\HH),\P)$ as follows, see \cite{Janson97}.
Denote $$\HH_n={\rm span}\{\xi_1\cdots \xi_m:~ m\le n\}\ominus {\rm span}\{\xi_1\cdots \xi_m:~ m<n\}\subset L^2,$$
where $\xi_j$'s in the both spans are arbitrary elements of $\HH,$
and consider the map
$$\HH^{\odot n}\to \HH_n,\qquad \xi_1\odot \dots \odot\xi_n ~\mapsto ~\pi_n(\xi_1 \cdots \xi_n),$$
where $\pi_n$ is the orthogonal projection in $L^2$ onto $\HH_n.$
Under this correspondence,
the symmetric tensor algebra multiplication corresponds to the so-called \emph{Wick's multiplication} in $L^2$: \index{Wick's! multiplication}
$$\textrm{if }X\in \HH_m,\; Y\in \HH_n,\quad \textrm{then} \quad X\odot Y=\pi_{m+n}(XY).$$
(An alternative and more traditional notation is $:\!XY\!:.$)
The identification
$$L^2(\Omega, \sigma(\HH), \P) \cong \bigoplus _{n=0}^\infty \HH^{\odot n}$$
is called the Wiener chaos decomposition. \index{Wiener chaos decomposition}
The fact that the described construction gives a unitary map Fock$(\HH)\to L^2$ is based on the following Wick's formula, which provides the chaos decomposition for products of Gaussian variables, and which will play a central role in the definition of Fock space fields.
The formula is stated in terms of Feynman's diagrams.

\ms\SS A \emph{Feynman diagram} \index{Feynman diagram} $\gamma$ labeled by random variables $\xi_1,\cdots,\xi_n$ is a graph with vertices $1,2, \cdots, n,$ and edges (``Wick's contractions") $\{v,v'\}$ without common endpoints.
We denote the unpaired vertices by $v''.$
The Wick's value of the diagram is the random variable
\begin{equation}\label{eq: Wick's value}
\odot(\gamma)= \prod_{\{v,v'\}}\E[\xi_{v}\xi_{v'}]~\underset{v''}{\textstyle\bigodot} \xi_{v''}.
\end{equation}
For example, the Feynman diagram with two edges $\{1,4\},\{3,5\}$ and two unpaired vertices $2,6$
corresponds to
$$\contraction{}{(\xi}{_1 \odot \xi_2 \odot \xi_3)(}{\xi} \contraction[2ex]{(\xi_1 \odot \xi_2 \odot }{\xi}{_3)(\xi_4 \odot}{\xi} (\xi_1 \odot \xi_2 \odot \xi_3)(\xi_4 \odot \xi_5 \odot \xi_6) := \E[\xi_1\xi_4] \E[\xi_3\xi_5] \xi_2\odot \xi_6.$$

\bs\textbf{Wick's formula.} \index{Wick's!formula}
\emph{Let $\xi_{jk},$ $(1\le j\le l, 1\le k \le m_j),$ be centered jointly Gaussian random variables, and let $X_j = \xi_{j1}\odot \cdots \odot \xi_{jm_j}.$ Then
$$X_1\cdots X_l = \sum_\gamma\odot(\gamma),$$
where the sum is taken over all Feynman diagrams (labeled by the variables $\xi_{jk}$) such that no edge joins $\xi_{j_1k_1}$ and $\xi_{j_2k_2}$ with $j_1=j_2.$}

\ms\subsec{Wick's powers and exponentials}
If $\xi$ is a centered Gaussian with variance $\sigma^2,$ then
\begin{equation} \label{eq: Wick-power}
\xi^{\odot n} = \sigma^n H_n\left(\frac \xi\sigma\right),
\end{equation}
where $H_n$ are the \emph{Hermite polynomials},
\begin{equation} \label{eq: Hermite}
H_2(x)=x^2-1, \quad H_3(x)=x^3-3x,\quad H_4(x)=x^4-6x^2+3,\quad \cdots.
\end{equation}
Recall that the polynomials $H_n$ are monic and orthogonal with respect to the standard Gaussian measure on $\R,$ so \eqref{eq: Wick-power} is just the chaos decomposition in the case $\dim\HH=1.$
We define
$$e^{\odot\xi} := \sum_{n=0}^\infty \frac{\xi^{\odot n}}{n!}.$$
Using the generating function
\begin{equation} \label{eq: generating fcn}
e^{tx-{t^2}/2}=\sum_{n=0}^\infty \frac{t^n}{n!}H_n(x),
\end{equation}
we get
$$e^{\odot\xi} =e^{\xi -\frac12\E \xi^2}.$$
In particular, if $\xi$ and $\eta$ are jointly Gaussian, then
$$e^{\odot \xi}e^{\odot \eta} = e^{\odot (\xi+\eta)} e^{\E \xi\eta} ,\qquad \E[e^{\odot \xi}e^{\odot \eta}] = e^{\E \xi\eta}.$$

\ms\section{Fock space correlation functionals} \label{sec: corr fcnl}

\ms\SS Let $D$ be a domain in $\C$ and let $\Phi$ be the Gaussian free field in $D.$
By definition \emph{basic} correlation functionals \index{basic correlation functional} (the use of word ``functionals" will be explained later in this section) are formal expressions of the type
$$X_1(z_1)\odot\cdots\odot X_n(z_n),$$
where points $z_j\in D$ are not necessarily distinct and $X_j$'s are derivatives of the Gaussian free field, (i.e., $X_j=\pa^{\alpha}\bp^{\beta}\Phi$).
We also include the constant $1$ to the list of basic functionals.

\ms A general Fock space correlation functional \index{Fock space correlation functional} $\XX$ is a linear combination (over $\C$) of basic functionals.
We allow some infinite combinations, e.g., the exponentials
$$e^{\odot \alpha\Phi(z)}=\sum_{n=0}^\infty \frac{\alpha^n}{n!}\Phi^{\odot n}(z).$$
For our purposes it will suffice to consider the class of \emph{quasi-polynomial} functionals that consists of finite linear combination of $\odot$-products of exponentials and basic functionals.
This class is a graded commutative algebra (with respect to formal chaos decomposition and Wick's multiplication), e.g.,
$$e^{\odot \alpha\Phi(z)}\odot e^{\odot \beta\Phi(z)}=e^{\odot (\alpha+\beta)\Phi(z)}.$$

\ms\textbf{Notation.} We will write $S_\XX$ or $S(\XX)$ for the (finite) set of all points $z_j,$ the \emph{nodes} \index{node} of $\XX,$ appearing (after cancellations) in the expression of $\XX.$

\ms In the rest of the section we explain (or rather \emph{define}) various natural operations on correlation functionals such as (tensor) products, ``expectations", weak convergence, and complex conjugation.
In addition, we will need to explain the meaning of the statements like ``$J = \pa\Phi$ is purely imaginary on the boundary."

\ms\subsec{Tensor products} \index{tensor product}
We use Wick's formula, which describes products of Gaussians in terms of their Wick's products, to define the usual (or tensor) products $\XX_1\cdots \XX_m$ of correlation functionals with pairwise \emph{disjoint} sets $S(\XX_j).$
Namely, for basic functionals
$$\XX_j = X_{j1}(z_{j1}) \odot \cdots \odot X_{jn_j}(z_{jn_j}),\qquad (\textrm{fields~} X_{jk}\textrm{~are~ derivatives~ of}\; \Phi), $$
we set (cf. \eqref{eq: Wick's value})
\begin{equation} \label{eq: Wick's4FCF}
\XX_1\cdots \XX_m = \sum\prod_{\{v,v'\}} \E[X_{v}(z_{v})X_{v'}(z_{v'})] \underset{v''}{\textstyle\bigodot} X_{v''}(z_{v''}),
\end{equation}
where the sum is taken over Feynman diagrams with vertices $v$ labeled by functionals $X_{jk}$ such that there are no contractions of vertices with the same $j,$ and the Wick's product is taken over unpaired vertices $v''.$
By definition, the ``expectations" in \eqref{eq: Wick's4FCF} are given by the 2-point functions of derivatives of the Gaussian free field, e.g.,
$$\E[\pa^j\Phi(\zeta)\pa^k\Phi(z)] = \pa_\zeta^j \pa_z^k\E[\Phi(\zeta)\Phi(z)] = 2 \pa_\zeta^j \pa_z^k G(\zeta,z).$$
We extend the definition of tensor product to general correlation functionals by linearity.

\begin{prop}\label{tensor product}
The tensor product of correlation functionals is commutative and associative.
\end{prop}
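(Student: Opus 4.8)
The plan is to reduce the statement to the combinatorics of Feynman diagrams, so that commutativity and associativity of the tensor product become purely bookkeeping statements about sums over pairings. By linearity it suffices to treat basic functionals $\XX_j = X_{j1}(z_{j1}) \odot \cdots \odot X_{jn_j}(z_{jn_j})$ with pairwise disjoint node sets; once the claim is established on basic functionals, the extension to general (and quasi-polynomial) functionals is immediate since both operations are defined by $\C$-linear extension, and a bilinear (resp.\ trilinear) identity that holds on a spanning set holds everywhere.

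For commutativity I would compare $\XX_1 \XX_2$ and $\XX_2 \XX_1$ directly from \eqref{eq: Wick's4FCF}. The admissible Feynman diagrams in both cases are graphs on the same vertex set $\{X_{11},\dots,X_{1n_1},X_{21},\dots,X_{2n_2}\}$ with edges only between the block $j=1$ and the block $j=2$; swapping the roles of the two functionals is just relabeling which block is called ``first,'' so there is a canonical bijection between the two families of diagrams. Under this bijection each summand is preserved because $\E[X_v(z_v)X_{v'}(z_{v'})]=\E[X_{v'}(z_{v'})X_v(z_v)]$ (the $2$-point function is symmetric, being $2\partial^{j}_\zeta\partial^{k}_z G$ read off the symmetric Green's function), and because Wick's product $\odot$ over the unpaired vertices is commutative by construction in Section~\ref{sec: F-space}. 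Hence the two sums agree term by term.

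For associativity the cleanest route is to show that both $(\XX_1\XX_2)\XX_3$ and $\XX_1(\XX_2\XX_3)$ equal the ``triple'' sum
\begin{equation*}
\XX_1\XX_2\XX_3 = \sum \prod_{\{v,v'\}} \E[X_v(z_v)X_{v'}(z_{v'})] \underset{v''}{\textstyle\bigodot} X_{v''}(z_{v''}),
\end{equation*}
where now the sum ranges over all Feynman diagrams on the union of the three blocks whose edges never join two vertices with the same index $j$ — i.e.\ the direct three-fold analogue of \eqref{eq: Wick's4FCF}, which is exactly the instance of Wick's formula from Section~\ref{sec: F-space} applied to the family $\{X_{jk}\}$ with $l=3$. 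So the real content is to check that expanding $(\XX_1\XX_2)\XX_3$ via two successive applications of the two-fold rule reproduces this three-fold sum. The mechanism is that a diagram counted by $(\XX_1\XX_2)\XX_3$ consists of an ``inner'' $1$–$2$ diagram $\gamma_{12}$ together with an ``outer'' diagram pairing some of the unpaired vertices of $\odot(\gamma_{12})$ with vertices of $\XX_3$; but the unpaired vertices of $\odot(\gamma_{12})$ are precisely the $1$- and $2$-vertices not used by $\gamma_{12}$, and an edge of the outer diagram may attach to either kind (never creating a forbidden $3$–$3$ edge), so $(\gamma_{12},\gamma_{\text{outer}})$ assembles uniquely into a single admissible diagram on all three blocks, and conversely every such diagram decomposes uniquely this way by restricting to its $1$–$2$ edges. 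The scalar prefactors multiply correctly because $\E[\cdots]$ factors are attached edge-by-edge, and the iterated $\odot$ is associative, again by the construction in Section~\ref{sec: F-space}. The symmetric argument gives $\XX_1(\XX_2\XX_3)=\XX_1\XX_2\XX_3$, and associativity follows.

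The main obstacle is the bijection in the associativity step: one must verify carefully that when the outer contraction in $(\XX_1\XX_2)\XX_3$ hits an unpaired vertex coming from block $1$ versus block $2$, the resulting global diagram is still admissible (no same-$j$ edge) and that nothing is double-counted or missed — in particular that an outer edge is never allowed to land on a vertex that was already contracted inside $\gamma_{12}$, which is automatic because such vertices are no longer among the factors of $\odot(\gamma_{12})$. This is genuinely just a careful reading of the admissibility condition in \eqref{eq: Wick's4FCF} together with the definition \eqref{eq: Wick's value} of $\odot(\gamma)$, so the proof is short but the combinatorial bookkeeping deserves to be spelled out.
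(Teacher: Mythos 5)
Your proposal is correct and follows essentially the same route as the paper: the paper reduces commutativity to the symmetry of the $2$-point function and Wick's product, and proves associativity by exhibiting a one-to-one correspondence between the Feynman diagrams for the two sides of \eqref{eq: associative}, which it leaves as an easy exercise and which your inner/outer decomposition of a three-block diagram spells out explicitly. (The paper also sketches an alternative proof by approximating the fields with jointly Gaussian variables, but your combinatorial argument matches its primary one.)
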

Commutativity is of course obvious.
To prove that
\begin{equation} \label{eq: associative}
\XX_1\cdots \XX_m \YY_1\cdots\YY_n = (\XX_1\cdots \XX_m)(\YY_1\cdots\YY_n),
\end{equation}
one needs to show that there is one-to-one correspondence between Feynman's diagrams corresponding to the left-hand side and the right-hand side of \eqref{eq: associative}, which is an easy exercise.

\ms An alternative argument is as follows.
Approximate the values $X_j(z_j)$ of derivatives of the Gaussian free field involved in the formula by jointly Gaussian variables, see Appendix~\ref{appx: F-field}.
Then apply Wick's calculus to the Gaussians, and take the limit.

\ms\subsec{Expectation values of functionals}
We define $\E\XX$ in terms of the chaos decomposition of $\XX$:
$$\E[1] = 1,$$
and
$$\E[X_1(z_1)\odot\cdots\odot X_n(z_n)] = 0, \qquad (\textrm{fields~} X_j\textrm{~are ~derivatives ~ of ~ } \Phi).$$
For example,
$$\E[\Phi(z_1)\cdots \Phi(z_n)] = \sum\prod_k 2G(z_{i_k},z_{j_k})$$
(see \eqref{eq: Wick's}) and
$$\E[e^{\odot\alpha\Phi(z)}] = 1.$$
Since tensor products of functionals are defined by Wick's formula, our definition of $\E\XX$ is consistent with the definition of the $n$-point correlation functions of derivatives of the Gaussian free field introduced earlier.
Correlation functions are ``expected values" of correlation functionals.

\ms Given $\XX,$ consider the linear space $V_\XX=\{\YY:~S_\YY\cap S_\XX=\emptyset\}.$
We have a linear map
$$V_\XX\to \C,\qquad \YY \mapsto \E[\XX\YY],$$
so we can think of $\XX$ as a linear functional on $V_\XX.$
This explains our terminology (``functionals") and also introduces some kind of weak topology in the space of functionals.
For example, the statement
$$\Phi(z_1)\odot\Phi(z_2)\to\Phi^{\odot2}(z),\qquad (z_1,z_2\to z)$$
means (by definition) that
$$\E[(\Phi(z_1)\odot\Phi(z_2))\XX]\to\E[\Phi^{\odot2}(z)\XX]$$
for every $\XX$ such that $z\not\in S_\XX.$
Essentially all statements in conformal field theory have a similar meaning (they hold ``within correlations").

\ms \subsec{Trivial functionals}
From the point of view of calculus of correlations, we can identify functionals $\XX_1$ and $\XX_2$ such that
$$\E[\XX_1\YY] =\E[\XX_2\YY]$$
for all $\YY$ with nodes outside $S_{\XX_1}\cup S_{\XX_2}.$
In this case, we will write $\XX_1\approx \XX_2$ and later just $\XX_1=\XX_2.$

\begin{eg*}
$(\pa\bp\Phi)(z)\approx0.$
Of course, $\pa\bp\Phi\ne0$ as a Gaussian distributional field.
\end{eg*}

\ms It is easy to check that for all $\YY,$
$$\textrm{if }\XX\approx0,\quad \textrm{then}\quad \XX\odot\YY\approx0, \quad \XX\YY\approx0.$$
In particular, $\NN=\{\XX\approx0\}$ is an ideal of Wick's algebra, so we effectively consider Fock space functionals modulo $\NN.$
Also, it is clear that
$$\XX\approx0~\textrm{if and only if}~\E[\XX(\Phi(z_1)\odot\cdots\odot\Phi(z_n))] = 0\quad (\Phi \textrm{ is the Gaussian free field})$$
for all $n$ and all sets $\{z_1,\cdots,z_n\}$ in $D\sm S_\XX.$
In particular, $\XX$ is trivial if and only if all its chaos decomposition components are trivial, and therefore the factor algebra preserves the grading.

\ms\SS Often we can extend the concept of a correlation functional $\XX$ to the case when some of the nodes of $\XX$ lie on the boundary -- we simply define the correlations $\E[\XX\YY]$ in terms of the boundary values.
\begin{eg*}
For $z\in\pa D,$ $e^{\odot\alpha\Phi(z)} = 1.$
\end{eg*}
There is a natural operation of complex conjugation on correlation functionals:
$$\Phi(z)=\overline{\Phi(z)},\qquad (\bp\Phi)(z)=\overline{(\pa\Phi)(z)}, \qquad \overline{\XX\odot \YY}=\bar\XX\odot\bar\YY.$$
More generally, the functional $\bar\XX$ is defined (modulo $\NN$) by the equation
$$\E[\bar\XX\YY]=\overline{\E[\XX\YY]}$$
for all $\YY$'s of the form $\Phi(z_1)\odot\cdots\odot\Phi(z_n).$
\begin{eg*}
If $J=\pa\Phi$ in the half-plane $\H$ and if $z\in\pa\H,$ then
$J(z)$ is purely imaginary, i.e., $\overline{J(z)}=-J(z),$ and $J(z)\odot J(z)$ is real.
\end{eg*}

\ms \section{Fock space fields} \label{sec: F-field}

\SS Basic Fock space fields $X_\alpha$ are formal expressions written as Wick's products of derivatives of the Gaussian free field $\Phi,$ e.g., $$1,\quad \Phi\odot \Phi,\quad \pa \Phi\odot \bp\Phi,\quad \pa^2\Phi\odot\Phi\odot\Phi,\quad\rm etc.$$
A general Fock space field \index{Fock space field} is a linear combination of basic fields $X_\alpha,$
$$X=\sum_\alpha f_\alpha X_\alpha,$$
where the (\emph{basic field}) \emph{coefficients} \index{basic field coefficient} $f_\alpha $ are arbitrary (smooth) functions in $D.$
We think of $X$ as a map
$$z \mapsto X(z), \qquad (z\in D),$$
where the values $\XX=X(z)$ are correlation functionals with $S_\XX \subset \{z\}.$
Thus Fock space fields are functional-valued functions.
Wick's powers $\Phi^{\odot n}$ and Wick's exponentials $e^{\odot \alpha \Phi}$ of the Gaussian free field are important examples of Fock space fields.

\ms If $X_1,\cdots,X_n$ are Fock space fields and $z_1,\cdots,z_n$ are distinct points in $D,$ then
$$\XX = X_1(z_1)\cdots X_n(z_n)$$
is a correlation functional.
We often refer to its ``expectation"
\begin{equation} \label{eq: correlation}
\E[X_1(z_1)\cdots X_n(z_n)]
\end{equation}
as a correlation function.

\ms The collection of Fock space fields (modulo $\NN,$ the ideal of fields whose values are trivial functionals) is a graded commutative algebra (over smooth functions) with respect to pointwise Wick's multiplication.
On the other hand, the ``usual" product $(X_1,X_2)\mapsto X_1X_2$ is not defined, but we can consider the tensor products, which are multivariable fields.
For example,
$$X = X_1\otimes X_2$$
is defined in $D\times D \setminus \{ \textrm{diagonal} \}.$
Its value at $(z_1,z_2)$ is the ``string" $X_1(z_1)X_2(z_2).$

\ms\begin{rmk*}
We often consider Fock space fields with basic field coefficients defined only in some open set $U\subset D$ (``local fields").
It is important that underlying basic fields are global (originated from the Gaussian free field in $D$).
\end{rmk*}

\ms\SS We define the differential operators $\pa$ and $\bp$ on Fock space fields by specifying their action on basic fields so that the action on $\Phi$ is consistent with the definition of $\pa\Phi, \bp\Phi$ (as distributional fields) and so that
$$\pa (X\odot Y)=(\pa X)\odot Y+ X\odot (\pa Y),\qquad \bp (X\odot Y)=(\bp X)\odot Y+ X\odot (\bp Y).$$
We extend this action to general Fock space fields by linearity and by Leibniz's rule with respect to multiplication by smooth functions.

\begin{egs*}
$\pa\bp[\Phi\odot\Phi] \approx 2J\odot \bar J,\qquad
\pa e^{\odot\alpha\Phi} = \alpha J\odot e^{\odot\alpha\Phi}.$
\end{egs*}

\ms It is easy to see that $\pa X$ is a unique (modulo $\NN$) field satisfying
$$\E[(\pa X)(z)\YY] = \pa_z \E[X(z)\YY],\qquad (z\not\in S_\YY),$$
for all correlation functionals $\YY.$
Also, it is clear that $\bp \bar X=\overline{\pa X}.$

\ms\SS By definition, $X$ is \emph{holomorphic} \index{Fock space field!holomorphic} in $D$ if $\bp X\approx 0,$ i.e., all correlation functions $\E[X(\zeta) \YY]$ are holomorphic in $\zeta\in D\sm S_\YY.$ \index{holomorphic Fock space field}
\begin{egs*}
$J=\pa \Phi,$ $X = J\odot J$ are holomorphic fields.
\end{egs*}
Holomorphic fields play a prominent role in conformal field theory.
Their properties are quite different from those of usual holomorphic functions, and some formulas involving holomorphic fields look unfamiliar from the point of view of ``classical" complex analysis.

\renewcommand\chaptername{Appendix}
\chapter{Fock space fields as (very) generalized random functions} \label{appx: F-field}

In this appendix we want to substantiate the concept of Fock space functionals and fields, which we introduced as somewhat formal algebraic objects.
We already mentioned that we can think of functionals as ``generalized" elements of the Fock space, and therefore view fields as ``generalized" random functions (cf. fields in lattice models).
One way to make this point of view clear is to approximate correlation functionals by genuine random variables.

\section{Approximation of correlation functionals by elements of the Fock space} \label{sec: approx}

For each $z\in D,$ let us choose test functions $f_{\ve,z}(\cdot)$ supported in a disc of radius $\ve$ about $z$ and satisfying
$$f_{\ve,z}\to \delta_z\quad \textrm{as}\quad\ve\to0$$
(as measures). Define Gaussian random variables
$$\Phi_\ve(z)=\Phi(f_{\ve,z}),\quad (\Phi \textrm{ is the Gaussian free field}),$$
$$ J_\ve(z)=J(f_{\ve,z})=-\Phi(\pa f_{\ve,z}),\qquad
(\pa^2\Phi)_\ve(z)=\Phi(\pa^2 f_{\ve,z}),\qquad\rm etc.$$
Varying $z,$ we get random functions which approximate the Gaussian free field and its derivatives in the sense of convergence of correlation functions.
For example, we have
$$\E\left[J_\ve(z_1)J_\ve(z_2)\right] ~\to~\E\left[J(z_1)J(z_2)\right],\qquad (z_1\ne z_2).$$
Indeed, the left-hand side,
$$2\iint G(\zeta, \eta)\pa f_{\ve,z_1}(\zeta)\pa f_{\ve,z_2}(\eta)=2\iint \pa_\zeta\pa_\eta G(\zeta, \eta) f_{\ve,z_1}(\zeta) f_{\ve,z_2}(\eta),$$
converges to $2\pa_1\pa_2G(z_1,z_2)= \E\left[J(z_1)J(z_2)\right].$
Usually, when there is no danger of confusion, we omit $dA(\zeta),$ etc.

\ms Next, we extend this approximation to Wick's products, and therefore to general Fock space functionals/fields. For example, we define
$$ X_\ve=-\frac12(J\odot J)_\ve:=-\frac12(J_\ve \odot J_\ve), \qquad X_\ve: D\to\HH^{\odot2},$$
where $\HH^{\odot2}$ is the symmetric tensor square of the Hilbert space $\HH = \EE(D),$ see Section~\ref{sec: F-space}.
Again, it is clear that the correlations of $ X_\ve$ converge to the corresponding correlations of the field $X=-\frac12J\odot J.$
This follows from Wick's formula and from the convergence of the 2-point function of $J_\ve$ established in the previous paragraph.

\ms Thus we can say that Fock space fields are ``generalized" random functions -- they are limits of random functions in the sense of correlations.
(This point of view is somewhat similar to the definition of Colombeau's ``generalized" functions (see~\cite{Colombeau85}).)

\ms In practical terms, we can use approximating random functions to compute correlations of Fock space fields at distances much greater than the ``wavelength" $\ve.$
Moreover, we can give a similar interpretation to other equations of conformal field theory.
For instance, operator product expansions, which we discuss in the next lecture, hold on approximate level as $\ve\ll|\zeta-z|\to0$ so that the error term $``o(1)"$ has vanishing correlations with all fields at positive distance from $z.$
For example,
$$\Phi_\ve(\zeta) \Phi_\ve(z) = \log \frac{1}{|\zeta-z|^2} + 2c(z) + \Phi_\ve^{\odot 2}(z) + o(1) \qquad\textrm{as }\;\ve\ll|\zeta-z|\to0.$$
Here, $c(z)$ is the logarithm of conformal radius $C(z),$ \index{conformal radius $C$}
\begin{equation} \label{eq: log C}
c(z) = \log C(z),\qquad C(z) = \left|\frac{w(z)-\overline{w(z)}}{w'(z)}\right|,
\end{equation}
where $w$ is a conformal map from $D$ onto the upper half-plane $\H.$
The logarithm of conformal radius can be described in terms of the Green's function, see \eqref{eq: c1}, \eqref{eq: c2}, and \eqref{eq: c3}.

\section{Distributional fields} \label{sec: distribution}

\SS Some important Fock space fields admit a much stronger, more analytical interpretation.
We say that a Fock space field is \emph{distributional}\index{Fock space field!distributional} if it can be represented by a linear map $f\mapsto X(f)$ from a space of test functions to the space $L^2(\Omega,\P)$ of random variables on some probability space; the Gaussian free field and its derivatives are the simplest examples. \index{distributional Fock space field}
This is the kind of fields studied in axiomatic (Euclidean) field theory; distributional fields also play an important role in analysis and probability theory.
For any test functions with disjoint supports, assuming that $X(f_1)\cdots X(f_n)$ is in $L^1,$  we require
\begin{equation}\label{eq: distributional}
\E[X(f_1)\cdots X(f_n)] = \int\cdots\int \E[X(z_1)\cdots X(z_n)]f_1(z_1)\cdots f_n(z_n).
\end{equation}
As we explained before, $\E$ has different meanings in this formula; $\E$ in the left-hand side is the expectation of random variables and $\E$ in the right-hand side means the correlation function, see \eqref{eq: correlation}.

\ms Let us show that Wick's powers $\Phi^{\odot n}$ and exponentials $e^{\odot \alpha\Phi}$ with $|\alpha|<1$ exist as distributional fields
$$\Psi:~C^\infty_0(D)\to L^2(\Omega,\P),$$
see \cite{DS11} for a stronger statement.

\ms To construct the map $\Psi$ we follow the same idea as in the previous section but we interpret random functions
$$\Psi_\ve:~D\to L^2$$
as linear operators
$$\Psi_\ve:~C^\infty_0(D)\to L^2,\qquad f\mapsto \int f\Psi_\ve$$
and prove convergence in the strong operator topology.

\ms Almost any choice of approximating random functions will do the job but the estimates are particularly simple if we define
$$\Phi_\ve(z)=\Phi(m_{z,\ve}),$$
where $m_{z,\ve}$ is the normalized arclength of the circle of radius $\ve\ll1$ centered at $z.$

\begin{prop*}
As $\ve\to0,$ $\Phi_\ve\to \Phi$ in the sense that for all test functions $f$ the random variables $\Phi_\ve(f)$ converge to $\Phi(f)$ in $L^2.$
\end{prop*}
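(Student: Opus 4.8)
The plan is to reduce the $L^2$-convergence $\Phi_\ve(f)\to\Phi(f)$ to a statement about the Dirichlet energy norm, since $\Phi$ is by definition an isometry $\EE(D)\to L^2(\Omega,\P)$. Concretely, $\Phi_\ve(f)=\Phi(m_{f,\ve})$ where $m_{f,\ve}=\int f(z)\,m_{z,\ve}\,dA(z)$ is the measure obtained by smearing the normalized arclength measures against the test function $f$, and $\Phi(f)$ corresponds to the test function $f$ itself. By the isometry property,
$$\E\big[(\Phi_\ve(f)-\Phi(f))^2\big] = \|m_{f,\ve}-f\|_\EE^2,$$
so it suffices to show this energy norm tends to $0$ as $\ve\to0$. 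First I would note that $m_{f,\ve}$ and $f$ are both compactly supported (signed) measures/functions of finite energy, so the expression expands via \eqref{eq: norm} as a sum of double integrals $\iint 2G(\zeta,z)\,d\mu(\zeta)\,d\mu'(z)$ with $\mu,\mu'\in\{m_{f,\ve},f\}$; each such integral converges because $G$ is locally integrable and the measures are nice.

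The key computational step is to exploit the \emph{mean-value property of the Green's function}: for fixed $z$, the function $\zeta\mapsto G_D(\zeta,z)$ is harmonic in $\zeta$ away from $z$, so averaging $G(\cdot,z)$ over a circle of radius $\ve$ centered at a point $\zeta_0$ at distance $>\ve$ from $z$ reproduces $G(\zeta_0,z)$ exactly; only when the circle encloses or nearly touches $z$ does one pick up a correction, and that correction is an explicit logarithmic term coming from $G(\zeta,z)=\log\frac1{|\zeta-z|}+\text{harmonic}$. This means that the cross term and the $m_{f,\ve}$–$m_{f,\ve}$ term in $\|m_{f,\ve}-f\|_\EE^2$ differ from the $f$–$f$ term only through contributions localized to the region where the $\ve$-circles overlap the support of $f$ at scale $\lesssim\ve$, i.e. a region of area $O(\ve)$ (more precisely $O(\ve\,|\supp f|)$) on which the integrand has at most an integrable logarithmic singularity. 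A dominated-convergence / direct estimate argument then gives $\|m_{f,\ve}-f\|_\EE^2=O(\ve\log\tfrac1\ve)\to0$.

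The main obstacle, and the place where care is needed, is precisely the self-energy term $\|m_{f,\ve}\|_\EE^2=\iint 2G(\zeta,z)\,dm_{f,\ve}(\zeta)\,dm_{f,\ve}(z)$: one must check that smearing against the circle measures does not produce a divergence from the $\log\frac1{|\zeta-z|}$ singularity of $G$. Here I would use that $m_{z,\ve}$, being uniform on a circle of radius $\ve$, has bounded potential — indeed $\int\log\frac1{|\zeta-w|}\,dm_{z,\ve}(\zeta)=\log\frac1{\max(|w-z|,\ve)}$ by the classical formula for the logarithmic potential of a uniform circle measure — so the double integral over the two circle measures is bounded uniformly in their centers by $\log\frac1\ve$ plus a bounded term, and integrating against $f(\zeta_0)\overline{f(z_0)}\,dA\,dA$ keeps everything finite; the divergent $\log\frac1\ve$ pieces cancel against the corresponding pieces in the cross term, leaving the $O(\ve\log\frac1\ve)$ bound. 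An alternative, slightly slicker route is to observe that $m_{f,\ve}\to f$ as distributions and that $\{\Phi(m_{f,\ve})\}_\ve$ is Cauchy in $L^2$ by the same energy estimate applied to differences $m_{f,\ve}-m_{f,\ve'}$, identifying the limit with $\Phi(f)$ by testing against a dense family; I would present the direct energy-norm computation as the cleaner argument and relegate the mean-value-property bookkeeping to a short paragraph.
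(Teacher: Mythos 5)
Your argument is correct and is essentially the paper's own proof in a different wrapper: the paper computes $\var(\Phi_\ve(f))$, $\cov(\Phi_\ve(f),\Phi(f))$ and $\var(\Phi(f))$ directly, using exactly the key input you use, namely the explicit Green potential of the circle measure $m_{z,\ve}$ (obtained from harmonicity of $u(\zeta,z)=G(\zeta,z)+\log|\zeta-z|$), which makes the covariance kernel equal to $2G(\zeta,z)$ once $|\zeta-z|\ge2\ve$ and at most $2\log\frac1\ve+O(1)$ near the diagonal. Your reduction to $\|m_{f,\ve}-f\|_\EE^2\to0$ via the isometry is the same second-moment computation (and the near-diagonal terms are each separately negligible, no cancellation actually needed), so the two approaches coincide.
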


\begin{proof}
Note that $\Phi_\ve(\zeta)$ is a centered Gaussian random variable with
\begin{equation} \label{eq: var Phi e}
\var\left(\Phi_\ve(\zeta)\right) = 2\|m_{\zeta,\ve}\|^2_\EE=2\log\frac1\ve+2c(\zeta),
\end{equation}
where $c(\zeta)$ is the logarithm of conformal radius of $D,$ see \eqref{eq: log C}.
Indeed,
$$\var\left(\Phi_\ve(\zeta)\right) = 2\iint G(\xi,\eta) \,dm_{\zeta,\ve}(\xi)\,dm_{\zeta,\ve}(\eta).$$
Set $u(\zeta,z) = G_D(\zeta,z) + \log|\zeta-z|.$
Then the logarithm of conformal radius can be written in terms of the Green's function as follows:
\begin{equation} \label{eq: c1}
c(\zeta) = u(\zeta,\zeta).
\end{equation}

Using the harmonicity of the map $z\mapsto u(\zeta,z),$ we have the following expression for the Green potential $U_D^{m_{\zeta,\ve}}(=\int m_{\zeta,\ve}(\xi) G(\xi,\cdot))$ of $m_{\zeta,\ve}$:
$$
U_D^{m_{\zeta,\ve}}(\eta) =
\begin{cases}
u(\zeta,\eta) + \log \dfrac1\ve, &\quad \textrm{if }|\zeta -\eta|\le\ve;\\
u(\zeta,\eta) + \log \dfrac1{|\zeta -\eta|}, &\quad \textrm{otherwise.}
\end{cases}
$$
Thus we have
$$\var(\Phi_\ve(\zeta)) = 2\|m_{\zeta,\ve}\|^2_\EE = 2\log\frac1\ve+ 2\int u(\zeta,\eta) \,dm_{\zeta,\ve}(\eta) =2\log\frac1\ve + 2u(\zeta,\zeta),$$
which shows \eqref{eq: var Phi e}.
Arguing as above, we show
$\E[\Phi_\ve(\zeta)\Phi_\ve(z)] = 2K_\ve(\zeta,z) + 2u(\zeta,z),$ where
\begin{equation}\label{eq: K}
\begin{cases}\bs\phantom{|}K_\ve(\zeta,z)\phantom{|} = \log\dfrac1{|\zeta-z|},\qquad &\textrm{if } |\zeta-z|\ge2\ve; \\
|K_\ve(\zeta,z)| \le \log\dfrac1\ve, &\textrm{otherwise.}
\end{cases}
\end{equation}
Integrating against test functions $f,$
$\operatorname{var}\left(\Phi_\ve(f)\right) \to \operatorname{var}\left(\Phi(f)\right).$
In a similar way, $\operatorname{cov}(\Phi_\ve(f),\Phi(f)) \to \operatorname{var}\left(\Phi(f)\right).$
Therefore, we obtain
$\operatorname{var}\left(\Phi_\ve(f)-\Phi(f)\right) \to 0.$
\end{proof}

\ms\subsec{Exponentials and powers of the Gaussian free field}
We represent Wick's powers $\Phi^{\odot n}$ and exponentials $e^{\odot \alpha\Phi}$ with $|\alpha|<1$
as \emph{distributional} fields in the following way:
\begin{equation}
\Phi^{\odot n}=\lim_{\ve\to 0}\Phi_\ve^{\odot n},\qquad
e^{\odot\alpha\Phi}=\lim_{\ve\to 0} e^{\odot\alpha\Phi_\ve},
\end{equation}
where the limits are in the strong operator topology.
The existence of the limits is shown below.
Thus we have
$$
e^{\odot\alpha\Phi}=C(z)^{-\alpha^2}\lim_{\ve\to 0}\ve^{\alpha^2}\,e^{\alpha\Phi_\ve},\qquad (|\alpha|<1),
$$
where $C(z)$ is the conformal radius (see \eqref{eq: log C} and \eqref{eq: var Phi e}) and
$$\Phi^{\odot n}=\lim_{\ve\to 0} ~\sigma_\ve^n H_n\left(\frac{\Phi_\ve}{\sigma_\ve}\right),\qquad (\sigma_\ve^2 = \operatorname{var}(\Phi_\ve)),$$
where $H_n$'s are the Hermite polynomials, see \eqref{eq: Hermite}.
For example,
$$\Phi^{\odot 2}=\lim_{\ve\to 0} ~ \Phi_\ve^2-2\log\frac{C}{\ve},\qquad\Phi^{\odot 4}=\lim_{\ve\to 0} ~ \Phi_\ve^4-12\Phi_\ve^2\log\frac{C}{\ve}+12\log^2\frac{C}{\ve}.$$

\begin{prop*}
\renewcommand{\theenumi}{\alph{enumi}}
{\setlength{\leftmargini}{1.7em}
\begin{enumerate}
\item Suppose $|\alpha|<1.$
\renewcommand{\theenumii}{\roman{enumii}}
{\setlength{\leftmarginii}{1.2em}
\begin{enumerate}
\ms \item For all test functions $f,$ the random variables $e^{\odot\alpha\Phi_\ve}(f)$ converge in $L^2$ as $\ve\to0.$
\ms \item Let $e^{\odot\alpha\Phi}(f)$ denote the $L^2$-limit. 
For any test functions with disjoint supports, the random variable $e^{\odot\alpha\Phi}(f_1)\cdots e^{\odot\alpha\Phi}(f_n)$ is in $L^1.$
\ms \item The linear map $e^{\odot\alpha\Phi}:f\mapsto e^{\odot\alpha\Phi}(f)$ is distributional in the sense that \eqref{eq: distributional} holds.
\end{enumerate}}
\ms \item Similar properties hold for Wick's powers $\Phi^{\odot n}.$
\end{enumerate}}
\end{prop*}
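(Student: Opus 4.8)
The plan is to prove part (a) in detail, since part (b) follows by entirely parallel (in fact simpler, being polynomial rather than exponential) estimates on Hermite polynomials. Fix $|\alpha|<1$ and a test function $f$. The key computational input is that $e^{\odot\alpha\Phi_\ve}(z) = \ve^{\alpha^2}C(z)^{-\alpha^2}\,e^{\alpha\Phi_\ve(z)}$ up to the Wick normalization identity $e^{\odot\xi}=e^{\xi-\frac12\E\xi^2}$ together with \eqref{eq: var Phi e}, so that we are really studying the $L^2$ convergence of the random measures $\ve^{\alpha^2}e^{\alpha\Phi_\ve(z)}\,dA(z)$ (a Gaussian multiplicative chaos in the subcritical regime $\alpha^2<1$). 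First I would write, for $\ve,\ve'\to 0$,
$$
\E\big[(e^{\odot\alpha\Phi_\ve}(f)-e^{\odot\alpha\Phi_{\ve'}}(f))^2\big]
=\iint f(\zeta)f(z)\,\big(A_{\ve,\ve}+A_{\ve',\ve'}-2A_{\ve,\ve'}\big)\,dA(\zeta)dA(z),
$$
where $A_{\ve,\ve'}(\zeta,z)=\E[e^{\odot\alpha\Phi_\ve}(\zeta)\,e^{\odot\alpha\Phi_{\ve'}}(z)]=e^{\alpha^2\E[\Phi_\ve(\zeta)\Phi_{\ve'}(z)]}$, using the identity $\E[e^{\odot\xi}e^{\odot\eta}]=e^{\E\xi\eta}$ from Section~\ref{sec: F-space} applied to $\xi=\alpha\Phi_\ve(\zeta)$, $\eta=\alpha\Phi_{\ve'}(z)$ (these are jointly Gaussian since they are linear images of $\Phi$). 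The point of this reduction is that everything is now expressed through the single covariance kernel $\E[\Phi_\ve(\zeta)\Phi_{\ve'}(z)] = 2K_{\ve,\ve'}(\zeta,z)+2u(\zeta,z)$, which by the estimates behind \eqref{eq: K} behaves like $2\log\frac{1}{|\zeta-z|}$ when $|\zeta-z|$ is much larger than $\ve\vee\ve'$ and is uniformly bounded above by $2\log\frac1{\ve\wedge\ve'}+O(1)$ always (with $u$ bounded on the support of $f$).

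Next I would bound the Cauchy difference. Exponentiating, $A_{\ve,\ve'}(\zeta,z)\le C_f\,|\zeta-z|^{-2\alpha^2}$ on the support of $f$ whenever $|\zeta-z|\ge 2(\ve\vee\ve')$, and on the complementary near-diagonal region one has the cruder but sufficient bound $A_{\ve,\ve'}\le C_f\,(\ve\wedge\ve')^{-2\alpha^2}$, whose contribution to the double integral is $O((\ve\vee\ve')^{2}\cdot(\ve\wedge\ve')^{-2\alpha^2})=O((\ve\vee\ve')^{2(1-\alpha^2)})\to 0$ precisely because $\alpha^2<1$ — this is where subcriticality enters. Off the diagonal, $A_{\ve,\ve'}(\zeta,z)\to e^{2\alpha^2 u(\zeta,z)}|\zeta-z|^{-2\alpha^2}$ pointwise as $\ve,\ve'\to0$, dominated by the integrable majorant $C_f|\zeta-z|^{-2\alpha^2}$ (integrable on a compact planar region since $2\alpha^2<2$). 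Hence $A_{\ve,\ve}+A_{\ve',\ve'}-2A_{\ve,\ve'}\to 0$ a.e.\ and is dominated, so by dominated convergence the Cauchy difference tends to $0$: this gives (i), with the $L^2$ limit $e^{\odot\alpha\Phi}(f)$, and passing to the limit in $\iint f(\zeta)f(z)A_{\ve,\ve}$ identifies the $2$-point correlation function as $\iint f(\zeta)f(z)\,e^{2\alpha^2 u(\zeta,z)}|\zeta-z|^{-2\alpha^2}$, consistent with the OPE $e^{\odot\alpha\Phi}(\zeta)e^{\odot\alpha\Phi}(z)=|\zeta-z|^{-2\alpha^2}e^{2\alpha^2 u}\,e^{\odot\alpha(\Phi(\zeta)+\Phi(z))}+\cdots$.

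For (ii) and (iii) I would run the same argument with $n$ factors: $\E\big[\prod_{j=1}^n e^{\odot\alpha\Phi_\ve}(f_j)\big]=\int\cdots\int\prod_j f_j(z_j)\,\exp\!\big(\alpha^2\sum_{j<k}\E[\Phi_\ve(z_j)\Phi_\ve(z_k)]\big)$, and the integrand is dominated by $C\prod_{j<k}|z_j-z_k|^{-2\alpha^2}$, which for test functions with \emph{disjoint} supports is bounded, giving convergence of all moments and the distributional identity \eqref{eq: distributional} in the limit; the convergence $\prod_j e^{\odot\alpha\Phi_\ve}(f_j)\to\prod_j e^{\odot\alpha\Phi}(f_j)$ in $L^1$ needed to identify the limiting moment with a moment of the limit follows from the $L^2$ convergence of each factor (so products converge in $L^{2/n}\subset L^1$ on the probability space, after a Hölder/uniform-integrability argument, or more simply by noting each $e^{\odot\alpha\Phi_\ve}(f_j)$ is bounded in every $L^p$). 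I expect the main obstacle to be the near-diagonal estimate in the $2$-point bound — making precise, from the definition of $K_\ve$ via circle averages, the claim that $A_{\ve,\ve'}$ is controlled by $(\ve\wedge\ve')^{-2\alpha^2}$ uniformly and that its contribution is genuinely $o(1)$; everything else is bookkeeping with the Green's function decomposition $G_D=-\log|\cdot-\cdot|+u$ already set up before the statement.
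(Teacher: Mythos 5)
Your overall route for part (a) is the paper's: reduce everything to covariance kernels via $\E[e^{\odot\xi}e^{\odot\eta}]=e^{\E\xi\eta}$, split the double integral into a near-diagonal piece that is killed by $|\alpha|^2<1$ and an off-diagonal piece dominated by the integrable kernel $e^{2|\alpha|^2G(\zeta,z)}$, and then obtain all moments and the distributional identity \eqref{eq: distributional} from uniform $L^p$ bounds together with the explicit $n$-point formula \eqref{eq: dist ve}, passing to the limit on the left-hand side along an a.s.\ convergent subsequence as in \eqref{eq: conv in Lp}. The only genuinely different choice is part (b): instead of redoing Hermite-polynomial estimates, the paper simply projects $e^{\odot\alpha\Phi_\ve}(f)$ onto the $n$-th chaos $\HH^{\odot n}$; since the projection is a contraction, $L^2$ convergence of the exponential immediately gives $L^2$ convergence of $\Phi_\ve^{\odot n}(f).$

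There is, however, a concrete problem in the step you yourself single out as the crux. You bound $A_{\ve,\ve'}\le C_f\,(\ve\wedge\ve')^{-2\alpha^2}$ on the near-diagonal region $\{|\zeta-z|<2(\ve\vee\ve')\}$ and then assert that its contribution is $O\big((\ve\vee\ve')^{2}(\ve\wedge\ve')^{-2\alpha^2}\big)=O\big((\ve\vee\ve')^{2(1-\alpha^2)}\big).$ The last equality is false unless $\ve\asymp\ve',$ and with the min-bound alone the Cauchy estimate does not close: taking $\ve'=\ve^{N}$ with $N$ large gives $(\ve\vee\ve')^{2}(\ve\wedge\ve')^{-2\alpha^2}=\ve^{2-2N\alpha^2}\to\infty.$ What makes the argument work — and what the paper uses, with $\ve=\max(\ve_m,\ve_n)$ in its error term — is the sharper covariance bound $\E[\Phi_\ve(\zeta)\Phi_{\ve'}(z)]\le 2\log\frac1{\ve\vee\ve'}+O(1)$: the Green potential $U_D^{m_{\zeta,\ve}}$ is bounded above everywhere by $\log(1/\ve)+u(\zeta,\cdot),$ so integrating it against $m_{z,\ve'}$ gives $2\log(1/\ve)+O(1),$ and by symmetry also $2\log(1/\ve')+O(1)$; taking the better of the two yields the max-bound, after which your computation is correct and gives exactly the rate you wrote. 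Two smaller points: for complex $\alpha$ the relevant kernels carry $|\alpha|^2$ and $f(\zeta)\overline{f(z)}$ (the Cauchy quantity is $\E|\cdot|^2$), as in the paper's estimate similar to \eqref{eq: K}; and $L^{2/n}\not\subset L^1$ for $n\ge3,$ so in (iii) you must indeed rely, as you then suggest and as the paper does, on the uniform $L^p$ bounds plus a.s.\ convergence along a subsequence rather than on the naive product-of-$L^2$ argument.
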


\begin{proof}
(a) (i)
Given a sequence $\{\ve_m\}_{m=1}^\infty$ with $\ve_m\downarrow0,$ we set $\Phi_m=\Phi_{\ve_m}.$
Note that
$$\cov(e^{\odot\alpha\Phi_m}(f),e^{\odot\alpha\Phi_n}(f)) = \iint\cov(e^{\odot\alpha\Phi_m}(\zeta),e^{\odot\alpha\Phi_n}(z))f(\zeta)\overline{f(z)},$$
where
$$\cov(e^{\odot\alpha\Phi_m}(\zeta),e^{\odot\alpha\Phi_n}(z))= \exp(|\alpha|^2 \E[\Phi_m(\zeta)\Phi_n(z)])-1.$$
It follows from the estimate on $\E[\Phi_m(\zeta)\Phi_n(z)]$ similar to \eqref{eq: K} that
\begin{align*}
\Big|\cov(e^{\odot\alpha\Phi_m}(f),e^{\odot\alpha\Phi_n}(f))-&\iint_{|\zeta-z| \ge \ve_m + \varepsilon_n} \big(e^{2|\alpha|^2G(\zeta,z)}-1\big)f(\zeta)\overline{f(z)} \Big|\\
\le \Big|&\iint_{|\zeta-z| < \varepsilon_m+\ve_n}\big(\ve^{-2|\alpha|^2}e^{2u(\zeta,z)}-1\big)f(\zeta)\overline{f(z)}\Big|,
\end{align*}
where $\ve = \max(\ve_m,\ve_n).$
If $|\alpha|<1,$ then the right-hand side in the above estimate tends to $0$ as $\min(m,n)\to\infty.$
On the other hand, if $|\alpha|<1,$ then the integral
$$\iint e^{2|\alpha|^2G(\zeta,z)} f(\zeta)\overline{f(z)}$$
is finite.
Thus $\{e^{\odot\alpha\Phi_m}(f)\}$ is a Cauchy sequences in $L^2,$ which has an $L^2$-limit.
This limit does not depend on a particular sequence $\{\ve_m\}_{m=1}^\infty.$

\ss (ii)
By (i), there is an almost sure convergent subsequence $\{e^{\odot\alpha\Phi_{m_k}}(f_j)\}.$
We first note that for all $m,$
\begin{align} \label{eq: dist ve}
\E\big[e^{\odot \alpha\Phi_m}{(f_1)}~&\cdots~ e^{\odot \alpha\Phi_m}{(f_n)}\big] \\ &= \int\cdots\int e^{\alpha^2 \sum_{j<k}\E[\Phi_m(z_j)\Phi_m(z_k)]}~f_1(z_1)\cdots f_n(z_n). \nonumber
\end{align}
It follows from the estimate~\eqref{eq: K} that 
$$\sup_m \E|e^{\odot \alpha\Phi_m}{(f_1)}~\cdots~ e^{\odot \alpha\Phi_m}{(f_n)}|<\infty.$$
Thus the random variable $e^{\odot\alpha\Phi}(f_1)\cdots e^{\odot\alpha\Phi}(f_n)$ is in $L^1.$
Furthermore,
\begin{equation} \label{eq: conv in L1}
e^{\odot \alpha\Phi_{m_k}}{(f_1)}~\cdots~ e^{\odot \alpha\Phi_{m_k}}{(f_n)} \overset{L^1}{\longrightarrow} e^{\odot\alpha\Phi}(f_1)~\cdots~e^{\odot\alpha\Phi}(f_n).
\end{equation}

\ss (iii)
It follows from the estimate~\eqref{eq: K} that the right-hand side of \eqref{eq: dist ve} converges to
$$\idotsint e^{2\alpha^2\sum_{j<k}G(z_j,z_k)} ~f_1(z_1)\cdots f_n(z_n).$$
On the other hand, we have
\begin{equation} \label{eq: n-pt4expGFF}
\E\left[e^{\odot \alpha_1\Phi}{(z_1)}\cdots~ e^{\odot \alpha_n\Phi}{(z_n)}\right]=e^{2\sum_{j<k}\alpha_j\alpha_k~G(z_j,z_k)}.
\end{equation}

Using \eqref{eq: conv in L1}, by passing to a subsequence, the left-hand side of \eqref{eq: dist ve} converges to
$\E\left[e^{\odot \alpha\Phi}{(f_1)}~\cdots~ e^{\odot \alpha\Phi}{(f_n)}\right].$
Thus the linear map $e^{\odot \alpha\Phi}$ is distributional.

\ss (b)
Project $e^{\odot\alpha\Phi_\ve}(f)$ onto $\HH^{\odot n}.$
Then the convergence of $\Phi_\ve^{\odot n}(f)$ in $L^2$ follows from the convergence of $e^{\odot\alpha\Phi_\ve}(f).$
The other parts are left to the reader.
\end{proof}

\ms \begin{rmks*}
(a) As Fock space fields, Wick's exponentials satisfy \eqref{eq: n-pt4expGFF} without any restriction on $\alpha_j$'s.

\ss (b) Exponentials with $|\alpha|\ge1$ cannot be distributional since the \emph{positive} $2$-point function
$$\E\left[e^{\odot \alpha\Phi(z)}e^{\odot \bar\alpha\Phi(w)}\right] = e^{2|\alpha|^2G(z,w)}$$
is not integrable in $D\times D.$
\end{rmks*}

\section{Insertion operators} \label{sec: insert}

In this section we will use the distributional representation of the Gaussian free field to explain the mechanism of the insertion procedure, an operation widely used in the field theory.

\ms Let $\Phi:\EE\to L^2(\Omega,\P)$ be the Gaussian free field in $D.$
Given a real distribution $\rho\in \mathcal{E}$ we define the probability measure $\widehat\P\equiv\P_\rho$ on $\Omega$ by the equation (the ``Cameron-Martin" change of measure)
$$d\widehat \P=e^{\odot\Phi(\rho)}\,d{\P}.$$
The following proposition describes the random field $\Phi: \EE\to L^2(\Omega,\widehat \P),$ which is the composition of the Gaussian free field and the identity map $L^2(\Omega, \P)\to L^2(\Omega,\widehat \P),$ in terms of the Green potential $U_D^\rho = \int \rho(\zeta)G_D(\cdot,\zeta).$

\begin{prop*}
The law of $\Phi$ with respect to $\widehat\P$ (i.e., under the insertion of $e^{\odot\Phi(\rho)}$) is the same as the law of $\widehat\Phi:=2U_D^\rho+\Phi$ with respect to $\P.$
\end{prop*}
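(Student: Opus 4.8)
The plan is to verify the identity of laws by checking that all joint moments (equivalently, all correlation functions) of $\Phi$ under $\widehat\P$ agree with those of $\widehat\Phi=2U_D^\rho+\Phi$ under $\P$; since everything in sight is jointly Gaussian (or, for $\widehat\Phi$, a deterministic shift of a Gaussian), matching the full family of correlations suffices. First I would fix test functions $f_1,\dots,f_n$ and write, by the definition of $\widehat\P$,
\begin{equation*}
\E_{\widehat\P}\big[\Phi(f_1)\cdots\Phi(f_n)\big] = \E\big[\Phi(f_1)\cdots\Phi(f_n)\,e^{\odot\Phi(\rho)}\big].
\end{equation*}
The right-hand side is a correlation functional of the type already treated in Section~\ref{sec: corr fcnl}: it is the expectation of a tensor product of the basic functionals $\Phi(f_j)$ with the Wick exponential $e^{\odot\Phi(\rho)}$. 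By Wick's formula (in the form \eqref{eq: Wick's4FCF}), the contractions either pair two of the $\Phi(f_j)$'s, contributing $\E[\Phi(f_i)\Phi(f_j)]$, or pair some $\Phi(f_j)$ with the exponential, each such contraction contributing a factor $\E[\Phi(f_j)\Phi(\rho)] = 2U_D^\rho(f_j)$ (using the symbolic representation $\Phi(\rho)=\int\rho(\zeta)\Phi(\zeta)\,dA(\zeta)$ together with $\E[\Phi(\zeta)\Phi(z)]=2G(\zeta,z)$), while $\E[e^{\odot\Phi(\rho)}]=1$ removes any self-contraction of the exponential.

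Collecting these contributions, the sum over Feynman diagrams reorganizes exactly as the binomial expansion of $\prod_{j=1}^n\big(\Phi(f_j)+2U_D^\rho(f_j)\big)$ inside an expectation: the diagrams that leave $\Phi(f_j)$ unpaired-to-the-exponential yield the $\Phi(f_j)$ term, those that pair it yield the deterministic $2U_D^\rho(f_j)$ term. Hence
\begin{equation*}
\E_{\widehat\P}\big[\Phi(f_1)\cdots\Phi(f_n)\big] = \E\Big[\prod_{j=1}^n\big(2U_D^\rho(f_j)+\Phi(f_j)\big)\Big] = \E\big[\widehat\Phi(f_1)\cdots\widehat\Phi(f_n)\big],
\end{equation*}
which is precisely the $n$-point correlation function of $\widehat\Phi=2U_D^\rho+\Phi$ under $\P$. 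Since $n$ and the $f_j$ are arbitrary and both processes are Gaussian up to a deterministic shift, equality of all these moments forces equality in law. An essentially equivalent and perhaps cleaner route is to compute characteristic functionals: using $e^{\odot\Phi(\rho)}=e^{\Phi(\rho)-\frac12\E\Phi(\rho)^2}$ and the Gaussian identity $\E[e^{\odot\xi}e^{\odot\eta}]=e^{\E\xi\eta}$ from Section~\ref{sec: F-space}, one gets for real $f$
\begin{equation*}
\E_{\widehat\P}\big[e^{it\Phi(f)}\big] = \E\big[e^{it\Phi(f)}e^{\odot\Phi(\rho)}\big] = e^{it\,\E[\Phi(f)\Phi(\rho)]}\,e^{-\frac{t^2}{2}\E\Phi(f)^2} = e^{2it\,U_D^\rho(f)}\,e^{-\frac{t^2}{2}\var\Phi(f)},
\end{equation*}
which is the characteristic function of the Gaussian variable $\widehat\Phi(f)=2U_D^\rho(f)+\Phi(f)$; doing this jointly for finitely many $f_j$ (replace $t\Phi(f)$ by $\sum t_j\Phi(f_j)$) identifies the finite-dimensional distributions and hence the law.

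The main obstacle is not conceptual but a matter of making the formal manipulations legitimate: one must ensure $\Phi(\rho)$ is a genuine $L^2$ random variable (i.e. $\rho\in\EE$, which is assumed) so that $e^{\odot\Phi(\rho)}$ is an honest positive density of total mass one, and that $U_D^\rho(f)=\int f(z)U_D^\rho(z)\,dA(z)$ is finite for test $f$ — both of which follow from $\rho\in\EE$ and standard Green-potential estimates. A secondary point worth a line is justifying the interchange of the $\rho$-integration with the expectation when expanding $\E[\Phi(f_j)\Phi(\rho)]$; this is routine by Fubini given the integrability just noted. Once these are in place, the combinatorial bookkeeping of the diagrams is exactly the content already established in Section~\ref{sec: corr fcnl}, so the proof is short.
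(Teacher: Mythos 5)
Your proposal is correct and, in its characteristic-functional form, is essentially the paper's own proof: the paper computes $\log\E\big[e^{\odot\Phi(\rho)}e^{it\Phi(f)}\big]$ via the Wick-exponential identity to identify $\Phi(f)$ under $\widehat\P$ as Gaussian with mean $2\int f\,U_D^\rho$ and variance $\|f\|_\EE^2$, then concludes by uniqueness of the Gaussian free field. Your preliminary moment/Feynman-diagram computation is just a minor variant of the same calculation and adds nothing that needs separate justification.
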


\begin{proof}
For a test function $f,$ let us compute the characteristic functions of $\Phi(f)$ with respect to $\P_\rho.$
We have
\begin{align*}
\log\E_{\rho}\left[e^{it\Phi(f)}\right] &= \log\E\left[e^{\odot\Phi(\rho)}e^{it\Phi(f)}\right]=\log\left(e^{-t^2\E\Phi(f)^2/2}\E\left[e^{\odot\Phi(\rho)}e^{\odot it\Phi(f)}\right]\right)\\
&=2 it\iint f(z)\rho(\zeta)G(z,\zeta) -\frac12t^2\|f\|_\EE^2\\
&= 2 it\int f(z)U_D^\rho(z)- \frac12t^2\|f\|_\EE^2.
\end{align*}
This means that $\Phi(f)$ is Gaussian with mean $2\int f U_D^\rho$ and variance $\|f\|_\EE^2,$ see \eqref{eq: norm}.
Proposition follows from uniqueness of the Gaussian free field.
\end{proof}

\ms We use this proposition as the motivation for the following construction on Fock space fields.
Let us now formally take $\rho = \alpha\delta_{z_0}$ (note that $\rho\not\in \EE,$ but $\alpha f_{\ve,z_0} \to \rho$ and $\alpha f_{\ve,z_0}\in \EE$) and define a linear operator $\XX\mapsto\widehat\XX$ on correlation functionals with nodes in $D\sm\{z_0\}$ by the following rules: $$\Phi(z)\mapsto\Phi(z)+2\alpha G(\cdot,z_0),$$
\begin{equation*} 
\pa\XX\mapsto\pa\widehat\XX,\qquad \bp\XX\mapsto\bp\widehat\XX,\qquad \XX\odot\YY\mapsto\widehat\XX\odot\widehat\YY. 
\end{equation*}

We define $\widehat\E[\XX]$ by 
$$\widehat\E[\XX]:=\E[e^{\odot\alpha\Phi(z_0)}\XX].$$

\ms The following proposition (with real $\alpha$) is immediate from the previous proposition if we use the approximation technique described in Section~\ref{sec: approx}.
It is also easy to give a direct proof (which works for complex $\alpha$'s as well).

\begin{prop}\label{insert}
We have 
$$\widehat \E[\XX]=\E[\widehat\XX].$$
\end{prop}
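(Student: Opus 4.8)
The plan is to reduce the statement to the Girsanov-type proposition already proved for the Gaussian free field, and then upgrade it from $\Phi$ itself to arbitrary Wick products by exploiting the algebraic structure. The key observation is that the operator $\XX \mapsto \widehat\XX$ defined by the rules \eqref{eq: rules4^} is precisely the substitution $\Phi \rightsquigarrow \widehat\Phi = \Phi + 2U_D^\rho$ with $\rho = \alpha\delta_{z_0}$, i.e.\ $U_D^\rho = \alpha G(\cdot, z_0)$, carried through $\pa$, $\bp$, and $\odot$. First I would reduce to the case where $\XX$ is a basic functional $X_1(z_1)\odot\cdots\odot X_n(z_n)$ with each $X_j$ a derivative of $\Phi$: both sides of the claimed identity are linear in $\XX$, and the defining rules for the hat operation respect this linearity, so the general (quasi-polynomial) case follows from the basic case. (For Wick exponentials one passes to the limit in the finite Wick-power truncations, using the weak topology on functionals.)

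Next, for a basic functional, the plan is to compute both sides by Wick's formula. Since $z_0 \notin S_\XX$, the insertion $e^{\odot\alpha\Phi(z_0)}\XX$ is a genuine tensor product, so $\widehat\E[\XX]$ is computed by summing over Feynman diagrams on the vertices $\{\alpha\Phi(z_0)^{\odot k}\}_{k\ge0} \cup \{X_1,\dots,X_n\}$ with no internal $z_0$–$z_0$ contractions; the unpaired $X_j$ vertices contribute $0$ under $\E$, so only diagrams with \emph{all} $X_j$ paired survive. Grouping the diagrams by how the $X_j$'s pair among themselves versus pairing to a $\Phi(z_0)$ leg, and using $\E[e^{\odot\alpha\Phi(z_0)}\,\Phi(z_j)] = 0$ but $\E[e^{\odot\alpha\Phi(z_0)}\, e^{\odot t\Phi(z_j)}] = e^{2\alpha t G(z_j,z_0)}$ (equivalently, repeatedly contracting a single $X_j$-leg against the exponential produces the factor $X_j$ acting as $2\alpha\,\pa^{\alpha_j}\bp^{\beta_j}G(z_j,z_0)$, i.e.\ exactly the shift $\Phi(z_j) \mapsto \Phi(z_j) + 2\alpha G(z_j,z_0)$ with derivatives applied), one sees that the generating-function bookkeeping collapses the sum to $\E[\widehat\XX]$, where $\widehat\XX$ is obtained from $\XX$ by the substitution rule. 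This is the computational heart of the argument and is where I expect the main obstacle to lie: organizing the Feynman-diagram sum so that the "leg of $X_j$ contracted to the exponential" terms reassemble into the shifted field $\widehat\Phi$ evaluated with the appropriate derivatives, uniformly over all $j$ simultaneously. A clean way around the combinatorics is to first handle $\XX$ built only from Wick exponentials of $\Phi$ (where \eqref{eq: n-pt4expGFF} gives both sides in closed form and the identity is immediate), then recover general basic functionals by differentiating \eqref{eq: n-pt4expGFF} in the parameters $\alpha_j$ and setting them to zero; differentiation commutes with the hat operation by the very rules \eqref{eq: rules4^}.

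Alternatively — and this is the slickest route — I would invoke the preceding proposition directly: under $\widehat\P$, the field $\Phi$ has the same law as $\widehat\Phi = 2U_D^\rho + \Phi$ under $\P$. Hence for any polynomial expression in the values of $\Phi$ and its derivatives at points of $D\sm\{z_0\}$, expectation under $\widehat\P$ equals expectation under $\P$ of the same expression with $\Phi$ replaced by $\widehat\Phi$. Wick products are such polynomial expressions (the Wick normalization constants involve only variances, which are unaffected by the deterministic shift), and $\pa$, $\bp$ act on them consistently; so $\widehat\E[\XX] = \E_{\widehat\P}[\XX] = \E_\P[\XX|_{\Phi \mapsto \widehat\Phi}] = \E[\widehat\XX]$, which is exactly the claim. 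The only care needed here is the passage from test-function-smeared statements to the formal pointwise functionals with nodes at distinct points, and the observation that $\widehat\P = e^{\odot\Phi(\rho)}\,d\P$ with $\rho = \alpha\delta_{z_0}$ makes sense as a (signed, if $\alpha$ complex) pairing because the functionals $\XX$ involved have all their nodes at positive distance from $z_0$, so $\E[e^{\odot\alpha\Phi(z_0)}\XX]$ is well-defined by the extension of correlations to boundary/coincident nodes discussed earlier.
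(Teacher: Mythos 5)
Your proposal is correct and follows essentially the paper's own proof: the paper gives exactly your two routes, namely the direct Wick's-formula computation on a basic functional $X_1(z_1)\odot\cdots\odot X_n(z_n)$ (where only the $k=n$ term of the exponential survives, so $\widehat \E[\XX]=\alpha^n\prod_j\E[\Phi(z_0)X_j(z_j)]=\prod_j\E[\widehat X_j(z_j)]=\E[\widehat\XX]$), and the reduction to the preceding Girsanov-type proposition via the approximation technique of Section~\ref{sec: approx}, with the direct computation preferred precisely because it also covers complex $\alpha$. The combinatorial obstacle you anticipate does not actually arise, since within a single Wick monomial no $X_j$--$X_k$ contractions are permitted, so each $X_j$ must pair with an exponential leg; also note the small slip that $\E[e^{\odot\alpha\Phi(z_0)}\Phi(z_j)]=2\alpha G(z_j,z_0)\ne 0$ --- you presumably meant the Wick product $e^{\odot\alpha\Phi(z_0)}\odot\Phi(z_j)$.
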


\begin{proof}
Let
$\XX=X_1(z_1)\odot\cdots\odot X_n(z_n), X_j = \pa^{\beta_j}\bp^{\widetilde\beta_j}\Phi.$
Then by Wick's formula we have
$$\widehat\XX= \widehat X_1(z_1) \odot\cdots \odot \widehat X_n(z_n),\qquad \widehat X_j(z_j) = X_j(z_j) + 2\alpha \pa^{\beta_j}\bp^{\widetilde\beta_j} G(z_j,z_0),$$
where we differentiate the Green's function with respect to the first variable.
By definition, we get
$$\widehat{\E}[X_j(z_j)] = \alpha \E[X_j(z_j)\Phi(z_0)] = 2\alpha \pa^{\beta_j}\bp^{\widetilde\beta_j} G(z_j,z_0) = \E[\widehat X_j(z_j)].$$
It follows from the definition \eqref{eq: Wick's4FCF} of tensor products of functionals that
\begin{align*}
\widehat{\E}[\XX] &= \sum_{k=0}^\infty\frac{\alpha^k}{k!}\E[\Phi^{\odot k}(z_0)~X_1(z_1)\odot\cdots\odot X_n(z_n)] \\
&= \alpha^n \prod_{j=1}^n\E[\Phi(z_0)X_j(z_j)]= \prod_{j=1}^n \E[\widehat X_j(z_j)] = \E[\widehat\XX].
\end{align*}

\end{proof}

\renewcommand\chaptername{Lecture}
\chapter{Operator product expansion} \label{ch: OPE} \index{OPE}

Operator product expansion (OPE) is the expansion of the tensor product of two fields near diagonal.
The name originates from the corresponding construction for local operators.
With our approach, we use reverse logic -- operator product expansions of fields are used to define local operators, see Appendix~\ref{appx: Op algebra}.
The concept of operator product expansion is quite general -- the definition does not depend on a particular nature of correlation functions.
In the case of Fock space fields, the OPE coefficients are again Fock space fields, and so we get important algebraic operations (OPE multiplications) on Fock space fields.

\ms \section{Definition and first examples}\label{sec: OPE def}

\ms\SS We start with a simple example.
\ms\begin{eg*}
Let $\Phi$ be the Gaussian free field in $D,$ and let $c(z),$ $z\in D$ denote the logarithm of conformal radius of $D,$ see \eqref{eq: log C} in Appendix~\ref{appx: F-field}.
Then
\begin{equation} \label{eq: OPE(Phi,Phi)}
\Phi(\zeta) \Phi(z) = \log \frac{1}{|\zeta-z|^2} + 2c(z) + \Phi^{\odot 2}(z) + o(1) \qquad\textrm{as }\;\zeta\to z,\;\zeta\ne z.
\end{equation}
As we mentioned in Section \ref{sec: corr fcnl}, the meaning of the convergence (here and in all similar statements) is the convergence of correlation functionals: the equation
$$\E[\Phi(\zeta) \Phi(z)\XX] = \log \frac{1}{|\zeta-z|^2}\E[\XX] + 2c(z)\E[\XX] + \E[\Phi^{\odot 2}(z)\XX] + o(1)$$
holds for all Fock space correlation functionals $\XX$ in $D$ satisfying $z\notin S_\XX.$

\ms To derive the operator product expansion \eqref{eq: OPE(Phi,Phi)} we use Wick's formula
\eqref{eq: Wick's4FCF},
$$\Phi(\zeta) \Phi(z) = \E[\Phi(\zeta) \Phi(z)] + \Phi(\zeta)\odot\Phi(z)$$
and the relation
\begin{equation} \label{eq: c2}
\E[\Phi(\zeta) \Phi(z)] = 2G(\zeta,z) = \log \frac{1}{|\zeta-z|^2} + 2c(z) + o(1),
\end{equation}
see \eqref{eq: c1} for the description of $c(z)$ in terms of the Green's function.
The convergence of $\Phi(\zeta)\odot\Phi(z)$ to $\Phi^{\odot2}(z)$ was already explained in Section \ref{sec: corr fcnl}.\qed
\end{eg*}

\ms\SS In general, the operator product expansion of two Fock space fields is an asymptotic expansion of the correlation functional $X(\zeta)Y(z)$ with respect to some appropriate (and \emph{independent} of $D$) growth scale as $\zeta\to z.$

\ms Particularly important is the case in which the field $X(\zeta)$ is
\emph{holomorphic} (recall that this means that all correlation functions $\E[X(\zeta) \YY]$ are holomorphic with respect to $\zeta\in D\sm S_\YY$).
The operator product expansion is then defined as a (formal) Laurent series expansion
\begin{equation} \label{eq: OPE(X,Y)}
X(\zeta)Y(z)= \sum {C_n(z)}{(\zeta-z)^n}, \qquad\zeta\to z.
\end{equation}
The function $\zeta\mapsto\E\,X(\zeta)Y(z)\ZZ$ is holomorphic in a punctured neighborhood of $z.$ 
Hence it has a Laurent series expansion and its radius of convergence is the shortest distance from $z$ to the nodes of $\ZZ.$

\ss \begin{eg*} Here is an example of a full operator product expansion.
For a given domain $D$ we defined
$$u(\zeta,z)=G(\zeta,z)+\log|\zeta-z|.$$
Since $c(z)=u(z,z),$ we have $\pa c(z)=2\pa_1u(z,z),$ where $\pa_1$ is the complex derivative with respect to the first variable.
The derivatives $$c_n(z):= 2\pa_1^{n+1}u(z,z)$$ appear in the operator product expansion of the fields $J=\pa\Phi$ and $\Phi$:
\begin{align} \label{eq: OPE(J,Phi)}
J(\zeta)\Phi(z)&=\E[J(\zeta)\Phi(z)]+J(\zeta)\odot\Phi(z)\\
&=-\frac1{\zeta-z}+\pa c+J(z)\odot\Phi(z)+\sum_{n=1}^\infty {C_n(z)}{(\zeta-z)^n}, \nonumber
\end{align}
where
$$C_n(z) = \frac1{n!}\Big(c_n(z) + (\pa^{n}J)\odot\Phi(z)\Big).$$
\hfill\qed
\end{eg*}

\ms It is easy to show that there are only finitely many terms in the principle (or \emph{singular}) part of the Laurent series \eqref{eq: OPE(X,Y)} (in the case of ``quasi-polynomial" Fock space fields that we only consider).
Sometimes, we use the notation $\sim$ for the singular part of the operator product expansion,
$$X(\zeta)Y(z)\sim\sum_{n<0} {C_n(z)}{(\zeta-z)^n}.$$
We also write $\Sing_{\zeta\to z}\,X(\zeta)Y(z)$ for the right-hand side of the above equation.
For example, we have (by Wick's calculus)
\begin{equation} \label{eq: OPE(J,V)}
J(\zeta)e^{\odot\alpha\Phi(z)}\sim -\frac{\alpha}{\zeta-z} e^{\odot\alpha\Phi(z)}.
\end{equation}
It is clear that we can differentiate operator product expansions \eqref{eq: OPE(X,Y)} both in $\zeta$ and $z$; and the differentiation preserves singular parts.
For example, differentiating \eqref{eq: OPE(J,Phi)} we have
\begin{equation} \label{eq: sOPE(J,J)}
J(\zeta)J(z)\sim-\dfrac1{(\zeta-z)^2}.
\end{equation}

\ms Also, we should keep in mind that operator product expansion is the expansion of functionals defined modulo $\NN,$ the trivial functionals (see Section~\ref{sec: corr fcnl}), so we can disregard terms like $\bar\partial J$ or $\delta$-functions and their derivatives, e.g.,
\begin{equation} \label{eq: sOPE(J,barJ)}
J(\zeta)\bar J(z)\sim-\partial_{\bar z}\left(\frac1{\zeta-z}\right)\equiv 0.
\end{equation}
More generally, if both $X$ and $Y$ are holomorphic, then
$$X(\zeta)\overline{Y(z)}\sim 0.$$

\ms \section{OPE coefficients} \label{sec: OPE coeffs}

\ms\SS The functionals appearing in the operator product expansions (e.g., $2c(z) + \Phi^{\odot2}(z)$ in \eqref{eq: OPE(Phi,Phi)}, $C_n(z)$ in \eqref{eq: OPE(X,Y)}, or $C_{j,k}$ in \eqref{eq: OPE(V,V)} below) are called \emph{OPE coefficients}. \index{OPE!coefficients}

\begin{prop} \label{OPE coeffs}
OPE coefficients of quasi-polynomial Fock space fields are quasi-polynomial Fock space fields (as functions of $z$).
\end{prop}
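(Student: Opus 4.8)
The plan is to reduce the statement to a closed class of Fock space fields and show that class is preserved under the operation of extracting OPE coefficients. First I would observe that every quasi-polynomial Fock space field is, by definition, a finite linear combination (with smooth-function coefficients) of $\odot$-products of basic fields $\pa^{\alpha}\bp^{\beta}\Phi$ and Wick's exponentials $e^{\odot\lambda\Phi}$. So it suffices, by bilinearity of the tensor product $X(\zeta)Y(z)$ and by the fact that we may extract OPE coefficients term by term, to treat the case where $X$ and $Y$ are each such $\odot$-products (times smooth functions of $\zeta$ and $z$ respectively). The smooth prefactor $g(\zeta)$ in $X$ can be Taylor-expanded about $z$, i.e. $g(\zeta)=\sum_k \tfrac1{k!}g^{(k)}(z)(\zeta-z)^k$ (and similarly for anti-holomorphic dependence), so it only shifts OPE coefficients by smooth functions and reindexes the expansion; hence I may assume $X$ and $Y$ are pure $\odot$-products of basic fields and exponentials.

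Next I would apply Wick's formula \eqref{eq: Wick's4FCF} to expand $X(\zeta)Y(z)$ as a sum over Feynman diagrams: each term is a product of two-point functions $\E[X_v(\zeta)X_{v'}(z)]$ (one leg at $\zeta$, one at $z$) times a Wick's product of the surviving basic fields and exponentials, all evaluated at $\zeta$ or $z$. The key point is that each such two-point function is, up to terms smooth in $(\zeta,z)$, an explicit elementary function: for derivatives of $\Phi$ it is a derivative of $2G(\zeta,z)=\log|\zeta-z|^{-2}+2u(\zeta,z)$, so $\E[\pa^j\Phi(\zeta)\pa^k\Phi(z)]$ equals $-\,(j{+}k{-}1)!\,(-1)^k(\zeta-z)^{-(j+k)}$ plus a smooth function $2\pa_\zeta^j\pa_z^k u(\zeta,z)$; for exponentials one gets factors like $e^{2\lambda\mu G(\zeta,z)}=|\zeta-z|^{-2\lambda\mu}e^{2\lambda\mu u(\zeta,z)}$, and pairing a derivative against an exponential yields similar combinations. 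In every case the $\zeta$-dependence is a finite sum of terms of the form $(\zeta-z)^{n}\cdot(\text{smooth in }\zeta,z)$ together with at most one ``anomalous'' power $|\zeta-z|^{-2\lambda\mu}$ coming from exponential–exponential contractions.

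Then I would deal with the one genuinely non-elementary ingredient: the anomalous power $|\zeta-z|^{-2\lambda\mu}$ and the smooth remainder $u(\zeta,z)$ (and its derivatives). Taylor-expanding $u(\zeta,z)$ about $(z,z)$ in powers of $\zeta-z$ and $\overline{\zeta-z}$ gives coefficients that are smooth functions of $z$ — these are exactly the $c_n(z)$ and their anti-holomorphic analogues from \eqref{eq: OPE(J,Phi)}, hence smooth, hence admissible as coefficients. For the anomalous factor, since we are ultimately interested in the full asymptotic expansion as $\zeta\to z$ in the given (domain-independent) growth scale, the relevant scale must already include powers $|\zeta-z|^{-2\lambda\mu}$, and one simply reads off the coefficient of each such scale term; what multiplies it is again a product of two-point smooth remainders, an exponential $e^{\odot(\lambda+\mu)\Phi(z)}$, and surviving basic fields at $z$ — a quasi-polynomial field. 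Collecting over all diagrams and all scale exponents, each coefficient is a finite sum of (smooth function of $z$) times (a $\odot$-product of basic fields and exponentials at $z$), which is exactly a quasi-polynomial Fock space field. The main obstacle to be careful about is purely bookkeeping: verifying that only finitely many diagrams and finitely many scale exponents contribute to each coefficient (so the coefficients are genuinely finite combinations), and handling exponentials correctly — one must note that a single exponential $e^{\odot\lambda\Phi(\zeta)}$ contracts with everything at $z$ through $\lambda$ times the corresponding two-point functions, producing the shift $e^{\odot\lambda\Phi(\zeta)}\mapsto$ (scalar and smooth factors)$\,\odot\,e^{\odot\lambda\Phi(z)}$ as in \eqref{eq: rules4^}, so no new field types are created beyond exponentials and basic fields.
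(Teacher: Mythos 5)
Your proposal is correct and takes essentially the same route as the paper, whose entire proof is the remark ``use Wick's calculus and the definition of fields'': you expand $X(\zeta)Y(z)$ over Feynman diagrams, split each contraction into its explicit singular part plus the smooth remainder $u(\zeta,z)$, Taylor-expand the smooth data and the surviving fields about $z$, and read off that every coefficient is a smooth function times a Wick product of basic fields and exponentials, i.e.\ quasi-polynomial. Two harmless bookkeeping slips that do not affect the structure: the explicit constant should be $\E[\pa^j\Phi(\zeta)\pa^k\Phi(z)]=(-1)^j(j+k-1)!\,(\zeta-z)^{-(j+k)}+2\pa_\zeta^j\pa_z^k u(\zeta,z)$ for $j+k\ge1$, and the growth scale must also admit logarithmic terms $\log|\zeta-z|$ arising from contractions involving undifferentiated $\Phi$'s or exponentials against $\Phi$ (as in \eqref{eq: OPE(Phi,Phi)}), whose coefficients are again quasi-polynomial.
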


\ms The proof is straightforward -- use Wick's calculus and the definition of fields.
Proposition~\ref{OPE coeffs} allows us to define certain operations on Fock space fields.
In particular, if $X$ is \emph{holomorphic}, then we define the $*_n$ product
\begin{equation} \label{eq: OPE coeffs}
X *_{n} Y=C_n,
\end{equation}
see \eqref{eq: OPE(X,Y)} for $C_n.$

\ms\SS We will use the operations $*_{n}$ for all $n$'s, see Lecture \ref{ch: T} and Appendix~\ref{appx: Op algebra}, but in this section we focus on the special case $n=0.$

\ms \textbf{Notation.} We write $*$ for $*_{0}$ and call $X*Y$ the \emph{OPE multiplication}, or the \emph{OPE product} of $X$ and $Y.$ \index{OPE!multiplication (product)}

\ms For example, by \eqref{eq: sOPE(J,barJ)},
$$(J*\bar J)(z) = \lim_{\zeta\to z} [J(\zeta)\bar J(z)]$$
(so we can write $J*\bar J = J\bar J$) and
\begin{align} \label{eq: J*bar J}
(J\bar J)(z) &= (\bar J\odot J)(z) + \E[\bar J(z)J(z)] \\
& = (\bar J\odot J)(z) + \frac{w'(z)\overline{w'(z)}}{(w(z)-\overline{w(z)})^2} = (\bar J\odot J)(z) -\frac1{C(z)^2}, \nonumber
\end{align}
where $C(z)$ is the conformal radius, see \eqref{eq: log C}.
(More generally, if both $X$ and $Y$ are holomorphic, then we can write $X*\bar Y =X\bar Y.$)

\ms The OPE product $X*Y$ as the coefficient of 1 can be defined for some (but not all, see e.g., \eqref{eq: OPE(V,V)}) non-holomorphic fields $X,$ e.g.,
$$\Phi^{*2}=\Phi^{\odot 2}+2c,\qquad (\textrm{see}\; \eqref{eq: OPE(Phi,Phi)}).$$
The field $X*Y$ is obtained by subtracting all divergent terms in operator product expansion and taking the limit, which is a usual procedure in the field theory.

\ms\SS If $f$ is a non-random holomorphic function, then
$$f*X=X*f=f X.$$
However, simple examples show that $(f X)*Y\ne X*(fY)$ in general, so unlike Wick's multiplication, the OPE multiplication is neither associative nor commutative (on holomorphic fields).
On the other hand, $*_n$ satisfies Leibniz's rule
\begin{equation} \label{eq: OPE Leibnitz}
\partial (X*_n Y)=(\partial X*_n Y)+(X*_n \partial Y).
\end{equation}
If $X$ is holomorphic, then differentiation of operator product expansion \eqref{eq: OPE(X,Y)} with respect to $\zeta$ gives
$ (\partial X)*_n Y=(n+1)(X *_{n+1} Y)$ and therefore,
\begin{equation} \label{eq: X*nY}
X *_{n} Y= \frac{1}{n!}(\partial^n X)*Y,\qquad (n\ge1).
\end{equation}
Differentiation of operator product expansion \eqref{eq: OPE(X,Y)} with respect to $z$ then gives \eqref{eq: OPE Leibnitz}.

\ms \section{OPE powers and exponentials of Gaussian free field} \label{sec: OPE GFF}

We already computed
$\Phi^{*2}=\Phi^{\odot 2}+2c,$ where
$c$ is the logarithm of conformal radius $C.$
Further computation with Wick's formula gives
$$\Phi^{*3}:=\Phi*\Phi^{*2}\equiv \Phi^{*2}*\Phi=\Phi^{\odot 3}+6c\Phi,\qquad \textrm{etc.}$$
In fact, we have the following formula.

\begin{prop} We have 
\begin{equation} \label{eq: OPE^n GFF}
\Phi^{\odot n} = (2c)^{n/2}H_n^*\left(\frac{\Phi}{\sqrt{2c}}\right),
\end{equation}
where $H_n(z) = \sum_{k=0}^n a_k z^k$ are the Hermite polynomials (see \eqref{eq: Hermite}) and
$$H_n^*(\alpha\Phi) = \sum_{k=0}^n a_k \alpha^k\Phi^{*k}.$$
\end{prop}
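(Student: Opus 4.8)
The plan is to establish the identity \eqref{eq: OPE^n GFF} by relating both the OPE powers $\Phi^{*k}$ and Wick's powers $\Phi^{\odot n}$ to ordinary powers of the Gaussian free field, exactly as one does in the one-dimensional chaos picture. Recall from \eqref{eq: Wick-power} and \eqref{eq: Hermite} that a one-dimensional centered Gaussian $\xi$ with variance $\sigma^2$ satisfies $\xi^{\odot n} = \sigma^n H_n(\xi/\sigma)$, equivalently $\xi^n = \sigma^n \sum_k a_k^{(n)}\,\widetilde H_k(\xi/\sigma)\cdots$; the point is that the transition between the monomial basis $\{x^k\}$ and the Hermite basis $\{H_k\}$ is governed by the ``completing the square'' combinatorics. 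For Fock space fields the role of the ordinary power $\Phi(z)^n$ is played, inside correlations, by the limit $\lim_{\ve\to0}\Phi_\ve(z)^n$, and here the relevant variance is $\sigma_\ve^2=\var(\Phi_\ve(z)) = 2\log(1/\ve)+2c(z)$ by \eqref{eq: var Phi e}. So the first step is to compare the two regularizations: the OPE product strips off the $\log(1/|\zeta-z|^2)$ divergences as $\zeta\to z$, while the point-splitting of the approximating fields strips off the $2\log(1/\ve)$ divergences. Both procedures subtract a quantity whose finite remainder is $2c(z)$; this is precisely the content of \eqref{eq: OPE(Phi,Phi)} together with $\Phi_\ve^{\odot2}(z)=\Phi_\ve(z)^2-\sigma_\ve^2$.

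The cleanest route, which I would take, is to prove \emph{first} the auxiliary claim that OPE powers coincide with point-split limits:
\begin{equation*}
\Phi^{*n}(z) = \lim_{\ve\to0}\Big(\Phi_\ve(z)^n\ \text{minus its divergent part}\Big),
\end{equation*}
more precisely that $\Phi^{*n} = \lim_{\ve\to0}\big[(2c_\ve)^{n/2}\,H_n^*(\Phi_\ve/\sqrt{2c_\ve})\big]$ where $c_\ve := \tfrac12\sigma_\ve^2 = \log(1/\ve)+c$, interpreting the right side via the \emph{polynomial} identity defining $H_n^*$ with ordinary powers $\Phi_\ve^{*k}\rightsquigarrow\Phi_\ve^k$. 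The identity $H_n^*(\Phi_\ve/\sqrt{2c_\ve})(2c_\ve)^{n/2}$ then collapses, by the definition of $H_n$ as the monic orthogonal polynomials and the generating function \eqref{eq: generating fcn}, to exactly $\Phi_\ve^{\odot n}$ expressed through $\sigma_\ve = \sqrt{2c_\ve}$ — this is \eqref{eq: Wick-power} read in reverse. Taking $\ve\to0$ and using that $\Phi^{\odot n}=\lim_\ve \Phi_\ve^{\odot n}$ (established in Section~\ref{sec: distribution}) yields \eqref{eq: OPE^n GFF}.

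Concretely the ordered steps are: (1) write the Hermite inversion $x^n = \sum_{k}b_k^{(n)} H_k(x)$ dual to $H_n(x)=\sum_k a_k^{(n)} x^k$, so that $\sigma^n(x/\sigma)^n$-type rescalings convert monomials into Wick powers and back; (2) observe that for the approximating Gaussians this is a genuine polynomial identity in $\Phi_\ve(z)$, hence holds as an identity of random variables and, after integrating against test functions and taking expectations against any $\YY$, as an identity of correlation functionals; (3) identify $\lim_{\ve\to0}\Phi_\ve^{\odot k}(z)$ with $\Phi^{\odot k}(z)$ and, separately, identify the combination $(2c_\ve)^{k/2}(\Phi_\ve/\sqrt{2c_\ve})^{\odot 0\ldots}$ — i.e. the point-split product with $2\log(1/\ve)$-subtraction — with the OPE power $\Phi^{*k}(z)$; (4) substitute and read off \eqref{eq: OPE^n GFF}. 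An equivalent, more self-contained route avoids the approximation machinery entirely: prove by induction on $n$ that $\Phi*\Phi^{*(n-1)} = \Phi^{\odot n}+ \text{(lower Wick powers times powers of } 2c)$ using only Wick's formula \eqref{eq: Wick's4FCF} and \eqref{eq: c2}, matching the coefficients against the three-term recurrence $H_n(x)=xH_{n-1}(x)-(n-1)H_{n-2}(x)$ for Hermite polynomials; since $\Phi^{*2}=\Phi^{\odot2}+2c$ is the base case and the recurrence for $H_n^*$ (with $x^2\mapsto 2c$) is exactly the OPE recursion one gets from $\Phi(\zeta)\Phi^{*(n-1)}(z)$ by extracting the constant term, the induction closes.

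The main obstacle is the bookkeeping in step (3): one must be careful that the OPE regularization (subtracting $(\zeta-z)$-singularities as $\zeta\to z$) and the lattice-style regularization (subtracting $\ve$-divergences) really produce the \emph{same} finite field, not merely finite fields differing by a constant. This is true because both remainders are pinned down by the single normalization $\Phi^{*2}=\Phi^{\odot2}+2c$ forced by \eqref{eq: OPE(Phi,Phi)}; once the quadratic case agrees, the polynomial/recursive structure propagates the agreement to all $n$. I would therefore present the induction-based proof as the primary argument, since it keeps everything within Wick's calculus and the already-proved OPE \eqref{eq: OPE(Phi,Phi)}, and mention the approximation picture only as motivation for why the Hermite polynomials appear.
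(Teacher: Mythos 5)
Your primary (induction) argument is correct and is essentially the paper's proof: the paper also proceeds by induction, using Wick's calculus to get $\Phi*\Phi^{\odot n}=2cn\,\Phi^{\odot(n-1)}+\Phi^{\odot(n+1)}$ and matching this against the Hermite three-term recurrence $H_{n+1}(x)=xH_n(x)-nH_{n-1}(x)$ with base case $\Phi^{*2}=\Phi^{\odot2}+2c$. The point-splitting/regularization discussion you include is only motivation and is not needed, just as you say.
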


\begin{proof} From
$$\Phi(\zeta)\Phi^{\odot n}(z)=2n G(\zeta,z)\Phi^{\odot (n-1)}(z)+\Phi^{\odot (n+1)}(z)+o(1),$$
we find
$$\Phi*\Phi^{\odot n}=2cn\Phi^{\odot (n-1)}+\Phi^{\odot (n+1)}.$$
Assuming that \eqref{eq: OPE^n GFF} holds for $\Phi^{\odot n}$ and $\Phi^{\odot (n-1)},$ and using recurrence relation
$$H_{n+1}(x)=xH_{n}(x)-nH_{n-1}(x),$$
we prove \eqref{eq: OPE^n GFF} for $\Phi^{\odot (n+1)}.$
\end{proof}

\ms By definition,
$$e^{*\alpha\Phi} = \sum_{n=0}^\infty \alpha^n \frac{\Phi^{*n}}{n!}.$$

\begin{prop} We have 
\begin{equation} \label{eq: exp(*Phi)}
e^{*\alpha\Phi}~=~C^{\alpha^2}~e^{\odot\alpha\Phi}.
\end{equation}
\end{prop}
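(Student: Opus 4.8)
The plan is to regard both sides of the asserted identity as power series in $\alpha$ whose coefficients are Fock space fields in $z$, and to show that each of the two series solves the same first-order recursion in $\alpha$, with the same value at $\alpha=0$; uniqueness of such a solution then finishes the proof.

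First I would recall the one OPE identity that does all the work, namely
$$\Phi*\Phi^{\odot k}=2ck\,\Phi^{\odot(k-1)}+\Phi^{\odot(k+1)},$$
which was established in the proof of \eqref{eq: OPE^n GFF} (it comes from Wick's formula $\Phi(\zeta)\Phi^{\odot k}(z)=2kG(\zeta,z)\Phi^{\odot(k-1)}(z)+\Phi^{\odot(k+1)}(z)+o(1)$ together with $2G(\zeta,z)=\log\frac1{|\zeta-z|^{2}}+2c(z)+o(1)$, after discarding the divergent $\log$ term). Multiplying by $\alpha^k/k!$, summing over $k$, and reindexing gives $\Phi*e^{\odot\alpha\Phi}=\Phi\odot e^{\odot\alpha\Phi}+2\alpha c\,e^{\odot\alpha\Phi}$. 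Since $C^{\alpha^2}$ is a non-random function of $z$, the $*$-operation simply multiplies the relevant OPE coefficient pointwise by $C^{\alpha^2}$, so $\Phi*(C^{\alpha^2}e^{\odot\alpha\Phi})=C^{\alpha^2}\Phi\odot e^{\odot\alpha\Phi}+2\alpha c\,C^{\alpha^2}e^{\odot\alpha\Phi}$. On the other hand, differentiating the product $C^{\alpha^2}e^{\odot\alpha\Phi}$ in the parameter $\alpha$, using $c=\log C$ and $\partial_\alpha e^{\odot\alpha\Phi}=\Phi\odot e^{\odot\alpha\Phi}$, gives exactly the same expression. Hence $Y:=C^{\alpha^2}e^{\odot\alpha\Phi}$ satisfies $\partial_\alpha Y=\Phi*Y$ with $Y|_{\alpha=0}=1$.

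Next I would check the left-hand side satisfies the same recursion. Taking $\Phi^{*(n+1)}:=\Phi*\Phi^{*n}$ as the definition (the ``etc.'' in the display defining $\Phi^{*3}$) and using that $\Phi*(\cdot)$ is linear, term-by-term differentiation of $e^{*\alpha\Phi}=\sum_n\alpha^n\Phi^{*n}/n!$ gives $\partial_\alpha e^{*\alpha\Phi}=\sum_{m\ge0}\frac{\alpha^m}{m!}\,\Phi*\Phi^{*m}=\Phi*e^{*\alpha\Phi}$, again with value $1$ at $\alpha=0$. Finally I would invoke uniqueness: if a power series $Y=\sum_n y_n\alpha^n$ with Fock-field coefficients has $y_0=0$ and $\partial_\alpha Y=\Phi*Y$, then $(n+1)y_{n+1}=\Phi*y_n$ forces $y_n\approx0$ for every $n$; applying this to the difference of the two sides yields the identity (in the sense of functionals modulo $\NN$, which is the meaning of the statement).

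The main obstacle is not conceptual but a matter of keeping the bookkeeping honest: one must be sure that $\Phi*(\cdot)$ is genuinely well-defined and linear on the (non-holomorphic) arguments occurring here — it is, as the coefficient of $(\zeta-z)^0$ once the divergent $\log|\zeta-z|$ contributions are subtracted — and that the termwise manipulations of the $\alpha$-series are legitimate in the weak topology on correlation functionals. If one prefers to avoid the differential-equation language, the same recursion $\Phi*\Phi^{\odot k}=2ck\,\Phi^{\odot(k-1)}+\Phi^{\odot(k+1)}$ gives, by an induction whose inductive step is a Pascal-type identity, the explicit inversion of \eqref{eq: OPE^n GFF},
$$\Phi^{*n}=\sum_{2j+k=n}\frac{n!}{j!\,k!}\,c^{\,j}\,\Phi^{\odot k},$$
after which summing the resulting double series gives $e^{*\alpha\Phi}=e^{\alpha^2 c}\,e^{\odot\alpha\Phi}=C^{\alpha^2}e^{\odot\alpha\Phi}$.
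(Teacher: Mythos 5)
Your argument is correct, but it takes a genuinely different route from the paper's. The paper's proof simply takes the already established identity \eqref{eq: OPE^n GFF}, $\Phi^{\odot n}=(2c)^{n/2}H_n^*\big(\Phi/\sqrt{2c}\big)$, multiplies by $\alpha^n/n!$, sums, and recognizes the Hermite generating function \eqref{eq: generating fcn}, which yields $e^{\odot\alpha\Phi}=e^{*\alpha\Phi}e^{-c\alpha^2}$ in two lines. You instead return to the single OPE recursion $\Phi*\Phi^{\odot k}=2ck\,\Phi^{\odot(k-1)}+\Phi^{\odot(k+1)}$ from the proof of \eqref{eq: OPE^n GFF} and show that both sides of \eqref{eq: exp(*Phi)}, viewed as formal power series in $\alpha$ with field coefficients, solve $\partial_\alpha Y=\Phi*Y$ with $Y|_{\alpha=0}=1$, concluding by the coefficientwise recursion $(n+1)y_{n+1}=\Phi*y_n$; the auxiliary facts you invoke --- $\Phi*(fY)=f\,(\Phi*Y)$ for non-random $f$ (the factor $f(z)$ is constant in $\zeta$ in the expansion), termwise differentiation in $\alpha$, and $\Phi*y\approx 0$ whenever $y\approx0$ --- are all legitimate, since these series identities mean coefficientwise identities modulo $\NN$. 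Your closing alternative, the explicit inversion $\Phi^{*n}=\sum_{2j+k=n}\frac{n!}{j!\,k!}\,c^{j}\,\Phi^{\odot k}$ followed by summation, is essentially the paper's computation run backwards (it is the monomial-to-Hermite expansion inverse to \eqref{eq: OPE^n GFF}), and your Pascal-type inductive step does check out: collecting the coefficient of $c^i\Phi^{\odot m}$ with $2i+m=n+1$ gives $\frac{n!}{i!\,m!}(m+2i)=\frac{(n+1)!}{i!\,m!}$. What your main route buys is independence from the statement of \eqref{eq: OPE^n GFF} and from any Hermite identities --- only the one recursion is needed; what the paper's route buys is brevity, since \eqref{eq: OPE^n GFF} is already on record.
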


\begin{proof}
Using the generating function \eqref{eq: generating fcn} for the Hermite polynomials, we get
$$e^{\odot\alpha\Phi} = \sum_{n=0}^\infty \frac{\alpha^n}{n!} \Phi^{\odot n} = \sum_{n=0}^\infty \frac{\alpha^n(2c)^{n/2}}{n!} H_n^*\left(\frac{\Phi}{\sqrt{2c}}\right)=e^{*\alpha\Phi}e^{-c\alpha^2}.$$
\end{proof}

\begin{rmk*}
If we define random functions $\Phi_\ve$ as in Section~\ref{sec: approx}, then we get the formula
$$e^{*\alpha\Phi} =\lim_{\ve\to0}\ve^{\alpha^2}e^{\alpha\Phi_\ve},$$
(convergence of correlation functionals but also convergence in the strong operator topology if $|\alpha|<1.$)
It is remarkable that two different types of normalizations, by averaging and by operator product expansion, produce the same result.
\end{rmk*}

As we mentioned earlier, the OPE multiplication (as the coefficient of $1$ in the operator product expansion) does not make sense for general non-holomorphic fields, but we can of course consider the corresponding OPE coefficients.

\ms \begin{eg*}
Let us denote $\VV^\alpha=e^{*\alpha \Phi}$ (``vertex fields", see Section~\ref{sec: vertex field}).
The operator product expansion of two such fields has the following form:
\begin{equation} \label{eq: OPE(V,V)}
\VV^\alpha(\zeta)\VV^\beta(z) = \frac{1}{|\zeta-z|^{2\alpha\beta}}\big(\sum_{j,k=0}^\infty C_{jk}(z)(\zeta-z)^j(\bar\zeta-\bar z)^k\big).
\end{equation}
The first coefficients are
$$C_{0,0} = \VV^{\alpha+\beta}$$
and
$$
C_{1,0} = \alpha\VV^{\alpha+\beta}\odot(J+(\alpha+\beta)\pa c),\qquad
C_{0,1} = \alpha\VV^{\alpha+\beta}\odot(\bar J+(\alpha+\beta)\bp c).
$$
To see this, first note that
$$\VV^{\alpha}(\zeta)\VV^{\beta}(z) = C(\zeta)^{\alpha^2}C(z)^{\beta^2}\exp(\alpha\beta\E[\Phi(\zeta)\Phi(z)]) e^{\odot\alpha\Phi(\zeta)}\odot e^{\odot\beta\Phi(z)}.$$
We expand both
\begin{align*}
C&(\zeta)^{\alpha^2}C(z)^{\beta^2}\exp(\alpha\beta\E[\Phi(\zeta)\Phi(z)])
= \frac1{|\zeta-z|^{2\alpha\beta}}e^{\alpha^2 u(\zeta,\zeta) + 2\alpha\beta u(\zeta,z) + \beta^2 u(z,z)}\\
&= \frac{C(z)^{(\alpha+\beta)^2}}{|\zeta-z|^{2\alpha\beta}}\big(1+(\alpha^2+\alpha\beta) \pa c(z)(\zeta-z) + (\alpha^2+\alpha\beta) \bp c(z)(\bar \zeta-\bar z) + \cdots \big)
\end{align*}
and
$e^{\odot\alpha\Phi(\zeta)} = e^{\odot\alpha\Phi(z)}\odot\big(1+\alpha J(z)(\zeta-z) + \alpha \bar J(z)(\bar \zeta-\bar z) + \cdots \big).$
\end{eg*}

\bs \section{\texorpdfstring{The field~$T= -\frac12\,J*J$}{The field T = -1/2 J*J}} \label{sec: T0}

\ms We use the OPE multiplication to introduce this important field
(in Lecture~\ref{ch: T} we will identify it with the \emph{Virasoro field} of the Gaussian free field, $\Phi$).
By \eqref{eq: sOPE(J,J)}, $T$ is defined by the operator product expansion
\begin{equation} \label{eq: OPE(J,J)}
J(\zeta)J(z)=-\frac{1}{(\zeta-z)^2}-2T(z)+o(1), \qquad(\zeta\to z).
\end{equation}

\ms To express $T$ in terms of Wick's calculus, we need the \emph{Schwarzian} of $D$: \index{Schwarzian}
\begin{equation} \label{eq: S of D}
S(z) = S(z,z), \qquad S(\zeta,z) := -12\partial_\zeta\partial_z u(\zeta,z),
\end{equation}
where $u(\zeta,z) = G(\zeta,z) + \log|\zeta-z|,$ as usual.

\begin{prop} \label{T in S(1/12)}
We have 
\begin{equation} \label{eq: T in S(1/12)}
T=-\frac12~J\odot
J+\frac1{12}S.
\end{equation}\end{prop}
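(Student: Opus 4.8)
The plan is to obtain the operator product expansion of $J(\zeta)J(z)$ directly from Wick's formula and then compare its constant term with the one in the defining expansion \eqref{eq: OPE(J,J)}. As in the derivation of \eqref{eq: OPE(J,Phi)}, I would start from
$$J(\zeta)J(z) = \E[J(\zeta)J(z)] + J(\zeta)\odot J(z).$$
The second term is a correlation functional which converges — within correlations against any $\YY$ with $z\notin S_\YY$ — to $(J\odot J)(z)$ as $\zeta\to z$; this is the analogue for $J=\pa\Phi$ of the convergence $\Phi(\zeta)\odot\Phi(z)\to\Phi^{\odot2}(z)$ recorded in Section~\ref{sec: corr fcnl}, and it follows from Wick's formula together with the smoothness of $G(\zeta,z)+\log|\zeta-z|$.

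Next I would expand the $2$-point function. Writing $G(\zeta,z) = -\log|\zeta-z| + u(\zeta,z)$ with $u(\zeta,z) = G(\zeta,z)+\log|\zeta-z|$ as in \eqref{eq: S of D} and differentiating,
$$\E[J(\zeta)J(z)] = 2\,\pa_\zeta\pa_z G(\zeta,z) = -2\,\pa_\zeta\pa_z\log|\zeta-z| + 2\,\pa_\zeta\pa_z u(\zeta,z) = -\frac1{(\zeta-z)^2} - \frac1{6}\,S(\zeta,z),$$
using $\pa_\zeta\pa_z\log|\zeta-z| = \tfrac12(\zeta-z)^{-2}$ and the definition \eqref{eq: S of D} of $S(\zeta,z)$. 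Since $u$ is smooth on $D\times D$ (harmonic, hence real-analytic, in each variable, including on the diagonal), $S(\zeta,z)$ is continuous and $S(\zeta,z) = S(z) + o(1)$ as $\zeta\to z$. Combining the two steps,
$$J(\zeta)J(z) = -\frac1{(\zeta-z)^2} - \frac1{6}\,S(z) + (J\odot J)(z) + o(1), \qquad \zeta\to z.$$

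Finally, matching this with \eqref{eq: OPE(J,J)}, namely $J(\zeta)J(z) = -(\zeta-z)^{-2} - 2T(z) + o(1)$, and using uniqueness of the coefficients in such an expansion, I would read off $-2T = -\tfrac16 S + J\odot J$, i.e. $T = -\tfrac12\,J\odot J + \tfrac1{12}\,S$, which is \eqref{eq: T in S(1/12)}. There is no serious obstacle here: the argument is a few lines of Wick calculus and elementary complex differentiation. The only points that need a little care are the sign bookkeeping in $2\pa_\zeta\pa_z G$ (the $-\log|\zeta-z|$ part producing exactly the $-(\zeta-z)^{-2}$ pole and the $u$ part producing $-\tfrac16 S$), and the interpretation of the two $o(1)$ symbols as statements about convergence of correlation functionals — that $J(\zeta)\odot J(z)$ converges weakly to $(J\odot J)(z)$ and that $S(\zeta,z)\to S(z)$ — both of which are immediate from the smoothness of $G(\zeta,z)+\log|\zeta-z|$.
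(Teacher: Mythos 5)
Your proof is correct and takes essentially the same route as the paper: the paper's entire proof is the single identity $\E[J(\zeta)J(z)]=2\pa_\zeta\pa_z G=-\frac1{(\zeta-z)^2}-\frac16 S(\zeta,z)$, with the Wick decomposition $J(\zeta)J(z)=\E[J(\zeta)J(z)]+J(\zeta)\odot J(z)$, the convergence $J(\zeta)\odot J(z)\to (J\odot J)(z)$ and $S(\zeta,z)\to S(z)$, and the matching against \eqref{eq: OPE(J,J)} left implicit exactly as you spell them out. Your sign bookkeeping ($\pa_\zeta\pa_z\log|\zeta-z|=\tfrac12(\zeta-z)^{-2}$ and $2\pa_\zeta\pa_z u=-\tfrac16 S$ by \eqref{eq: S of D}) is right.
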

\begin{proof}
Differentiating $\E[J(\zeta)\Phi(z)]=2\partial_\zeta G(\zeta,z),$ we have
\begin{equation} \label{eq: EJJ} \E[J(\zeta)J(z)]=2\partial_\zeta\partial_z G= -\frac1{(\zeta-z)^2}-\frac16S(\zeta,z).
\end{equation}
\end{proof}

We finish this lecture with several singular parts of operator product expansions involving $T,$ which we will need later.
The operator product expansions can be verified by Wick's calculus.
(Later we will explain them from a different perspective -- in terms of conformal geometry.)

\bs \begin{prop} \quad \label{OPE4T}
We have 
\renewcommand{\theenumi}{\alph{enumi}}
{\setlength{\leftmargini}{2.0em}
\begin{enumerate}
\ms \item \label{item: OPE(T,Phi)}
$T(\zeta)\Phi(z)\sim \dfrac{J(z)}{\zeta-z},$
\ms \item \label{item: OPE(T,J)}
$T(\zeta)J(z)\sim \dfrac{J(z)}{(\zeta-z)^2}+ \dfrac{\partial J(z)}{\zeta-z},$
\ms \item \label{item: OPE(T,T)}
$T(\zeta)T(z)\sim \dfrac{1/2}{(\zeta-z)^4}+\dfrac{2T(z)}{(\zeta-z)^2}+ \dfrac{\partial T(z)}{\zeta-z},$
\ms \item $T(\zeta)\VV^\alpha(z) \sim -\dfrac{\alpha^2}2~\dfrac {\VV^\alpha(z)}{(\zeta-z)^2}+\dfrac{\partial \VV^\alpha(z)}{\zeta-z}, \qquad(\VV^\alpha=e^{*\alpha \Phi}).$
\end{enumerate}
}
\end{prop}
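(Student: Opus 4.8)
The plan is to obtain all four singular parts directly by Wick's calculus, starting from the representation $T=-\frac12\,J\odot J+\frac1{12}S$ of Proposition~\ref{T in S(1/12)}. The first point is that $S(\zeta)=S(\zeta,\zeta)$ is a non-random \emph{smooth} function of $\zeta$ (the function $u(\zeta,z)=G(\zeta,z)+\log|\zeta-z|$ extends smoothly across the diagonal, see \eqref{eq: S of D}), so in $T(\zeta)X(z)$ the term $\frac1{12}S(\zeta)X(z)$ contributes no negative powers of $(\zeta-z)$; one is thus reduced to computing $\Sing_{\zeta\to z}\bigl(-\frac12(J\odot J)(\zeta)X(z)\bigr)$. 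Since the two copies of $J$ sitting at $\zeta$ cannot be contracted with one another, Wick's formula \eqref{eq: Wick's4FCF} expresses this as a sum over the diagrams in which $0$, $1$, or $2$ of them are contracted with the nodes of $X(z)$; one then uses $\E[J(\zeta)\Phi(z)]=-\tfrac1{\zeta-z}+\pa c(z)+O(\zeta-z)$ (see \eqref{eq: OPE(J,Phi)}) and $\E[J(\zeta)J(z)]=-\tfrac1{(\zeta-z)^2}-\tfrac16 S(\zeta,z)$ (see \eqref{eq: EJJ}), Taylor-expands the remaining holomorphic factors about $\zeta=z$, and reads off the principal part.

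For (a), only the one-contraction diagram is singular: $-\frac12\cdot 2\,\E[J(\zeta)\Phi(z)]\,J(\zeta)\sim \frac{J(z)}{\zeta-z}$. Part (b) follows either by the analogous count (contract a $J(\zeta)$ with $J(z)$ and expand $J(\zeta)=J(z)+\pa J(z)(\zeta-z)+\cdots$), or more quickly by differentiating (a) in $z$, since differentiation of an operator product expansion preserves singular parts: $T(\zeta)J(z)=\pa_z\bigl(T(\zeta)\Phi(z)\bigr)\sim \pa_z\frac{J(z)}{\zeta-z}=\frac{J(z)}{(\zeta-z)^2}+\frac{\pa J(z)}{\zeta-z}$. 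For (d) I would write $\VV^\alpha=C^{\alpha^2}e^{\odot\alpha\Phi}$ (see \eqref{eq: exp(*Phi)}), pull the smooth scalar $C(z)^{\alpha^2}$ out, and use the ``completing the square'' form of Wick's formula, $(J\odot J)(\zeta)\,e^{\odot\alpha\Phi(z)}=\bigl(J(\zeta)+\alpha\,\E[J(\zeta)\Phi(z)]\bigr)^{\odot2}\odot e^{\odot\alpha\Phi(z)}$; expanding and keeping singular terms gives $-\tfrac{\alpha^2}{2}\tfrac{\VV^\alpha(z)}{(\zeta-z)^2}+\tfrac{(\alpha J+\alpha^2\pa c)\odot\VV^\alpha(z)}{\zeta-z}$, and the last factor is $\pa\VV^\alpha(z)$ by $\pa\VV^\alpha=(\alpha J+\alpha^2\pa c)\odot\VV^\alpha$ (itself a consequence of \eqref{eq: exp(*Phi)} and $\pa e^{\odot\alpha\Phi}=\alpha J\odot e^{\odot\alpha\Phi}$).

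The real work is in (c), and this is where I expect the only genuine subtlety. Modulo terms that are smooth in $\zeta$ (all the cross terms consisting of a Schwarzian factor at one point times $J\odot J$ at the other are regular), $T(\zeta)T(z)=\frac14(J\odot J)(\zeta)(J\odot J)(z)$. Wick's formula splits the latter into the uncontracted diagram ($\to J^{\odot4}$, regular), the four one-contraction diagrams contributing $4\,\E[J(\zeta)J(z)]\,\bigl(J(\zeta)\odot J(z)\bigr)$, and the two double-contraction diagrams contributing the scalar $2\,\E[J(\zeta)J(z)]^2$; the central term $\tfrac12(\zeta-z)^{-4}$ is born from squaring $-\tfrac1{(\zeta-z)^2}-\tfrac16 S(\zeta,z)$, which also produces a singular cross term $\tfrac{S(\zeta,z)}{3(\zeta-z)^2}$. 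To finish one needs $J(\zeta)\odot J(z)=(J\odot J)(z)+(\pa J\odot J)(z)(\zeta-z)+\cdots$ together with the expansion $S(\zeta,z)=S(z)+\tfrac12\pa S(z)(\zeta-z)+O((\zeta-z)^2)$, where the coefficient $\tfrac12\pa S(z)$ comes from the symmetry $S(\zeta,z)=S(z,\zeta)$, i.e. $(\pa_1 S)(z,z)=\tfrac12\pa S(z)$. Collecting all contributions and multiplying by $\tfrac14$, the $(\zeta-z)^{-4}$ coefficient is $\tfrac12$; the $(\zeta-z)^{-2}$ coefficient is $-(J\odot J)(z)+\tfrac16 S(z)=2T(z)$; and the $(\zeta-z)^{-1}$ coefficient is $-(\pa J\odot J)(z)+\tfrac1{12}\pa S(z)=\pa T(z)$. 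The delicate point is precisely this bookkeeping of the Schwarzian pieces — verifying that the regular part of $\E[J(\zeta)J(z)]$ feeds exactly the right amounts into the $2T(z)$ and $\pa T(z)$ coefficients (and nothing extra elsewhere); everything else is routine Wick's calculus.
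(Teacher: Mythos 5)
Your proposal is correct and follows essentially the paper's own route: Wick's calculus applied to $T=-\frac12 J\odot J+\frac1{12}S$, with the only delicate case being (c), where your decomposition into one- and two-contraction diagrams, the use of $\E[J(\zeta)J(z)]=-\frac1{(\zeta-z)^2}-\frac16 S(\zeta,z)$ from \eqref{eq: EJJ}, and the symmetry of $S(\zeta,z)$ giving $\pa_\zeta S(\zeta,z)\big|_{\zeta=z}=\frac12\pa S(z)$ reproduce the paper's computation step for step (the paper only details (c), leaving (a), (b), (d) to the same routine Wick's calculus you spell out). The only blemish is a bookkeeping slip in the prose of (c) — the cross term is $\frac{S(\zeta,z)}{3(\zeta-z)^2}$ for $\E[J(\zeta)J(z)]^2$ itself but becomes $\frac{S(\zeta,z)}{6(\zeta-z)^2}$ once the overall prefactor is applied — which does not affect your final, correct coefficients $\frac16 S(z)$ and $\frac1{12}\pa S(z)$.
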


\begin{proof}
We only explain \eqref{item: OPE(T,T)}.
Use \eqref{eq: T in S(1/12)}, Wick's theorem, and \eqref{eq: OPE(J,J)} to express the singular term of $T(\zeta)T(z)$ as
\begin{gather*}
\frac14\left(J(\zeta)\odot J(\zeta) \right) \left(J(z)\odot J(z)\right) \sim \frac12\left(\frac16S(\zeta,z)+\frac1{(\zeta-z)^2}\right)^2-\frac{J(\zeta)\odot J(z)}{(\zeta-z)^2} \\
\sim \frac{1/2}{(\zeta-z)^4}-\frac{J(\zeta) \odot J(z)-S(\zeta,z)/6}{(\zeta-z)^2}.
\end{gather*}
The numerator $J(\zeta) \odot J(z)-S(\zeta,z)/6$ of the last term in the above is equal (up to the second order terms) to
\begin{align*}
J(z) \odot J(z) &-\frac16 S(z,z)+(\zeta-z)\left(\partial J(z) \odot J(z) -\frac16\partial_\zeta\big|_{\zeta=z} S(\zeta,z)\right) \\
=&-2T(z)+(\zeta-z)\left(\partial J(z) \odot J(z) -\frac1{12}\partial_z S(z)\right) \\
=&-2T(z)-(\zeta-z)\partial T(z),
\end{align*} which completes the proof.
\end{proof}

\renewcommand\chaptername{Lecture}
\chapter{Conformal geometry of Fock space fields} \label{ch: conf geom}

In the first lecture we defined Fock space fields in a planar domain.
We will now revise this definition and equip fields with certain geometric (or conformal) structures.
We call them \emph{conformal} Fock space fields. \index{Fock space field!conformal}
After we explain the definition, we will typically drop the epithet ``conformal." \index{conformal Fock space field}

\ms Even if we only consider functionals and fields in the half-plane, it is necessary to think of them as defined on a Riemann surface -- their correlations depend on the choice of \emph{local} coordinates at the nodes.
For example, the fields $J\odot J$ and $J*J,$ as we defined them in Lecture \ref{ch: OPE}, have the same correlation functions in the half-plane but as conformal fields they are different -- the first one is a quadratic differential and the second one is a Schwarzian form.

\ms At the end of this lecture we discuss the concept of the Lie derivative of a conformal field. This concept will be used in the next lecture to define the stress tensor and to state Ward's identities.

\ms \section{Non-random conformal fields} \label{sec: non-random}
Recall that a local coordinate chart in a domain $D$ (or more generally on a Riemann surface $M$) is a conformal map
$$\phi:U\to\phi(U) \subset \C,\qquad (U\subset D \;\rm open).$$
The transition map between two overlapping charts $\phi$ and $\widetilde\phi$ is a conformal transformation
$$h=\widetilde\phi\circ\phi^{-1}:~ \phi(U\cap\widetilde U)\to \widetilde\phi(U\cap\widetilde U). $$

\ms By definition, a non-random \emph{conformal} field $f$ is an assignment of a (smooth) function
$$ (f\,\|\,\phi): ~\phi U\to\C$$
to each local chart $\phi:U\to\phi U.$
(We assume that this assignment respects restrictions to subcharts.)
When local coordinates are specified explicitly, we often write $f(z)$ for $(f\,\|\,\phi)(z).$

\ms The transformation law from one coordinate chart to another can be quite complicated for the fields that we will consider.
Several simpler cases have special names.

\ms A field $f$ is a \emph{differential} \index{differential} of degrees \index{degree} (or conformal dimensions) \index{conformal dimension} $(\lambda,\lambda_*)$ if for any two overlapping charts $\phi$ and $\widetilde\phi,$ we have
$$f = (h')^\lambda(\overline{h'})^{\lambda_*}\widetilde{f}\circ h,$$
where $f$ is the notation for $(f\,\|\,\phi),$ $\widetilde f$ for $(f\,\|\,\widetilde\phi),$ and $h$ is the transition map.
In particular, (0,0)-differentials are called \emph{scalars}. \index{scalar field}

\ms \emph{Schwarzian forms}, \index{form!Schwarzian} \emph{pre-Schwarzian forms}, \index{form!pre-Schwarzian} and \emph{pre-pre-Schwarzian forms} \index{form!pre-pre-Schwarzian} are fields with transformation laws \index{Schwarzian!form} \index{pre-Schwarzian!form}
$$f=(h')^2\widetilde{f}\circ h + \mu S_h,\qquad f=h'\widetilde{f}\circ h +\mu N_h, \qquad f=\widetilde{f}\circ h +\mu \log h',$$
respectively, where $\mu\in\C$ is called the order of the form, and
$$N_h =(\log h')',\qquad S_h = N_h' -{N_h^2}/2$$
are pre-Schwarzian and Schwarzian derivatives of $h.$ \index{pre-Schwarzian!derivative} \index{Schwarzian!derivative}
(In all cases we consider, forms are holomorphic.)

\begin{egs*}
\renewcommand{\theenumi}{\alph{enumi}}
{\setlength{\leftmargini}{2.0em}
\begin{enumerate}
\ms \item
Smooth $(-1,0)$-differentials $v$ can be identified with vector fields.
The local flow $z(t)$ of $v$ in a chart $\phi$ is given by the ordinary differential equation $\dot z=v(z).$
If we have another chart $\widetilde\phi,$ then the expression for the flow in this chart is $\ti z(t)=h(z(t)),$ and the vector field is
$\ti v(\ti z)=\dot{\ti z}=h'(z)v(z),$
so the transformation law is
\begin{equation} \label{eq: vector field}
v=\frac1{h'}\ti v\circ h.
\end{equation}
\item
If $f$ is a holomorphic scalar function on $M,$ then the derivatives $N_f$ and $S_f$ are computed in local coordinates,
e.g.,
$$(S_f\,\|\,\phi):=S_{(f\,\|\,\phi)}$$
are forms of order 1.
\ms \item The field $c,$ the logarithm of conformal radius is defined by the equation
\begin{equation} \label{eq: c3}
(c\,\|\,\phi)(z)= \lim_{\zeta\to z} \left[G(\zeta,z)+\log| \zeta-z|\right],
\end{equation}
where $G$ is the Green's function and $G(\zeta, z)$ of course means $G(\phi^{-1}\zeta,\phi^{-1}z)$ according to our convention.
It is easy to see that the transformation law is
\begin{equation*}
c=\tilde c\circ h-\log|h'|,
\end{equation*}
so $c$ is the real part of a pre-pre-Schwarzian form.
In particular,
$$(c\,\|\,\id_D)(z) = \log\left|\frac{w(z)-\overline{w(z)}}{w'(z)}\right|,$$
where $\id_D$ is the identity chart of $D$ and $w$ is a conformal map from $D$ onto the upper half-plane $\H,$ cf. \eqref{eq: log C}.
\ms \item
As a general rule, we define derivatives of fields by differentiating in local coordinates.
Thus the field $\pa c,$
$$(\pa c\,\|\,\phi):=\pa( c\,\|\,\phi)$$
is a pre-Schwarzian form of order $-\frac12.$
\ms \item
The conformal radius $C=e^c$ is a $\left(-\frac12, -\frac12\right)$-differential.
Indeed,
$$C = e^c = e^{\tilde c\circ h-\log|h'|} = (h'\overline{h'})^{-1/2}\widetilde C\circ h.$$
By Koebe, $(C\,\|\,\id_D)(z) \asymp \operatorname{dist}(z,\pa D).$
\item
Non-random fields of \emph{several variables} are defined similarly but one should keep in mind that we need to specify local coordinates for each variable (unless some of them coincide).
For example, the field $\pa_1\pa_2 G$ defined as
\begin{equation} \label{eq: d1d2G}
\pa_\zeta\pa_z G(\zeta,z) \qquad ({\rm in ~charts}~\phi, \psi)
\end{equation}
is a $(1,0)$-differential in both variables.
\end{enumerate}}
\end{egs*}

\ms \section{Conformal Fock space fields}\label{sec: conf F-field}

\ms\SS Let $\Phi$ denote the Gaussian free field on $M.$
(As a correlation functional, the Gaussian free field on a Riemann surface is well-defined as long as the Green's function exists.)
As in Section~\ref{sec: F-field}, we define basic Fock space fields $X_\alpha$ as formal Wick's products of the derivatives of $\Phi.$
A general conformal Fock space field is a linear combination of basic fields $X_\alpha,$
$$X=\sum_\alpha f_\alpha X_\alpha,$$
where the coefficients $f_\alpha $ are non-random conformal fields.

\ms We can define chaos decomposition, Wick's multiplication, and the differential operators $\pa,\bp$ in an obvious fashion so that the space of conformal Fock space fields will have the structure of a graded commutative differential algebra (with complex conjugation) over the ring of non-random fields.

\ms\SS We now want to interpret the values of conformal fields as chart dependent correlation functionals.
In particular we will explain the meaning of formulas like
$J = h'[\ti J\circ h],$
where $J, \ti J$ are expressions of $\pa \Phi$ in $2$ overlapping charts.

\ms The correlations
$$\E[X_1(p_1)\cdots X_n(p_n)],\qquad (p_j\in M~ {\rm are~ distinct}),$$
of conformal Fock space fields are non-random fields of several variables:
we define them by means of Wick's formula and differentiation rules like
$$\E[(\pa^\alpha\Phi)(z_1)(\pa^\beta\Phi)(z_2)]= 2\pa_1^\alpha\pa_2^\beta G(z_1,z_2).$$
In other words, we think of the derivatives of $\Phi$ as Gaussians, and we differentiate and Wick-multiply in local coordinates.

\ms As in Section~\ref{sec: F-field}, we think of ``strings"
$$\XX:=X_1(z_1)\cdots X_n(z_n)\equiv (X_1\,\|\,\phi_1)(z_1)\cdots(X_n\,\|\,\phi_n)(z_n) $$
as Fock space correlation functionals.
Note that $\XX$ specifies the choice of local charts.
Any such $\XX$ determines a linear map
$$ \XX: ~ \YY~\mapsto~ \E[\XX \YY]$$
on the space of $\YY$'s with nodes in $M\sm S_\XX.$
(The functionals $\YY$ also come with chart specifications and we define $S_\XX$ as a subset of $M.$)

\ms In particular, the value $X(p)$ of a conformal field $X$ at some point $p\in M$ is a coordinate dependent functional
$$(X(p)\,\|\,\phi)\equiv (X\,\|\,\phi)(\phi(p)).$$
Many formulas of conformal field theory (convergence, operator product expansions, transformation laws, etc.) are based on this interpretation.

\ms For example, we say that a field $X$ is a differential if its transformation law is
\begin{equation} \label{eq: diff}
X=(\widetilde X\circ h)(h')^\lambda(\overline{h'})^{\lambda_*}.
\end{equation}
Here
$$ X(\cdot):=(X\,\|\,\phi)(\cdot),\qquad \widetilde X(\cdot):=(X\,\|\,\widetilde \phi)(\cdot),$$
and the equation \eqref{eq: diff} means that for all $\YY,$
$$\E[X(z)\YY]=h'(z)^\lambda\overline{h'(z)}^{\lambda_*} \E[\widetilde X(h(z))\YY].$$
Equivalently, for all $\YY,$ the non-random field
$p\mapsto \E[X(p)\YY]$ is a differential in $p\in M.$
Moreover, it is enough to consider $\YY$'s of the form $\Phi(p_1)\odot \cdots \odot\Phi(p_n).$

\begin{egs*}
\renewcommand{\theenumi}{\alph{enumi}}
{\setlength{\leftmargini}{2.0em}
\begin{enumerate}
\ms \item $\Phi$ is a scalar field, i.e., a (0,0)-differential,
$J$ is a (1,0)-differential, and $J\odot J$ is a (2,0)-differential;
\ms \item The field (see Proposition \ref{T in S(1/12)})
$$T=-\frac12 J*J=-\frac12~J\odot J+\frac1{12}S $$
 is a Schwarzian form of order $\frac1{12}$;
\ms \item $e^{\odot \alpha\Phi}$ is a scalar field but $e^{*\alpha\Phi}$ is a differential of degrees
$(-{\alpha^2}/2,-{\alpha^2}/2);$
\ms \item $J*\bar J = J\bar J$ is a (1,1)-differential, see \eqref{eq: J*bar J}.
\end{enumerate}}
\end{egs*}

\ms\SS The operator product expansion of conformal fields (in particular, the OPE multiplication which we used in the examples above) is defined in terms of local charts.
For example, if $X$ is a holomorphic field, then there are conformal Fock space fields $C_n$ such that in every chart $\phi$ we have
$$X(\zeta)Y(z)=\sum(\zeta-z)^{n} C_n(z),\qquad (\zeta\to z), $$
where $$X(\zeta):=(X\,\|\,\phi)(\zeta),\quad Y(z):=(Y\,\|\,\phi)(z),\quad C_n(z):=(C_n\,\|\,\phi)(z).$$
It is crucial that we use the same asymptotic scale in all local charts, which results in non-trivial conformal structure of OPE coefficients.

\ms\SS We often consider the values of conformal fields at boundary points of $D$ (or ideal boundary points of a finite Riemann surface $M$).
By convention, we always model local coordinate charts on the half-plane $\H,$ i.e., we use \emph{standard} boundary charts
$$\phi:U\to\phi(U) \subset \H,\qquad \phi(\pa M \cap \bar U) \subset \R.$$
Note that the transition map between standard boundary charts extends by symmetry to a map which is analytic at the points in $\R.$
For example, the field $J=\pa\Phi$ is purely imaginary on $\pa M$ (in all standard boundary charts).

\ms \section{Conformal invariance} \label{sec: conf inv} \index{conformal invariance}

\SS A \emph{non-random} conformal field $f$ is invariant with respect to some conformal automorphism $\tau$ of $M$ if
$$\textrm{for~all~}\, \phi,\qquad (f\,\|\,\phi)=(f\,\|\,\phi\circ \tau^{-1}).$$
Note that in this equation we compare $f(p)$ with $f(\tau p).$
\begin{center}\begin{figure}[ht]
\includegraphics{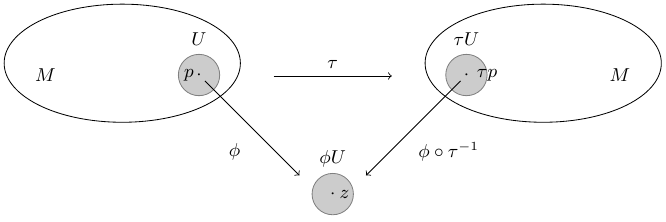}
\caption{Conformal invariance} \label{fig: conformal invariance}
\end{figure}\end{center}

For example, let $D$ be a domain in $\C$ and let us write $f(z)$ for $(f\,\|\,\id_D)(z).$
Then $f$ is a $\tau$-invariant $(\lambda,0)$-differential if
$$f(z) =f(\tau z)~\tau'(z)^\lambda,$$
and $f$ is a $\tau$-invariant Schwarzian form if
$$f(z) =f(\tau z)~\tau'(z)^2+\mu S_\tau(z).$$
This is because $\tau$ is the transition between the charts $\phi\circ \tau^{-1}$ and $\phi=\id_D.$

\ms The concept of conformal invariance extends to multi-variable non-random conformal fields.
For example,
$$f(z_1,z_2)=\frac{w'(z_1)w'(z_2)}{(w(z_1)-w(z_2))^2},$$
where $w: D\to\H$ is a conformal map, is a conformally invariant differential in both variables.

\ms\SS By definition, a \emph{random} conformal field (or a family of conformal fields) is $\tau$-\emph{invariant} if all correlations are invariant as non-random conformal fields.

\ms Clearly, the Gaussian free field is conformally invariant (i.e., invariant with respect to the full group $\Aut(M)$), and a family of conformally invariant fields is closed under differentiations, $\pa,\bp,$ and Wick's multiplication.
So all basic fields are conformally invariant.
It follows that a conformal Fock space field is $\tau$-invariant if and only if all its basic field coefficients are $\tau$-invariant.
It is also clear that the OPE coefficients of two conformally invariant fields are conformally invariant.

\ms Caution: it would be wrong to define conformal invariance by the equation
$$(X\,\|\,\phi)=(X\,\|\,\phi\circ \tau^{-1})$$
for correlation functionals. In fact, $X$ is $\tau$-invariant if and only if
$$\E\,[\,(X\,\|\,\phi)\,\Phi(p_1)\odot\cdots\odot\,\Phi(p_n)\,]\, =\E\,[\,(X\,\|\,\phi\circ \tau^{-1})\,\Phi(\tau p_1)\odot\cdots\odot\,\Phi(\tau p_n)\,].$$
(This is different from $\E\,[\,(X\,\|\,\phi)\,\YY\,]=\E\,[\,(X\,\|\,\phi\circ \tau^{-1})\,\YY\,].$)

\ms\SS The following simple but useful construction (see e.g., Lecture~\ref{ch: SLE theory}) depends on conformal invariance.
Suppose we have conformally equivalent Riemann surfaces $M$ and $\widetilde{M}.$
Given a conformally invariant field $X$ on $M,$
we define the field $\widetilde X$ on $\widetilde{M}$ as follows.
Let $f:M \to \widetilde{M}$ be a conformal map.
We write $p=f^{-1}(\widetilde{p}),$ $p_j=f^{-1}(\widetilde{p}_j),$ $\widetilde\phi$ for a fixed chart at $\widetilde{p},$ and $\phi = \widetilde\phi\circ f.$
We set
$$\E\,[\,(\widetilde X(\widetilde{p})\,\|\,\widetilde\phi)\,\widetilde\YY\,]=\E\,[\,(X(p)\,\|\,\phi)\,\YY\,],$$
where $\YY(p_1,\cdots,p_n) = \Phi(p_1)\odot\cdots\odot\Phi(p_n)$ and $\widetilde\YY(\widetilde{p}_1,\cdots,\widetilde{p}_n) = \Phi(\widetilde{p}_1)\odot\cdots\odot\Phi(\widetilde{p}_n).$
Clearly, $\widetilde X$ does not depend on the choice of $f.$

\ms \section{Lie derivatives} \label{sec: Lie}

Let $v$ be a non-random smooth vector field, i.e., a $(-1,0)$-differential, see \eqref{eq: vector field}, on a Riemann surface $M$; it determines a local flow
$$\psi_t:U\to M,\qquad \dot \psi_t(z)=v(\psi_t(z)),\qquad z\in U, \; |t|\ll1.$$
Suppose $v$ is holomorphic in some open set $U\subset M$ (so the flow is also holomorphic).
For a conformal Fock space field $X,$ we define the Lie derivative $\LL_vX$ in $U$ as follows.

\ms We first define the fields $X_t$ by the equation of correlation functionals
$$(X_t\,\|\,\phi)(z)=(X\,\|\,\phi\circ \psi_{-t})(z),\qquad z\in \phi U, \; |t|\ll1,$$
where $\phi$ is an arbitrary chart in $U.$

\ms For example, if $M$ is a domain in $\C$ with the identity chart, then the equation is
\begin{equation}
\label{eq: Xt4diff}
X_t(z)=(X(\psi_tz)\,\|\,\psi_{-t})=(\psi'_t(z))^\lambda~(\overline{\psi'_t(z)})^{\lambda_*}~ X(\psi_tz)
\end{equation}
for $(\lambda,\lambda_*)$-differentials, and
\begin{equation}
\label{eq: Xt4S-form}
X_t(z)=(\psi'_t(z))^2~X(\psi_tz)+\mu S_{\psi_t}(z)
\end{equation}
for Schwarzian forms.

\ms It is easy to see that if $X$ is a differential or a form, then $X_t$ is a differential or a form of the same type.

\ms We now define the \emph{Lie derivative} of $X$ \index{Lie derivative} by
$$\LL_vX=\frac d{dt}\Big|_{t=0}X_t.$$
As usual, this means that for every chart $\phi$ and every functional $\YY$ we have
$$\E\,[\,(\LL_vX\,\|\,\phi)\,\YY\,] = \frac{d}{dt}\Big|_{t=0}\E\,[\,(X_t\,\|\,\phi)\,\YY\,].$$

\ms This definition is very general -- the only assumption that we make is that $X$ depends smoothly on local coordinates, so the derivative exists.
We need higher smoothness when we consider commutations of Lie derivatives.
Smooth dependence on local coordinates can be defined as follows: $\E\,[\,(X\,\|\,h_\ve\circ\phi )\,\YY\,]$ is a smooth function of $\ve = (\ve_1,\cdots,\ve_n)$ for any $\ve$-perturbation $h_\ve\circ\phi$ of the chart $\phi,$
$$h_\ve(z)= z + \ve_1 v_1(z) + \cdots + \ve_n v_n(z).$$
In particular, the smooth dependence of $X$ on local charts implies that
\begin{equation} \label{eq: Lie}
\frac{d}{dt}\Big|_{t=0}\E\,[\,(X\,\|\,\phi\circ f_t^{-1})\,\YY\,]=\E\,[\,(\LL_vX\,\|\,\phi)\,\YY\,]
\end{equation}
for any  flow $f_t(z) = z + tv(z,t) + o(t)$ with the time-dependent vector field $v(z,t)$ $(v = v(z,0)).$
It is easy to see that if $X$ and $Y$ depend smoothly on charts, then so does $X*_nY.$

\ms Lie derivative of a differential is a differential but Lie derivative of a Schwarzian form is a quadratic differential.

\ss \begin{prop} \label{LvX}
If $X$ is a differential, then
\begin{equation} \label{eq: Lie4diff}
\LL_vX=\left(v\pa+\bar v\bar\pa+\lambda v'+\lambda_*\overline{v'}\right)X;
\end{equation}
if $X$ is a pre-Schwarzian form of order $\mu,$ then
\begin{equation} \label{eq: Lie4PS-form}
\LL_vX=\left(v\pa+v'\right)X +\mu v'';
\end{equation}
if $X$ is a Schwarzian form of order $\mu,$ then
\begin{equation} \label{eq: Lie4S-form}
\LL_vX=\left(v\pa+2v'\right)X +\mu v'''.
\end{equation}
\end{prop}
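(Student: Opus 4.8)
The plan is to compute $\frac{d}{dt}\big|_{t=0}X_t$ directly from the explicit formulas \eqref{eq: Xt4diff} and \eqref{eq: Xt4S-form} (together with the analogous formula $X_t=\psi_t'\,(X\circ\psi_t)+\mu N_{\psi_t}$ for pre-Schwarzian forms, obtained from the transformation law with $h=\psi_t$), using nothing beyond the chain rule, the Leibniz rule, and the flow equation $\dot\psi_t=v\circ\psi_t$. All identities are understood as equalities of correlation functionals, tested against arbitrary $\YY$ with $z\notin S_\YY$; since by hypothesis correlations depend smoothly on local coordinates, differentiation under $\E[\cdot]$ is legitimate, and it suffices to differentiate the non-random scalar factors appearing in $X_t$.

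First I would record the $t$-derivatives at $t=0$ of the geometric quantities entering these formulas. Since $\psi_0=\id$, the flow equation gives $\dot\psi_0(z)=v(z)$; differentiating once more in $z$ (using $\pa_t\pa_z=\pa_z\pa_t$) yields $\frac{d}{dt}\big|_{t=0}\psi_t'(z)=v'(z)$ and hence $\frac{d}{dt}\big|_{t=0}\overline{\psi_t'(z)}=\bar v'(z)$. For the forms I also need $\frac{d}{dt}\big|_{t=0}N_{\psi_t}$ and $\frac{d}{dt}\big|_{t=0}S_{\psi_t}$: writing $N_{\psi_t}=\psi_t''/\psi_t'$ and using $\psi_0'=1$, $\psi_0''=0$ gives $\frac{d}{dt}\big|_{t=0}N_{\psi_t}=v''$; since $S_h=N_h'-N_h^2/2$ and $N_{\psi_0}=0$, the quadratic term contributes nothing at $t=0$, so $\frac{d}{dt}\big|_{t=0}S_{\psi_t}=v'''$.

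With these in hand, each case is a short Leibniz-rule computation. For a $(\lambda,\lambda_*)$-differential, differentiating $X_t(z)=(\psi_t'(z))^\lambda(\overline{\psi_t'(z)})^{\lambda_*}X(\psi_tz)$ at $t=0$: the first factor contributes $\lambda v'X$, the second $\lambda_*\bar v'X$, and the chain rule applied to $X(\psi_tz)$ (viewing $X$ as a function of $z$ and $\bar z$, with $\psi_tz$ and its conjugate having $t$-derivatives $v$ and $\bar v$ at $t=0$) contributes $(v\pa+\bar v\bp)X$, which assembles to \eqref{eq: Lie4diff}. For a pre-Schwarzian form the $\psi_t'\,(X\circ\psi_t)$ part is the $(\lambda,\lambda_*)=(1,0)$ computation, producing $(v\pa+v')X$, while $\mu N_{\psi_t}$ adds $\mu v''$, giving \eqref{eq: Lie4PS-form}; for a Schwarzian form the $(\psi_t')^2(X\circ\psi_t)$ part is the $(\lambda,\lambda_*)=(2,0)$ computation, producing $(v\pa+2v')X$, while $\mu S_{\psi_t}$ adds $\mu v'''$, giving \eqref{eq: Lie4S-form}. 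That $\LL_vX$ is in each case a differential then follows because $v',\bar v',v'',v'''$ together with the lower-order terms recombine into differentials of the stated degrees, consistent with the remark preceding the proposition.

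The only genuinely non-routine point is the computation of the $t$-derivatives of the pre-Schwarzian and Schwarzian derivatives of the flow -- in particular, checking that the $N_h^2/2$ term in $S_h$ drops out at $t=0$ and that one is left with exactly $v''$ and $v'''$ with the correct constants. This is precisely what produces the inhomogeneous terms $\mu v''$ and $\mu v'''$, and hence explains why a form Lie-differentiates to a (quadratic) differential. Everything else is bookkeeping with the chain and Leibniz rules and could alternatively be obtained by composing transition maps and invoking the cocycle identities for $N$ and $S$.
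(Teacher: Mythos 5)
Your proposal is correct and is essentially the paper's own proof: differentiate the explicit formulas for $X_t$ at $t=0$ using $\psi_0'=1$, $\dot\psi_0=v$, $\dot\psi_0'=v'$, $(\dot N_{\psi})_0=v''$, $(\dot S_{\psi})_0=v'''$, exactly as you do (you merely spell out the short verifications of the last two derivatives, which the paper states without computation).
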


\begin{proof} Without loss of generality we can consider the planar case and the identity chart.
Differentiate \eqref{eq: Xt4diff}, \eqref{eq: Xt4S-form} and use
$\psi'_0=1,\quad \dot \psi_0=v,\quad\dot \psi_0'=v',$
$(\dot N_{\psi})_0 = v'',$ and $(\dot S_{\psi})_0 = v'''.$
\end{proof}

\ms It turns out that the converse is also true.
For example,
\begin{prop} \label{LvX4diff}
Suppose the equation
$$\LL_vX=\left(v\pa+\bar v\bar\pa+\lambda v'+\lambda_*\overline{v'}\right)X$$
holds in $D_{\rm hol}(v)$ for every vector field $v.$ Then $X$ is a differential.
\end{prop}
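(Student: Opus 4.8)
The plan is to integrate the infinitesimal identity up to the finite transformation law~\eqref{eq: diff}. As in the proof of Proposition~\ref{LvX}, we may take $M$ to be a planar domain with the identity chart. Fix a holomorphic vector field $v$ with flow $\psi_t$, let $X_t$ be the field with $(X_t\,\|\,\id)(z)=(X\,\|\,\psi_{-t})(z)$ (so $X_0=X$ and $\LL_vX=\frac d{dt}\big|_{t=0}X_t$), and let $\widehat X_t$ be the $(\lambda,\lambda_*)$-differential whose identity-chart expression is $\psi_t'(z)^\lambda\,\overline{\psi_t'(z)}^{\lambda_*}\,(X\,\|\,\id)(\psi_t z)$. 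Then $\widehat X_0=X$ in the identity chart, and by~\eqref{eq: Xt4diff} the identity $X_t=\widehat X_t$ (for all small $t$) is exactly the transformation law~\eqref{eq: diff} for the transition map $\psi_{-t}$. So it suffices to establish $X_t=\widehat X_t$, and then to remove the restriction to flows.

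The heart of the matter is that $X_t$ and $\widehat X_t$ obey the same evolution equation. Since $\widehat X_t$ is a genuine differential, the semigroup property $\psi_{t+s}=\psi_s\circ\psi_t$ and Proposition~\ref{LvX} give $\frac d{dt}\widehat X_t=\LL_v\widehat X_t=(v\pa+\bar v\bp+\lambda v'+\lambda_*\bar v')\widehat X_t$. For $X_t$: writing $\psi_{-t-s}=\psi_{-t}\circ\psi_{-s}$ and applying~\eqref{eq: Lie} in the chart $\psi_{-t}$ with the flow $\psi_s$ gives
$$\frac d{dt}\,\E\big[(X_t\,\|\,\id)(z)\,\YY\big]=\E\big[(\LL_vX\,\|\,\psi_{-t})(z)\,\YY\big].$$
By hypothesis $\LL_vX=(v\pa+\bar v\bp+\lambda v'+\lambda_*\bar v')X$ as an equation of fields, and since a vector field is invariant under its own flow, $v$ is represented by the same function in the chart $\psi_{-t}$ as in $\id$; hence the right-hand side above equals $\big(v\pa+\bar v\bp+\lambda v'+\lambda_*\bar v'\big)\E[(X_t\,\|\,\id)(z)\,\YY]$, the operator acting in the variable $z$. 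Thus $u(t,z):=\E[(X_t\,\|\,\id)(z)\,\YY]$ and $\widehat u(t,z):=\E[(\widehat X_t\,\|\,\id)(z)\,\YY]$ solve the same first-order linear transport equation $\pa_t u=v\pa u+\bar v\bp u+(\lambda v'+\lambda_*\bar v')u$ with equal data at $t=0$; by uniqueness along characteristics (for $|t|$ small enough that $\psi_t$ is defined on the set in question), $u\equiv\widehat u$, so $X_t=\widehat X_t$.

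It remains to pass from flows to an arbitrary transition map $h$. Since~\eqref{eq: diff} is a pointwise identity, it is enough to verify it on a small disc about each point; on such a disc $h$ is joined to the identity through a smooth family of conformal maps, which is the flow of a time-dependent holomorphic vector field, and the argument above applies verbatim with the time-dependent version of~\eqref{eq: Lie}. Hence $X$ is a differential. I expect the main obstacle to be the chart bookkeeping in the middle step — specifically, checking that $\LL_vX$ expressed in the moving chart $\psi_{-t}$ is exactly $(v\pa+\bar v\bp+\lambda v'+\lambda_*\bar v')$ applied to $(X_t\,\|\,\id)$, with no leftover terms, which is precisely what makes the two evolution equations coincide. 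The isotopy-to-identity fact needed in the last step is routine but should be stated carefully.
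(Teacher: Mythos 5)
Your argument is correct, and it is essentially the paper's own argument carried to completion rather than a different route: the printed proof of Proposition~\ref{LvX4diff} stops at the first-order statement, expanding $X_t=X+t\LL_vX+o(t)$, substituting the hypothesis, and observing that the result is ``the infinitesimal version of the transformation law'' \eqref{eq: Xt4diff}; the integration to finite $t$ and the passage from flow maps $\psi_{-t}$ to arbitrary transition maps are left implicit. You supply exactly these two missing layers: (i) the fact that $X_t$ and the genuine-differential comparison $\widehat X_t$ solve the same transport equation -- and your chart bookkeeping there is right, since $\frac d{dt}(X\,\|\,\psi_{-t})=(\LL_vX\,\|\,\psi_{-t})$ by the semigroup property of the autonomous flow, while $(v\,\|\,\psi_{-t})$ agrees with $(v\,\|\,\id)$ because a vector field is invariant under its own flow -- followed by uniqueness along characteristics; and (ii) the local isotopy of a general transition map to the identity, treated with the time-dependent form of \eqref{eq: Lie}. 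Step (ii) is genuinely needed (not every univalent germ embeds in the flow of an autonomous holomorphic vector field), so invoking the time-dependent version is the right move; the only place where ``verbatim'' is optimistic is that the own-flow invariance which made the coefficients time-independent is no longer available, and instead one checks directly that the $s$-derivative of $h_s'(z)^\lambda\,\overline{h_s'(z)}^{\lambda_*}\,\E[(X\,\|\,h_s\circ\phi)(h_s z)\,\YY]$ vanishes, the Jacobian and moving-point terms cancelling against the hypothesis applied to the generator $v_s$ expressed in the chart $h_s\circ\phi$ (extend $v_s$ off a neighborhood by a smooth cutoff, which is harmless since $\LL_vX$ depends only on the germ of $v$ near the point). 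With that adjustment, and the routine isotopy construction you flag, the proof is complete; what your version buys over the paper's is precisely the bridge from the infinitesimal identity to the actual definition \eqref{eq: diff} of a differential.
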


\ms\textbf{Notation.} If $v$ is a smooth vector field in $D,$ then we denote by $D_{\rm hol}(v)$ the maximal open set where $v$ is holomorphic.

\ms\begin{proof}
In a fixed chart $\phi$ we have
\begin{align*}
X_t(z)&=X+tL_vX+o(t)=X+t\left(v\pa+\bar v\bar\pa+\lambda v'+\lambda_*\overline{v'}\right)X+o(t)\\
&=(1+t\lambda v'+o(t))(1+t\lambda_*\overline{v'}+o(t))(X+(\psi_t-z)\pa X+(\bar \psi_t-\bar z)\bp X+o(t))\\
&=(\psi'_t(z))^\lambda~(\overline{\psi'_t(z)})^{\lambda_*}~ X(\psi_tz)+o(t).
\end{align*}
On the other hand, by definition $X_t=(X\,\|\,\phi\circ\psi_{-t})$ we have
$$(X\,\|\,\phi\circ\psi_{-t})(z)=(\psi'_t(z))^\lambda~(\overline{\psi'_t(z)})^{\lambda_*}~(X\,\|\,\phi)(\psi_tz)+o(t),$$
which is the infinitesimal version of the transformation law for a differential.
\end{proof}

\ms The next statement follows from the elementary properties of Lie derivatives that we record in the next section.

\ss \begin{prop}
If $X$ is a conformal Fock space field, then $\LL_vX$ is also a (local) conformal Fock space field.
\end{prop}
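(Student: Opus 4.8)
The plan is to reduce the assertion, by linearity together with the elementary properties of the Lie derivative recorded in the next section, to three facts: that $\LL_v$ sends non-random conformal fields to non-random conformal fields, that $\LL_v$ obeys Leibniz's rule with respect to Wick's multiplication and to multiplication by non-random fields, and that $\LL_v$ commutes with $\pa$ and $\bp$ on the set $D_{\rm hol}(v)$ where $v$ is holomorphic. Write $X=\sum_\alpha f_\alpha X_\alpha$ with $f_\alpha$ non-random conformal fields and $X_\alpha$ basic fields. By linearity and the Leibniz rule $\LL_v(f_\alpha X_\alpha)=(\LL_v f_\alpha)\,X_\alpha+f_\alpha\,\LL_v X_\alpha$, it suffices to show that each $\LL_v f_\alpha$ is a non-random conformal field and that each $\LL_v X_\alpha$ is a conformal Fock space field. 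The first is part of the general theory: for differentials, pre-Schwarzian forms and Schwarzian forms it is exactly Proposition~\ref{LvX}, and in general it follows from the assumption, built into the definition of $\LL_v$, that fields depend smoothly on local coordinates.

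For a basic field $X_\alpha=\bigodot_k\bigl(\pa^{a_k}\bp^{b_k}\Phi\bigr)$, apply the Leibniz rule $\LL_v(A\odot B)=(\LL_vA)\odot B+A\odot(\LL_vB)$ repeatedly to reduce to the single factor $\LL_v(\pa^{a}\bp^{b}\Phi)$. Since $\LL_v$ commutes with $\pa$ and $\bp$ on $D_{\rm hol}(v)$, this equals $\pa^{a}\bp^{b}(\LL_v\Phi)$, and Proposition~\ref{LvX} applied to the scalar field $\Phi$ gives $\LL_v\Phi=v\pa\Phi+\bar v\bp\Phi=vJ+\bar v\bar J$. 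Expanding $\pa^{a}\bp^{b}(vJ+\bar v\bar J)$ by Leibniz's rule for multiplication by the non-random fields $v$ (a $(-1,0)$-differential) and $\bar v$, every term is a non-random conformal field (a derivative of $v$ or of $\bar v$) times a derivative of $\Phi$, hence a basic field with a non-random conformal coefficient. Therefore $\LL_v X$ is a finite sum of basic fields with non-random conformal coefficients, i.e.\ a conformal Fock space field; it is defined only on $D_{\rm hol}(v)$, which is why it is a \emph{local} field.

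The argument above is essentially bookkeeping once the three ``elementary properties'' of $\LL_v$ — all established in the next section — are granted: Leibniz's rule for $\LL_v$ with respect to $\odot$ and to multiplication by non-random fields, the commutations $[\LL_v,\pa]=[\LL_v,\bp]=0$ on $D_{\rm hol}(v)$, and preservation of the class of non-random conformal fields. These come from differentiating the defining identity $(X_t\,\|\,\phi)=(X\,\|\,\phi\circ\psi_{-t})$ at $t=0$ and using that precomposition of charts with $\psi_{-t}$ is compatible with forming Wick products and with multiplying by coefficient functions. The step most likely to cause trouble is the treatment of the higher derivatives $\pa^{a}\bp^{b}\Phi$ with $a,b\ge1$: these are neither differentials nor forms, so Proposition~\ref{LvX} does not apply to them directly and their transformation laws are cumbersome. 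The commutation $[\LL_v,\pa]=[\LL_v,\bp]=0$ — which itself follows from the identities $(\pa X)_t=\pa(X_t)$ and $(\bp X)_t=\bp(X_t)$ read off from that defining identity — is exactly what lets us replace $\LL_v(\pa^{a}\bp^{b}\Phi)$ by $\pa^{a}\bp^{b}(\LL_v\Phi)$ and thereby never write those laws down.
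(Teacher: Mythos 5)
Your proposal is correct and follows exactly the route the paper intends: the paper gives no written proof, only the remark that the statement ``follows from the elementary properties of Lie derivatives that we record in the next section,'' and your argument is precisely that reduction (Leibniz for $\odot$ and for non-random coefficients, commutation of $\LL_v$ with $\pa,\bp$, and $\LL_v\Phi=vJ+\bar v\bar J$ from the scalar case of Proposition~\ref{LvX}). The bookkeeping, including the treatment of the mixed derivatives $\pa^a\bp^b\Phi$ via the commutation rather than via transformation laws, is sound.
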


\ms \section{Properties of Lie derivatives}\label{sec: basic Lv}
\SS \emph{Basic properties}:
\renewcommand{\theenumi}{\alph{enumi}}
{\setlength{\leftmargini}{2.0em}
\begin{enumerate}
\item\ms $\LL_v$ is an $\R$-linear operator on Fock space fields;
\item\ms $\LL_v (\bar X)=\overline {(\LL_vX)}$;
\item\ms $\E[\LL_vX]=\LL_v(\E[X]);$
\item\ms Leibniz's rule applies to Wick's products;
\item\ms $\LL_v(\pa X)=\pa(\LL_v X)$ and $\LL_v(\bar\pa X)=\bar\pa(\LL_v X).$
\end{enumerate}}

\ms Let us show that Leibniz's rule also applies to OPE products.

\begin{prop} \label{Leibnitz4OPE}
If $X$ is a holomorphic Fock space field, then
$$\LL_v(X*_n Y)=(\LL_v X)*_n Y+X*_n (\LL_v Y).$$
\end{prop}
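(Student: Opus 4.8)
The plan is to track how the OPE coefficient $X*_n Y$ transforms under the chart shift that defines the Lie derivative, and then to differentiate in the flow parameter. As in the other proofs of this lecture, it suffices to work in the planar case and the identity chart; fix the holomorphic vector field $v$ with local flow $\psi_t$ on $D_{\rm hol}(v)$.

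First I would clear two preliminaries. Because $X$ is holomorphic and each $\psi_t$ is conformal, the transported field $X_t$ is a holomorphic field of the same type as $X$ (cf.\ the remark that $X_t$ is a differential or form of the same type as $X$), so $X_t*_n Y_t$ is defined; and since $\bp(\LL_v X)=\LL_v(\bp X)=0$ by the elementary properties of $\LL_v$, the field $\LL_v X$ is holomorphic too, so $(\LL_v X)*_n Y$ makes sense. I would also invoke the remark, recorded just before the statement, that $X*_n Y$ depends smoothly on charts whenever $X,Y$ do; this guarantees that the correlation functions differentiated in $t$ below are smooth functions of $t$.

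The heart of the argument is the identity $X_t*_n Y_t=(X*_n Y)_t$. To establish it, I would use that the defining OPE relation $X(\zeta)Y(z)=\sum_m (\zeta-z)^m C_m(z)$ (with $C_n=X*_n Y$) holds not just in the identity chart but in \emph{every} chart, in particular in the shifted chart $\psi_{-t}$, where it reads $(X\,\|\,\psi_{-t})(\zeta)(Y\,\|\,\psi_{-t})(z)=\sum_m (\zeta-z)^m (C_m\,\|\,\psi_{-t})(z)$ as $\zeta\to z$. Since $(Z\,\|\,\psi_{-t})=(Z_t\,\|\,\id)$ for every conformal field $Z$ -- this is exactly the definition of $Z_t$ -- the displayed line \emph{is} the Laurent expansion, in the identity chart, of $(X_t\,\|\,\id)(\zeta)(Y_t\,\|\,\id)(z)$ in powers of $\zeta-z$; reading off the coefficient of $(\zeta-z)^n$ gives $X_t*_n Y_t=(C_n)_t=(X*_n Y)_t$. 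The one thing to check carefully here is that replacing $(X\,\|\,\psi_{-t})$ by $(X_t\,\|\,\id)$ is an equality of functions of the coordinate rather than a substitution that would displace the expansion point -- it is, by the definition of transport -- after which this step is purely formal. As a sanity check, taking $X=Y=J$ should recover the familiar transformation law exhibiting $-\frac12 J*J=T$ as a Schwarzian form of order $\frac1{12}$.

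Finally I would differentiate at $t=0$. The operation $*_n$ is $\C$-bilinear, and $X_t=X+t\LL_v X+o(t)$, $Y_t=Y+t\LL_v Y+o(t)$ in the sense of correlation functionals, so $X_t*_n Y_t=X*_n Y+t\big[(\LL_v X)*_n Y+X*_n(\LL_v Y)\big]+o(t)$. Combining this with $X_t*_n Y_t=(X*_n Y)_t$ and $\LL_v(X*_n Y)=\frac{d}{dt}\big|_{t=0}(X*_n Y)_t$ -- the differentiation being legitimate by the smoothness noted above, which makes the $o(t)$ term drop out upon differentiating -- yields $\LL_v(X*_n Y)=(\LL_v X)*_n Y+X*_n(\LL_v Y)$. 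I expect the only genuine subtlety to be the chart-shift identity of the third paragraph; a more computational route, expanding $X_t(\zeta)Y_t(z)$ directly through the transformation laws \eqref{eq: Xt4diff}, \eqref{eq: Xt4S-form} together with the Leibniz rule \eqref{eq: OPE Leibnitz}, is available but considerably more laborious.
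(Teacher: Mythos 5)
Your proposal is correct and follows essentially the same route as the paper's proof: the chart-shift identity $X_t*_nY_t=(X*_nY)_t$, obtained by reading the operator product expansion in the chart $\psi_{-t}$ (with the same asymptotic scale, so that $(C_n)_t$ appears as the coefficient), followed by differentiation at $t=0$. Your added preliminaries (holomorphy of $X_t$ and $\LL_vX$, smooth chart dependence, bilinearity of $*_n$) merely spell out steps the paper leaves implicit.
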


\begin{proof}
Without loss of generality we can consider the planar case and the identity chart.
Suppose
$$X(\zeta)Y(z)=\sum(\zeta-z)^nC_n(z),\qquad \zeta\to z.$$
Then
\begin{align*}
X_t(\zeta)Y_t(z)&=(X\,\|\,\psi_{-t})(\zeta)(Y\,\|\,\psi_{-t})(z)\\&=\sum(\zeta-z)^n(C_n\,\|\,\psi_{-t})(z)=\sum(\zeta-z)^n ~(C_n)_t(z),
\end{align*}
so
$X_t*_n Y_t= (C_n)_t.$
We now take the time derivative at $t=0.$
\end{proof}

\ms\SS Recall that the Lie derivative of a vector field is defined in the smooth category as follows:
$$\LL_{v_1}v_2 \equiv [v_1,v_2] = \frac{\pa^2}{\pa s\pa t}\Big|_{s=t=0}~(\chi_s\circ\psi_t-\psi_t\circ\chi_s),$$
where $\psi_t$ is the flow of $v_1$ and $\chi_s$ is the flow of $v_2$; the local flow of $[v_1,v_2]$ is
$\chi_{-\sqrt t}\circ\psi_{-\sqrt t}\circ\chi_{\sqrt t}\circ\psi_{\sqrt t}.$
If both vector fields are holomorphic, then
\begin{equation} \label{eq: Lv1v2}
\LL_{v_1}v_2 = [v_1,v_2]=v_1v_2'-v_1'v_2 \quad \textrm{ and } \quad \LL_{v_1}\bar v_2 = \overline{[v_1,v_2]},
\end{equation}
which is of course a special case of \eqref{eq: Lie4diff}.

\begin{prop} \label{Lie []}
If $X$ is a conformal Fock space field, then
\begin{equation} \label{eq: Lie []}
\LL_{v_1}\LL_{v_2}X-\LL_{v_2}\LL_{v_1}X=\LL_{[v_1,v_2]}X
\end{equation}
in the region where both vector fields are holomorphic.
\end{prop}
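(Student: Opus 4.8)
The plan is to verify the identity in the sense of correlation functionals: for every chart $\phi$ and every functional $\YY$ with nodes in $D_{\rm hol}(v_1)\cap D_{\rm hol}(v_2)$ (it suffices to take $\YY=\Phi(p_1)\odot\cdots\odot\Phi(p_n)$), I would show that both sides give the same value against $\YY$. Set $g(\phi):=\E\,[\,(X\,\|\,\phi)\,\YY\,]$; by the smoothness hypothesis on $X$ -- here one really uses the \emph{higher} smoothness mentioned in Section~\ref{sec: Lie}, imposed precisely so that $\LL_{v_1}X$ again depends smoothly on charts -- the function $g$ is smooth in $\phi$ in the sense of $\ve$-perturbations. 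For a holomorphic vector field $v$ with flow $\psi_t$, write $(\DD_v g)(\phi):=\frac{d}{dt}\big|_{t=0}g(\phi\circ\psi_{-t})$. Unwinding the definition of the Lie derivative (and of the auxiliary fields $X_t$), $\E\,[\,(\LL_v X\,\|\,\phi)\,\YY\,]=(\DD_v g)(\phi)$; applying the same computation to the conformal field $\LL_{v_1}X$ gives $\E\,[\,(\LL_{v_2}\LL_{v_1}X\,\|\,\phi)\,\YY\,]=(\DD_{v_2}\DD_{v_1}g)(\phi)$, and symmetrically $\E\,[\,(\LL_{v_1}\LL_{v_2}X\,\|\,\phi)\,\YY\,]=(\DD_{v_1}\DD_{v_2}g)(\phi)$. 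Since $[v_1,v_2]$ is holomorphic on the overlap region by \eqref{eq: Lv1v2}, the right-hand side of the proposition is defined there and tested against $\YY$ equals $(\DD_{[v_1,v_2]}g)(\phi)$. Thus the whole statement reduces to the operator identity $[\DD_{v_1},\DD_{v_2}]=\DD_{[v_1,v_2]}$ on smooth functions of charts.

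To prove this operator identity, I would use the joint smoothness of $(t,s)\mapsto g(\phi\circ\chi_{-s}\circ\psi_{-t})$ (with $\chi_s$ the flow of $v_2$) to interchange partial derivatives and write
$$[\DD_{v_1},\DD_{v_2}]g(\phi)=\frac{\pa^2}{\pa t\,\pa s}\Big|_{t=s=0}\Big[g(\phi\circ\psi_{-t}\circ\chi_{-s})-g(\phi\circ\chi_{-s}\circ\psi_{-t})\Big].$$
The two arguments of $g$ coincide whenever $t=0$ or $s=0$, so the bracketed quantity is $O(ts)$ and its mixed second derivative is the coefficient of $ts$ in its Taylor expansion. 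Substituting $t,s\mapsto-t,-s$ in the defining relation $[v_1,v_2]=\frac{\pa^2}{\pa s\,\pa t}\big|_0(\chi_s\circ\psi_t-\psi_t\circ\chi_s)$ (equivalently, using \eqref{eq: Lv1v2}) yields $\psi_{-t}\circ\chi_{-s}=\chi_{-s}\circ\psi_{-t}-ts\,[v_1,v_2]+(\text{terms not affecting }\pa_t\pa_s|_0)$. Feeding this discrepancy into a first-order expansion of $g$ with respect to the chart, and recalling that the flow of $w:=[v_1,v_2]$ enters with the same sign as $\psi_{-t}$ -- so that $\frac{d}{d\ve}\big|_0 g(\phi\circ(\id+\ve w))=-(\DD_w g)(\phi)$ -- one finds that the coefficient of $ts$ is exactly $(\DD_{[v_1,v_2]}g)(\phi)$, which completes the argument. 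If one prefers, one may first reduce to the planar case in the identity chart as in the proofs of Propositions~\ref{LvX} and~\ref{LvX4diff}; this changes nothing essential.

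The genuinely delicate point is this last step: justifying that the $ts$-coefficient of $g(\phi\circ\psi_{-t}\circ\chi_{-s})-g(\phi\circ\chi_{-s}\circ\psi_{-t})$ is controlled solely by the linearization of $g$ along the chart-perturbation $-ts\,[v_1,v_2]$, with no contribution from the higher-order (in $t$ and $s$ separately) discrepancy between the two compositions of flows. This is exactly where joint smoothness of $\ve\mapsto\E\,[\,(X\,\|\,h_\ve\circ\phi)\,\YY\,]$ is used, and it is also why one must check beforehand that $\LL_{v_1}X$ inherits the smooth dependence on charts before a second Lie derivative can be taken. The remaining ingredients -- the flow algebra behind \eqref{eq: Lv1v2}, the sign bookkeeping forced by the right action $\phi\mapsto\phi\circ\psi_{-t}$ on charts, and the reduction to $\YY$ of the form $\Phi\odot\cdots\odot\Phi$ -- are routine.
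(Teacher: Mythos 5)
Your proposal is correct and follows essentially the same route as the paper's proof: both express $\LL_{v_1}\LL_{v_2}X-\LL_{v_2}\LL_{v_1}X$ as the mixed derivative $\frac{\pa^2}{\pa t\pa s}\big|_{0}$ of the difference of $X$ (or its correlations) evaluated in the charts $\phi\circ\psi_{-t}\circ\chi_{-s}$ and $\phi\circ\chi_{-s}\circ\psi_{-t}$, expand the composed flows to second order so that only the cross term $st\,[v_1,v_2]$ survives, and conclude by the assumed smooth dependence on charts. Your version merely makes explicit the reduction to correlation functions $g(\phi)=\E[(X\,\|\,\phi)\YY]$, the need for the higher smoothness of $\LL_{v_1}X$, and the sign bookkeeping, all of which the paper leaves implicit.
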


\begin{proof} From the definition of Lie derivative we see that the left-hand side is equal to
$$\frac{\pa^2}{\pa s\pa t}\Big|_{s=t=0}~\left[(X\,\|\,\chi_{-s}\circ\psi_{-t})-(X\,\|\,\psi_{-t}\circ\chi_{-s})\right].$$
Expanding the flows up to second order (we use $\dot\psi=v_1,$ $\ddot \psi=v_1'v_1,$ etc.), we get
$$\chi_{-s}\circ\psi_{-t}=\id-tv_1-sv_2+\frac{t^2}2v_1v_1'+\frac{s^2}2v_2v_2'+stv_1v_2'+\cdots,$$
$$\psi_{-t}\circ\chi_{-s}=\id-tv_1-sv_2+\frac{t^2}2v_1v_1'+\frac{s^2}2v_2v_2'+stv_1'v_2+\cdots,$$
and the statement easily follows if we assume sufficient smoothness with respect to local coordinates.
\end{proof}

\ss\SS The concept of Lie derivative extends to conformal fields of several variables.
For example, for
$$X(p_1, \cdots, p_n)=X_1(p_1)\cdots X_n(p_n), \qquad (p_j\in M \textrm{~are distinct}),$$
we fix coordinate charts $\phi_j$ at $p_j,$ and assuming that $v$ is holomorphic at the nodes, we define
$$\LL_vX(z_1, \cdots, z_n)=\frac d{dt}\Big|_{t=0}~ \left[(X_1\,\|\,\phi_1\circ\psi_{-t})(z_1)~\cdots~(X_n \,\|\,\phi_n\circ\psi_{-t})(z_n)\right].$$

\begin{prop} \label{Leibnitz4strings}
Leibniz's rule holds for tensor products:
$$\LL_v\left[X_1(p_1) X_2(p_2)\right]=\left[\LL_vX_1(p_1)\right] X_2(p_2)+\left[\LL_vX_2(p_2)\right] X_1(p_1).$$
\end{prop}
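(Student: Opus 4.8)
The plan is to reduce to the usual model situation and then to observe that Leibniz's rule here is nothing but the product rule for a \emph{bilinear} operation applied to two smoothly varying curves of fields. By the $\R$-linearity of $\LL_v$ and of the tensor product it suffices to treat basic fields, and by conformal invariance of the construction we may take $M$ to be a planar domain with the identity chart. Writing $z_j$ for the point $p_j\in\C$ and letting $\psi_t$ be the local flow of $v$ (holomorphic near $p_1,p_2$), the definition of the Lie derivative of a string gives
$$\LL_v\bigl[X_1(p_1)X_2(p_2)\bigr](z_1,z_2)=\frac{d}{dt}\Big|_{t=0}\bigl[(X_1\,\|\,\psi_{-t})(z_1)\,(X_2\,\|\,\psi_{-t})(z_2)\bigr].$$
Set $(X_j)_t:=(X_j\,\|\,\psi_{-t})$. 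These are conformal Fock space fields depending smoothly on $t$, with $(X_j)_0=X_j$ and, by the very definition of $\LL_v$ in Section~\ref{sec: Lie}, $\frac{d}{dt}\big|_{t=0}(X_j)_t=\LL_vX_j$.

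The key point I would then use is that the tensor product $(\XX_1,\XX_2)\mapsto\XX_1\XX_2$ of correlation functionals with disjoint node sets is, by its definition \eqref{eq: Wick's4FCF} through Feynman diagrams, separately linear in each factor over the ring of functions: each diagram contributes a product of $2$-point functions times a Wick monomial, and this distributes over linear combinations of either argument. Consequently, for any test functional $\YY$ with $S_\YY\cap\{p_1,p_2\}=\es$, the correlation $F(t):=\E\bigl[(X_1)_t(z_1)\,(X_2)_t(z_2)\,\YY\bigr]$ expands, after Wick's formula, into a finite sum of terms, each bilinear in the (smoothly $t$-dependent) basic-field coefficients of $(X_1)_t$ and $(X_2)_t$, multiplied by fixed $2$-point functions and fixed correlations of Wick monomials with $\YY$. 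Hence $F$ is differentiable in $t$ and the ordinary product rule yields
$$F'(0)=\E\bigl[(\LL_vX_1)(z_1)\,X_2(z_2)\,\YY\bigr]+\E\bigl[X_1(z_1)\,(\LL_vX_2)(z_2)\,\YY\bigr],$$
which, $\YY$ being arbitrary, is precisely the asserted identity of correlation functionals.

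The step that I expect to require the most care is exactly this interchange of $d/dt$ with the Wick expansion of the string, i.e.\ the justification that $F$ is genuinely differentiable with the expected derivative. This reduces to the standing smoothness hypothesis on the dependence of conformal fields on local charts (Section~\ref{sec: Lie}) together with the fact that for quasi-polynomial fields only finitely many Feynman diagrams occur, so the sum defining $F$ is finite and may be differentiated term by term; the only $t$-dependent coefficients appearing are the transformation factors $(\psi'_t)^{\lambda}(\overline{\psi'_t})^{\lambda_*}$ (and Schwarzian/pre-Schwarzian corrections for forms), which are manifestly smooth in $t$. An alternative route, entirely parallel to the second proof of Proposition~\ref{tensor product}, is to approximate $X_j(z_j)$ by the genuine random variables $(X_j)_\ve(z_j)$ of Section~\ref{sec: approx}, for which the string is an honest product of random variables, apply the scalar Leibniz rule in $t$, and pass to the limit $\ve\to0$.
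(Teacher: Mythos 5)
Your argument is correct and is exactly the intended one: the paper states this proposition without proof, regarding it as immediate from the definition of $\LL_v$ on strings as a $t$-derivative of the product $(X_1\,\|\,\phi_1\circ\psi_{-t})(z_1)(X_2\,\|\,\phi_2\circ\psi_{-t})(z_2)$ together with the product rule inside correlations and the standing smoothness-in-charts hypothesis of Section~\ref{sec: Lie}. Your write-up simply fills in that argument (and the approximation alternative mirrors the second proof of Proposition~\ref{tensor product}), so there is nothing to object to.
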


\ss For example, if $X$ is a tensor product of differentials, then
$$\LL_v X=\sum_j\left[v(p_j)\pa_j+\bar v(p_j)\bar\pa_j+\lambda_jv'(p_j)+\lambda_{*j}\overline{v'}(p_j)\right]\,X.$$

\ss\SS As we mentioned, $\LL_v$ depends $\R$-linearly on $v.$
It is convenient to separate the $\C$-linear \index{Lie derivative!$\C$-linear part} and anti-linear parts \index{Lie derivative!anti-linear part} of the Lie derivative.
Denote
\begin{equation}\label{eq: Lpm}
2\LL_v^+= \LL_v-i\LL_{iv},\qquad 2\LL_v^-= \LL_v+i\LL_{iv},
\end{equation}
so that
$$\LL_v=\LL_v^++ {\LL_v^-}$$
and
$\LL_v^-=\overline{\LL_v^+}$ in the following sense: $\bar \LL_v X=\overline{\LL_v \bar X}.$

\ms For example, if $X$ is a tensor product of differentials, then
\begin{equation} \label{eq: Lplus4diff}
\LL_v^+ X=\sum_j\left[v(p_j)\pa_j+\lambda_jv'(p_j)\right]\,X,
\end{equation}
and $\LL^+_vX=\LL_vX$ in the case of forms (see \eqref{eq: Lie4PS-form}, \eqref{eq: Lie4S-form}, and \eqref{eq: Lpm}). 

\ms It is easy to justify the corresponding
Leibniz's rule for $\LL^+_v$ and also to verify the identity
\begin{equation} \label{eq: Leibnitz4+}
\LL^+_{v_1}\LL^+_{v_2}-\LL^+_{v_2}\LL^+_{v_1}=\LL^+_{[v_1,v_2]}.
\end{equation}
For example, using \eqref{eq: Lie []} and \eqref{eq: Lpm} we have
$$\LL^+_{v_1}\LL^+_{v_2}-\LL^+_{v_2}\LL^+_{v_1} = \frac14\left(\LL_{[v_1,v_2]}-\LL_{[iv_1,iv_2]}-i\LL_{[iv_1,v_2]}- i\LL_{[v_1,iv_2]}\right)=\LL^+_{[v_1,v_2]}.$$

\renewcommand\chaptername{Lecture}
\chapter{Stress tensor and Ward's identities} \label{ch: Ward}

We define the stress tensor for a family $\FF$ of conformal Fock space fields as a pair of quadratic differentials which represent the Lie derivative operators in application to the fields in $\FF$ (and their tensor products).
The corresponding formulas are known as Ward's identities.
More precisely, for every local holomorphic vector field $v,$ we use the quadratic differentials to construct a functional (``generalized random variable") $W(v)$ such that the action of the operator $\LL_v$ on any string of fields in $\FF$ is equivalent, in correlations with arbitrary Fock space fields, to the multiplication by $W(v).$
Alternately, the stress tensor $W$ can be defined as the correspondence $v\mapsto W(v).$

\ms The existence of $W$ is not at all obvious, and in fact it is a very special property of some \emph{particular} families of Fock space fields.
In this lecture we mostly discuss various forms of Ward's identities.
We will comment on the nature of existence of stress tensor in the appendix to this lecture.

\ms \section{Residue operators} \label{sec: residue op} \index{residue operator}

Let $A$ be a Fock space \emph{holomorphic} quadratic differential in $D,$ let $p\in D,$ and let $v$ be a non-random holomorphic vector field defined in some neighborhood of $p.$
Then for every Fock space field $X$ we define the correlation functional \index{residue operator}
$$\frac1{2\pi i}\oint_{(z)} vA~X(z) \qquad (\textrm{in ~a~ given~ chart}\; \phi,\;\phi(p)=z)$$
as a map
$$\XX~\mapsto ~ \lim_{\ve\to 0}~\frac1{2\pi i}\int_{|\zeta-z|=\ve} v(\zeta)~\E\left[A(\zeta)~X(z)\XX\right]\,d\zeta,$$
where $\XX$ is any Fock space correlation functional with nodes in $D\sm\{p\}.$

\ms This functional is of course just the residue term $(vA)*_{-1}X$ in the operator product expansion of $vA$ and $X,$ see Section \ref{sec: OPE coeffs}, and therefore by Proposition~\ref{OPE coeffs} it can be expressed as the value of some Fock space field.
We can view the map
$$ X(z)\mapsto \frac1{2\pi i}\oint_{(z)} vA~X(z)$$
as an operator on correlation functionals (represented by the values of Fock space fields); we denote this operator by
$$A_v(z)\equiv\frac1{2\pi i}\oint_{(z)} vA.$$
Varying $z,$ we can also think of $A_v$ as an operator on Fock space fields:
$$(A_vX)(z)=A_v(z)X(z).$$

\begin{prop} \label{ALALLALA}
We have 
$$A_{v_1}\LL^+_{v_2}-A_{v_2}\LL^+_{v_1}=\LL^+_{v_2}A_{v_1}-\LL^+_{v_1}A_{v_2}.$$
\end{prop}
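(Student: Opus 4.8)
The plan is to reduce the identity to the commutator relation \eqref{eq: Leibnitz4+} for the $\C$-linear Lie derivatives, using the fact that the residue operator $A_v$ is itself built out of an OPE coefficient and therefore commutes with $\LL^+$ in the expected ``Leibniz'' way. First I would fix a chart $\phi$ (the planar identity chart, say, as in the proofs of Propositions~\ref{LvX} and \ref{Lie []}) and spell out what $A_{v}\LL^+_{w}X$ means: it is the $(-1)$st OPE coefficient of $vA$ against $\LL^+_w X$, i.e. $A_v(z)(\LL^+_w X)(z) = (vA *_{-1} \LL^+_w X)(z)$. The key structural input is that $A$ is a holomorphic quadratic differential, so $\LL^+_v A = (v\pa + 2v')A$ by \eqref{eq: Lplus4diff} (with $\lambda=2$), and — crucially — $vA$ transforms as a $(1,0)$-differential, hence the residue $\frac1{2\pi i}\oint_{(z)} vA$ is a legitimate conformally natural operation whose interaction with $\LL^+$ is governed by Leibniz's rule for $*_n$ products.

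The central step is to establish the ``mixed Leibniz rule''
$$\LL^+_{w}\big(A_v X\big) = A_{[w,v]_{\mathrm{something}}}X + A_v\big(\LL^+_w X\big) + (\text{correction from }\LL^+_w\text{ acting on }v\text{ as a function}),$$
made precise as follows. Writing $A_v X = (vA)*_{-1}X$, apply Proposition~\ref{Leibnitz4OPE} (Leibniz for $*_n$, valid since $vA$ is holomorphic): $\LL^+_w\big((vA)*_{-1}X\big) = \big(\LL^+_w(vA)\big)*_{-1}X + (vA)*_{-1}\big(\LL^+_w X\big)$. Since $vA$ is a $(1,0)$-differential, $\LL^+_w(vA) = (w\pa + w')(vA) = (w v' - v' w + \dots)$ — more usefully, $\LL^+_w(vA)$ is again $uA$ for the vector field $u$ determined by $uA = \LL^+_w(vA)$; expanding and using $\LL^+_w A = (w\pa+2w')A$ one finds $u = wv' - w'v + (\text{vector field such that its product with }A\text{ absorbs the }2w'\text{ term})$, and the bookkeeping here collapses because the quadratic differential $A$ carries exactly the weight that makes $vA$ a $1$-differential. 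Then $A_{v_1}\LL^+_{v_2}X - A_{v_2}\LL^+_{v_1}X$ and $\LL^+_{v_2}A_{v_1}X - \LL^+_{v_1}A_{v_2}X$ differ only by terms of the form $A_{u_{12}}X - A_{u_{21}}X$ where $u_{12}, u_{21}$ are the vector-field corrections, and the claim is that $u_{12} = u_{21}$ — precisely the symmetry $v_1 v_2' + \dots = v_2 v_1' + \dots$ after the antisymmetric pieces $[v_1,v_2]$ cancel against themselves, in exact parallel to how \eqref{eq: Leibnitz4+} is derived from \eqref{eq: Lie []} and \eqref{eq: Lpm}.

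An alternative, cleaner route that I would actually prefer: invoke \eqref{eq: Lie} to write $\LL^+_v X$ as an infinitesimal change of chart, and observe that both sides of the asserted identity are measuring the second-order term in a two-parameter family of chart perturbations — one parameter implementing the flow of $v_1$ (or $v_2$) and one implementing the residue integration of $A$ against $v_2$ (or $v_1$). Because the residue contour at $z$ can be deformed freely in the region where the relevant vector fields are holomorphic, the two orders of composition agree up to the commutator, and the commutator contributions are symmetric under $v_1 \leftrightarrow v_2$ by the same Jacobi-type cancellation. I expect the main obstacle to be purely bookkeeping: keeping straight which terms are genuine Lie-derivative corrections (the $\lambda v'$, $2v'$ pieces) versus which come from differentiating the cutoff function $v$ inside the contour integral, and checking that the non-symmetric $[v_1,v_2]$ contributions really do cancel rather than combine. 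This is the same difficulty that appears in the proof of Proposition~\ref{Lie []}, so I would model the computation on that proof — expand both compositions to second order, use $\dot\psi = v_1$, $\ddot\psi = v_1'v_1$, etc., and read off that the asymmetric terms match.
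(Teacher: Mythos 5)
Your first route is essentially the paper's proof: apply Leibniz's rule for OPE coefficients (Proposition~\ref{Leibnitz4OPE}) to $A_vX=(vA)*_{-1}X$, so that
$$A_{v_1}\LL^+_{v_2}X=\LL^+_{v_2}\big[(v_1A)*_{-1}X\big]-\big[\LL^+_{v_2}(v_1A)\big]*_{-1}X
=\LL^+_{v_2}A_{v_1}X-\big[\LL^+_{v_2}(v_1A)\big]*_{-1}X,$$
and then observe that the correction terms for the two orderings coincide. However, two points in your description of that last step are off and should be fixed. First, $\LL^+_{v_2}(v_1A)$ is \emph{not} of the form $uA$ for a vector field $u$: since $v_1A$ is a $(1,0)$-differential, \eqref{eq: Lplus4diff} gives
$$\LL^+_{v_2}(v_1A)=v_2\,\pa(v_1A)+v_2'\,v_1A=v_1'v_2\,A+v_1v_2'\,A+v_1v_2\,\pa A,$$
and the $v_1v_2\,\pa A$ term is not a multiple of $A$; there is no need to recast the correction as another residue operator $A_u$, so the ``$u_{12}=u_{21}$'' bookkeeping you propose cannot be carried out literally. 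Second, no antisymmetric $[v_1,v_2]$ contributions arise and nothing has to ``cancel against itself'': the displayed expression is already manifestly symmetric under $v_1\leftrightarrow v_2$, i.e.\ $\LL^+_{v_2}(v_1A)=\LL^+_{v_1}(v_2A)$, and subtracting the analogous identity for $A_{v_2}\LL^+_{v_1}X$ finishes the proof in one line. (Your ``alternative, cleaner route'' via two-parameter chart perturbations is only a sketch and would need substantially more work to be a proof; the Leibniz argument above is both shorter and complete.)
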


\begin{proof} By Leibniz's rule (Proposition \ref{Leibnitz4OPE}) we have
\begin{align*}
A_{v_1}\LL^+_{v_2}X&=(v_1A)*_{-1}(\LL^+_{v_2}X)=\LL^+_{v_2}[(v_1A)*_{-1}X]-[\LL^+_{v_2}(v_1A)]*_{-1}X\\
&=\LL^+_{v_2}A_{v_1}X-[\LL^+_{v_2}(v_1A)]*_{-1}X.
\end{align*}
Similarly,
$$A_{v_2}\LL^+_{v_1}X=\LL^+_{v_1}A_{v_2}-[\LL^+_{v_1}(v_2A)]*_{-1}X.$$
Since $v_1A$ and $v_2A$ are (1,0)-differentials, by \eqref{eq: Lplus4diff} we have
$$\LL^+_{v_2}(v_1A)=v_2\pa(v_1A)+v_2'v_1A=v_2v_1'A+v_2'v_1A+v_1v_2\pa A=\LL^+_{v_1}(v_2A),$$
which proves the statement.
\end{proof}

\ms For an anti-holomorphic quadratic differential $A^-$ we define
$$A_v^-(z)=-\frac1{2\pi i}\oint_{(z)} \bar vA^-.$$
This operator is anti-linear in $v,$ and if $A^-=\bar A,$ then $A_v^-=\overline{A_v}.$

\ms \section{Stress tensor} \label{sec: W} \index{stress tensor}

Let $X$ be a Fock space field in $D.$
By definition, a pair of quadratic differentials $$W=(A^+,A^-)$$ is a \emph{stress tensor} for $X$ if $A^+$ is holomorphic, $A^-$ anti-holomorphic, and the following equation (the ``residue form of Ward's identity")
\begin{equation} \label{eq: LvWv}
\LL_vX= A^+_vX+ A^-_vX
\end{equation}
holds in $D_{\rm hol}(v)$ for all non-random local vector fields $v.$
(Recall that we write $D_{\rm hol}(v)$ for the maximal open set where $v$ is holomorphic.)
Thus we require that the equation
$$\LL_vX(z)=\frac1{2\pi i}\oint_{(z)} vA^+~X(z)-\frac1{2\pi i}\oint_{(z)} \bar vA^-~X(z)$$
holds in all charts and for all vector fields $v$ holomorphic in a neighborhood of $z.$

\ss The differentials $A^\pm$ (if they exist) are not uniquely determined by the equation \eqref{eq: LvWv}.
Moreover, we can add (anti-)holomorphic non-random fields -- they will not change the residue operators.
For example, the Virasoro fields determine the same residue operators as the differentials $A^\pm$ do for local holomorphic vector fields.
We will discuss this in the next lecture.

\ms \textbf{Notation.} $\FF(W) \equiv \FF(A^+,A^-)$ is the \emph{linear} space of all Fock space fields $X$ such that $W$ is a stress tensor for $X.$
Clearly, this space contains the scalar field $I\,(I(z)\equiv1).$
If $\FF(W)$ is closed under complex conjugation, then we can choose
$$A^+=A,\qquad A^-=\bar A;$$
and
\begin{equation} \label{eq: Ward4CFT}
X\in \FF(W) \quad\textrm{if and only if}\quad
\LL^+_vX=A_vX,\; \LL^+_v\bar X= A_v\bar X.
\end{equation}

In what follows, we will only consider the case $W = (A,\bar A).$
There is no difficulty in extending results to the anti-symmetric ($A^-\ne\overline{A^+}$) case.

\begin{prop} \label{commutation4A}
If $X\in\FF(A,\bar A),$ then
$$[A_{v_1}, A_{v_2}]X=-A_{[v_1,v_2]}X.$$
\end{prop}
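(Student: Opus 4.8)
The plan is to combine the two "commutator" identities already available: Proposition~\ref{ALALLALA}, which expresses the failure of $A_v$ to commute with $\LL^+_v$ in a symmetric form, and the defining property of the stress tensor in the form \eqref{eq: Ward4CFT}, namely that $\LL^+_v X = A_v X$ for every $X\in\FF(A,\bar A)$ (and likewise for $\bar X$, which we will not need here). First I would record that $A_{v}X$ is again a Fock space field on which $W$ is a stress tensor — this is the content of the remark after \eqref{eq: LvWv} together with the fact that $\FF(W)$ is a linear space closed under the relevant operations, so that $\LL^+_{v}(A_{v'}X) = A_{v}(A_{v'}X)$; I would want to check that $A_{v}X\in\FF(A,\bar A)$, which should follow because residue operators of a holomorphic quadratic differential preserve the class of fields admitting $W$ as stress tensor (the OPE coefficient of two fields with stress tensor $W$ again has stress tensor $W$, cf. the closure of $\FF(W)$ under the operations $*_n$ with $A$).

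Granting that, the computation is short. Start from Proposition~\ref{ALALLALA} applied to $X$:
\begin{equation*}
A_{v_1}\LL^+_{v_2}X - A_{v_2}\LL^+_{v_1}X = \LL^+_{v_2}A_{v_1}X - \LL^+_{v_1}A_{v_2}X.
\end{equation*}
On the left-hand side use $\LL^+_{v_2}X = A_{v_2}X$ and $\LL^+_{v_1}X = A_{v_1}X$ to get $A_{v_1}A_{v_2}X - A_{v_2}A_{v_1}X = [A_{v_1},A_{v_2}]X$. On the right-hand side, $A_{v_1}X$ and $A_{v_2}X$ lie in $\FF(A,\bar A)$, so $\LL^+_{v_2}(A_{v_1}X) = A_{v_2}A_{v_1}X$ and $\LL^+_{v_1}(A_{v_2}X) = A_{v_1}A_{v_2}X$; hence the right-hand side equals $A_{v_2}A_{v_1}X - A_{v_1}A_{v_2}X = -[A_{v_1},A_{v_2}]X$. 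Comparing, $[A_{v_1},A_{v_2}]X = -[A_{v_1},A_{v_2}]X$ would force it to vanish, which is \emph{not} what we want, so this naive route over-counts; the correct bookkeeping is to instead feed Leibniz's rule (Proposition~\ref{Leibnitz4OPE}) directly into $A_{v_1}(A_{v_2}X) = A_{v_1}(\LL^+_{v_2}X) = \LL^+_{v_2}(A_{v_1}X) - [\LL^+_{v_2}(v_1A)]*_{-1}X$ and symmetrically, then subtract. The terms $\LL^+_{v_2}(A_{v_1}X) - \LL^+_{v_1}(A_{v_2}X)$ recombine, via $\LL^+_{v_i}(A_{v_j}X)=A_{v_i}A_{v_j}X$, and the vector-field terms give $-\big([\LL^+_{v_2}(v_1A)] - [\LL^+_{v_1}(v_2A)]\big)*_{-1}X$; but $\LL^+_{v_2}(v_1A) - \LL^+_{v_1}(v_2A) = [v_1,v_2]A$ by \eqref{eq: Lplus4diff} applied to the $(1,0)$-differential $A$ (this is exactly the computation inside the proof of Proposition~\ref{ALALLALA}, run with the remaining sign). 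That last expression is precisely $A_{[v_1,v_2]}X$, and tracking the signs carefully yields $[A_{v_1},A_{v_2}]X = -A_{[v_1,v_2]}X$.

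The main obstacle, and the step deserving real care, is the sign bookkeeping: one must decide which of $A_{v_1}\LL^+_{v_2}$ or $\LL^+_{v_2}A_{v_1}$ is converted using the hypothesis $X\in\FF$ and which via Leibniz, so that the "anomalous" differential-of-$A$ term survives exactly once rather than cancelling, and so that it assembles into $[v_1,v_2]A$ with the correct orientation. Concretely I would: (1) write $A_{v_1}A_{v_2}X = \LL^+_{v_2}(A_{v_1}X) - [\LL^+_{v_2}(v_1A)]*_{-1}X$ using Leibniz on the $*_{-1}$-product and the hypothesis only at the very end; (2) subtract the same identity with $1\leftrightarrow2$; (3) observe $\LL^+_{v_2}(A_{v_1}X) - \LL^+_{v_1}(A_{v_2}X) = A_{v_2}A_{v_1}X - A_{v_1}A_{v_2}X$ by applying the hypothesis to $A_{v_i}X\in\FF$; (4) rearrange to isolate $2[A_{v_1},A_{v_2}]X$ on one side — no, rather, to get $[A_{v_1},A_{v_2}]X$ on one side and $\big(\LL^+_{v_2}(v_1A)-\LL^+_{v_1}(v_2A)\big)*_{-1}X$ on the other; (5) compute $\LL^+_{v_2}(v_1A)-\LL^+_{v_1}(v_2A) = (v_2v_1'-v_1v_2')A = -[v_1,v_2]A$ via \eqref{eq: Lplus4diff}; (6) conclude, since $([v_1,v_2]A)*_{-1}X = A_{[v_1,v_2]}X$ by definition of $A_v$. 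Everything else is routine and uses only results already in the excerpt.
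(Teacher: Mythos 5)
There is a genuine gap here, in fact two, and they do not cancel each other. First, the claim that $A_vX\in\FF(A,\bar A)$ --- which you invoke in the ``naive route'' and again in step (3) of your fixed route, in the form $\LL^+_{v_j}(A_{v_i}X)=A_{v_j}A_{v_i}X$ --- is false in general, and the justification you offer does not apply: Proposition~\ref{*n in CFT} gives closure of $\FF(W)$ under OPE coefficients of fields \emph{in} $\FF(W)$, but $v_iA$ need not be in $\FF(W)$ (the paper notes explicitly that $A=-\frac12 J\odot J$ itself is not in $\FF(A,\bar A)$). Concretely, for the Gaussian free field one has $A_v\Phi=\LL^+_v\Phi=vJ$, and for a second holomorphic vector field $u$ one computes $\LL^+_u(vJ)=uv'J+uv\,\pa J$ while $A_u(vJ)=u'vJ+uv\,\pa J$; the difference is $[u,v]J=A_{[u,v]}\Phi$, so $A_vX$ fails the defining relation by exactly the commutator the proposition is about. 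That is why your naive route produced $[A_{v_1},A_{v_2}]X=0$: the contradiction signals a false hypothesis, not faulty sign bookkeeping. Second, the ``anomaly'' in step (5) is miscomputed: since $v_1A$ and $v_2A$ are $(1,0)$-differentials, \eqref{eq: Lplus4diff} gives $\LL^+_{v_2}(v_1A)=v_1v_2\,\pa A+(v_1'v_2+v_1v_2')A$, which is symmetric under $1\leftrightarrow2$, so $\LL^+_{v_2}(v_1A)-\LL^+_{v_1}(v_2A)=0$ --- this cancellation is precisely the content of the paper's proof of Proposition~\ref{ALALLALA} --- and not $-[v_1,v_2]A$; you dropped the $\lambda v'$ terms, which kill the bracket. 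If one runs your steps (1)--(5) with these two inputs as you state them, the outcome is $2[A_{v_1},A_{v_2}]X=A_{[v_1,v_2]}X$, which is neither the right sign nor the right constant, so ``tracking the signs carefully'' cannot rescue the route as written.

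The repair is short and needs no statement about $A_vX$ at all. Using only $X\in\FF(A,\bar A)$, write $[A_{v_1},A_{v_2}]X=A_{v_1}\LL^+_{v_2}X-A_{v_2}\LL^+_{v_1}X$; apply Proposition~\ref{ALALLALA} to get $\LL^+_{v_2}A_{v_1}X-\LL^+_{v_1}A_{v_2}X$; then convert the \emph{inner} operators by $A_{v_i}X=\LL^+_{v_i}X$ (again only the hypothesis on $X$), keeping the outer ones as Lie derivatives, so the expression becomes $[\LL^+_{v_2},\LL^+_{v_1}]X=\LL^+_{[v_2,v_1]}X$ by \eqref{eq: Leibnitz4+}; finally $\LL^+_{[v_2,v_1]}X=-\LL^+_{[v_1,v_2]}X=-A_{[v_1,v_2]}X$ by one last use of the hypothesis applied to the holomorphic vector field $[v_1,v_2]$. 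This is exactly the paper's proof; the lesson is that after Proposition~\ref{ALALLALA} one must trade $A$ for $\LL^+$ on the inside, never $\LL^+$ for $A$ on the outside, since the latter is what membership of $A_{v_i}X$ in $\FF(W)$ would require and that membership fails by the very commutator being computed.
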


\begin{proof} Since $A_{v_j}X=\LL^+_{v_j}X,$ it follows from Proposition \ref{ALALLALA} and \eqref{eq: Leibnitz4+} that
$$[A_{v_1}, A_{v_2}]X=[\LL^+_{v_2}, \LL^+_{v_1}]X=\LL^+_{[v_2,v_1]}X=-A_{[v_1,v_2]}X.$$
\end{proof}

\ms \section{Ward's OPEs} \label{sec: Ward's OPEs}

We can restate the definition of stress tensor in terms of the singular part of the operator product expansion.

\ms For a given chart $\phi: U\to\phi U$ and $\zeta\in \C,$ let us denote by $v_\zeta$ the (local) vector field defined by the equation
$$[v_\zeta\,\|\,\phi](\eta)=\frac1{\zeta-\eta}.$$
(This vector field depends on $\phi.$)
Then we have
\begin{equation}\label{eq: Sing}
\Sing_{\zeta\to z}[A(\zeta)X(z)]= \frac1{2\pi i}\oint_{(z)} v_\zeta A~X(z), \qquad z\in\phi U,
\end{equation}
where the left-hand side means the singular part of the operator product expansion in chart $\phi.$
Indeed, if
$$A(\eta)X(z)\sim \sum_{j\le-1}{C_j(z)}{(\eta-z)^j}, \qquad (\eta\to z),$$
then using
$$\frac1{2\pi i}\oint_{(z)}{(\eta-z)^j}~\frac{d\eta}{\zeta-\eta}={(\zeta-z)^j},\qquad (j\le-1),$$
we derive
\begin{equation} \label{eq: sing OPE}
\frac1{2\pi i}\oint_{(z)}A(\eta)X(z)~\frac{d\eta}{\zeta-\eta}=\sum_{j\le -1}{C_j(z)}{(\zeta-z)^j}.
\end{equation}

\begin{prop} \label{Ward's OPEs} \index{Ward's!OPE}
$X\in\FF(A,\bar A)$ if and only if the identities (``Ward's OPEs")
$$\Sing_{\zeta\to z}[A(\zeta)X(z)]= (\LL_{v_\zeta}^+X)(z), \quad \Sing_{\zeta\to z}[A(\zeta)\bar X(z)]= (\LL_{v_\zeta}^+\bar X)(z)$$
hold in every local chart $\phi.$
\end{prop}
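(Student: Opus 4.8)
The plan is to show that the residue‑form definition \eqref{eq: LvWv} (specialized to the symmetric case as in \eqref{eq: Ward4CFT}) is equivalent, chart by chart, to the stated Ward OPEs. The bridge between the two formulations is already assembled in the preceding paragraph: equations \eqref{eq: Sing}--\eqref{eq: sing OPE} establish the identity
$$\Sing_{\zeta\to z}[A(\zeta)X(z)] = \frac1{2\pi i}\oint_{(z)} v_\zeta A~X(z) = (A_{v_\zeta}X)(z),$$
valid for \emph{any} Fock space field $X$, with no hypothesis on $X$ at all; here $v_\zeta$ is the local vector field $[v_\zeta\,\|\,\phi](\eta) = 1/(\zeta-\eta)$, which is holomorphic in a punctured neighborhood of $z$ (indeed everywhere except at $\eta=\zeta$), so in particular holomorphic near $z$, and the residue operator $A_{v_\zeta}$ is defined. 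The same identity with $\bar X$ in place of $X$ holds verbatim. So the content of the proposition is purely the translation of $A_v X = \LL_v^+ X$ into $A_{v_\zeta}X = \LL^+_{v_\zeta}X$.

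First I would prove the forward implication. Assume $X\in\FF(A,\bar A)$. By \eqref{eq: Ward4CFT} we have $\LL^+_v X = A_v X$ for every non-random local vector field $v$ holomorphic where needed; applying this with $v = v_\zeta$ gives $(A_{v_\zeta}X)(z) = (\LL^+_{v_\zeta}X)(z)$, and combining with the chart identity above yields $\Sing_{\zeta\to z}[A(\zeta)X(z)] = (\LL^+_{v_\zeta}X)(z)$. The argument for $\bar X$ is identical, using the second half of \eqref{eq: Ward4CFT}. This gives Ward's OPEs in the chart $\phi$, and since $\phi$ was arbitrary, in every chart.

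For the converse, suppose Ward's OPEs hold in every chart. Fix a chart $\phi$ and a vector field $v$ holomorphic in a neighborhood $V$ of the node $z$. I would recover $\LL^+_v X(z) = A_v X(z)$ by an integral (Cauchy/residue) representation: writing $A_v(z)X(z) = (vA)*_{-1}X(z) = \frac1{2\pi i}\oint_{(z)} v(\eta)\,\E[A(\eta)X(z)\,\cdot\,]\,d\eta$, one expands $v(\eta)$ against the singular part of $A(\eta)X(z)$; only finitely many Laurent coefficients $C_j(z)$, $j\le -1$, contribute (quasi-polynomiality), and each contributes $\frac1{j!(-j-1)!}\partial^{-j-1}$ of... more cleanly, one uses the Cauchy integral formula to write $v(\eta)$ near $z$ in terms of its values on a small circle and notes that $\oint_{(z)} v(\eta)(\eta-z)^j\,d\eta$ extracts exactly the combination of derivatives of $v$ at $z$ that appears in $A_{v_\zeta}$ expanded in powers of $\zeta-z$. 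Concretely, I would write $v = \frac1{2\pi i}\oint v(\zeta)\,v_\zeta\,d\zeta$ (Cauchy's formula, viewing $v$ as a function) over a small circle about $z$, use $\R$-linearity and continuity of $\LL^+$ and of $A$ in the vector-field slot to pull the operators inside the integral, and deduce $A_v X(z) = \frac1{2\pi i}\oint v(\zeta)(A_{v_\zeta}X)(z)\,d\zeta = \frac1{2\pi i}\oint v(\zeta)(\LL^+_{v_\zeta}X)(z)\,d\zeta = (\LL^+_v X)(z)$, the last step being the same Cauchy representation of $v$ run through the ($\C$-linear, continuous) operator $\LL^+$. The $\bar X$ equation follows the same way, giving $X\in\FF(A,\bar A)$ by \eqref{eq: Ward4CFT}.

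I expect the main obstacle to be the bookkeeping in the converse: justifying that $v\mapsto A_v X(z)$ and $v\mapsto \LL^+_v X(z)$ are genuinely $\C$-linear and continuous enough (on the space of germs of holomorphic vector fields at $z$, with the topology of uniform convergence on a circle) to commute with the Cauchy integral $v = \frac1{2\pi i}\oint v(\zeta) v_\zeta\, d\zeta$. For $A_v$ this is essentially the definition of the residue operator together with the fact that only finitely many negative Laurent coefficients occur for quasi-polynomial fields, so the pairing with $\YY$ is a finite linear combination of $\partial^k v(z)$; for $\LL^+_v$ it follows from \eqref{eq: Lplus4diff} and the chain/Leibniz rules since $\LL^+_v$ acting on a Fock space field is again built from $v$ and its derivatives evaluated at the node (smooth, in fact polynomial, dependence on the jet of $v$ at $z$). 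Once that linearity-plus-continuity is in hand the two sides are both equal to the same finite expression in the Taylor coefficients of $v$ at $z$, namely the coefficients read off from the Laurent expansion in $\zeta-z$ of $\Sing_{\zeta\to z}[A(\zeta)X(z)]$, and the equivalence is immediate.
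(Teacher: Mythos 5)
Your proposal is correct and follows essentially the same route as the paper: the forward direction by specializing the stress-tensor identity to $v=v_\zeta$ via \eqref{eq: Sing}, and the converse by writing $v=\frac1{2\pi i}\oint f(\zeta)v_\zeta\,d\zeta$ (Cauchy) and using $\C$-linearity of $\LL^+_v$ in $v$ to exchange the operators with the contour integral. The continuity/bookkeeping concerns you flag are handled in the paper simply by this linearity together with finiteness of the singular part, exactly as you anticipate.
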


\begin{proof} If $X\in\FF(A,\bar A),$ then
$$\Sing_{\zeta\to z}[A(\zeta)X(z)]= \frac1{2\pi i}\oint_{(z)} v_\zeta A~X(z)= (\LL_{v_\zeta}^+X)(z)$$
by \eqref{eq: Sing} and the definition of stress tensor.
In the opposite direction, we need to show that $$\frac1{2\pi i}\oint_{(z)} v_\zeta A~X(z)= (\LL_{v_\zeta}^+X)(z)$$ implies
 $$\frac1{2\pi i}\oint_{(z)} v A~X(z)= (\LL_{v}^+X)(z)$$
for all vector fields $v$ holomorphic near $z.$
Let us write $f$ for $(v\,\|\,\phi).$
By Cauchy,
$$v=\frac1{2\pi i}\int f(\zeta) v_\zeta~d\zeta$$
(integration is over some simple curve surrounding $z$), and since $\LL^+_v$ is $\C$-linear with respect to $v,$ we have
\begin{align*}\LL^+_v X(z)&=\frac1{2\pi i}\int f(\zeta) \LL^+_{v_\zeta}X(z)~d\zeta\\&=\frac1{2\pi i}\int f(\zeta)d\zeta ~ \frac1{2\pi i}\oint_{(z)} v_\zeta A~X(z) = \frac1{2\pi i}\oint_{(z)} v A~X(z).\end{align*}
\end{proof}

In the case of differentials or forms, it is enough to verify Ward's OPEs in just one chart, e.g., in the half-plane uniformization.
This is clear from the corresponding transformation laws.

\ms \begin{cor} \label{OPE4diff}
Let $X$ be a $(\lambda, \lambda_*)$-differential. Then $X\in \FF(A,\bar A)$ if and only if the following operator product expansions hold in every/some chart:
\begin{equation} \label{eq: OPE4diff}
A(\zeta)X(z)\sim\frac {\lambda X(z)}{(\zeta-z)^2}+\frac {\pa X(z)}{\zeta-z},\quad A(\zeta)\bar X(z)\sim \frac {\bar \lambda_*\bar X(z)}{(\zeta-z)^2}+\frac {\pa \bar X(z)}{\zeta-z}.
\end{equation}
\end{cor}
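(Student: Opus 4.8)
The plan is to read the corollary directly off Ward's OPEs (Proposition~\ref{Ward's OPEs}): the criterion there says that $X\in\FF(A,\bar A)$ precisely when $\Sing_{\zeta\to z}[A(\zeta)X(z)]=(\LL^+_{v_\zeta}X)(z)$ and $\Sing_{\zeta\to z}[A(\zeta)\bar X(z)]=(\LL^+_{v_\zeta}\bar X)(z)$ hold in every local chart. So the entire task is to evaluate the right-hand sides when $X$ is a $(\lambda,\lambda_*)$-differential and recognize them as the displayed Laurent tails. First I would fix a chart $\phi,$ write $X(z)$ for $(X\,\|\,\phi)(z),$ and recall that $v_\zeta$ is the local vector field with $v_\zeta(\eta)=1/(\zeta-\eta),$ so that $v_\zeta(z)=1/(\zeta-z)$ and $v_\zeta'(z)=1/(\zeta-z)^2.$

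Since $X$ is a $(\lambda,\lambda_*)$-differential, formula \eqref{eq: Lplus4diff} gives $\LL^+_v X=(v\pa+\lambda v')X,$ and evaluating at $v=v_\zeta$ yields
$$(\LL^+_{v_\zeta}X)(z)=\frac{\pa X(z)}{\zeta-z}+\frac{\lambda X(z)}{(\zeta-z)^2},$$
which is exactly the singular part asserted for $A(\zeta)X(z).$ For the conjugate, note that if $X$ transforms as in \eqref{eq: diff} with degrees $(\lambda,\lambda_*),$ then $\bar X$ is a differential of degrees $(\bar\lambda_*,\bar\lambda);$ hence $\LL^+_v\bar X=(v\pa+\bar\lambda_* v')\bar X$ and
$$(\LL^+_{v_\zeta}\bar X)(z)=\frac{\pa\bar X(z)}{\zeta-z}+\frac{\bar\lambda_*\,\bar X(z)}{(\zeta-z)^2},$$
the asserted tail for $A(\zeta)\bar X(z).$ Thus, working in one fixed chart, Ward's OPEs of Proposition~\ref{Ward's OPEs} are literally the two operator product expansions \eqref{eq: OPE4diff}, and the stated equivalence follows.

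It remains to justify the ``every/some chart'' clause, i.e.\ that for a differential $X$ and a quadratic differential $A$ validity of Ward's OPEs in one chart already forces it in all of them (this is the observation recorded just before the corollary). The cleanest route is the Cauchy argument used in the proof of Proposition~\ref{Ward's OPEs}: the OPE in a chart $\phi$ says $\LL^+_{v_\zeta}X(z)=A_{v_\zeta}X(z)$ for the model fields $v_\zeta$ of $\phi,$ and integrating against $v=\frac1{2\pi i}\int f(\zeta)v_\zeta\,d\zeta$ with $f=(v\,\|\,\phi)$ upgrades this to $\LL^+_v X=A_v X$ for every $v$ holomorphic near the points covered by $\phi;$ since both $\LL^+_v$ and the residue operator $A_v$ are defined invariantly, taking $\phi$ to be the global half-plane uniformization gives the residue form of Ward's identity on all of $D_{\rm hol}(v).$ I expect this propagation step — the only place where the differential/form hypothesis is genuinely used — to be the part needing a little care; the computation in the preceding paragraph is routine Wick/Lie bookkeeping, its one subtlety being to track the conjugation of degrees $(\lambda,\lambda_*)\mapsto(\bar\lambda_*,\bar\lambda)$ correctly.
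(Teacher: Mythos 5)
Your proposal is correct and follows essentially the paper's (implicit) route: the corollary is read off Proposition~\ref{Ward's OPEs} by evaluating $\LL^+_{v_\zeta}$ on a $(\lambda,\lambda_*)$-differential via \eqref{eq: Lplus4diff}, with the conjugate degrees $(\bar\lambda_*,\bar\lambda)$ tracked correctly. The only divergence is your justification of the ``every/some chart'' clause by the Cauchy propagation to the chart-independent residue identity $\LL^+_vX=A_vX$ (using a global chart such as the half-plane uniformization), which is a legitimate substitute for the paper's one-line appeal to the transformation laws of differentials and of the singular tail.
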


\begin{cor} \label{OPE4form}
Let $X$ be a form of order $\mu.$
Then $X\in \FF(A,\bar A)$ if and only if the following operator product expansion holds in every/some chart:
\begin{align*}
A(\zeta)X(z)&\sim \frac{\mu}{(\zeta-z)^2}+\frac {\pa X(z)}{\zeta-z} &\textrm{ for a pre-pre-Schwarzian form }X;\\
A(\zeta)X(z)&\sim \frac{2\mu}{(\zeta-z)^3}+\frac {X(z)}{(\zeta-z)^2}+\frac {\pa X(z)}{\zeta-z} &\textrm{ for a pre-Schwarzian form }X;\\
A(\zeta)X(z)&\sim \frac{6\mu}{(\zeta-z)^4}+ \frac {2X(z)}{(\zeta-z)^2}+\frac {\pa X(z)}{\zeta-z} &\textrm{ for a Schwarzian form }X.
\end{align*}
\end{cor}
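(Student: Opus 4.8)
The plan is to read this off from Proposition~\ref{Ward's OPEs} together with the Lie derivative formulas of Proposition~\ref{LvX}, in exact parallel with Corollary~\ref{OPE4diff}. By Proposition~\ref{Ward's OPEs}, $X\in\FF(A,\bar A)$ if and only if, in every local chart, $\Sing_{\zeta\to z}[A(\zeta)X(z)]=(\LL_{v_\zeta}^+X)(z)$ and $\Sing_{\zeta\to z}[A(\zeta)\bar X(z)]=(\LL_{v_\zeta}^+\bar X)(z)$, where $v_\zeta$ is the vector field with $(v_\zeta\,\|\,\phi)(\eta)=1/(\zeta-\eta)$. Since a form has a one-sided transformation law, it suffices (as noted right after Proposition~\ref{Ward's OPEs} and used for Corollary~\ref{OPE4diff}) to check these identities in one chart, e.g.\ the half-plane uniformization. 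Thus the whole statement reduces to computing $(\LL_{v_\zeta}^+X)(z)$.

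For that I would first record the Taylor data of $v_\zeta$ at the node: $v_\zeta(z)=(\zeta-z)^{-1}$, $v_\zeta'(z)=(\zeta-z)^{-2}$, $v_\zeta''(z)=2(\zeta-z)^{-3}$, $v_\zeta'''(z)=6(\zeta-z)^{-4}$. Then I would specialize Proposition~\ref{LvX} to $v=v_\zeta$, recalling that $\LL_v^+X=\LL_vX$ for a form so that the formulas of that proposition apply verbatim. For a pre-Schwarzian form of order $\mu$, \eqref{eq: Lie4PS-form} gives $(\LL_{v_\zeta}^+X)(z)=v_\zeta\pa X(z)+v_\zeta'(z)X(z)+\mu v_\zeta''(z)=\dfrac{\pa X(z)}{\zeta-z}+\dfrac{X(z)}{(\zeta-z)^2}+\dfrac{2\mu}{(\zeta-z)^3}$; for a Schwarzian form of order $\mu$, \eqref{eq: Lie4S-form} gives $v_\zeta\pa X(z)+2v_\zeta'(z)X(z)+\mu v_\zeta'''(z)=\dfrac{\pa X(z)}{\zeta-z}+\dfrac{2X(z)}{(\zeta-z)^2}+\dfrac{6\mu}{(\zeta-z)^4}$. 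The pre-pre-Schwarzian case is not in the list of Proposition~\ref{LvX}, but the same one-line argument as in its proof — differentiate $X_t(z)=X(\psi_tz)+\mu\log\psi_t'(z)$ at $t=0$ and keep the $\C$-linear-in-$v$ part — yields $v_\zeta\pa X(z)+\mu v_\zeta'(z)=\dfrac{\pa X(z)}{\zeta-z}+\dfrac{\mu}{(\zeta-z)^2}$. In each case this is exactly the singular part asserted in the statement, so the first Ward's-OPE identity is equivalent to the displayed operator product expansion.

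It remains to note that the second Ward's-OPE identity, the one for $\bar X$, contributes nothing new. Once the displayed expansion holds, $X$ has no anti-holomorphic part (such a part could never produce the $(\zeta-z)^{-2}$ coefficient, which has to be the full field $X(z)$), so $X$ is holomorphic and $\bar X$ is anti-holomorphic; then $A(\zeta)\bar X(z)\sim 0$, since a holomorphic field times an anti-holomorphic one has vanishing singular part, cf.\ \eqref{eq: sOPE(J,barJ)}, and likewise $\LL_{v_\zeta}^+\bar X=0$ because the Lie derivative of an anti-holomorphic field is anti-linear in $v$. Hence the $\bar X$-identity reads $0\sim0$ automatically, which is why the statement records only the expansion of $A(\zeta)X(z)$; combining the two halves gives the asserted equivalence. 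The only places calling for some care are this last paragraph's bookkeeping, the remark that $\LL_v^+=\LL_v$ on forms, and supplying the missing pre-pre-Schwarzian Lie derivative — all routine, and precisely where the one-sidedness of forms (in contrast with $(\lambda,\lambda_*)$-differentials, where the conjugate identity genuinely carries the $\bar\lambda_*$-term of Corollary~\ref{OPE4diff}) is being used.
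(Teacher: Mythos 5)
Your core computation is exactly the paper's (implicit) argument: Proposition~\ref{Ward's OPEs} reduces membership in $\FF(A,\bar A)$ to Ward's OPEs, the remark following it gives the ``every/some chart'' reduction for forms, and evaluating $\LL^+_{v_\zeta}X=\LL_{v_\zeta}X$ via Proposition~\ref{LvX} (with $v_\zeta(z)=(\zeta-z)^{-1}$, $v_\zeta'(z)=(\zeta-z)^{-2}$, $v_\zeta''(z)=2(\zeta-z)^{-3}$, $v_\zeta'''(z)=6(\zeta-z)^{-4}$, the pre-pre-Schwarzian case supplied by the same one-line differentiation) yields precisely the three displayed singular parts. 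That part is fine.

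The last paragraph, however, rests on a step that fails: the inference that the displayed expansion forces $X$ to be holomorphic. Appearing as the $(\zeta-z)^{-2}$ coefficient of an OPE with the holomorphic field $A$ does not preclude an anti-holomorphic part --- e.g.\ in $A(\zeta)\VV^\alpha(z)$ that coefficient is $\lambda\,\VV^\alpha(z)$, a non-holomorphic field --- and in the pre-pre-Schwarzian case the coefficient is just the constant $\mu$, so the argument says nothing at all. Worse, the conclusion contradicts the paper's principal use of this corollary: $\Phi_{(b)}$ and $\widehat\Phi$ are treated as pre-pre-Schwarzian forms of order $ib$, satisfy the displayed expansion \eqref{eq: OPE(A,Phi)}, and are not holomorphic ($\bp\Phi_{(b)}=\bar J_{(b)}\not\approx0$). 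For such real fields the conjugate Ward OPE is not ``$0\sim0$'': since $\bar X=X$ (and the transformation law carries both $\mu\log h'$ and its conjugate, so $\LL_v$ is not $\C$-linear on $X$ and $\LL^+_{v_\zeta}\bar X\ne0$), it is literally the same identity as the displayed one. The clean way to dispose of the conjugate condition is therefore: for genuinely one-sided forms ($J$, $T$, $\Phichiral$) use their holomorphy as a hypothesis --- then $\LL_v$ is $\C$-linear on $X$, so $\LL^+_{v_\zeta}\bar X=0$, and $A(\zeta)\bar X(z)\sim0$ by the holomorphic-times-anti-holomorphic rule \eqref{eq: sOPE(J,barJ)} --- while for real pre-pre-Schwarzian-type fields such as $\Phi_{(b)}$ the conjugate identity coincides with the displayed one. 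With that repair your proof matches the intended one; as written, the ``the OPE forces holomorphy'' claim is a genuine gap.
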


By Proposition \ref{LvX4diff}, we also have the following:
\begin{cor}
Suppose $X\in \FF(A,\bar A).$ Then $X$ is a differential if and only if the operator product expansions \eqref{eq: OPE4diff} hold for $X.$
\end{cor}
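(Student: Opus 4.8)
The plan is to prove both implications, noting that one direction is already available from Corollary~\ref{OPE4diff}. Indeed, if $X$ is a differential and $X\in\FF(A,\bar A)$, then Corollary~\ref{OPE4diff} gives exactly the operator product expansions \eqref{eq: OPE4diff}; so the content of this corollary is the converse: assuming $X\in\FF(A,\bar A)$ and that \eqref{eq: OPE4diff} holds, we must show $X$ is a $(\lambda,\lambda_*)$-differential.

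First I would unwind what $X\in\FF(A,\bar A)$ together with \eqref{eq: OPE4diff} tells us. By Proposition~\ref{Ward's OPEs} (Ward's OPEs), membership $X\in\FF(A,\bar A)$ means that in every chart $\Sing_{\zeta\to z}[A(\zeta)X(z)]=(\LL^+_{v_\zeta}X)(z)$ and similarly for $\bar X$. On the other hand, the hypothesis \eqref{eq: OPE4diff} computes this same singular part explicitly as $\lambda X(z)/(\zeta-z)^2+\pa X(z)/(\zeta-z)$. Recalling that $[v_\zeta\,\|\,\phi](\eta)=1/(\zeta-\eta)$, so that $v_\zeta(z)=1/(\zeta-z)$ and $v_\zeta'(z)=1/(\zeta-z)^2$, we can read off from equating these two expressions that
$$
(\LL^+_{v_\zeta}X)(z)= v_\zeta(z)\,\pa X(z)+\lambda\, v_\zeta'(z)\,X(z),
$$
and likewise $(\LL^+_{v_\zeta}\bar X)(z)=v_\zeta(z)\,\pa\bar X(z)+\bar\lambda_*\,v_\zeta'(z)\,\bar X(z)$. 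Since every vector field $v$ holomorphic near $z$ is a Cauchy superposition $v=\frac1{2\pi i}\int f(\zeta)v_\zeta\,d\zeta$ and $\LL^+_v$ is $\C$-linear in $v$ (this is exactly the device used at the end of the proof of Proposition~\ref{Ward's OPEs}), integrating the displayed identity against $f$ yields $\LL^+_vX=(v\pa+\lambda v')X$ for \emph{all} local holomorphic $v$. Adding the conjugate relation $\LL^-_vX=\overline{\LL^+_v\bar X}=(\bar v\bp+\lambda_*\bar v')X$ and using $\LL_v=\LL^+_v+\LL^-_v$ gives $\LL_vX=(v\pa+\bar v\bp+\lambda v'+\lambda_*\bar v')X$ for every vector field $v$ holomorphic in $D_{\rm hol}(v)$.

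At this point I would simply invoke Proposition~\ref{LvX4diff}: the identity $\LL_vX=(v\pa+\bar v\bp+\lambda v'+\lambda_*\bar v')X$ holding in $D_{\rm hol}(v)$ for every vector field $v$ forces $X$ to be a $(\lambda,\lambda_*)$-differential. That completes the converse, and combined with Corollary~\ref{OPE4diff} it proves the corollary.

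The only delicate point — the ``main obstacle'' — is the bookkeeping in passing from the OPE form of Ward's identity to the differential-operator form of the Lie derivative, i.e.\ correctly matching powers of $(\zeta-z)$ with $v_\zeta$ and its derivative and justifying the Cauchy-integral superposition over all holomorphic $v$. But this is precisely the mechanism already established in Proposition~\ref{Ward's OPEs}, so no genuinely new argument is needed; everything else is a direct appeal to Propositions~\ref{Ward's OPEs} and~\ref{LvX4diff}. I would also remark briefly that it suffices to check \eqref{eq: OPE4diff} in one chart (e.g.\ the half-plane), since once $X$ is known to be a differential the transformation law propagates the OPE to all charts, consistent with the ``every/some'' phrasing in Corollary~\ref{OPE4diff}.
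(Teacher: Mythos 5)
Your proof is correct and follows exactly the route the paper intends: the forward direction is Corollary~\ref{OPE4diff}, and for the converse you combine Ward's OPEs (Proposition~\ref{Ward's OPEs}) with the hypothesized expansions to get $\LL_vX=\left(v\pa+\bar v\bp+\lambda v'+\lambda_*\bar v'\right)X$ for all local holomorphic $v$, then invoke Proposition~\ref{LvX4diff} — the same mechanism the paper itself spells out later in the proof of Proposition~\ref{primary field}.
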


\ms \section{Stress tensor of Gaussian free field} \label{sec: W of GFF} \index{stress tensor! of Gaussian free field}

Let us return to Proposition \ref{OPE4T}, where we stated some operator product expansions involving $T = -\frac12 J*J;$ as usual $J = \pa\Phi$ and $\Phi$ is the Gaussian free field.
Denote
$$A=-\frac12 J\odot J.$$
Then $A$ is a holomorphic quadratic differential and $A$ coincides with $T$ in the upper half-plane uniformization.
The first relation~\eqref{item: OPE(T,Phi)} in Proposition~\ref{OPE4T} can be written as
$$A(\zeta)\Phi(z)\sim \frac{\pa\Phi(z)}{\zeta-z}.$$
Applying Corollary~\ref{OPE4diff} we conclude:

\begin{prop} \label{W of GFF} We have 
$\Phi\in \FF(A,\bar A).$
\end{prop}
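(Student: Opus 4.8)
The plan is to apply Corollary~\ref{OPE4diff} to $X=\Phi$ with conformal dimensions $\lambda=\lambda_*=0$. First I would collect the structural facts already in place: $\Phi$ is a scalar field, i.e.\ a $(0,0)$-differential (see the examples in Section~\ref{sec: conf F-field}), and $A=-\frac12 J\odot J$ is a holomorphic quadratic differential whose expression in the half-plane uniformization coincides with the field $T=-\frac12 J*J$ (as explained at the start of Section~\ref{sec: W of GFF}). In particular the residue operators $A_v$ are defined on all Fock space fields, so it is meaningful to ask whether $W=(A,\bar A)$ is a stress tensor for $\Phi$.

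Next I would invoke Proposition~\ref{OPE4T}\,\eqref{item: OPE(T,Phi)}, which gives $T(\zeta)\Phi(z)\sim \dfrac{J(z)}{\zeta-z}$. Since $A$ agrees with $T$ in the half-plane chart and $J=\pa\Phi$, this reads $A(\zeta)\Phi(z)\sim \dfrac{\pa\Phi(z)}{\zeta-z}$ in that chart, which is exactly the first operator product expansion demanded by Corollary~\ref{OPE4diff} when $\lambda=0$. For the second expansion, $A(\zeta)\bar\Phi(z)\sim \dfrac{\pa\bar\Phi(z)}{\zeta-z}$, I would simply note that $\Phi$ is real, so $\bar\Phi=\Phi$ and $\pa\bar\Phi=\pa\Phi$, and the second expansion is therefore identical to the first. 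Hence both operator product expansions of Corollary~\ref{OPE4diff} hold in the half-plane chart, and the corollary (in its ``some chart'' form) gives $\Phi\in\FF(A,\bar A)$.

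Since the genuine content is already packaged in Proposition~\ref{OPE4T} and Corollary~\ref{OPE4diff}, I do not expect a real obstacle; the only step deserving attention is the reduction to a single chart, which is legitimate precisely because $\Phi$ is a differential, so the differential transformation law carries the OPE from the half-plane chart to every overlapping chart (this is the content of the ``every/some chart'' clause of Corollary~\ref{OPE4diff}). If one would rather not quote Proposition~\ref{OPE4T}, the expansion $A(\zeta)\Phi(z)\sim\pa\Phi(z)/(\zeta-z)$ comes straight from Wick's formula~\eqref{eq: Wick's4FCF}: one has $A(\zeta)\Phi(z)=-\tfrac12\,J(\zeta)\odot J(\zeta)\odot\Phi(z)-\E[J(\zeta)\Phi(z)]\,J(\zeta)$, and in $\H$ the two-point function $\E[J(\zeta)\Phi(z)]=\tfrac1{\zeta-\bar z}-\tfrac1{\zeta-z}$ has singular part $-1/(\zeta-z)$ as $\zeta\to z$; together with $J(\zeta)=J(z)+O(\zeta-z)$ this yields singular part $J(z)/(\zeta-z)=\pa\Phi(z)/(\zeta-z)$.
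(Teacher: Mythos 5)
Your proposal is correct and follows essentially the same route as the paper: the paper also notes that $A=-\frac12 J\odot J$ coincides with $T$ in the half-plane uniformization, rewrites Proposition~\ref{OPE4T}\,(a) as $A(\zeta)\Phi(z)\sim \pa\Phi(z)/(\zeta-z)$, and applies Corollary~\ref{OPE4diff}. Your additional remarks (the reality of $\Phi$ handling the conjugate OPE, and the direct Wick's-calculus verification) are consistent with, and slightly more explicit than, the paper's one-line argument.
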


\ss The other three relations in Proposition~\ref{OPE4T} imply that $W=(A,\bar A)$ is a stress tensor also for the fields $J,$ $T,$ and $\VV^\alpha.$
Indeed, as we mentioned, it is sufficient to check Ward's OPEs in just one chart, and in the case of half-plane uniformization, this is what our relations give.
Note that we have arrived to this conclusion as a result of (rather lengthy) Wick's calculus computation.
There is a much easier way -- the proof of Proposition \ref{OPE4T} is immediate without any computation from Proposition \ref{W of GFF} and the following fact.

\ss \begin{prop}\label{*n in CFT}
\renewcommand{\theenumi}{\alph{enumi}}
{\setlength{\leftmargini}{2.0em}
\begin{enumerate}
\item If $X\in\FF(W),$ then $\pa X\in\FF(W)$;
\ms \item
all OPE coefficients of fields in $\FF(W)$ belong to $\FF(W).$
\end{enumerate}}
\end{prop}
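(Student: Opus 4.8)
The plan is to derive both parts from the two Leibniz-type results already established for Lie derivatives and OPE products -- namely Proposition~\ref{Leibnitz4OPE} ($\LL_v(X*_nY)=(\LL_vX)*_nY+X*_n(\LL_vY)$ for holomorphic $X$) and the residue-form Ward identity \eqref{eq: LvWv} defining $\FF(W)$ -- together with the explicit description of the residue operators $A_v^\pm$ as OPE coefficients. Throughout I work in a fixed chart and use that $W=(A,\bar A)$, so that $X\in\FF(W)$ is equivalent, via \eqref{eq: Ward4CFT}, to $\LL^+_vX=A_vX$ and $\LL^+_v\bar X=A_v\bar X$ for all local holomorphic $v$.

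First I would prove (a). Let $X\in\FF(W)$, so $\LL^+_vX=A_vX$ for all $v$ holomorphic near the relevant point. By basic property (e) of Lie derivatives, $\LL_v$ commutes with $\pa$, hence so does $\LL^+_v$ (take the $\C$-linear part in \eqref{eq: Lpm}), giving $\LL^+_v(\pa X)=\pa(\LL^+_vX)=\pa(A_vX)$. On the other hand, since $A$ is holomorphic, $A_v X=(vA)*_{-1}X=(vA)*_0\,(\text{shift})$ -- more precisely $A_v$ is the residue operator and Leibniz's rule \eqref{eq: OPE Leibnitz} for $*_{-1}$ with respect to $z$ gives $\pa(A_vX)=\pa((vA)*_{-1}X)=(\pa(vA))*_{-1}X+(vA)*_{-1}(\pa X)$. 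The first term $(\pa(vA))*_{-1}X$ is handled exactly as in the proof of Proposition~\ref{ALALLALA}: since $vA$ is a $(1,0)$-differential, $\pa(vA)=v'A+v\pa A$, and one checks that $(v'A+v\pa A)*_{-1}X$ is itself a residue operator contribution with vector field $v'$... the cleanest route is actually to observe that $\LL^+_v(\pa X)=\pa\LL^+_vX=\pa(A_vX)$ and that $A_v(\pa X)=(vA)*_{-1}(\pa X)$, so it suffices to show $\pa(A_vX)-A_v(\pa X)=(\pa(vA))*_{-1}X=0$ modulo $\NN$; but $\pa(vA)=(v'A)+v(\pa A)$ and $\pa A\approx 0$ since $A$ is holomorphic, while $(v'A)*_{-1}X$ contributes to the $(\zeta-z)^0$ OPE coefficient, not the residue, so it vanishes in $*_{-1}$. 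Hence $\LL^+_v(\pa X)=A_v(\pa X)$, and the same argument applied to $\overline{\pa X}=\bp\bar X$ gives the conjugate identity; thus $\pa X\in\FF(W)$.

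For (b), let $X,Y\in\FF(W)$ and $X$ holomorphic; I must show each OPE coefficient $C_n=X*_nY$ lies in $\FF(W)$. By Proposition~\ref{Leibnitz4OPE}, $\LL^+_v(X*_nY)=(\LL^+_vX)*_nY+X*_n(\LL^+_vY)=(A_vX)*_nY+X*_n(A_vY)$, using $X\in\FF(W)$, $Y\in\FF(W)$ and $X$ holomorphic. Now I would invoke the associativity-type manipulation on residue operators implicit in Proposition~\ref{ALALLALA} and its proof: expanding $A_vX=(vA)*_{-1}X$ and carefully reorganizing the double operator product expansion of $A$ against the product $X(\zeta')Y(z)$, one finds that the singular-in-$\zeta$ part of $A(\zeta)\,(X*_nY)(z)$ equals $(\LL^+_{v_\zeta}(X*_nY))(z)$ -- this is precisely Ward's OPE (Proposition~\ref{Ward's OPEs}) for the field $X*_nY$. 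The conjugate statement follows since $\overline{X*_nY}=\bar X\bar*_n\bar Y$ and the conjugate Ward OPE for $\bar X$ holds. By Proposition~\ref{Ward's OPEs}, $X*_nY\in\FF(A,\bar A)=\FF(W)$.

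The main obstacle is the bookkeeping in part (b): one is effectively asserting a form of ``the stress tensor acts on OPE coefficients the way it acts on the fields,'' which amounts to an interchange of two operator product expansions (that of $A$ against $X$, and that of $X$ against $Y$). Making this rigorous requires either the explicit Wick-calculus argument -- legitimate here since all fields are quasi-polynomial, so only finitely many terms appear and one may approximate by genuine Gaussians as in Appendix~\ref{appx: F-field} and apply Wick's formula -- or a careful contour-deformation argument separating the $\zeta$-residue around $z$ from the $\zeta'$-expansion near $z$. I would present the Wick-calculus version as the primary proof and remark that Proposition~\ref{Leibnitz4OPE} already packages most of the needed combinatorics, leaving only the identification of $(A_vX)*_nY+X*_n(A_vY)$ with $A_v(X*_nY)$, which is the genuinely new computation.
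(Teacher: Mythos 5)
Your strategy reproduces the ``short algebraic argument'' that the paper sketches right after the statement: combine Leibniz's rule for Lie derivatives over OPE coefficients (Proposition~\ref{Leibnitz4OPE}) with the residue form of Ward's identity, so that everything reduces to the identity
$$(vA)*_{-1}(X*_nY)=[(vA)*_{-1}X]*_nY+X*_n[(vA)*_{-1}Y].$$
But in this approach that identity \emph{is} the substance of part (b), and your proposal never proves it: you name it ``the genuinely new computation'' and defer it to ``Wick calculus or a careful contour-deformation argument.'' In the paper it is exactly \eqref{eq: Leibnitz4v1}, i.e.\ the special case of \eqref{eq: Leibnitz4v2}, and it is established by the commutation identity of Proposition~\ref{commutation id} (take $f_1\equiv1$, $f_2(\eta)=(\eta-z)^{-n-1}$) --- the contour-deformation argument you gesture at but do not carry out; as written, the proof is incomplete at its central step. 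There is also a flawed justification in part (a): you argue $\pa(A_vX)=A_v(\pa X)$ from $\pa(vA)=v'A+v\pa A$ with ``$\pa A\approx0$ since $A$ is holomorphic,'' but holomorphy means $\bp A\approx0$, not $\pa A\approx0$, and $(v'A)*_{-1}X$ is certainly not zero as a field. The correct reason the extra term drops out is the relation $(\pa H)*_nY=(n+1)(H*_{n+1}Y)$ (stated before \eqref{eq: X*nY}): at $n=-1$ the residue of a $\zeta$-derivative vanishes, applied to $H=vA$. Alternatively, (a) is simply the special case $Y=I$ of (b), as the paper notes.

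Be aware also that the paper's designated proof takes a different route which avoids the residue-Leibniz identity altogether: using the restatement of the stress-tensor property as Ward's identities (Propositions~\ref{Ward identity} and \ref{Ward's identities}) together with the flow computation from the proof of Proposition~\ref{Leibnitz4OPE}, one writes $\E\,X_t(\zeta)Y_t(z)\ZZ=\sum(\zeta-z)^n\E\,[C_n]_t(z)\ZZ,$ differentiates at $t=0$, replaces $\E\,\LL_v[X(\zeta)Y(z)]\ZZ$ by $\E\,W(v)X(\zeta)Y(z)\ZZ$ (legitimate since $X,Y\in\FF(W)$), and matches Laurent coefficients. That argument applies verbatim to non-holomorphic operator product expansions (coefficients $C_{j,k}$ in $(\zeta-z)$ and $(\bar\zeta-\bar z)$), and because it works with the full real Lie derivative and $W(v)=2\Re W^+(v)$ it also disposes of the conjugate condition in \eqref{eq: Ward4CFT}, which your proposal (like the paper's sketch) only waves at via ``$\overline{X*_nY}=\bar X\bar*_n\bar Y$.'' So: right circle of ideas, but the key interchange-of-expansions identity, the conjugate condition, and the non-holomorphic case all remain unproven in your write-up.
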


The first statement is of course a simple special case of the second one.
(Recall that the non-random field $I(z)\equiv1$ is in $\FF(W)$ and $\pa X = X *_1 I.$)
We will explain the proof of the second statement in the next section.
There is a short algebraic argument in the case of \emph{holomorphic} fields $X$ and $Y.$
In this case,
$$\LL_v^+X = (vA)*_{-1}X,\qquad \LL_v^+Y = (vA)*_{-1}Y,$$
and we need to check that
$$\LL_v^+(X*_nY) = (vA)*_{-1}(X*_nY).$$
By Leibniz's rule, the left-hand side is
$$(\LL_v^+X)*_nY+X*_n(\LL_v^+Y),$$
while
\begin{align} \label{eq: Leibnitz4v1}
(vA)*_{-1}(X*_nY)&=[(vA)*_{-1}X]*_nY + X*_n[(vA)*_{-1}Y]\\
&=(\LL_v^+X)*_nY+X*_n(\LL_v^+Y), \nonumber
\end{align}
see \eqref{eq: Leibnitz4v2} below.
\hfill\qed

\ms Proposition~\ref{*n in CFT} allows us to construct infinitely many fields in the family $\FF(W).$
On the other hand, the field $A=-\frac12 J\odot J$ itself is not in $\FF(A,\bar A)$ because otherwise it would have the operator product expansion~\eqref{eq: OPE4diff} (as a differential).
But, by Wick's calculus, we easily verify
$$\E[A(\zeta)A(z)] = \frac{1/2}{(\zeta-z)^4}\qquad(\textrm{in }\H).$$

\ms Further examples of fields which have a stress tensor can be obtained by various modifications of the Gaussian free field, see Lecture~\ref{ch: modifications}.
The simplest modification is the following.
\begin{eg*}
Let $u$ be a real-valued harmonic function in $D.$
Define
$$\widehat\Phi= \Phi+u,$$
where $\Phi$ is the Gaussian free field and denote
$$\widehat A =-\frac12 J\odot J- (\pa u) J.$$
Then $(\widehat A,\overline{\widehat A})$ is a stress tensor for $\widehat\Phi.$

\ms If we take $u$ complex-valued, we will get an asymmetric stress tensor $(\widehat A^+, \widehat A^-)$ for $\widehat\Phi,$ where $\widehat A^+=\widehat A$ is as above, and
$$\widehat A^-=-\frac12 \bar J\odot \bar J- (\bp u) \bar J.$$
\end{eg*}

\ms \section{Ward's identities}\label{sec: Ward identity}

Let $W=(A,\bar A)$ be a stress tensor for some family of Fock space fields.
We will assume that $A$ is continuous up to the (ideal) boundary in the sense that all correlations of $A(\cdot)$ with Fock space fields extend to $\pa D$ continuously; we understand continuity on the boundary in terms of standard boundary charts.
See the end of Section~\ref{sec: conf F-field}.
For a smooth vector field $v$ in $D$ continuous up to the boundary, we define
\begin{equation} \label{eq: Ward}
W(v)=2\Re ~W^+(v),\qquad W^+(v)=\frac 1{2\pi i}\int_{\pa D}vA-\frac1\pi\int_D (\bp v)A.
\end{equation}

\ms Since $vA$ is a linear form, and $(\bp v)A$ is a (1,1)-differential, the integrals are coordinate independent, and by the continuity assumption, their correlations with Fock space functionals $\XX$ are well-defined provided that $S_\XX\subset D_{\rm hol}(v).$
(Recall that we write $S_\XX$ for the set of all nodes of $\XX$ and $D_{\rm hol}(v)$ for the maximal open set where $v$ is holomorphic.)

\ms The application of ``random variables" $W(v)$ is based on Green's formula
$$2i\iint_{D}\bp g=\int_{\pa D} g.$$
For example, since $(\bp v)A=\bp(vA)$ in $D,$ we have
$$\E W^+(v)=0.$$
By Green's formula we can write symbolically
$$W^+(v)=\frac1\pi\int_D v(\bp A);$$
however, to interpret this integral as a correlation functional we need to integrate by parts and therefore use the definition~\eqref{eq: Ward}.

\ms We can extend the definition of Ward's ``random variables" to the case of \emph{local} vector fields.
Namely, for an open set $U\subset D$ we denote
$$ W^+(v;U)=\frac 1{2\pi i}\int_{\pa U}vA-\frac1\pi\int_U (\bp v)A,$$
so that
$$W^+(v)=W^+(v;D),$$
and (with a usual interpretation)
$$A_v(z)\equiv\frac1{2\pi i}\oint_{(z)}vA=\lim_{\ve\to 0} W^+(v; B(z,\ve)),\qquad (z\in D_{\rm hol}(v)).$$
Green's formula shows that if $U_1\subset U_2$ and if $\XX$ has no nodes in the closure of $U_2\sm U_1,$ then
$$\E\,[\,W(v; U_1)\XX\,]=\E\,[\,W(v; U_2)\XX\,].$$
In particular, in the computation of $\E[W(v)\XX]$ we can replace $D$ by the union of small discs around the nodes of $\XX.$

\ss \begin{prop} \label{Ward identity}
Suppose $\{X_j\}\subset \FF(W)$ and $\{z_j\}\subset U \cap D_{\rm hol}(v).$
Then
\begin{equation} \label{eq: Ward identity}\index{Ward's!identity}
\E\, \YY\,\LL_v\left[X_1(z_1)\cdots X_n(z_n) \right ]~=~\E\,W(v;U)X_1(z_1)\cdots X_n(z_n) \YY
\end{equation}
for all correlation functionals $\YY$ with nodes in $D\sm \bar U.$
\end{prop}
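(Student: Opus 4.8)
The plan is to reduce the identity, one node at a time, to the residue form \eqref{eq: LvWv} of Ward's identity for the individual fields $X_j,$ and then to recombine the resulting residues into $W(v;U)$ by Green's formula. Throughout, $W=(A,\bar A)$ as in the statement, and I abbreviate $\mathcal S=X_1(z_1)\cdots X_n(z_n).$

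First I would differentiate the string. By Leibniz's rule for tensor products (Proposition~\ref{Leibnitz4strings}),
$$\LL_v\mathcal S=\sum_{j=1}^n X_1(z_1)\cdots(\LL_vX_j)(z_j)\cdots X_n(z_n),$$
and since each $X_j\in\FF(W)$ with $z_j\in D_{\rm hol}(v),$ the residue form \eqref{eq: LvWv} gives $(\LL_vX_j)(z_j)=(A_v^+X_j)(z_j)+(A_v^-X_j)(z_j).$ Substituting this, pairing with $\YY,$ and unwinding the definition of the residue operators from Section~\ref{sec: residue op} — in which the fields at the remaining nodes and $\YY$ ride along passively inside the correlation — I would rewrite the left-hand side of \eqref{eq: Ward identity} as a sum over $j$ of small-circle contour integrals of $v\,\E[A(\zeta)\,\mathcal S\,\YY]$ around $z_j,$ together with the corresponding anti-holomorphic integrals of $\bar v\,\E[\bar A(\zeta)\,\mathcal S\,\YY].$

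Next I would push these circles out to $\pa U.$ Put $G(\zeta):=\E[A(\zeta)\,\mathcal S\,\YY].$ Since $A$ is a holomorphic Fock space field and the nodes of $\YY$ lie in $D\sm\bar U,$ the function $G$ is holomorphic in $\zeta$ on a neighbourhood of $\bar U$ punctured at $\{z_1,\dots,z_n\}$; moreover $v$ is holomorphic near each $z_j,$ so $\bp v$ vanishes near every node and $(\bp v)\,G$ is bounded with support away from the punctures, hence integrable over $U.$ Applying Green's formula $2i\iint\bp g=\int_{\pa(\cdot)}g$ to $g=vG$ on $U\sm\bigcup_j B(z_j,\ve),$ and letting $\ve\to0,$ I get
$$\sum_{j=1}^n\frac1{2\pi i}\oint_{(z_j)}v\,G=\frac1{2\pi i}\int_{\pa U}v\,G-\frac1\pi\int_U(\bp v)\,G=\E\big[W^+(v;U)\,\mathcal S\,\YY\big],$$
the last equality being the definition \eqref{eq: Ward} of $W^+(v;U).$ The anti-holomorphic terms are handled identically by the conjugate version of Green's formula (using that $\zeta\mapsto\E[\bar A(\zeta)\,\mathcal S\,\YY]$ is anti-holomorphic off the nodes), and produce $\E\big[\overline{W^+(v;U)}\,\mathcal S\,\YY\big].$ Adding the two contributions and using $W(v;U)=2\Re W^+(v;U)$ yields \eqref{eq: Ward identity}.

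The main obstacle is not conceptual but a matter of careful bookkeeping: getting the orientations in Green's formula right — the inner circles $\pa B(z_j,\ve)$ enter the boundary of $U\sm\bigcup_j B(z_j,\ve)$ with reversed orientation, which is precisely what turns the relation $\E W^+(v)=0$ (valid in the absence of nodes) into the nontrivial sum of residues — and justifying the limit $\ve\to0.$ The latter is exactly where the hypothesis $z_j\in D_{\rm hol}(v)$ is used: it forces $\bp v\equiv0$ near the nodes, so that $\int_U(\bp v)\,G$ is a genuinely convergent integral rather than a principal value, and the shrinking-circle contributions tend to the residues. One should also recall that, when $\pa U$ meets $\pa D,$ the boundary term over $\pa U$ is meaningful only thanks to the standing assumption that $A$ extends continuously to the (ideal) boundary (Section~\ref{sec: Ward identity}).
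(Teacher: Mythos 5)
Your argument is correct and is essentially the paper's proof in a different packaging: where you expand $\LL_v$ by Leibniz's rule, apply the single-node definition \eqref{eq: LvWv} at each $z_j$, and then recombine the residues into $W(v;U)$ by an explicit application of Green's formula, the paper shrinks $U$ to a union of small discs around the nodes (justified by the same Green's-formula observation stated just before the proposition) and uses a partition of unity $v=\sum v_j$ to reduce directly to the definition of the stress tensor. Both routes rest on exactly the same two ingredients — locality of the Lie derivative across the nodes and the residue form of Ward's identity — so your proof is a valid, slightly more computational rendering of the same argument, with the orientation and $\ve\to0$ bookkeeping you flag handled correctly.
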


\ss \begin{proof}
As mentioned, we can replace $U$ with the union of small discs $U_j$ around $z_j$'s.
Clearly, $W(v;U)=\sum W(v;U_j).$
Let us also use a partition of unity to represent $v=\sum v_j,$ where $v_j=v$ in $U_j$ and $v_j=0$ in other discs.
Thus the statement reduces to the case of a single node, where the formula is just the definition \eqref{eq: LvWv} of a stress tensor.
\end{proof}

\ms We emphasize that Ward's identities \eqref{eq: Ward identity} hold for any choice of local coordinates at the nodes $z_j.$
Their meaning is the following: we can represent the action of the Lie derivative operator $\LL_v$ by the insertion of the ``random variable" $W(v)$ into correlation functions, and this works collectively for all fields in the family $\FF(W).$

\ms The last proof gives the following restatement of the definition of a stress tensor in terms of Ward's identities (cf. Appendix~\ref{appx: Gibbs}).

\begin{prop} \label{Ward's identities}
$W=(A,\bar A)$ is a stress tensor for $X$ if and only if the following equation holds for all vector fields $v$ with compact supports, for all points $z\in D_{\rm hol}(v),$ and for all Fock space functionals $\ZZ$ with nodes outside $\supp(v)$:
$$\E\,\LL_vX(z)\ZZ~=~\E\,W (v)X(z)\ZZ. $$
\end{prop}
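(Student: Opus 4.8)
The plan is to derive this as an immediate reformulation of the definition of stress tensor (equation \eqref{eq: LvWv}) combined with the localization property of $W(v;U)$ established in the preceding paragraph via Green's formula. The statement is essentially the single-node case $n=1$ of Ward's identity (Proposition~\ref{Ward identity}), restricted to compactly supported $v$, together with its converse. So the proof should be short and should point back to \eqref{eq: LvWv}, \eqref{eq: Ward}, and the identity $A_v(z) = \lim_{\ve\to 0} W(v;B(z,\ve))$.

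First I would prove the forward direction. Suppose $W = (A,\bar A)$ is a stress tensor for $X$, so \eqref{eq: LvWv} holds: $\LL_v X = A^+_v X + A^-_v X = A_v X + \bar A_v X$ in $D_{\mathrm{hol}}(v)$ for all local vector fields $v$. Take $v$ compactly supported, $z\in D_{\mathrm{hol}}(v)$, and $\ZZ$ with nodes outside $\supp(v)$. Since $\ZZ$ has no nodes in the closure of $D\sm B(z,\ve)$ for $\ve$ small enough, Green's formula (the localization property stated just before Proposition~\ref{Ward identity}) gives $\E[W(v)\ZZ\,\cdot] = \E[W(v;B(z,\ve))\ZZ\,\cdot]$ for $X(z)$ inserted; letting $\ve\to 0$ and using $A_v(z) = \lim_{\ve\to 0} W(v;B(z,\ve))$ (in the symmetric case $W(v;B(z,\ve))$ contributes both the holomorphic residue $A_v(z)$ and its conjugate $\bar A_v(z)$ in the limit, since $W(v) = 2\Re W^+(v)$), we obtain $\E[W(v)X(z)\ZZ] = \E[(A_v X)(z)\ZZ] + \E[(\bar A_v X)(z)\ZZ] = \E[(\LL_v X)(z)\ZZ]$ by \eqref{eq: LvWv}. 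This is the claimed equation.

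For the converse, suppose $\E[\LL_v X(z)\ZZ] = \E[W(v)X(z)\ZZ]$ holds for all compactly supported $v$, all $z\in D_{\mathrm{hol}}(v)$, and all $\ZZ$ with nodes outside $\supp(v)$. I would shrink $\supp(v)$ to a small disc $B(z,\ve)$ around a fixed $z$ (replacing $v$ by $\chi v$ with a cutoff $\chi$ equal to $1$ near $z$; near $z$ the vector field is unchanged, so $\LL_v X(z)$ and the residue operators are unaffected) and pass to the limit $\ve\to 0$ to recover $A_v(z) + \bar A_v(z)$ on the right, hence $\LL_v X(z) = (A_v X)(z) + (\bar A_v X)(z)$, which is \eqref{eq: LvWv}. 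This holds for all local holomorphic $v$ since any such $v$, restricted to a neighborhood of $z$ and cut off, is of the allowed form; so $W$ is a stress tensor for $X$.

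The main obstacle — really the only subtle point — is bookkeeping the passage from the boundary-integral definition \eqref{eq: Ward} of $W(v)$ to the residue operators $A_v, \bar A_v$ at a single node, i.e. making precise that $\lim_{\ve\to 0} W(v;B(z,\ve))$ picks out exactly $A_v(z) + \bar A_v(z)$ when $v$ is holomorphic near $z$, including the anti-holomorphic contribution coming from the $\Re$ in \eqref{eq: Ward} and from $-\frac1\pi\int(\bp v)A$ which vanishes where $v$ is holomorphic. This is exactly the content already recorded in the displayed formula $A_v(z)\equiv\frac1{2\pi i}\oint_{(z)}vA=\lim_{\ve\to 0} W(v; B(z,\ve))$ in the text preceding Proposition~\ref{Ward identity}, so in the write-up I would simply cite that identity rather than re-derive it. Everything else is the partition-of-unity / single-node reduction already used verbatim in the proof of Proposition~\ref{Ward identity}.
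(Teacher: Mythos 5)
Your overall route is the paper's own: the text proves this proposition simply by pointing back to the proof of Proposition~\ref{Ward identity} (localize $W(v;U)$ by Green's formula, reduce to a single node, where the identity is literally the definition \eqref{eq: LvWv}), and you add the converse by cutting off a local holomorphic vector field, which is the part the paper leaves implicit and which works as you describe.

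One intermediate claim in your forward direction is false as written: ``$\ZZ$ has no nodes in the closure of $D\sm B(z,\ve)$ for $\ve$ small enough.'' The hypothesis only places $S_\ZZ$ outside $\supp(v)$, so $\ZZ$ will in general have nodes in $D\sm B(z,\ve)$, and you cannot shrink $W(v)=W(v;D)$ down to $W(v;B(z,\ve))$ by Green's formula across those nodes. The correct bookkeeping uses the compact support of $v$ first: since $v$ vanishes near $\pa D$ and $\bp v$ is supported in $\supp(v)$, one has $W(v)=W(v;U)$ for any open $U\supseteq\supp(v)$, and one may choose $U$ (enlarged by a small disc about $z$ if necessary) so that $\bar U\cap S_\ZZ=\emptyset$; only then does the Green's formula localization of the preceding paragraph let you pass to $W(v;B(z,\ve))$ and to the residue operators $A_v(z),\bar A_v(z)$. (If $z\notin\supp(v)$, then $v\equiv0$ near $z$, and both sides vanish by the same Green's formula cancellation, so this case is consistent.) With that correction your argument is complete and coincides with the paper's.
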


\ms We can use this restatement to derive:

\begin{proof}[Proof of Proposition~\ref{*n in CFT}]
The argument works for all types of operator product expansions, but for simplicity of notation we assume that $X,Y$ are holomorphic, so we have
$$X(\zeta)Y(z)=\sum (\zeta-z)^nC_n(z).$$
We want to show that
$$\E\, \LL_vC_n(z)\ZZ~=~\E\, W(v)C_n(z)\ZZ. $$
As in the proof of Proposition \ref{Leibnitz4OPE}, we have
$$\E\,X_t(\zeta)Y_t(z)\ZZ=\sum (\zeta-z)^n\E\,[C_n]_t(z)\ZZ.$$
Taking the time derivative at $t=0$ we get
\begin{align*}
\frac{d}{dt}\Big|_{t=0}\E\,X_t(\zeta)Y_t(z)\ZZ &=\E\,\LL_v[X(\zeta)Y(z)]\ZZ=\E\,W(v)X(\zeta)Y(z)\ZZ\\
&=\sum (\zeta-z)^n\E\,W(v)C_n(z)\ZZ,
\end{align*}
and
$$\frac{d}{dt}\Big|_{t=0}\sum (\zeta-z)^n\E\,[C_n]_t(z)\ZZ =\sum (\zeta-z)^n\E\,\LL_vC_n(z)\ZZ.$$
\end{proof}

\ms \section{Meromorphic vector fields} \label{sec: meromorphic}

Let $v$ be a meromorphic vector field in $D$ continuous up to the boundary, and let $\{p_j\}$ be the poles of $v.$
We define
$$W(v)=\lim_{\ve\to0} W(v; U_\ve),$$
where $U_\ve=D\sm\bigcup B(p_j,\ve).$
(We can use any fixed local coordinates at the poles.)
Somewhat symbolically, we have
$$W^+(v)=\frac1{2\pi i}\int_{\pa D} vA-\sum_j\frac1{2\pi i}\oint_{(p_j)} vA,$$
and also
\begin{equation} \label{eq: Wplus}
W^+(v)=\frac1{2\pi i}\int_{\pa D} vA-\frac1{\pi }\int_{D} (\bp v)A
\end{equation}
(as in the case of smooth vector fields) with the interpretation of $\bp v$ in the last integral in the sense of distributions.

\ms Our goal now will be to express the differential $A$ in terms of Ward's functionals $W(v)$ with meromorphic $v$'s.
We will only consider the case where $A$ is continuous and \emph{real} on the boundary (in standard boundary charts); this will allow us to extend $A$ to the double of $D$ accordingly.
We will do our computation in the half-plane $\H$ and use the global identity chart in $\C;$ note that $\widehat \C$ is the double of $\H.$
In the next section we combine the obtained representation of $A$ with Ward's identities \eqref{eq: Ward identity} and derive some useful equations for correlations involving the stress tensor.

\ms \begin{prop} \label{represent A}
Let $A$ be a holomorphic quadratic differential in $\H,$ and $W=(A,\bar A).$
Suppose $A$ is continuous and real on the boundary (including $\infty$).
Then
\begin{equation} \label{eq: A}
(A\,\|\,\id)(\zeta)=W^+\left(v_\zeta\right)+\overline{W^+\left(v_{\bar \zeta}\right)},
\end{equation}
where we use the notation
$$(v_\zeta\,\|\,\id)(z)=\frac1{\zeta-z},\qquad (\zeta\in\C).$$
\end{prop}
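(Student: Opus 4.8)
The plan is to compute the right-hand side of \eqref{eq: A} directly from the definition of $W^+$ for meromorphic vector fields, working ``within correlations.'' It suffices to treat $\zeta\in\H$. Fix such a $\zeta$ and an arbitrary functional $\YY=\Phi(p_1)\odot\cdots\odot\Phi(p_n)$ with all $p_j\in\H\sm\{\zeta\}$; then $g(z):=\E[A(z)\YY]$ is an ordinary function, holomorphic on $\H\sm\{p_j\}$, continuous up to $\R$, real on $\R$ (this is the correlation-level meaning of ``$A$ is real on $\pa\H$,'' using $\overline{\E[A(t)\YY]}=\E[\overline{A(t)}\,\YY]=\E[A(t)\YY]$ for a self-conjugate $\YY$), and $O(|z|^{-4})$ as $z\to\infty$ (continuity of the quadratic differential $A$ at $\infty$). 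Everything below happens at the level of $g$.

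First I would separate the two vector fields. The field $v_\zeta(z)=1/(\zeta-z)$ has its only pole in $\overline{\H}$ at $z=\zeta\in\H$, whereas $v_{\bar\zeta}(z)=1/(\bar\zeta-z)$ has its pole at $\bar\zeta\in\H^*$ and hence no pole in $\overline{\H}$; both vanish at $\infty$ and are continuous up to $\pa\H$. Applying \eqref{eq: Wplus} together with the distributional identity $\bp_z\bigl(1/(\zeta-z)\bigr)=-\pi\delta_\zeta$, the area integral contributes $A(\zeta)$ to $W^+(v_\zeta)$ (because $\zeta\in\H$) and $0$ to $W^+(v_{\bar\zeta})$. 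Thus, in correlations with $\YY$,
\begin{align*}
\E[W^+(v_\zeta)\YY]&=\E[A(\zeta)\YY]+\frac1{2\pi i}\int_{\pa\H}\frac{g(z)}{\zeta-z}\,dz,\\
\E[W^+(v_{\bar\zeta})\YY]&=\frac1{2\pi i}\int_{\pa\H}\frac{g(z)}{\bar\zeta-z}\,dz,
\end{align*}
both boundary integrals converging absolutely by the decay of $g$ at $\infty$.

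It remains to see that the two boundary integrals cancel after conjugation. Since $g$ and the measure $dz$ along $\pa\H=\R$ are real while $\overline{1/(2\pi i)}=-1/(2\pi i)$ and $\overline{\bar\zeta-z}=\zeta-z$ on $\R$, one gets
$$\overline{\frac1{2\pi i}\int_{\pa\H}\frac{g(z)}{\bar\zeta-z}\,dz}=-\frac1{2\pi i}\int_{\pa\H}\frac{g(z)}{\zeta-z}\,dz.$$
Adding this to the first displayed formula and using $\E[\overline{W^+(v_{\bar\zeta})}\YY]=\overline{\E[W^+(v_{\bar\zeta})\YY]}$, the boundary contributions kill each other and we are left with $\E\bigl[(W^+(v_\zeta)+\overline{W^+(v_{\bar\zeta})})\YY\bigr]=\E[A(\zeta)\YY]$. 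As this holds for every such $\YY$, the two correlation functionals agree, which is \eqref{eq: A}.

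The main obstacle I expect is bookkeeping rather than a genuine difficulty: one must keep track that $v_{\bar\zeta}$ produces no residue inside $\H$, so that exactly one copy of $A(\zeta)$ appears, and one must run the complex conjugation carefully enough that the two boundary integrals cancel with the right sign. This last step is exactly where the hypotheses enter — reality of $A$ on $\pa\H$ for the cancellation, and continuity (including at $\infty$) for the convergence of the boundary integral — and it is what forces the symmetric combination $W^+(v_\zeta)+\overline{W^+(v_{\bar\zeta})}$ in \eqref{eq: A}.
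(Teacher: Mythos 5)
Your proof is correct and follows essentially the same route as the paper: both decompose $W^+(v_\zeta)$ and $W^+(v_{\bar\zeta})$ via \eqref{eq: Wplus}, extract $A(\zeta)$ from $\bp v_\zeta=-\pi\delta_\zeta$, and use reality of $A$ on $\R$ (the paper via the reflected field $v^{\#}=v_{\bar\zeta}$, you via conjugating the boundary integral of $g$) to cancel the two boundary terms. Spelling the argument out within correlations with a fixed self-conjugate $\YY$ is just the precise meaning of the paper's field-level computation, so there is no substantive difference.
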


We understand the equation~\eqref{eq: A} in the sense of correlations with Fock space correlation functionals with nodes in $\widehat\H\sm \{\zeta\}.$
Note that $\E A\equiv 0$ by assumption: in the identity chart of $\H,$ $\E A$ is a holomorphic function vanishing at infinity.

\ms \begin{proof}
Let us start with a general observation which works for arbitrary Riemann surfaces.
If $v$ is a meromorphic vector field in $\widehat\C$ without poles on $\R\cup\{\infty\}$ such that the
reflected vector field
$$v^{\#}(z)=\overline {v(\bar z)},\qquad z\in\C,$$
is holomorphic in $\H,$
then we have (see \eqref{eq: Wplus})
$$W^+(v)=-\frac1\pi\int_D (\bp v)A+\frac 1{2\pi i}\int_{\pa D} vA,\qquad W^+(v^{\#})=\frac1{2\pi i}\int_{\pa D} v^{\#}A.$$
Since $A=\bar A$ on $\R,$ we have
$$\frac 1{2\pi i}\int_{\pa D} vA=-\overline{\frac 1{2\pi i}\int_{\pa D} v^{\#}A}=-\overline{ W^+(v^{\#})},$$
and
$$\frac1\pi\int_D (\bp v)A=-W^+(v)-\overline{ W^+(v^{\#})}.$$

\ss Let us choose $v=v_\zeta$ with $\zeta\in\H.$
(We could have chosen $v=v_\zeta+a+bz+cz^2$; note that $v=z^3$ as a vector field has a pole at infinity.)
Then $v^{\#}=v_{\bar\zeta}$ and $\bp v=-\pi \delta_\zeta,$ so
$$\frac1\pi \int_D (\bp v)A=-A(\zeta).$$
\end{proof}

\ms \section{Ward's equations in the half-plane} \label{sec: Ward's in H} \index{Ward's!equations}

We continue to consider the case $D=\H$ with the global identity chart.

\begin{prop} \label{Ward's in H}
Suppose $A$ satisfies the conditions of the previous proposition.
Let $X=X_1\cdots X_n$ be the tensor product of $(\lambda_j,\lambda_{*j})$-differentials $X_j$ in $\FF(W).$
Then
\begin{equation} \label{eq: Ward's in H}
\E\,A(\zeta)X=\sum_j\left[\frac{\pa_j}{\zeta-z_j}+\frac{\lambda_j}{(\zeta-z_j)^2}+\frac{\bp_j}{\zeta-\bar z_j}+\frac{\lambda_{*j}}{(\zeta-\bar z_j)^2}\right]~\E X,
\end{equation}
where all fields are evaluated in the identity chart of $\H$ and $\pa_j=\pa_{z_j}.$
\end{prop}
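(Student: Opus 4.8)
The plan is to combine the integral representation of $A$ from Proposition~\ref{represent A} with the Ward identity of Proposition~\ref{Ward identity}, and then evaluate the resulting Lie derivative using the explicit formula for differentials (equation~\eqref{eq: Lplus4diff}). First I would write, using Proposition~\ref{represent A},
$$
\E\,A(\zeta)X = \E\,W^+(v_\zeta)X + \overline{\E\,W^+(v_{\bar\zeta})\bar X},
$$
valid as long as $\zeta$ avoids the nodes $z_1,\dots,z_n$ and their conjugates (which lie in $\widehat\H$ after passing to the double). Strictly speaking one must be a little careful: the statement of Proposition~\ref{represent A} is about correlations with functionals in $\widehat\H\sm\{\zeta\}$, and the string $X=X_1(z_1)\cdots X_n(z_n)$ with $X_j$ a $(\lambda_j,\lambda_{*j})$-differential has, from the point of view of the double $\widehat\C$, both a holomorphic part at $z_j$ and an antiholomorphic part which, after reflection, sits at $\bar z_j$; this is exactly why both $v_\zeta$ and $v_{\bar\zeta}$ appear. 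So the second term should be read as the contribution coming from the reflected (antiholomorphic) parts of the differentials.

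Next I would apply Ward's identity \eqref{eq: Ward identity} with $v = v_\zeta$ (a meromorphic vector field holomorphic in a neighborhood of each $z_j$, provided $\zeta \neq z_j$) and $U$ a union of small discs around the $z_j$'s, together with $\YY$ running over functionals with nodes away from those discs. This gives
$$
\E\,W(v_\zeta;U)\,X = \E\,\LL_{v_\zeta}\bigl[X_1(z_1)\cdots X_n(z_n)\bigr].
$$
Here I need the $\C$-linear-part version: by \eqref{eq: Ward4CFT} and the decomposition $\LL_v = \LL_v^+ + \LL_v^-$, the holomorphic differential $A$ generates precisely $\LL_{v_\zeta}^+$ on the holomorphic sides of the $X_j$, and the antiholomorphic partner $\bar A$ — which enters as the $W^+(v_{\bar\zeta})$ term via the reflection trick in Proposition~\ref{represent A} — generates the $\bp_j$ and $\lambda_{*j}$ terms. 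Then by \eqref{eq: Lplus4diff} applied to the tensor product (Leibniz's rule for strings, Proposition~\ref{Leibnitz4strings}), I get
$$
\LL_{v_\zeta}^+\bigl[X_1(z_1)\cdots X_n(z_n)\bigr] = \sum_j\Bigl[v_\zeta(z_j)\pa_j + \lambda_j v_\zeta'(z_j)\Bigr]X,
$$
and substituting $v_\zeta(z_j) = 1/(\zeta-z_j)$ and $v_\zeta'(z_j) = 1/(\zeta-z_j)^2$ produces the first two families of terms in \eqref{eq: Ward's in H}. The conjugate contribution from $\overline{W^+(v_{\bar\zeta})}$ produces the $\bp_j$ and $\lambda_{*j}$ terms, noting that $\overline{v_{\bar\zeta}(\bar z_j)} = 1/(\zeta - \bar z_j)$ and $\overline{v_{\bar\zeta}'(\bar z_j)} = 1/(\zeta-\bar z_j)^2$.

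The main obstacle — really the only nontrivial point — is bookkeeping the two halves of the double correctly: one must be confident that the antiholomorphic degree $\lambda_{*j}$ of the differential $X_j$ at $z_j$ reappears as the $(\zeta - \bar z_j)^{-2}$ coefficient, and that the reflection $v \mapsto v^{\#}$ used in the proof of Proposition~\ref{represent A} is compatible with how $\FF(W)$ acts on $\bar X$ via \eqref{eq: Ward4CFT}. I would handle this by first doing the single-differential case $n=1$ fully explicitly — writing $X_1 = X$, computing $\E\,A(\zeta)X(z)$ from the OPE \eqref{eq: OPE4diff} in the identity chart and checking it matches $\E\,W^+(v_\zeta)X + \overline{\E\,W^+(v_{\bar\zeta})\bar X}$ — and then invoking Leibniz (Proposition~\ref{Leibnitz4strings}) to pass to the tensor product, since the only place $A$ can contract is one factor $X_j$ at a time. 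Everything else is routine: the coordinate-independence of the integrals defining $W^+$, the continuity-up-to-the-boundary hypothesis on $A$, and the replacement of $D=\H$ by small discs around the nodes were all established in Section~\ref{sec: Ward identity} and Section~\ref{sec: meromorphic}.
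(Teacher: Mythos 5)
Your proposal follows essentially the same route as the paper: represent $A(\zeta)$ via Proposition~\ref{represent A} as $W^+(v_\zeta)+\overline{W^+(v_{\bar\zeta})}$, apply Ward's identities \eqref{eq: Ward identity} to each piece, and evaluate the Lie derivatives on the string of differentials through \eqref{eq: Lplus4diff} and Leibniz's rule, with the conjugation $\E\,\overline{W^+(v_{\bar\zeta})}X=\overline{\E\,W^+(v_{\bar\zeta})\bar X}$ supplying the $\bp_j,\lambda_{*j}$ terms. One small bookkeeping slip that does not affect the argument: the relevant identity is $\overline{v_{\bar\zeta}(z_j)}=1/(\zeta-\bar z_j)$ and $\overline{v_{\bar\zeta}'(z_j)}=1/(\zeta-\bar z_j)^2$ (the conjugated field is still evaluated at the node $z_j$, as in $\LL^+_{v_{\bar\zeta}}\bar X$), not $\overline{v_{\bar\zeta}(\bar z_j)}$, which would equal $1/(\zeta-z_j)$.
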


\begin{proof}
Let us choose $v=v_\zeta$ with $\zeta\in\H.$ Then $v^{\#}=v_{\bar\zeta}.$
By Ward's identities \eqref{eq: Ward identity}, we have
$$\E\,W^+({v_\zeta})X=\E\,\LL^+_{v_\zeta}X=\sum_j\left[\frac{\pa_j}{\zeta-z_j}+\frac{\lambda_j}{(\zeta-z_j)^2}\right]~\E X,$$
$$\E\,W^+({v_{\bar\zeta}})\bar X=\E\,\LL^+_{v_{\bar\zeta}}\bar X=\sum_j\left[\frac{\pa_j}{\bar\zeta-z_j}+\frac{\bar \lambda_{*j}}{(\bar\zeta-z_j)^2}\right]~\E\bar X.$$
Note that
$$\E\,\overline{W^+({v_{\bar\zeta}})}X= \overline{\E\,W^+(v_{\bar\zeta})\bar X}=\sum_j\left[\frac{\bp_j}{\zeta-\bar z_j}+\frac{\lambda_{*j}}{(\zeta-\bar z_j)^2}\right]~\E X,$$
and apply Proposition \ref{represent A}.
\end{proof}

\ms We repeat that we have derived the equations~\eqref{eq: Ward's in H} in the half-plane uniformization.
Furthermore, we assumed that $A$ was real on $\pa D$ and has no singularities.
For example, in the case of non-trivial boundary condition ($\widehat\Phi = \Phi + u,$ where $\Phi$ is the Gaussian free field and $u$ is a real-valued harmonic function in $D,$ see Section~\ref{sec: W of GFF} and Section~\ref{sec: BC}), the differential
$$\widehat A = -\frac12 J\odot J -(\pa u)J$$
is not necessarily real on $\pa D$ and $\widehat A$ may have singularities.
It is of course not difficult to derive Ward's equations for $\widehat A$ -- they will be different from \eqref{eq: Ward's in H}.

\ms Here is a generalization of the last proposition.

\begin{prop} \label{Ward equation}
We assume that $A$ satisfies the conditions of Proposition \ref{represent A}.
Let $Y, X_1,\cdots, X_n\in \FF(W)$ and let $X$ be the tensor product of $X_j$'s.
Then
$$\E\, (A*Y)(z)~X=\E\,Y(z) \LL^+_{v_z}X+\E\,\LL^-_{v_{\bar z}}[Y(z)X],$$
where all fields are evaluated in the identity chart of $\H.$
\end{prop}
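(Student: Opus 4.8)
The plan is to localize the representation of the stress differential from Proposition~\ref{represent A} at the node $z$ of $Y$, apply Ward's identities to the resulting Ward functionals, and then let the auxiliary point $\zeta$ tend to $z$; the singular contributions produced on both sides will coincide and cancel, leaving precisely the two terms in the asserted identity. So first I would fix $\zeta\in\H$ with $\zeta\notin\{z,z_1,\dots,z_n\}$ and apply Proposition~\ref{represent A}, i.e.\ equation~\eqref{eq: A}, to the string $Y(z)X$, whose nodes all lie in $\widehat\H\setminus\{\zeta\}$:
\[
\E\,A(\zeta)[Y(z)X]=\E\,W^+(v_\zeta)[Y(z)X]+\E\,\overline{W^+(v_{\bar\zeta})}[Y(z)X].
\]
The field $v_\zeta$ is meromorphic with its only pole at $\zeta$, hence holomorphic at every node of $Y(z)X$, so the $\LL^+$ form of Ward's identity~\eqref{eq: Ward identity} (as used in the proof of Proposition~\ref{Ward's in H}) together with Leibniz's rule for $\LL^+_v$ on tensor products gives
\[
\E\,W^+(v_\zeta)[Y(z)X]=\E\,\LL^+_{v_\zeta}[Y(z)X]=\E\,(\LL^+_{v_\zeta}Y)(z)\,X+\E\,Y(z)\,(\LL^+_{v_\zeta}X);
\]
and since $v_{\bar\zeta}$ is holomorphic throughout $\H$, the same identity applied to the conjugated string, together with the complex-conjugation rule and the relation $\LL^-_v=\overline{\LL^+_v}$ contained in~\eqref{eq: Lpm}, yields
\[
\E\,\overline{W^+(v_{\bar\zeta})}[Y(z)X]=\overline{\E\,\LL^+_{v_{\bar\zeta}}\overline{Y(z)X}}=\E\,\LL^-_{v_{\bar\zeta}}[Y(z)X].
\]

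Next I would expand the left-hand side near the diagonal. Since $Y\in\FF(W)$, Ward's OPE (Proposition~\ref{Ward's OPEs}) identifies the singular part of $A(\zeta)Y(z)$ as $(\LL^+_{v_\zeta}Y)(z)$, so $A(\zeta)Y(z)=(\LL^+_{v_\zeta}Y)(z)+(A*Y)(z)+o(1)$ as $\zeta\to z$ and, in correlation with $X$,
\[
\E\,A(\zeta)[Y(z)X]=\E\,(\LL^+_{v_\zeta}Y)(z)\,X+\E\,(A*Y)(z)\,X+o(1).
\]
Comparing this with the expression obtained above and cancelling the common term $\E\,(\LL^+_{v_\zeta}Y)(z)\,X$, I am left with
\[
\E\,(A*Y)(z)\,X=\E\,Y(z)\,(\LL^+_{v_\zeta}X)+\E\,\LL^-_{v_{\bar\zeta}}[Y(z)X]+o(1),
\]
and letting $\zeta\to z$ finishes the proof: $v_\zeta\to v_z$ and $v_{\bar\zeta}\to v_{\bar z}$ uniformly near the fixed nodes $z,z_1,\dots,z_n$ (which stay at positive distance from $\zeta$), and the two correlation functionals on the right are smooth in $\zeta$ near $\zeta=z$.

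The main obstacle is the careful bookkeeping of the $\LL^+/\LL^-$ decomposition together with the two conjugations: one must check that $\overline{W^+(v_{\bar\zeta})}$, acting in correlations, genuinely represents $\LL^-_{v_{\bar\zeta}}$ on the whole string $Y(z)X$ (this is exactly the manipulation already carried out in the proof of Proposition~\ref{Ward's in H}), and that it is \emph{only} the singular part $(\LL^+_{v_\zeta}Y)(z)$ of $A(\zeta)Y(z)$ that cancels, so that the coefficient $A*Y$ survives while the higher OPE coefficients get absorbed into the $o(1)$. Beyond that the argument is a routine passage to the limit, valid because all nodes other than $z$ are held fixed and distinct, and $A$ is continuous and real on $\pa\H$ by the standing hypothesis of Proposition~\ref{represent A}.
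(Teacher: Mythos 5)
Your proof is correct and follows essentially the same route as the paper: represent $A(\zeta)$ via Proposition~\ref{represent A} applied to the string $Y(z)X$, convert the Ward functionals into $\LL^+_{v_\zeta}$ and $\LL^-_{v_{\bar\zeta}}$ by Ward's identities and Leibniz's rule, identify $(A*Y)(z)$ by subtracting the singular part $(\LL^+_{v_\zeta}Y)(z)$ via Ward's OPE, and let $\zeta\to z$. Your extra care with the conjugation step and the uniformity of the limit is exactly the bookkeeping implicit in the paper's argument.
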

(Proposition~\ref{Ward's in H} is the special case when $Y$ is the scalar field $I,$ i.e., $Y(z)\equiv1.$)
\begin{proof}
By Proposition~\ref{Ward's OPEs} we have
$$(A*Y)(z)=\lim_{\zeta\to z}[A(\zeta)Y(z)-(\LL^+_{v_\zeta}Y)(z)]$$
(we subtracted the singular part of operator product expansion).
We have
\begin{align*}
\E\,[A(\zeta) Y(z)\,X]&=\E\,[W^+_{v_\zeta} \,Y(z)\,X]+\E\,[\overline{W^+_{v_{\bar \zeta}}} \,Y(z)\,X]\\&
=\E\,\LL^+_{v_\zeta} [Y(z)X]+ \E\,\LL^-_{v_{\bar \zeta}} [Y(z)X] \\
&=\E\,Y(z) \LL^+_{v_\zeta}X+\E (\LL^+_{v_\zeta} Y)(z)X+ \E\,\LL^-_{v_{\bar \zeta}} [Y(z)X].
\end{align*}
It follows that
$$\E\,[(A*Y)(z)\,X]=\lim_{\zeta\to z}\,\E\, [Y(z)\,\LL^+_{v_\zeta}\,X]+ \E \,\LL^-_{v_{\bar \zeta}}[Y(z)X].$$
\end{proof}

\renewcommand\chaptername{Appendix}
\chapter{Ward's identities for finite Boltzmann-Gibbs ensembles} \label{appx: Gibbs}

We will construct the stress tensor $v \mapsto W(v)$ for the density fields of finite Boltzmann-Gibbs ensembles and derive the corresponding Ward's identities (well-known in the literature under the name of the \emph{loop equation}).
We hope that this discussion will somewhat clarify the meaning of the stress tensor of statistical models.
(A similar intuitive approach in the context of functional integration is one of the standard methods of introducing stress-energy tensor in quantum field theory.)

\ms Consider the following probability measure in $\C^n:$
\begin{equation} \label{eq: P}
\frac{1}{Z_n}~e^{H(\eta)}\,|d\eta|; \quad Z_n=\int_{\C^n}~e^{H}\equiv \int~e^{H(\eta)}~|d\eta|,
\end{equation}
where $|d\eta|$ is the Euclidean volume, and $H=H(\eta)$ is a given real smooth function which has sufficient growth at $\infty.$

\ms For example, the probability measure \eqref{eq: P} corresponding to
\begin{equation} \label{eq: H}
H(\eta)=\frac12\sum_{j\ne k}\log|\eta_j-\eta_k|^2-2n \sum_j Q(\eta_j), \qquad \eta=\{\eta_j\}\in\C^n,
\end{equation}
where $Q(\eta) \gg \log|\eta|$ is a given real function, describes the distribution of eigenvalues of $n\times n$ random normal matrices.

\ms Let $v$ be a smooth vector field in $\C$ (with compact support), and let $\psi_t$ denote its flow.
We will write $\Psi_t$ for the flow $\{\eta_j\}\mapsto\{\psi_t\eta_j\}$
in $\C^n.$

\ms Changing the variables
$$\eta=\Psi_t(\lambda),\qquad {\rm i.e.,}\quad \eta_j=\psi_t(\lambda_j),$$
we get
$$\int e^{H}=\int e^{H\diamond \Psi_t}~J_t,$$
where the Jacobian $J_t$ is given by the equation $|d\eta|=J_t(\lambda)~|d\lambda|,$ so
$$ J_t=1+2t\,\Re\,\trace[\pa v]+\cdots.$$
(We use the notation $\trace[f] = \sum f(\lambda_j)$ and $\diamond$ for composition.)
Denote
$$W_v\equiv W[v]=\frac d{dt}\Big|_{t=0}\left[ H\diamond \Psi_t+ J_t \right]=\sum_j[v_j\pa_jH+\bar v_j\bp_jH]+2\,\Re\,\trace[\pa v],$$
where $ v_j(\lambda)=v(\lambda_j)$ and $\pa_j=\pa_{\lambda_j}.$
Clearly, $W$ is a $\R$-linear map taking vector fields to random variables and its $\C$-linear component is
$$W^+[v]=\sum_jv_j\pa_jH+\trace[\pa v].$$

\ms In the random normal matrix case \eqref{eq: H} we have
$$W^+[v](\lambda)=\frac12\sum_{j\ne k}\frac{v(\lambda_j)-v(\lambda_k)}{\lambda_j-\lambda_k}-2n \trace[v\pa Q] +\trace[\pa v].$$

\ms Let $F=F(\lambda)$ be a random variable on $\C^n.$
Denote
$$\nabla_vF= \frac d{dt}\Big|_{t=0} F\diamond \Psi_t=\sum[v_j\pa_jF+\bar v_j\bp_jF].$$
Its $\C$-linear component is of course $\nabla^+_vF= \sum v_j\pa_jF$
and
$W^+[v]=\nabla^+_vH+\trace[\pa v].$

\ms Note that $\nabla_v$ is a differentiation (i.e., Leibniz's rule applies) in the algebra of random variables.

\begin{prop} We have 
\begin{equation} \label{eq: EWvF}
\E[W[v]F]+\E[\nabla_v F]=0.
\end{equation}

\end{prop}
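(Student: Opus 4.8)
The plan is to use the obvious invariance of an integral over $\C^n$ under the change of variables given by the flow $\Psi_t,$ and then to differentiate at $t=0.$

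First I would fix a random variable $F$ on $\C^n$ of moderate (say polynomial) growth, so that $Fe^{H}$ is integrable against $|d\eta|,$ and introduce
\[
I(t)=\int_{\C^n}(F\diamond\Psi_t)\,e^{H\diamond\Psi_t}\,J_t\,|d\lambda|,
\]
where $J_t$ is defined by $|d\eta|=J_t(\lambda)\,|d\lambda|$ for $\eta=\Psi_t(\lambda).$ Because $v$ has compact support, $\psi_t$ is a diffeomorphism of $\C$ for every $t\in\R,$ hence so is $\Psi_t$ (which sends $\{\eta_j\}$ to $\{\psi_t\eta_j\}$) on $\C^n,$ and the substitution $\eta=\Psi_t(\lambda)$ gives $I(t)=\int_{\C^n}F\,e^{H}\,|d\eta|=Z_n\,\E[F],$ which is independent of $t.$

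Next I would differentiate $I$ at $t=0.$ Since $\Psi_t$ is the identity outside the fixed compact support of $v,$ the integrand coincides with $Fe^{H}$ there for all $t,$ and on a bounded interval of $t$ it is smooth in $t$ with $t$-derivative dominated by a fixed integrable function; hence differentiation under the integral sign is legitimate. Using $\Psi_0=\id,$ $J_0=1,$ Leibniz's rule, and the definitions $\nabla_vF=\frac{d}{dt}\big|_{t=0}(F\diamond\Psi_t)$ and $\nabla_vH=\frac{d}{dt}\big|_{t=0}(H\diamond\Psi_t),$ I obtain
\[
0=I'(0)=\int_{\C^n}\Big(\nabla_vF+F\,\nabla_vH+F\,\frac{d}{dt}\Big|_{t=0}J_t\Big)\,e^{H}\,|d\lambda|.
\]
By the very definition $W[v]=\frac{d}{dt}\big|_{t=0}\big[H\diamond\Psi_t+J_t\big]=\nabla_vH+\frac{d}{dt}\big|_{t=0}J_t,$ the bracket equals $\nabla_vF+F\,W[v],$ so dividing by $Z_n$ yields $\E[\nabla_vF]+\E[W[v]F]=0,$ which is \eqref{eq: EWvF}.

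The only step needing any care is moving $\frac{d}{dt}$ inside the integral, and it is routine here: the compact support of $v$ confines all motion to a fixed compact region, while the growth hypothesis on $H$ guarantees that $e^{H}$ (hence $Fe^{H}$, and likewise $(F\diamond\Psi_t)\,e^{H\diamond\Psi_t}\,J_t$ for $t$ in a bounded interval) is integrable. Everything else is just exact reparametrization invariance plus the product rule; the only conceptual input is that $W[v]$ was defined precisely as the infinitesimal change of $H$ plus the infinitesimal log-Jacobian of the flow.
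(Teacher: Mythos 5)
Your proof is correct and is essentially the paper's argument: the paper likewise differentiates the change-of-variables identity $\int (F\diamond\Psi_{-t})\,e^{H}=\int F\,e^{H\diamond\Psi_t}\,J_t$ at $t=0,$ which is the same computation as your constancy of $I(t)$ followed by the product rule. Your added remarks on compact support and differentiation under the integral sign only make explicit what the paper leaves implicit.
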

\begin{proof}
\begin{align*}
\frac d{dt}\Big|_{t=0}\E[F\diamond \Psi_{-t}]&= \frac1Z~\frac d{dt}\Big|_{t=0}\int (F\diamond \Psi_{-t})~ e^H\\
&=\frac1Z~\frac d{dt}\Big|_{t=0}\int F~ e^{H\diamond \Psi_{t}}~J_t= \frac1Z~\int FW[v] e^{H}.
\end{align*}
\end{proof}

It follows that $\E[W^+[v]F]+\E[\nabla_v^+ F]=0,$ in particular $\E[W^+[v]]=0$ (``loop equation").

\ms Consider now the \emph{density} field $\rho$ of the point process \eqref{eq: P}.
By definition, $\rho$ is a $(1,1)$-differential such that
$$\int f\rho=\frac1n\sum f(\lambda_j)$$
for all (scalar) test functions $f.$

\ms \begin{prop} If $v$ is holomorphic in a neighborhood of $z,$ then
\begin{equation} \label{eq: density field}
\nabla_v\rho(z)=-\LL_v\rho(z).
\end{equation}
\end{prop}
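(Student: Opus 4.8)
The identity \eqref{eq: density field} compares two infinitesimal transports of the density field $\rho$: on one side, $\nabla_v$ moves the points $\lambda_j$ of the configuration by the flow $\psi_t$ of $v$; on the other side, $\LL_v$ moves the coordinate chart by $\psi_{-t}.$ The plan is to make both sides concrete by testing against a scalar test function $f$ supported near $z$ (where $v$ is holomorphic), and to show that the two resulting expressions agree. Since $\rho$ is a $(1,1)$-differential defined by $\int f\rho = \frac1n\sum_j f(\lambda_j),$ pairing with $f$ converts statements about $\rho$ into statements about the elementary symmetric quantity $\frac1n\sum_j f(\lambda_j),$ which is a genuine random variable on $\C^n$ and can be differentiated directly.

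\textbf{Key steps.} First I would compute $\int f\,\nabla_v\rho.$ Because $\nabla_v$ acts on random variables by $\nabla_vF = \frac{d}{dt}\big|_{t=0}F\diamond\Psi_t$ and is a derivation, and because $\int f\rho$ is itself the random variable $\frac1n\sum_j f(\lambda_j),$ one gets
$$\int f\,\nabla_v\rho = \frac{d}{dt}\Big|_{t=0}\frac1n\sum_j f(\psi_t\lambda_j) = \frac1n\sum_j\big[v(\lambda_j)f'(\lambda_j) + \overline{v(\lambda_j)}\,\bp f(\lambda_j)\big] = \int (v\pa f + \bar v\bp f)\,\rho.$$
Second, I would compute $\int f\,\LL_v\rho$ using the transformation law of a $(1,1)$-differential: from \eqref{eq: Lie4diff} with $\lambda=\lambda_*=1,$ $\LL_v\rho = (v\pa + \bar v\bp + v' + \bar v')\rho$ wherever $v$ is holomorphic (note $\bar v' = \overline{v'}$ and $\bp v = 0$ there, so $v' + \bar v' = \pa v + \bp\bar v = \operatorname{div} v$). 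Third, I would integrate by parts: $\int f\,(v\pa\rho + \rho\,\pa v) = \int f\,\pa(v\rho) = -\int (\pa f)\,v\rho,$ and similarly for the antiholomorphic part, so that $\int f\,\LL_v\rho = -\int(v\pa f + \bar v\bp f)\rho.$ Comparing with Step one gives $\int f\,\nabla_v\rho = -\int f\,\LL_v\rho$ for all test $f$ supported where $v$ is holomorphic, which is \eqref{eq: density field}. The boundary terms in the integration by parts vanish because $f$ has compact support inside the region of holomorphicity of $v.$

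\textbf{Main obstacle.} The only real subtlety is bookkeeping about where the holomorphicity of $v$ is used: the transformation law $\LL_v\rho = (v\pa+\bar v\bp + v'+\bar v')\rho$ in the clean form above requires $v$ holomorphic (otherwise the correct formula for a $(1,1)$-differential carries $\pa v$ and $\bp\bar v$ rather than $v'$ and $\bar v'$, and there is an extra $\bp v$ contribution). So one must restrict attention to test functions $f$ supported in the neighborhood of $z$ on which $v$ is holomorphic — which is exactly the hypothesis — and observe that $\rho$ is determined as a $(1,1)$-differential by its pairings against all such local test functions once $z$ is fixed. A secondary point worth stating explicitly is that $\nabla_v$ genuinely commutes with the pairing $\rho\mapsto\int f\rho$ because the latter is a polynomial (indeed linear-in-$f$, symmetric) function of the coordinates $\lambda_j,$ so differentiating the transported configuration is the same as transporting and then evaluating. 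Neither of these is deep; the computation is short once the two sides are unwound into the elementary symmetric random variable.
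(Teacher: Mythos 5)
Your proof is correct and follows essentially the same route as the paper: both arguments test $\rho$ against a scalar test function $f$ supported where $v$ is holomorphic and identify $\int f\rho$ with the genuine random variable $\frac1n\trace[f].$ The only difference is one of execution: the paper uses the finite-$t$ change-of-variables identity $\frac1n\trace[f]\diamond\Psi_{-t}=\int (f\circ\psi_{-t})\rho=\int f\rho_t$ and differentiates once, whereas you differentiate each side separately, invoking the explicit formula \eqref{eq: Lie4diff} for a $(1,1)$-differential together with an integration by parts — an equivalent, slightly more computational path.
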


\begin{proof}
For a test function $f$ supported in the region where $v$ is holomorphic, consider the random variable
$$\frac1n\trace[f]=\int f\rho.$$
Then
$$\frac1n\trace[f]\diamond \Psi_{-t}=\frac1n \trace[f\circ \psi_{-t}]=\int (f\circ \psi_{-t})\cdot \rho=\int f \rho_t.$$
Taking derivative in $t$ we get
$$-\int f\nabla_v\rho=\int f\LL_v\rho.$$
\end{proof}

\ms Combining \eqref{eq: EWvF} and \eqref{eq: density field} we conclude $$\E[\LL_v\rho(z)]=\E[W_v\rho(z)].$$ More generally, applying Leibniz's rules to $\LL_v$ and $\nabla_v$ we get the following:

\begin{cor*}
$$\E\,\LL_v[\rho(z_1)\cdots \rho(z_k)]~=~\E\,[W_v\rho(z_1)\cdots \rho(z_k)]$$
(as in Proposition~\ref{Ward's identities}).
\end{cor*}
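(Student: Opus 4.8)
The plan is to combine the two identities already obtained in this appendix — equation \eqref{eq: EWvF}, $\E[W[v]F]+\E[\nabla_v F]=0$, valid for any random variable $F$ on $\C^n$, and equation \eqref{eq: density field}, $\nabla_v\rho(z)=-\LL_v\rho(z)$, valid where $v$ is holomorphic — together with the fact that both $\nabla_v$ (on random variables) and $\LL_v$ (on strings of fields) obey Leibniz's rule. The only genuine subtlety is that $\rho$ is a distributional field, so the string $\rho(z_1)\cdots\rho(z_k)$ must be read, as always, after smearing against test functions with pairwise disjoint supports, all contained in the open set $D_{\rm hol}(v)$ where $v$ is holomorphic.

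Concretely, I would fix test functions $f_1,\dots,f_k$ with pairwise disjoint supports inside $D_{\rm hol}(v)$ and set $F=\prod_{j=1}^k\int f_j\rho=\prod_{j=1}^k\frac1n\trace[f_j]$, which is a bona fide random variable on $\C^n$. Since $\nabla_v$ is a differentiation in the algebra of random variables, Leibniz gives
$$\nabla_vF=\sum_{j=1}^k\Big(\prod_{i\ne j}\int f_i\rho\Big)\,\nabla_v\Big(\int f_j\rho\Big).$$
By \eqref{eq: density field}, $\int f_j\nabla_v\rho=-\int f_j\LL_v\rho$ for each $j$ — this is precisely where $\supp f_j\subset D_{\rm hol}(v)$ is used — so the right-hand side equals minus the multi-variable Lie derivative of $\rho(z_1)\cdots\rho(z_k)$ smeared against $f_1,\dots,f_k$, the latter being computed by Leibniz's rule for tensor products (Proposition~\ref{Leibnitz4strings}). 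In short, $\nabla_vF=-\LL_v\big[\rho(z_1)\cdots\rho(z_k)\big]$ in the smeared sense. Substituting this $F$ into \eqref{eq: EWvF} yields $\E[W[v]F]=-\E[\nabla_vF]=\E[\LL_vF]$, and since the $f_j$ were arbitrary this is exactly the asserted identity
$$\E\,\LL_v[\rho(z_1)\cdots\rho(z_k)]=\E\,[W_v\,\rho(z_1)\cdots\rho(z_k)].$$
Taking $\C$-linear parts throughout, if desired, gives the analogous statement for $W^+[v]$ and $\LL^+_v$, just as the single-variable ``loop equation'' was obtained.

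The main obstacle — such as it is — is interpretational rather than computational: one must arrange the distributional products through test functions supported away from the boundary and inside $D_{\rm hol}(v)$, so that $W[v]$, the evaluations $\nabla_v\rho(z_j)=-\LL_v\rho(z_j)$, and the multi-variable $\LL_v$ all make sense, and so that the two Leibniz expansions (for $\nabla_v$ on $F$ and for $\LL_v$ on the string) line up term by term. Once the supports are fixed, nothing beyond \eqref{eq: EWvF} and \eqref{eq: density field} is needed.
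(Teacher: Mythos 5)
Your proposal is correct and follows essentially the same route as the paper, which simply combines \eqref{eq: EWvF} with \eqref{eq: density field} and invokes Leibniz's rule for both $\nabla_v$ and $\LL_v$; your version merely spells out the smearing against test functions and the term-by-term matching that the paper leaves implicit.
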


\ms If a statistical model has a properly defined scaling limit as $n\to\infty,$ then one can ask the question about the validity of Ward's identities in the limit.
For example, the rescaled density field (subtract expectation and multiply by $n$) of a random normal matrix model \eqref{eq: H}, under some rather general conditions, converges to the Laplacian of the Gaussian free field with \emph{free} boundary conditions on some compact set $S = S[Q],$ see \cite{AHM11}.
Taking the logarithmic potential of the density field and subtracting the terms corresponding to the boundary, one can recover the expression for the stress tensor of the Gaussian free field from Section~\ref{sec: Ward identity}.

\renewcommand\chaptername{Lecture}
\chapter{Virasoro field and representation theory} \label{ch: T}

Let $W=(A,\bar A)$ be a stress tensor for some family $\FF$ of Fock space fields.
(We can assume that $\FF$ is the maximal such family and write $\FF=\FF(W)$ in this case.)
As we mentioned, in general the holomorphic quadratic differential $A$ does not belong to $\FF(W).$
The theory gets much more interesting if we can find a holomorphic field $T$ which does belong to $\FF(W)$ and which produces the same residue operators (for holomorphic vector fields) and therefore the same Ward's ``random variables" $W^+(v)$ as the differential $A$ does.
In the appendix to this lecture we will show that under some rather general conditions (the family $\FF$ has to be conformally invariant), such a field $T$ exists and is a Schwarzian form.
In this lecture, we will take this last property for the definition of the \emph{Virasoro} field $T.$
The Virasoro field of the Gaussian free field $\Phi$ is
$$T = -\frac12 J*J, \qquad J = \pa\Phi,$$
see Section~\ref{sec: T0}.
Further examples will be given in the next lecture.

\ms It should be mentioned that the whole theory could be constructed (as is customary in the conformal field theory literature) without representing the stress tensor $v\mapsto W(v)$ in terms of quadratic differentials $A,\bar A$ -- we could have just defined $T$ as a Schwarzian form satisfying the Virasoro operator product expansion (with \emph{central charge} $c$)
\begin{equation} \label{eq: Virasoro OPE}\index{Virasoro!OPE}
T(\zeta)T(z)\sim\frac{c/2}{(\zeta-z)^4}+\frac{2T(z)}{(\zeta-z)^2}+\frac{\pa T(z)}{\zeta-z}.
\end{equation}
In our approach, we are trying to separate two different issues.
As we argued in Appendix~\ref{appx: Gibbs}, for certain fields of statistical mechanics one can expect the existence of Ward's identities and the stress tensor.
This aspect is not specific for 2D (in the smooth category).
On the other hand, it is remarkable that conformal invariance in two dimension then gives us a Schwarzian form with the stated properties.

\ms The material of this lecture is completely standard, see e.g., \cite{DFMS97}; we just adapt it to the setting of Fock space fields.
For the sake of completeness we recall some elementary facts of representation theory, in particular, the description of level two singular vectors, which we will use later in connection with the SLE theory.

\ms \section{Virasoro field} \label{sec: T} \index{Virasoro!field $T, \widehat T$}

By definition, a Fock space field $T$ is the \emph{Virasoro field} for the family $\FF(A,\bar A)$ if
\renewcommand{\theenumi}{\alph{enumi}}
{\setlength{\leftmargini}{2.0em}
\begin{enumerate}
\ms\item $T\in\FF(W),$ and
\ms\item $T-A$ is a non-random holomorphic Schwarzian form.
\end{enumerate}}

\ms (In the asymmetric case $W=(A^+,A^-)$ one should consider two fields $T^\pm$ with the corresponding properties.)

\ms The Virasoro field $T$ is unique (if exists).
Indeed, if we have two such fields $T_1,T_2,$ then the non-random holomorphic Schwarzian form $f:=T_1-T_2$ belongs to $\FF(W).$
Therefore, $$\LL_v^+f(z)=\frac1{2\pi i}\oint_{(z)} vA\,f(z),$$
and it is clear that $\oint_{(z)} vA\,f(z)=0$ for all holomorphic local vector fields, hence by \eqref{eq: Lie4S-form}
$$\LL^+_vf=vf'+2v'f+\mu v'''=0.$$
Considering constant and linear vector fields $v,$ we see that $f$ has to be zero.

\ms It follows that the order of $T$ as a Schwarzian form is uniquely determined; traditionally it is denoted by $c/12,$ and $c$ is called the \emph{central charge} \index{central charge} of the family $\FF(W).$

\ms Since the fields $T$ and $A$ determine the same residue operators for local holomorphic vector fields, we can replace $A$ by $T$ in the local Ward's identities
$$\LL_v^+X(z)=\frac 1{2\pi i}\oint_{(z)} vT\, X(z)$$
as well as in Ward's OPEs, see Sections \ref{sec: W} and \ref{sec: Ward's OPEs}.
We can also use $T$ to define the functionals
$$W^+(v)=\frac 1{2\pi i}\int_{\pa D} vT-\frac 1{\pi }\int_D T\,(\bp v),$$
though we now need to use Green's formula to interpret such integrals.
Since $T$ belongs to $\FF(W)$ and $T$ is a Schwarzian form, by Corollary~\ref{OPE4form} we have Virasoro operator product expansion~\eqref{eq: Virasoro OPE},
which shows in particular that we can find the central charge from the relation
$$c=2\lim_{\zeta\to z}(\zeta-z)^4\,\E\,A(\zeta)A(z).$$

\ss In the simply connected case it is often convenient to choose $A$ so that
$$A=T \qquad {\rm in}\quad(\H,\id).$$

\ms (Recall that unlike $T,$ the differential $A$ is not uniquely defined -- we can add non-random holomorphic quadratic differentials to $A.$)
If $T$ is real and continuous up to the boundary, then Ward's equations in Section~\ref{sec: Ward's in H} obviously hold (in $\H$) with $T$ instead of $A.$

\ms \begin{eg*} As we mentioned, $T=-\frac12 J*J$ is the Virasoro field for the Gaussian free field.
The central charge is $c=1.$
Indeed, if we set $A=-\frac12 J\odot J,$ then $W=(A, \bar A)$ is a stress tensor for the Gaussian free field, see Proposition~\ref{W of GFF}.
Then $T$ is the Virasoro field because
\renewcommand{\theenumi}{\alph{enumi}}
{\setlength{\leftmargini}{2.0em}
\begin{enumerate}
\ms\item $T\in \FF(W)$ by Proposition~\ref{*n in CFT};
\ms\item $T$ is a Schwarzian form of order 1/12 $(c=1)$ by Proposition~\ref{T in S(1/12)}.
\end{enumerate}}
\end{eg*}

\ms Examples with $c\ne1$ will be given in Lecture~\ref{ch: modifications}.

\ms Our next goal is to explain the relation of the Virasoro field to the representation theory of the Virasoro algebra.

\ms \section{Commutation of residue operators} \label{sec: commutation}

It will be important to extend the definition of residue operators
$$T_v(z)=\frac1{2\pi i}\oint_{(z)}vT \qquad ({\rm in~ a~ given ~chart}\;\phi),$$
see Section~\ref{sec: residue op}, to the case of meromorphic (local) vector fields $v.$
Note that $T_v(z)\ne A_v(z)$ if $v$ has a pole at $z,$ and unlike $A_v,$ the operators $T_v$ are chart dependent.

\ms Since this part of the theory is local, we can work in a fixed chart and consider the operators
$$Y_f(z)=\frac1{2\pi i}\oint_{(z)} f(\zeta)Y(\zeta)~d\zeta$$
for arbitrary holomorphic Fock space fields $Y$ and meromorphic non-random function $f.$
(We do not require $f$ to satisfy any particular transformation rule.)
The following \emph{commutation identity} \index{commutation identity} is the source of many useful relations, and is a typical example of the contour integration technique in conformal field theory.

\ss \begin{prop} \label{commutation id}
Let $Y^1$ and $Y^2$ be two holomorphic Fock space fields, and let $f_1$ and $f_2$ be meromorphic functions.
Then
$$\left[Y^1_{f_1}(z),Y^2_{f_2}(z)\right]= \frac1{2\pi i}\oint_{(z)}f_2(\eta)\,d\eta~\frac1{2\pi i}\oint_{(\eta)}f_1(\zeta)Y^1(\zeta)Y^2(\eta)\,d\zeta.$$
\end{prop}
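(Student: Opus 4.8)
The plan is to compute the commutator $[Y^1_{f_1}(z),Y^2_{f_2}(z)]$ directly by writing out each residue operator as a contour integral and interchanging the order of integration. First I would fix the chart $\phi$ and, working near $z$, represent the composition $Y^1_{f_1}(z)Y^2_{f_2}(z)$ as an iterated contour integral: apply $Y^2_{f_2}(z)$ first, so that the $\eta$-contour is a small circle $|\eta-z|=\ve_2$, and then apply $Y^1_{f_1}(z)$, whose $\zeta$-contour is a circle $|\zeta-z|=\ve_1$. The key point is that in the product $Y^1_{f_1}(z)Y^2_{f_2}(z)$ the operator $Y^1_{f_1}$ acts after $Y^2_{f_2}$, so when we represent $Y^2(\eta)Y^1(\zeta)$ by inserting into correlation functionals, the $\zeta$-contour must enclose the $\eta$-contour (i.e. $\ve_1 > \ve_2$); conversely in $Y^2_{f_2}(z)Y^1_{f_1}(z)$ we have $\ve_2 > \ve_1$. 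This ordering of nested contours is exactly what produces a nonzero commutator.

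Next I would subtract the two iterated integrals. Both involve $\tfrac1{(2\pi i)^2}\oint\oint f_1(\zeta)f_2(\eta)\,Y^1(\zeta)Y^2(\eta)\,d\zeta\,d\eta$ (using holomorphicity of $Y^1(\zeta)Y^2(\eta)$ away from the diagonal $\zeta=\eta$, so the integrand is a meromorphic function of $(\zeta,\eta)$ with singularities only at $\zeta=\eta$ — this is where the OPE of two holomorphic fields, Section~\ref{sec: OPE def}, enters), and they differ only in whether the $\zeta$-contour lies outside or inside the $\eta$-contour. Holding $\eta$ fixed and deforming the $\zeta$-contour from a circle of radius $\ve_1 > \ve_2$ (around $z$, hence around $\eta$) to a circle of radius $\ve_1 < \ve_2$ (around $z$ but not around $\eta$) picks up exactly the residue at $\zeta=\eta$, i.e. a small circle around $\eta$:
$$\frac1{2\pi i}\oint_{|\zeta-z|=\ve_1>\ve_2} - \frac1{2\pi i}\oint_{|\zeta-z|=\ve_1<\ve_2} = \frac1{2\pi i}\oint_{(\eta)}.$$
Integrating the resulting expression against $\tfrac1{2\pi i}\oint_{(z)}f_2(\eta)\,d\eta$ gives precisely the claimed right-hand side $\tfrac1{2\pi i}\oint_{(z)}f_2(\eta)\,d\eta~\tfrac1{2\pi i}\oint_{(\eta)}f_1(\zeta)Y^1(\zeta)Y^2(\eta)\,d\zeta$.

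Throughout, all identities are to be understood in the sense of correlation functionals: one tests against an arbitrary Fock space functional $\XX$ with nodes away from $z$, replaces $Y^j$ by the insertions used in the definition of residue operators in Section~\ref{sec: residue op}, and then everything reduces to contour manipulations with the genuinely meromorphic function $\E[f_1(\zeta)f_2(\eta)Y^1(\zeta)Y^2(\eta)\XX]$. The main obstacle — really the only subtle point — is bookkeeping the operator-ordering versus contour-nesting correspondence correctly: one must be careful that the ``inner'' operator in the product corresponds to the ``inner'' (smaller) contour, and that deforming contours across the diagonal is legitimate because the two-point string $Y^1(\zeta)Y^2(\eta)$ extends holomorphically off the diagonal (finitely many singular terms in the Laurent/OPE expansion, as noted after \eqref{eq: OPE(X,Y)}), so no contribution is lost or double-counted. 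Once this is set up cleanly, the computation is routine Cauchy theory.
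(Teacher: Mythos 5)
Your proposal is correct and is essentially the paper's own proof: both operator orderings are written as nested contour integrals (the later-acting operator on the larger circle around $z$), and subtracting and then deforming the $\zeta$-contour across the only singularity $\zeta=\eta$ in the intermediate annulus produces the inner residue $\frac1{2\pi i}\oint_{(\eta)}f_1(\zeta)Y^1(\zeta)Y^2(\eta)\,d\zeta$, after which the $\eta$-integration against $f_2$ gives the stated identity. The paper carries out exactly this computation, only making explicit the operand $Z(z)$ and the test functional $\XX$ and using three fixed circles $C_-\subset C\subset C_+$ with the nodes of $\XX$ and the extra poles of $f_1,f_2$ kept outside, which is the same bookkeeping you describe.
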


\begin{proof} Let us check the identity in application to $Z(z)$; $\XX$ denotes an arbitrary string satisfying $\phi^{-1}z\not\in S_\XX.$
Let $C_-,$ $C,$ and $C_+$ be three small circles around $z,$ with increasing radii.
The nodes of $\XX$ and the poles of $f$ other than $z$ should be outside of the discs.
We have
\begin{align*}
\E\,\XX Y^1_{f_1}Y^2_{f_2}\,Z(z)&=\frac1{2\pi i}\,\E\,\XX \oint_{\zeta\in C_+}f_1(\zeta)Y^1(\zeta) ~Y^2_{f_2}\,Z(z)\,d\zeta\\
&=\frac1{(2\pi i)^2}\,\E\,\XX \oint_{\zeta\in C_+}f_1(\zeta)Y^1(\zeta) \,d\zeta~\oint_{\eta\in C}f_2(\eta)Y^2(\eta) \,Z(z)\,d\eta\\
&=\frac1{(2\pi i)^2}\,\E\,\XX \oint_{\zeta\in C_+}\oint_{\eta\in C}f_1(\zeta)f_2(\eta)Y^1(\zeta) ~Y^2(\eta) \,Z(z)\,d\eta d\zeta .
\end{align*}
Similarly, we compute $Y^2_{f_2}Y^1_{f_1}\,Z(z)$ integrating the variable of $f_2$ over the bigger circle ($C$) and the variable of $f_1$ over the smaller circle ($C_-$),
$$\E\,\XX Y^2_{f_2}Y^1_{f_1}\,Z(z)=\frac1{(2\pi i)^2}\,\E\,\XX \oint_{\zeta\in C_-}\oint_{\eta\in C}f_1(\zeta)f_2(\eta)Y^1(\zeta) ~Y^2(\eta) \,Z(z)\,d\zeta d\eta.$$
Subtracting, we get
\begin{align*}
\E\,\XX \left[Y^1_{f_1},Y^2_{f_2}\right]Z(z)&=\frac1{(2\pi i)^2}\,\E\,\XX \oint_{\eta\in C}f_2(\eta)\,d\eta\oint_{\zeta\in [C_+-C_-]}f_1(\zeta)Y^1(\zeta)Y^2(\eta) \,Z(z)\,d\zeta\\
&=\frac1{(2\pi i)^2}\,\E\,\XX \oint_{\eta\in C}f_2(\eta)\,d\eta\oint_{(\eta)}f_1(\zeta)Y^1(\zeta)Y^2(\eta) \,Z(z)\,d\zeta,
\end{align*}
which completes the proof.
\end{proof}

\ms Similar formula holds for the commutator
$\big[Y^1_{f_1}(z),\overline {Y^2_{f_2}(z)}\,\big].$

\begin{egs*}
(a) Set $Y^1 = X$ and $Y^2 = Y.$ If we take $f_1\equiv1,$ then the inner integral gives the field $X*_{-1}Y.$
Taking $f_2(\eta)=(\eta-z)^{-n-1},$ by Proposition~\ref{commutation id}, we get
$$[X*_{-1}, Y*_n]~Z(z)~=~(X*_{-1}Y)*_{n}Z(z),$$
which can be rewritten as Leibniz's rule:
\begin{equation} \label{eq: Leibnitz4v2}
X*_{-1}(Y*_nZ)=(X*_{-1}Y)*_nZ+Y*_n (X*_{-1}Z).
\end{equation}
This is the formula \eqref{eq: Leibnitz4v1} we mentioned in Section~\ref{sec: W of GFF}.

\ms (b) If $X$ and $Y$ are holomorphic, then
\begin{equation} \label{eq: XYZminusYXZ}
X*(Y*Z)-Y*(X*Z)=[X,Y]*Z,
\end{equation}
where $[X,Y] = X*Y-Y*X.$
This follows from a similar argument with $f_1(\zeta)=1/(\zeta-z)$ and $f_2(\eta) = 1/(\eta-z).$
\end{egs*}

\ms In the case of residue operators of the Virasoro field, the commutation identity has the following form.
\index{commutation identity!of residue operators of the Virasoro field}
\ms\begin{prop} \label{commutation4T}
Let $T$ be the Virasoro field, and let $v_1,$ $v_2$ be (local) meromorphic vector fields. Then in any given chart, we have
$$\left[T_{v_1}(z),T_{v_2}(z)\right]=-T_{[v_1,v_2]}(z)-\frac c{24}~\frac1{2\pi i}\oint_{(z)}(v_1v_2'''-v_1'''v_2)(\zeta)\,d\zeta$$
and
$$\left[T_{v_1}(z),\overline{T_{v_2}(z)}\right]=0.$$
\end{prop}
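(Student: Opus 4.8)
The plan is to read the first identity off the commutation identity of Proposition~\ref{commutation id} combined with the Virasoro operator product expansion~\eqref{eq: Virasoro OPE}. Working in the fixed chart $\phi$, write $f_1=(v_1\,\|\,\phi)$ and $f_2=(v_2\,\|\,\phi)$; these are meromorphic functions on which we impose no transformation rule, since the whole computation is local. Applying Proposition~\ref{commutation id} with $Y^1=Y^2=T$ gives
$$[T_{v_1}(z),T_{v_2}(z)]=\frac1{2\pi i}\oint_{(z)}f_2(\eta)\,d\eta~\frac1{2\pi i}\oint_{(\eta)}f_1(\zeta)T(\zeta)T(\eta)\,d\zeta .$$
On the $\eta$-contour the function $f_1$ is holomorphic, so only the singular part of the operator product expansion contributes to the inner integral; since $T$ is a Schwarzian form of order $c/12$ lying in $\FF(W)$, Corollary~\ref{OPE4form} (equivalently \eqref{eq: Virasoro OPE}) yields
$$\frac1{2\pi i}\oint_{(\eta)}f_1(\zeta)T(\zeta)T(\eta)\,d\zeta=\frac c{12}\,f_1'''(\eta)+2f_1'(\eta)T(\eta)+f_1(\eta)\,\pa T(\eta).$$

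Next I would substitute this into the outer contour integral and rearrange using integration by parts around the closed contour about $z$ (legitimate because, tested against any string $\XX$ with no nodes in a disc about $z$, the relevant correlation functions are single-valued meromorphic in the punctured disc, so the integral of an exact differential vanishes). For the $T$-terms, integrating $\oint_{(z)}f_2f_1\,\pa T$ by parts produces $-\oint_{(z)}(f_2'f_1+f_2f_1')T$, which combines with $\oint_{(z)}2f_2f_1'T$ to leave $\oint_{(z)}(f_2f_1'-f_2'f_1)T$; since $f_1f_2'-f_1'f_2=([v_1,v_2]\,\|\,\phi)$ this is exactly $-T_{[v_1,v_2]}(z)$. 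For the anomaly term, repeated integration by parts on the closed contour gives $\oint_{(z)}f_2f_1'''=-\tfrac12\oint_{(z)}(f_1f_2'''-f_1'''f_2)$, so the coefficient $c/12$ turns into $-c/24$ and we land on the stated term $-\tfrac c{24}\cdot\frac1{2\pi i}\oint_{(z)}(v_1v_2'''-v_1'''v_2)\,d\zeta$. This proves the first identity.

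For the mixed commutator I would use the analogue of Proposition~\ref{commutation id} for $[\,Y^1_{f_1}(z),\overline{Y^2_{f_2}(z)}\,]$ noted right after it, again with $Y^1=Y^2=T$. The inner integral now involves the mixed product $T(\zeta)\overline{T(\eta)}$, and because $T$ is holomorphic we have $T(\zeta)\overline{T(\eta)}\sim0$ (the general fact that $X(\zeta)\overline{Y(z)}\sim0$ for holomorphic $X,Y$, cf.~\eqref{eq: sOPE(J,barJ)}); hence the inner residue is the zero functional and $[T_{v_1}(z),\overline{T_{v_2}(z)}]=0$.

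I do not expect a conceptual obstacle; the only points requiring care are the bookkeeping in the integrations by parts — in particular keeping the terms that are singular at $z$ so that the outer integrals are honestly the residue operators $T_{(\cdot)}(z)$ — and matching the numerical constants so that the $c/12$ from the Virasoro OPE becomes the $-c/24$ of the final formula. Everything is local and takes place in a single chart, so no coordinate-change considerations enter, and the contour integrations by parts are around closed curves in a punctured disc, hence valid.
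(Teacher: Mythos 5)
Your proposal is correct and follows essentially the same route as the paper: apply Proposition~\ref{commutation id} with $Y^1=Y^2=T$, evaluate the inner residue via the Virasoro OPE~\eqref{eq: Virasoro OPE} and the Taylor expansion of $v_1$, then integrate by parts on the outer contour to produce $-T_{[v_1,v_2]}(z)$ and antisymmetrize $\oint v_2v_1'''$ into $-\tfrac12\oint(v_1v_2'''-v_1'''v_2)$, which converts $c/12$ into $-c/24$. The vanishing of the mixed commutator from the absence of a singular part in $T(\zeta)\overline{T(\eta)}$ is also exactly the paper's argument.
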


\ms\begin{proof}
Let us compute the residue $\oint_{(\eta)}$ in Proposition~\ref{commutation id}.
By Virasoro operator product expansion~\eqref{eq: Virasoro OPE}
and Taylor series expansion of $v_1,$
$$v_1(\zeta)=v_1(\eta)+v'_1(\eta)(\zeta-\eta)+\frac{v''_1(\eta)}2(\zeta-\eta)^2+\frac{v'''_1(\eta)}6(\zeta-\eta)^3+\cdots,$$
the residue is
$$\frac1{2\pi i}\oint_{(\eta)}v_1(\zeta)T(\zeta)T(\eta)\,d\zeta=(v_1\pa T+2v'_1T+\frac c{12}v'''_1)~(\eta).$$
Next we compute
\begin{align*}
\frac1{2\pi i}\oint_{(z)}&v_2(\eta)\,d\eta\,\frac1{2\pi i}\oint_{(\eta)}v_1(\zeta)T(\zeta)T(\eta)\,d\zeta\\
&=\frac1{2\pi i}\oint_{(z)}v_2(\eta)~(v_1\pa T+2v'_1T)(\eta)\,d\eta+\frac c{12}\frac1{2\pi i}\oint_{(z)}v_2(\eta)v'''_1(\eta)\,d\eta.
\end{align*}
The first integral in the right-hand side is
$$\frac1{2\pi i}\oint_{(z)} 2(v_2v'_1~T)(\eta)\,d\eta-\frac1{2\pi i}\oint_{(z)} (v_2v_1)'(\eta)~ T(\eta)\,d\eta=\frac1{2\pi i}\oint_{(z)} [v_2,v_1](\eta)~T(\eta)\,d\eta,$$
and clearly
$$\frac1{2\pi i}\oint_{(z)} (v_2v'''_1)(\eta)\,d\eta=-\frac1{2\pi i}\oint_{(z)} (v_1v'''_2)(\eta)\,d\eta.$$
Since the operator product expansion of $T(\zeta)\bar T(\eta)$ has no singular part, the second formula follows from Proposition~\ref{commutation id}.
\end{proof}

\ms\begin{rmk*}
In the special case of \emph{holomorphic} vector fields, we get
$$\left[T_{v_1},T_{v_2}\right]=-T_{[v_1,v_2]},$$
where the operators act on arbitrary Fock space fields.
This, of course, implies
$$\left[A_{v_1},A_{v_2}\right]=-A_{[v_1,v_2]},$$
which is a stronger statement than Proposition~\ref{commutation4A} where the action is restricted to fields in $\FF(W).$
This extension of Proposition~\ref{commutation4A} has been obtained under the assumption of the existence of the Virasoro field.
In Appendix~\ref{appx: T} we will reverse the argument and derive the existence of $T$ from Proposition~\ref{commutation4A}.
\end{rmk*}

\ms\section{Virasoro algebra} \label{sec: V algebra}

Let $\AA$ denote the (Witt's) Lie algebra of meromorphic vector fields in $\widehat\C$ with possible poles only at $0$ and $\infty,$
$${\AA}={\Lin}\{l_n:~n\in\Z\},\qquad [l_m,l_n]=(m-n)l_{m+n},$$
where
$$
(l_n\,\|\,\id_\C)(\zeta)=-\zeta^{1+n}.$$
Given a chart $\phi$ at $p\in D,$ we can embed $\AA$ into the algebra of local meromorphic vector fields at $p$:
$$l_n\mapsto -(\zeta-z)^{n+1} \qquad({\rm in~chart}\;\phi,\; \phi(p)=z).$$
Proposition~\ref{commutation4T} shows that the local operators $-T_v(z)$ (defined in chart $\phi$) provide a projective representation of $\AA,$ i.e., a Lie algebra homomorphisms
$$\AA\to \LL(H)/\C\cdot I,$$
where $H$ is the linear space of Fock space fields evaluated at $z$ in chart $\phi,$ and $\LL(H)$ is the algebra of linear maps.
Equivalently, a projective representation is a
linear map
$$\varrho:~\AA\to\LL(H)$$
such that
$$[\varrho v_1,\varrho v_2]=\varrho[v_1,v_2]-\omega(v_1,v_2)\cdot I,$$
where $\omega:\AA\times \AA\to\C$ satisfies the \emph{cocycle} equation
$$\omega([v_1,v_2],v_3)+\omega([v_2,v_3],v_1)+\omega([v_3,v_1],v_2)=0.$$
It is known \cite{GF68} that (essentially) the only possible form of such a cocycle is
$$\omega(v_1,v_2)=\frac c{24}~\frac1{2\pi i}\oint_{(0)}(v_2'''v_1-v_1'''v_2)(\zeta)\,d\zeta,$$
where $c$ is a constant factor. 
(Adding to $\omega(v_1, v_2)$ any linear function of the commutator
$[v_1, v_2]$ doesn't violate the cocycle condition, but such coboundary doesn't
affect the equivalence type of the representation.)
In terms of the basis $l_n$ this means
$$\omega(l_m, l_n)=-\frac c{12}m(m^2-1)\delta_{m+n,0}.$$
In this case we say that $\rho$ is a \emph{Virasoro algebra representation} \index{Virasoro!algebra representation} with central charge $c.$
Thus we can restate Proposition~\ref{commutation4T} as follows.

\begin{prop} \label{Ln Virasoro}
Let $T$ be the Virasoro field.
Then for each $p\in D$ and each local chart at $p,$ the operators
$$L_n(z):=\frac1{2\pi i}\oint_{(z)}(\zeta-z)^{n+1} T(\zeta)~d\zeta$$
represent the Virasoro algebra:
$$[L_m, L_n]=(m-n)L_{m+n}+\frac c{12}m(m^2-1)\delta_{m+n,0},$$
where $c=12\mu$ and $\mu$ is the order of $T$ as a Schwarzian form.
\end{prop}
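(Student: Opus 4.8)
The plan is to identify the operators $L_n(z)$ with $-T_{l_n}(z)$, the residue operators of the Virasoro field attached to the standard copy of Witt's algebra $\AA$ sitting in the local meromorphic vector fields at $p$, and then to extract the commutation relations directly from Proposition~\ref{commutation4T}. First I would fix the chart $\phi$ with $\phi(p)=z$ and, as in Section~\ref{sec: V algebra}, regard $l_n$ as the local meromorphic vector field with $(l_n\,\|\,\phi)(\zeta)=-(\zeta-z)^{n+1}$, so that $[l_m,l_n]=(m-n)\,l_{m+n}$. Comparing with the definition $T_v(z)=\frac1{2\pi i}\oint_{(z)}vT$ from Section~\ref{sec: commutation}, we have
$$L_n(z)=\frac1{2\pi i}\oint_{(z)}(\zeta-z)^{n+1}T(\zeta)\,d\zeta=-T_{l_n}(z),$$
so it suffices to compute $[T_{l_m}(z),T_{l_n}(z)]$.

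Next I would apply Proposition~\ref{commutation4T} with $v_1=l_m$, $v_2=l_n$. Using the $\C$-linearity of $v\mapsto T_v$, the first term is $-T_{[l_m,l_n]}(z)=-(m-n)T_{l_{m+n}}(z)=(m-n)L_{m+n}(z)$. For the central term, I would substitute $(l_n\,\|\,\phi)(\zeta)=-(\zeta-z)^{n+1}$, whose third derivative is $-(n+1)n(n-1)(\zeta-z)^{n-2}$, to get
$$(l_ml_n'''-l_m'''l_n)(\zeta)=\big[(n+1)n(n-1)-(m+1)m(m-1)\big](\zeta-z)^{m+n-1}.$$
Since $\frac1{2\pi i}\oint_{(z)}(\zeta-z)^{m+n-1}\,d\zeta=\delta_{m+n,0}$, only the case $n=-m$ contributes, and there the bracket equals $-2m(m^2-1)$; hence the central term $-\frac c{24}\,\frac1{2\pi i}\oint_{(z)}(l_ml_n'''-l_m'''l_n)\,d\zeta$ of Proposition~\ref{commutation4T} is exactly $\frac c{12}m(m^2-1)\delta_{m+n,0}$.

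Combining these, and recalling that the operators $T_v$ act on arbitrary Fock space fields so that Proposition~\ref{commutation4T} applies at that generality (cf.\ the Remark following it), I obtain
$$[L_m(z),L_n(z)]=(m-n)L_{m+n}(z)+\frac c{12}m(m^2-1)\delta_{m+n,0},$$
which is the Virasoro relation; here $c/12$ is the order $\mu$ of $T$ as a Schwarzian form by the definition of central charge in Section~\ref{sec: T}, i.e.\ $c=12\mu$. The only delicate point is keeping the sign conventions straight — the minus sign in the embedding $l_n\mapsto -(\zeta-z)^{n+1}$ must be carried consistently through the residue computation — but beyond that bookkeeping the argument is a direct substitution into Proposition~\ref{commutation4T}, so I anticipate no genuine obstacle.
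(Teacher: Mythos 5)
Your proposal is correct and follows essentially the same route as the paper: the paper obtains this proposition precisely as a restatement of Proposition~\ref{commutation4T} via the embedding $l_n\mapsto -(\zeta-z)^{n+1}$ of Witt's algebra, with the cocycle evaluated on the basis $l_n$ exactly as in your residue computation. Your sign bookkeeping and the evaluation of the central term (giving $-2m(m^2-1)$ at $n=-m$ and hence $\frac{c}{12}m(m^2-1)\delta_{m+n,0}$) are correct.
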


\ms Assuming $T^-=\bar T$ we define the residue operators
$$T_v^-(z)=-\frac1{2\pi i}\oint_{(z)}\bar v\bar T,$$
so
$T_v^-X=\overline{(T_v\bar X)}.$
It is easy to check that the operators $L_n^-,$
$L_n^-X=\overline{(L_n\bar X)},$
represent the second copy of Virasoro algebra:
\begin{equation} \label{eq: 2nd copy}
[L^-_m, L^-_n]=(m-n)L^-_{m+n}+\frac c{12}m(m^2-1)\delta_{m+n,0},
\end{equation}
and satisfy
$$[L_m, L_n^-]=0.$$
(In the asymmetric case, we need to consider $L_n$ and $L_n^-$ separately.)

\ms \section{Virasoro generators} \label{sec: V generator}

We will now consider $L_n$'s as operators $X\mapsto L_nX$ acting on fields,
\begin{equation} \label{eq: Ln as op}
(L_nX)(z)=L_n(z)X(z)\qquad (\rm in~any~ given~chart).
\end{equation}
Of course, these operators are just OPE multiplications,
$$L_{n}X=T*_{(-n-2)}X,$$
i.e.,
\begin{equation} \label{eq: Ln modes T}
T(\zeta)X(z)=
\cdots+\frac{(L_0X)(z)}{(\zeta-z)^2}+\frac{(L_{-1}X)(z)}{\zeta-z}+(L_{-2}X)(z)+\cdots \qquad (\zeta\to z).
\end{equation}
By \eqref{eq: Ln as op} and Proposition~\ref{Ln Virasoro}, the operators $L_n$ provide a Virasoro algebra representation in the space of all Fock space fields in $D.$

\ms Let us restate some facts established in the previous lectures in terms of this representation.

\begin{prop} \label{Ln act on CFT}
Virasoro generators \index{Virasoro!generators} $L_n$ act on $\FF(W).$
\end{prop}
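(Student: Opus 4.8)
The plan is to deduce the statement immediately from Proposition~\ref{*n in CFT}. First I would recall the identification of the Virasoro generators with OPE coefficients of the Virasoro field: by \eqref{eq: Ln modes T}, in any fixed local chart one has $L_nX=T*_{(-n-2)}X$, so $L_nX$ is (up to a combinatorial constant) one of the coefficients occurring in the expansion of the string $T(\zeta)X(z)$ as $\zeta\to z$. Thus the claim ``$L_n$ acts on $\FF(W)$'' is just the assertion that the OPE coefficients of $T$ and $X$ stay in $\FF(W)$.

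Next I would feed in the two inputs. On one hand, $T\in\FF(W)$ by the very definition of the Virasoro field (condition (a) in Section~\ref{sec: T}). On the other hand, we are given $X\in\FF(W)$. Proposition~\ref{*n in CFT}(b) then says precisely that every OPE coefficient of a pair of fields drawn from $\FF(W)$ again lies in $\FF(W)$; applying it to the pair $(T,X)$ yields $L_nX\in\FF(W)$ for every $n\in\Z$. For the antiholomorphic generators $L_n^-$ I would use $L_n^-X=\overline{L_n\bar X}$ together with the fact that, in the symmetric case $W=(A,\bar A)$ considered here, $\FF(W)$ is stable under complex conjugation (the defining identities \eqref{eq: Ward4CFT} for $X$ are equivalent to those for $\bar X$); the asymmetric case is handled the same way with $T^-$ in place of $T$.

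I do not expect any genuine obstacle: the content of the statement is entirely contained in Proposition~\ref{*n in CFT}, whose proof (via Ward's identities, Proposition~\ref{Ward's identities}) already treats \emph{all} OPE coefficients uniformly --- both the singular terms (corresponding to $n\ge-1$) and the regular terms ($n\le-2$). The only thing worth double-checking is the bookkeeping that $L_nX$ really is the coefficient $T*_{-n-2}X$ and not, say, $T*_{n}X$, i.e., that the index conventions in \eqref{eq: Ln modes T} match those of Proposition~\ref{*n in CFT}; this is routine contour integration.
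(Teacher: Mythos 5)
Your argument is exactly the paper's proof: the paper also deduces the proposition from $T\in\FF(W)$ (definition of the Virasoro field) together with Proposition~\ref{*n in CFT}(b), since $L_nX=T*_{-n-2}X$ is an OPE coefficient of two fields in $\FF(W).$ Your extra remarks on the index bookkeeping and on $L_n^-$ are fine but not needed beyond what the paper states.
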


\begin{proof} By definition, $T\in\FF(W),$ and we know that OPE coefficients of fields in $\FF(W)$ belong to $\FF(W).$
\end{proof}

\ms For $n\ge-1,$ we can identify the action of $L_n$ on $\FF(W)$ with Lie derivative operators
$$(L_nX\,\|\,\phi)(z)~=~(\LL^+_{v_n}X\,\|\,\phi)(z), \qquad (v_n\,\|\,\phi)(\zeta)=(\zeta-z)^{n+1}.$$
Thus the Lie-subalgebra $\Lin\{L_{-1}, L_{0},\cdots\}$ (in the space of operators on Fock space fields) is isomorphic to the Lie-subalgebra $\Lin\{v_{-1},v_0,\cdots\}$ (in the space of locally holomorphic vector fields) with the bracket~\eqref{eq: Lv1v2}.

\ms \begin{prop} \label{primary field}
Let $X$ be a Fock space field. Any two of the following assertions imply the third one (but neither one implies the other two):
\renewcommand{\theenumi}{\alph{enumi}}
{\setlength{\leftmargini}{2.0em}
\begin{enumerate}
\ss \item \label{item: primary1} $X\in\FF(W)$;
\ss \item \label{item: primary2} $X$ is a $(\lambda,\lambda_*)$-differential;
\ss \item \label{item: primary3} $L_{\ge1}X=0,\; L_0X=\lambda X, \; L_{-1}X=\partial X,$ and similar equations hold for $\bar X.$
\end{enumerate}}
\end{prop}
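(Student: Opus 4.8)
The plan is to unwind the three assertions into OPE statements via \eqref{eq: Ln modes T} and then play them off against one another using Corollary~\ref{OPE4diff} and Proposition~\ref{*n in CFT}. The key dictionary is this: for a holomorphic field $A$ (or the Virasoro field $T$, which produces the same residue operators on $\FF(W)$), the operator product expansion
$$T(\zeta)X(z)=\cdots+\frac{(L_1X)(z)}{(\zeta-z)^3}+\frac{(L_0X)(z)}{(\zeta-z)^2}+\frac{(L_{-1}X)(z)}{\zeta-z}+\cdots$$
encodes, in its singular part, exactly the data appearing in \eqref{eq: OPE4diff}. So assertion~\eqref{item: primary3} says precisely that $\Sing_{\zeta\to z}T(\zeta)X(z)=\dfrac{\lambda X(z)}{(\zeta-z)^2}+\dfrac{\partial X(z)}{\zeta-z}$, together with the conjugate statement for $\bar X$ with $\lambda_*$ replacing $\lambda$ (using $L_{-1}X=\partial X$ and $L_0 X = \lambda X$ and $L_{\geq 1}X = 0$).

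First I would prove \eqref{item: primary1}~$\wedge$~\eqref{item: primary2}~$\Rightarrow$~\eqref{item: primary3}. Since $X\in\FF(W)$ and $X$ is a $(\lambda,\lambda_*)$-differential, Corollary~\ref{OPE4diff} gives the singular OPE \eqref{eq: OPE4diff} with $A$; replacing $A$ by $T$ (legitimate on $\FF(W)$, as noted in Section~\ref{sec: T}) and comparing with \eqref{eq: Ln modes T} reads off $L_{\geq 1}X=0$, $L_0X=\lambda X$, $L_{-1}X=\partial X$, and similarly for $\bar X$. Next, \eqref{item: primary1}~$\wedge$~\eqref{item: primary3}~$\Rightarrow$~\eqref{item: primary2}: assertion~\eqref{item: primary3} says the singular part of $T(\zeta)X(z)$ is exactly the right-hand side of \eqref{eq: OPE4diff}, and the same for $\bar X$; since $X\in\FF(W)$, the converse direction of Corollary~\ref{OPE4diff} (equivalently the Corollary following it, which uses Proposition~\ref{LvX4diff}) concludes that $X$ is a $(\lambda,\lambda_*)$-differential. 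Finally, \eqref{item: primary2}~$\wedge$~\eqref{item: primary3}~$\Rightarrow$~\eqref{item: primary1}: here the hypotheses pin down the full singular part of $T(\zeta)X(z)$ and of $T(\zeta)\bar X(z)$ to be the differential form \eqref{eq: OPE4diff}. Now translate back to the quadratic differential $A$: because $T-A$ is a non-random holomorphic Schwarzian form, $\oint_{(z)}v_\zeta(T-A)\,X(z)$ and the analogous contour integral for $\bar X$ both vanish for the local vector fields $v_\zeta$ (the difference being non-random and holomorphic contributes no residue against $X$ or $\bar X$), so $A$ and $T$ give the same Ward OPEs for $X$; hence \eqref{eq: OPE4diff} holds with $A$, and Proposition~\ref{Ward's OPEs} (or Corollary~\ref{OPE4diff}) yields $X\in\FF(W)$.

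The main obstacle I expect is the last implication, and specifically the justification that one may pass from the Virasoro-generator equations in \eqref{item: primary3} to the $A$-OPE needed to invoke $\FF(W)$ membership — the Virasoro field $T$ itself is \emph{not} a priori known to lie in $\FF(W)$ in this direction (that would be circular if $X=T$), so I must argue purely from the structure $T=A+(\text{Schwarzian form})$ and from \eqref{item: primary2}, rather than from any property of $T$ as a member of $\FF(W)$. The clean way around this is to note that \eqref{item: primary3} plus \eqref{item: primary2} gives the transformation law of $X$ directly (a $(\lambda,\lambda_*)$-differential whose Lie derivatives are $\LL_{v}X=(v\partial+\bar v\bar\partial+\lambda v'+\lambda_* \bar v')X$ by Proposition~\ref{LvX}), and then membership in $\FF(W)$ follows once one knows $A$ acts as the Lie derivative on $X$ — which one gets from comparing \eqref{eq: OPE4diff} for $A$ (forced by matching singular parts, since $T-A$ contributes nothing singular) with Proposition~\ref{LvX}. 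The remaining implications are essentially bookkeeping with Corollary~\ref{OPE4diff}. I would also include a one-line remark exhibiting the non-implications: e.g. $A=-\frac12 J\odot J$ satisfies \eqref{item: primary2} (it is a $(2,0)$-differential) but not \eqref{item: primary1} (Section~\ref{sec: W of GFF}), while $T=-\frac12 J*J$ satisfies \eqref{item: primary1} but, being a Schwarzian form rather than a genuine differential, not \eqref{item: primary2}.
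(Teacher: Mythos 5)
Your proposal is correct and follows essentially the same route as the paper: under \eqref{item: primary2} the equivalence of \eqref{item: primary1} and \eqref{item: primary3} via Corollary~\ref{OPE4diff} and the mode expansion \eqref{eq: Ln modes T} (with the harmless exchange of $A$ and $T$, since $T-A$ is non-random and holomorphic and so contributes nothing singular), and for \eqref{item: primary1}$\wedge$\eqref{item: primary3}$\Rightarrow$\eqref{item: primary2} the passage through Ward's OPEs to the Lie-derivative formula and Proposition~\ref{LvX4diff}, which is exactly what the unnumbered corollary after Corollary~\ref{OPE4form} that you cite encapsulates. The worry you raise about circularity in the last implication is resolved just as you say, and as the paper implicitly does, by the non-randomness of $T-A$.
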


Here and below, $L_{\ge k}X = 0$ means that $L_nX = 0$ for all $n\ge k.$

\begin{proof}
Under \eqref{item: primary2}, \eqref{item: primary1} and \eqref{item: primary3} are equivalent by Corollary~\ref{OPE4diff} and \eqref{eq: Ln modes T}.
Suppose that \eqref{item: primary1} and \eqref{item: primary3} hold.
Then by Proposition~\ref{Ward's OPEs} and \eqref{eq: Ln modes T}, we get
$$(\LL_{v_\zeta}^+X)(z) = (v_\zeta\pa+\lambda v_\zeta')X(z)
\qquad\textrm{and}\qquad
(\LL_{v_\zeta}^+\bar X)(z) = (v_\zeta\pa+\bar\lambda_*v_\zeta')\bar X(z),$$
which imply that the equation
$$\LL_{v}X= (v\pa+ \bar v\bp+\lambda v'+\lambda_*\overline{v'})X$$
holds in $D_{\mathrm{hol}}(v)$ for each vector field $v.$
By Proposition~\ref{LvX4diff}, we get \eqref{item: primary2}.
\end{proof}

\ss We call fields satisfying all three conditions \emph{(Virasoro) primary} \index{primary field!Virasoro} fields in $\FF(W).$ \index{Virasoro!primary}

\ms If $X\in \FF(W)$ is a Schwarzian form of order $\mu,$ then
$$L_{\ge3}X=0,\quad L_{2}X=6\mu I\,(I(z)\equiv1),\quad L_{1}X=0,\quad L_0X=2X,\quad L_{-1}X=\pa X.$$
For $n\le -2$ we have
$$L_{n}=\frac{\partial^{-n-2} T}{(-n-2)!} ~*,$$
so the Lie-subalgebra $\Lin\{L_{-2}, L_{-3},\cdots\}$ (in the space of operators on Fock space fields) is isomorphic to the Lie algebra $\Lin\{T, \pa T,\cdots\}$ (in the space of fields) with the bracket
$$[X,Y]=X*Y-Y*X.$$
Here we use the identity~\eqref{eq: XYZminusYXZ}.

\ms \section{Singular vectors} \label{sec: singular vec}

There is an extensive literature concerning Virasoro algebra representation theory.
For example, see \cite{IK11} and references therein. 
We will only mention an elementary fact that we will need later.

\begin{prop} \label{singular vec}
Let $V$ be a primary field in $\FF(W)$ with central charge $c,$ and let $\lambda,$ $\lambda_*$ be the conformal dimensions of $V.$ Then the field
$$X=[L_{-2}+\eta L_{-1}^2]V,$$
where $\eta$ is a complex number, is also a primary field (of dimensions $\lambda+2, \lambda_*$) if and only if
\begin{equation} \label{eq: level two singular condition}
3+2\eta+4\eta \lambda=0,\qquad \frac c2+4\lambda+6\eta \lambda=0.
\end{equation}
\end{prop}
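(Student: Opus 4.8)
The plan is to compute the action of the Virasoro generators $L_n$ (and $L_n^-$) on $X=[L_{-2}+\eta L_{-1}^2]V$ directly, using the commutation relations of Proposition~\ref{Ln Virasoro} together with the primary field conditions $L_{\ge1}V=0$, $L_0V=\lambda V$, $L_{-1}V=\pa V$ from Proposition~\ref{primary field}\eqref{item: primary3}. Since $X$ is built from $V$ by applying $L_{-1}$ and $L_{-2}$, and since $[L_m,L_n]=(m-n)L_{m+n}+\tfrac c{12}m(m^2-1)\delta_{m+n,0}$, everything reduces to moving lowering operators past raising operators. By the antiholomorphic part ($L_n^-$ commutes with all $L_m$, and $L_{-1},L_{-2}$ commute with $\bar L_0,\bar L_{\ge1}$), the equations for $\bar X$ are automatic: $X$ has the same $\lambda_*$ as $V$ and is annihilated by $L^-_{\ge1}$ with $L^-_0X=\lambda_*X$, $L^-_{-1}X=\bp X$. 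So the whole content is in the holomorphic generators.

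First I would record the three facts I need: $L_0$ acts on $X$ with eigenvalue $\lambda+2$ (this is immediate since $L_{-1}$ and $L_{-2}$ raise the $L_0$-weight by $1$ and $2$ respectively, using $[L_0,L_{-n}]=nL_{-n}$), and $L_{-1}X=\pa X$ (this follows from Leibniz/$L_{-1}=\pa$ on OPE coefficients, or directly from $[L_{-1},L_{-2}]=L_{-3}$, $[L_{-1},L_{-1}]=0$). So conditions \eqref{item: primary3} for $X$ hold except possibly $L_{\ge1}X=0$. It therefore suffices to check $L_1X=0$ and $L_2X=0$; then $L_nX=0$ for $n\ge3$ follows because $L_n=\tfrac1{n-2}[L_1,L_{n-1}]$ for $n\ge3$ and this bracket, applied after $L_1X=L_2X=0$ is known, pushes the vanishing upward by induction on $n$ (one checks $L_{n}X=\text{const}\cdot L_1(L_{n-1}X)+\text{const}\cdot L_{n-1}(L_1X)$ modulo lower generators that already annihilate $X$ — standard in Virasoro representation theory). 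So the real computation is exactly two equations: $L_1X=0$ and $L_2X=0$.

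Next I would compute $L_1X$ and $L_2X$. Using $L_1L_{-2}V = [L_1,L_{-2}]V + L_{-2}L_1V = 3L_{-1}V = 3\pa V$ (since $L_1V=0$) and $L_1L_{-1}^2V = [L_1,L_{-1}]L_{-1}V + L_{-1}L_1L_{-1}V = 2L_0L_{-1}V + L_{-1}[L_1,L_{-1}]V = 2(\lambda+1)\pa V + 2\lambda\pa V = (4\lambda+2)\pa V$, we get
\begin{equation}
L_1X = \big(3 + \eta(4\lambda+2)\big)\,\pa V,
\end{equation}
so $L_1X=0$ iff $3+2\eta+4\eta\lambda=0$, the first equation in \eqref{eq: level two singular condition}. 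Similarly, $L_2L_{-2}V = [L_2,L_{-2}]V = (4L_0 + \tfrac c2)V = (4\lambda+\tfrac c2)V$, and $L_2L_{-1}^2V = [L_2,L_{-1}]L_{-1}V + L_{-1}L_2L_{-1}V = 3L_1L_{-1}V + L_{-1}(3L_1V) = 3\cdot2L_0V = 6\lambda V$, giving
\begin{equation}
L_2X = \big(4\lambda + \tfrac c2 + 6\eta\lambda\big)\,V,
\end{equation}
so $L_2X=0$ iff $\tfrac c2+4\lambda+6\eta\lambda=0$, the second equation. Finally, by Proposition~\ref{primary field}, since $X\in\FF(W)$ (as an OPE coefficient of fields in $\FF(W)$, by Proposition~\ref{*n in CFT}) and $X$ now satisfies \eqref{item: primary3} with dimensions $(\lambda+2,\lambda_*)$, it is a $(\lambda+2,\lambda_*)$-differential, hence primary. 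Conversely, if $X$ is primary of those dimensions, then $L_1X=L_2X=0$ forces \eqref{eq: level two singular condition}.

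The main obstacle is purely organizational rather than conceptual: one must be careful that $X$ genuinely lies in $\FF(W)$ so that Proposition~\ref{primary field} applies (this is where Proposition~\ref{*n in CFT} on OPE coefficients is invoked — $L_{-1}V=T*_{-1}V$ and $L_{-2}V=T*V$, iterated), and one must verify the reduction ``$L_1X=L_2X=0\Rightarrow L_{\ge1}X=0$'' cleanly using the algebra relations; everything else is a short bracket computation. I expect no genuine difficulty beyond bookkeeping the central-charge term in $[L_2,L_{-2}]$.
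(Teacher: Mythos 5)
Your proposal is correct and follows essentially the same route as the paper: establish $L_0X=(\lambda+2)X$ and $L_{-1}X=\pa X$ for any $\eta$, reduce the primary condition to $L_1X=0$ and $L_2X=0$ via the bracket computations $L_1L_{-2}V=3\pa V$, $L_1L_{-1}^2V=(4\lambda+2)\pa V$, $L_2L_{-2}V=(4\lambda+c/2)V$, $L_2L_{-1}^2V=6\lambda V$, handle the antiholomorphic generators via $[L_m,L_n^-]=0$, and conclude with Propositions~\ref{*n in CFT} and \ref{primary field}. The only blemishes are cosmetic: the correct normalization is $L_n=\tfrac1{2-n}[L_1,L_{n-1}]$ for $n\ge3$ (a sign slip that does not affect the induction), and the justification of $L_{-1}X=\pa X$ should rest on differentiating the OPE of $T(\zeta)V(z)$ (i.e., $L_{-1}=\pa$ on OPE coefficients), not on the commutators alone.
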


\begin{proof} 
By assumption,
\begin{equation} \label{eq: LnV}
L_{\ge1}V=0,\quad L_0V=\lambda V,\quad L_{-1}V=\pa V.
\end{equation}
Observe that for any $\eta$ we have
$$L_{-1}X=\pa X.$$
Indeed,
differentiating the operator product expansion for $T(\zeta)V(z)$ in $z,$ we get (for all $n$)
$$L_n(\pa V)=\pa (L_nV)+(n+1)L_{n-1}V.$$
On the other hand, by \eqref{eq: LnV} we have
$$L_n(\pa V)=L_nL_{-1}V=(n+1)L_{n-1}V+L_{-1}L_nV,$$
and it follows that
$L_{-1}(L_nV)=\pa (L_nV),$ in~ particular $L_{-1}X=\pa X.$

\ms We also have (for any $\eta$)
\begin{equation} \label{eq: level2}
L_{0}X=(\lambda+2) X.
\end{equation}
This follows from the identity $L_0(L_nV)=(\lambda-n)L_nV,$ which is true because
$$L_0L_nV=[L_0,L_n]V+L_nL_0V=(-n+\lambda)L_nV.$$

\ms Let us now show that the equations in \eqref{eq: level two singular condition} are equivalent to the equations $L_{1}X=0$ and $L_2X=0$ respectively, and therefore to the condition $L_{\ge1}X=0.$

\ms Since $[L_1, L_{-1}]=2L_0,$ we have $L_1L_{-1}V=2L_0 V=2\lambda V,$ and
\begin{align*}
L_1L^2_{-1}V&=(L_1L_{-1})L_{-1}V=L_{-1}(L_1 L_{-1})V + 2L_0 L_{-1}V \\
&= 2\lambda L_{-1} V+(2\lambda L_{-1} V+ 2 L_{-1}V ) = (4\lambda+2)L_{-1}V.
\end{align*}
Since we also have
$$L_1L_{-2}V = [L_1,L_{-2}]V = 3L_{-1}V,$$
the first equation in \eqref{eq: level two singular condition} is equivalent to $L_{1}X=0.$

\ms Similarly, we show
 $$L_2L_{-2}V=(4\lambda+c/2)V, \qquad L_2L_{-1}^2V=6\lambda V,$$
so the second equation in \eqref{eq: level two singular condition} is equivalent to $L_{2}X=0.$

\ms Finally we notice that since $[L_m, L^-_n]=0,$ we have
$$L_{\ge1}^-X=0,\quad L_0^-X=\lambda_*X, \quad L_{-1}^-X=\bp X,$$
and the application of Propositions~\ref{Ln act on CFT} and \ref{primary field} completes the proof.
\end{proof}

\ss \begin{rmk*}
The field $X$ is called a \emph{level two singular vector} \index{singular vector} of the Virasoro algebra representation.
``Level two" means that $X$ is an ``eigenvector" of $L_0$ (see \eqref{eq: level2}) with eigenvalue $\lambda_X:$
$$\lambda_X = 2 + \lambda_V,$$
where $\lambda_V$ is the eigenvalue of $V.$
Level two singular vectors of the second copy of Virasoro algebra (see \eqref{eq: 2nd copy}) are described similarly to Proposition~\ref{singular vec}.
``Singular" means $X$ is ``primary."
We say that $V$ produces level two singular vector $X.$
The field $V$ is called degenerate (resp. non-degenerate) if $X =0$ (resp. $X\ne0$).

\ss At level one, $X=L_{-1}V$ is singular (i.e., primary) if and only if $L_0V=0,$ i.e., $\lambda=0$:
$$0=L_1L_{-1}V=[L_1,L_{-1}]V=2L_0V.$$
\end{rmk*}

\renewcommand\chaptername{Appendix}
\chapter{Existence of the Virasoro field} \label{appx: T}

Let $A$ be a holomorphic quadratic differential and $W=(A,\bar A).$
Let us assume that the family $\FF(W)$ is big enough in the following sense:
if $H$ is a holomorphic Fock space field, then
\begin{equation} \label{eq: FW big}
\forall n<0, \quad \forall X\in\FF(W),\quad H*_nX=0 \quad
\implies \quad H\textrm{~is~non-random.}
\end{equation}
In other words, there are no non-random $H$'s such that the operator product expansion of $H$ and $X$ has no singular terms.

\ms Interpreting a well-known postulate in the physical literature which says that scale (and translation) invariance at criticality implies (in 2D) the ``invariance with respect to the full conformal group", (i.e., the applicability of conformal field theory), see \cite{BPZ84}, we will show that the Virasoro field exists in a conformally invariant situation.

\begin{prop}
Let $D$ be a simply connected domain and let $q\in\pa D.$
Suppose $A$ is invariant with respect to the group $\Aut(D,q)$ and $\FF(W)$ satisfies \eqref{eq: FW big}.
Then there exists a field $T\in\FF(A,\bar A)$ such that $T-A$ is a non-random Schwarzian form.
\end{prop}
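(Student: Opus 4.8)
I want to produce the Virasoro field $T$ as $T = A + f$ where $f$ is a (candidate) non-random holomorphic Schwarzian form, so the whole problem is to find the right order $\mu$ (equivalently the central charge $c = 12\mu$) and to verify $T\in\FF(W)$. The natural source of $\mu$ is the two-point function: by Wick-type calculations elsewhere we expect $\E[A(\zeta)A(z)]$ to have a leading singularity $\tfrac{c/2}{(\zeta-z)^4}$ in the half-plane identity chart, so I would \emph{define} $c$ by $c = 2\lim_{\zeta\to z}(\zeta-z)^4\,\E A(\zeta)A(z)$ (a coordinate computation, finite by the quasi-polynomial hypothesis), set $\mu = c/12$, and let $T := A + f_\mu$ where $f_\mu$ is the unique $\Aut(D,q)$-invariant Schwarzian form of order $\mu$ in the chosen uniformization — concretely $f_\mu = \mu S_w$ pulled back from $\H$, which is invariant because the Möbius transformations of $\H$ have vanishing Schwarzian. (Uniqueness/existence of $f_\mu$ given invariance under $\Aut(\H)$ is an elementary ODE computation as in the uniqueness argument for $T$ in Section~\ref{sec: T}.)

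\textbf{Key steps.} First I would show $A$ and $T$ give the same residue operators $A_v = T_v$ for \emph{holomorphic} local vector fields $v$: this is immediate since $T - A = f_\mu$ is non-random holomorphic, so $\oint_{(z)} v\,(T-A)\,X(z) = (f_\mu)_{v}\,X(z)$ contributes nothing to the singular part (holomorphic times holomorphic, no residue). Hence $W^+(v)$ computed with $T$ equals that computed with $A$ on holomorphic $v$'s, and the only thing left is the real claim: $T\in\FF(W)$, i.e. $\LL_v T = A_v T + \bar A_v T$ in $D_{\rm hol}(v)$. Writing $\LL_v T = \LL_v A + \LL_v f_\mu$ and using that $f_\mu$ is a Schwarzian form of order $\mu$, Proposition~\ref{LvX} gives $\LL_v f_\mu = (v\pa + 2v')f_\mu + \mu v'''$. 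So the target identity becomes a statement purely about $A$: $\LL_v A + (v\pa+2v')f_\mu + \mu v''' = A_v T + \bar A_v T$. The crux is to compute $A_v A = (vA)*_{-1}A$. Using the OPE $A(\zeta)A(z) \sim \tfrac{c/2}{(\zeta-z)^4} + (\text{lower})$ — where the coefficient of $(\zeta-z)^{-2}$ should turn out to be $2A(z) + (\text{something involving } f_\mu \text{ or the Schwarzian}) $ and of $(\zeta-z)^{-1}$ to be $\pa$ of that — contour integration against $v$ produces exactly $(v\pa + 2v')A + \mu v''' - (v\pa+2v')f_\mu$, and the anti-holomorphic piece $\bar A_v A$ vanishes because $A$ is holomorphic and $\bar A$ anti-holomorphic (their OPE has no singular part, cf. \eqref{eq: sOPE(J,barJ)}). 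Matching these gives $\LL_v A = A_v A + \bar A_v A - [(v\pa+2v')f_\mu + \mu v''']$, which is precisely what the target identity requires.

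\textbf{Where conformal invariance and \eqref{eq: FW big} enter.} The honest difficulty is verifying that the coefficient of $(\zeta-z)^{-2}$ in the OPE of $A$ with itself really is $-2T$ and the coefficient of $(\zeta-z)^{-1}$ is $-\pa T$, i.e. that the self-OPE of $A$ has the Virasoro shape — this is not automatic from $A$ being a quadratic differential. Here I would argue: the OPE coefficient $C := A*_{-2}A$ (the $(\zeta-z)^{-2}$-coefficient, up to sign) lies in $\FF(W)$ by Proposition~\ref{*n in CFT}(b) applied to... no — $A\notin\FF(W)$, so instead I test against fields in $\FF(W)$. For any $X\in\FF(W)$, Ward's identity lets me rewrite $\E[A(\zeta)A(z)\,X_1\cdots]$ via $\LL_{v_\zeta}$ acting on the string $A(z)X_1\cdots$, and expanding in $\zeta\to z$ identifies the singular coefficients of $A(\zeta)A(z)$, modulo fields that have no singular OPE with all $X\in\FF(W)$. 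By hypothesis \eqref{eq: FW big}, any such holomorphic field is non-random; combined with $\Aut(D,q)$-invariance (which pins down the non-random ambiguity to be a multiple of the invariant Schwarzian form $f_\mu$, since an invariant quadratic differential on the disk is $0$ by Schwarz's lemma / Liouville), this forces the singular part of $A(\zeta)A(z)$ to be exactly $\tfrac{c/2}{(\zeta-z)^4} - \tfrac{2T(z)}{(\zeta-z)^2} - \tfrac{\pa T(z)}{\zeta-z}$. I expect the bookkeeping in this last step — tracking the non-random correction terms through the Ward identity and pinning them down using invariance plus \eqref{eq: FW big} — to be the main obstacle; everything else is the contour-integration identities of Sections~\ref{sec: residue op}–\ref{sec: Ward's OPEs} together with Proposition~\ref{LvX}.
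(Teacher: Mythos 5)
Your overall architecture -- establish the Virasoro-shaped self-OPE of $A$ in a half-plane uniformization with $q=\infty$, then set $T=A+\frac{c}{12}S_w$ and conclude $T\in\FF(W)$ by the one-chart criterion for Ward's OPE of a Schwarzian form (Corollary~\ref{OPE4form}) -- is the same as the paper's, and your endgame of using $\Aut(D,q)$-invariance to kill the non-random corrections is in the right spirit. The genuine gap is exactly at the step you call the crux: you propose to identify the singular coefficients of $A(\zeta)A(z)$ by rewriting $\E[A(\zeta)A(z)X_1\cdots X_n]$ ``via $\LL_{v_\zeta}$ acting on the string $A(z)X_1\cdots$.'' Ward's identities and Ward's OPEs (Propositions~\ref{Ward identity} and~\ref{Ward's OPEs}) represent Lie derivatives only for strings of fields belonging to $\FF(W)$, and nothing in the hypotheses places $A$ itself in $\FF(W)$; in the basic example one has $A=-\frac12 J\odot J\notin\FF(A,\bar A)$, see Section~\ref{sec: W of GFF}, and indeed the fourth-order singularity you are trying to produce is incompatible with $A$ satisfying the differential-type Ward OPE. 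So $\Sing_{\zeta\to z}A(\zeta)A(z)$ cannot be read off as a Lie derivative of a string containing $A(z)$, and the bookkeeping you describe never gets started.

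What replaces this step in the paper is a commutator argument. For $X\in\FF(W)$ and local holomorphic $v_1,v_2$ one has $[A_{v_1},A_{v_2}]X=-A_{[v_1,v_2]}X$ (Proposition~\ref{commutation4A}, a consequence of $[\LL^+_{v_1},\LL^+_{v_2}]=\LL^+_{[v_1,v_2]}$ -- note both residue operators act here on a field of $\FF(W)$, so no illegitimate use of Ward's identities occurs), while the same commutator is computed by the contour identity of Proposition~\ref{commutation id} in terms of the unknown coefficients $C_n$ in $A(\zeta)A(z)\sim\sum C_n(z)(\zeta-z)^{-n}.$ Testing with $v_1=1,$ $v_1(\zeta)=\zeta,$ and then $v_1(\zeta)=\zeta^k$ pins down $C_1=\pa A+c_1,$ $C_2=2A+c_2,$ and $C_n=c_n$ for $n\ge3,$ with all $c_n$ non-random holomorphic; this is precisely where \eqref{eq: FW big} enters, since the $C_n$ are determined only modulo holomorphic fields whose OPE with every $X\in\FF(W)$ has no singular part. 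Only afterwards does invariance under $z\mapsto kz+a$ give $c_n(z)=k^{4-n}c_n(kz+a),$ hence $c_n=0$ for $n\ne4$ and $c_4$ constant, which yields the OPE \eqref{eq: OPE(A,A)} with $c=2c_4.$ If you replace your Ward-identity step by this commutator computation, your remaining steps (defining $c,$ setting $T=A+\frac c{12}S_w,$ and checking $T\in\FF(W)$ in the uniformizing chart where $T=A$) go through essentially as in the text.
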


\begin{proof} It is sufficient to show that there is a number $c$ such that the operator product expansion
\begin{equation} \label{eq: OPE(A,A)}
A(\zeta)A(z)\sim \frac{c/2}{(\zeta-z)^4}+ \frac {2A(z)}{(\zeta-z)^2}+\frac {\pa A(z)}{\zeta-z}
\end{equation}
holds in some fixed half-plane uniformization of $D$ with $q=\infty.$
Indeed, if this is true, then we can define
$$T=A+\frac c{12}S_w,$$
where $w:D\to \H$ is a conformal map and $S_w = (w''/w')'- (w''/w')^2/2$ is the Schwarzian derivative of $w$ (expressed in local charts).
Clearly, $T$ is a Schwarzian form, and to claim that $T$ is the Virasoro field for $\FF(W)$ we must show that $T\in \FF(W).$
However, this follows from Corollary~\ref{OPE4form} because $T$ satisfies Ward's OPE in the half-plane uniformization, where we have $T=A,$ and as we mentioned earlier, it is enough to check Ward's OPE for a form in just one chart.

\ms To prove \eqref{eq: OPE(A,A)} we write (in $(D,q) = (\H,\infty)$)
\begin{equation} \label{eq*: OPE(A,A)}
A(\zeta)A(z)\sim \frac{C_1(z)}{\zeta-z}+\frac{C_2(z)}{(\zeta-z)^2}+\cdots
\end{equation}
with ``undetermined" coefficients $C_n,$ which are holomorphic Fock space fields.
Recall Proposition~\ref{commutation4A} -- for all $X\in\FF(W)$ and for all local holomorphic vector fields $v_1$ and $v_2,$ we have
$$[A_{v_1},A_{v_2}]X=-A_{[v_1,v_2]}X.$$
Applying the commutation identity (Proposition~\ref{commutation id}) and the operator product expansion \eqref{eq*: OPE(A,A)}, we see that
\begin{align*}
[A_{v_1},A_{v_2}]X&=\frac1{2\pi i} \oint_{(z)} v_2v_1C_1X(z)+\frac1{2\pi i} \oint_{(z)} v_2v_1'C_2X(z)\\
&+\frac1{2!}\frac1{2\pi i} \oint_{(z)} v_2v_1''C_3X(z)+\cdots.
\end{align*}
We now set $v_1=1.$ Then $[v_1, v_2]=v_2',$ so for all $v_2,$
$$\oint_{(z)} v_2C_1X(z)=-\oint_{(z)} v_2'AX(z),$$
i.e.,
$$\oint_{(z)} v_2(C_1-\pa A)X(z)=0.$$
According to our assumption \eqref{eq: FW big}, this gives $$C_1=\pa A+c_1,$$
where $c_1$ is a \emph{non-random} holomorphic field.
Next we set $v_1(\zeta)=\zeta.$ Then $[v_1, v_2]=\zeta v_2'-v_2,$ so
$$\oint_{(z)} \zeta v_2(\pa A)X(z)+\oint_{(z)} v_2C_2X(z)
=- \oint_{(z)} \zeta v_2'AX(z)+\oint_{(z)} v_2AX(z)$$
for all $v_2,$ which gives $$C_2=2 A+c_2,$$
where $c_2$ is a non-random holomorphic field.
Next steps with $v_1(\zeta)=\zeta^k$ give
$$C_3=c_3,\quad C_4=c_4, \cdots,$$
where $c_3,c_4,\cdots$ are non-random holomorphic fields.

\ms We claim that all $c_j$'s are zero except for $c_4$ which is constant. This is where we use conformal invariance.
Recall that conformal invariance of $A$ means that for all maps $\tau(z)=kz+a$ we have
$$\E\,A(z)\Phi(z_1)\Phi(z_2)\cdots=k^2\E\,A(\tau z)\Phi(\tau z_1)\Phi(\tau z_2)\cdots.$$
It is in the sense of such correlations that we can write
$$A(\zeta)A(z)=k^4A(\tau\zeta)A(\tau z),$$
or (according to the previous discussion) equate the operator product expansions
$$\frac{\pa A(z)+c_1(z)}{\zeta-z}+\frac{2A(z)+c_2(z)}{(\zeta-z)^2}+\frac{c_3(z)}{(\zeta-z)^3}+\cdots$$
and
$$k^3\frac{\pa A(\tau z)+c_1(\tau z)}{\zeta-z}+k^2\frac{2A(\tau z)+c_2(\tau z)}{(\zeta-z)^2}+k\frac{c_3(\tau z)}{(\zeta-z)^3}\cdots.$$
By conformal invariance of $A$ we can eliminate the terms with $A$ and $\pa A,$ so we end up with the identities
$$c_n(z)=k^{4-n}c_n(kz+a).$$
Clearly this implies that $c_n=0$ unless $n=4,$ in which case $c:=2c_4$ is a constant.
\end{proof}

\renewcommand\chaptername{Appendix}
\chapter{Operator algebra formalism} \label{appx: Op algebra}

Physical and algebraic literature uses the language of operator algebra formalism.
Here we will outline the relation of this formalism to the theory that we discuss in these lectures.

\ms \section{Construction of (local) operator algebras from holomorphic Fock space fields} \label{sec: construct}

\SS Fix a coordinate chart $\phi$ at some point $p\in D$ and assume $\phi(p)=0$; all our constructions will be in this chart.
Let $\mathfrak{F}$ be a linear set of quasi-polynomial Fock space fields with the following properties:

\renewcommand{\theenumi}{\alph{enumi}}
{\setlength{\leftmargini}{2.0em}
\begin{enumerate}
\ss \item \emph{all} fields in $\mathfrak{F}$ are \emph{holomorphic};
\ss \item $I\in \mathfrak{F},$ $(I(z) \equiv 1)$;
\ss \item \label{item: F*} $\mathfrak{F}$ is closed under OPE multiplications, i.e., all OPE coefficients of two fields in $\mathfrak{F}$ belong to $\mathfrak{F}$; in particular,
$\mathfrak{F}$ is closed under differentiation;
\ss \item the map $A\mapsto A(0)$ from $\mathfrak{F}$ to the space of correlation functionals in $D\sm\{p\}$ is 1-to-1.
\end{enumerate}}

\ms For example, we can take one or several holomorphic fields, such as $J,$ $T,$ $J\odot J,$ etc., and study the corresponding OPE families.

\ms Let us denote
$$V=\{A(0):~ A\in \mathfrak{F}\},$$
so $V$ is a linear subspace in the space of Fock space functionals, and we have a bijection
$\mathfrak{F}\mapsto V,$
$$ A\mapsto a=A(0).$$
(As a general rule, we will use upper and lower cases for the fields and their values, respectively.)
For each $A\in\mathfrak{F}$ we define the corresponding \emph{operator field} $\bfA$ \index{operator field} as a sequence of operators (the ``\emph{modes}" of $\bfA$)
\begin{equation} \label{eq: modes} \index{mode expansion!of operator field}
\bfa_n=\frac1{2\pi i} \oint_{(0)}\zeta^n A(\zeta)\,d\zeta,\qquad (n\in\mathbb{Z}),
\end{equation}
which we write as a \emph{formal} power series with operator-valued coefficients
\begin{equation} \label{eq: A[z]}
\bfA[z]=\sum_{-\infty}^\infty \frac{\bfa_n}{ z^{n+1}}.
\end{equation}
(We use bold letters for operators.)

\ms The indexing in the power series can be different from \eqref{eq: A[z]}.
It usually reflects the ``conformal weight" of $A$ (i.e., the eigenvalue of $A$ as eigenvector of $L_0$ whenever this makes sense, see e.g., Proposition~\ref{primary field}).
For example, the Virasoro operator field is \index{mode expansion!of the Virasoro operator field}
$$\bfT[z]=\sum_{-\infty}^\infty \frac{\bfl_n}{z^{n+2}},\qquad \bfl_n = \frac1{2\pi i} \oint_{(0)}\zeta^{n+1} T(\zeta)\,d\zeta,$$
see Section~\ref{sec: V generator} where we used the notation $L_n$ for $\bfl_n.$
As we mentioned, the operators $\bfl_n$ generate a Virasoro algebra representation:
\begin{equation} \label{eq: [l,l]}
[\bfl_m, \bfl_n]=(m-n)\bfl_{m+n}+\frac 1{12}m(m^2-1)\delta_{m+n,0}.
\end{equation}

\ms A simpler example is the operator field
$$\bfJ[z]=\sum_{-\infty}^\infty \frac{\bfj_n}{z^{n+1}}$$
corresponding to the current $J = \pa\Phi.$
The mode operators $\bfj_n$ satisfy the relations
\begin{equation} \label{eq: [j,j]}
[\,\bfj_m, \bfj_n]=n\delta_{m+n,0},
\end{equation}
which follow from the operator product expansion $J(\zeta)J(z)\sim-1/(\zeta-z)^2$ and the commutation identity in Proposition~\ref{commutation id}.
In a different language, the operators
$$\bfp_n = i\bfj_n$$
together with $\mathbf{1}$ generate a representation of the \emph{Heisenberg algebra}: \index{Heisenberg algebra}
\begin{equation} \label{eq: [p,p]}
[\,\bfp_m, \bfp_n]=m\delta_{m+n,0}.
\end{equation}

\ms\SS We can consider $\bfa_n$'s in \eqref{eq: modes} as operators $V\to V.$
Indeed, if $b\in V,$ then $b = B(0)$ for some field $B\in\mathfrak{F},$
and
$$\bfa_n b = (A*_{-n-1}B)(0)\in V$$
because $(A*_{-n-1}B)\in\mathfrak{F}$ by \eqref{item: F*}.
Thus we have a 1-to-1 map
$$Y: V\to\End(V)[[z]],\qquad a~\mapsto ~\bfA[z],$$
which can be called the \emph{operator-state correspondence} \index{operator-state correspondence}
(elements of $V$ are \emph{states}, \index{state} and operator fields are usually called \emph{operators}).
We denote by $\End(V)[[z]]$ the collection of formal power series with operator-valued coefficients $\bfa_n\in\End(V).$
Also, we have the ``\emph{translation}" operator \index{translation operator}
$\Pa:V\to V,$
$$\Pa a:=(\pa A)(0),$$
and a distinguished (``\emph{vacuum}") state $1=I(0).$ \index{state!vacuum} \index{vacuum state}

\ms The quadruple $(V, Y, \Pa, 1)$ is a \emph{chiral operator algebra}, \index{chiral!operator algebra} according to the definition in \cite{Kac98}.
In addition to some natural properties (axioms) involving $\Pa$ and $1,$
$$[\Pa, \bfa_n]=-n\bfa_{n-1},\quad \Pa1=0,\quad \bfa_{-1}1=a,\quad \bfa_{\ge0}1=0,$$
a chiral operator algebra must have the following properties:
for all $a, b\in V,$ there exists $N$ such that
\begin{equation} \label{eq: Axiom 1}
\bfa_n b=0,\quad (n>N),
\end{equation}
and
\begin{equation} \label{eq: Axiom 2}
(z-w)^n[\bfA[z],\bfB[w]]=0,\quad (n>N),
\end{equation}
(as a formal power series).
In our case, the first property is just a restatement of the fact that operator product expansions of quasi-polynomial Fock space fields have only finitely many singular terms.
The second axiom will be explained later.
We repeat that a chiral operator algebra is attached to the point $p$ and depends on $\phi;$
in other words, the operator fields are functions
$$\bfA = \bfA(p,\phi).$$
The letter $z$ in \eqref{eq: A[z]} for $\bfA[z]$ is just a dummy variable.

\ms \section{Radial ordering} \label{sec: radial ordering}

In some computations we can treat operator fields $\bfA[z],$ which are formal power series with operator coefficients, as operator-valued meromorphic functions
$$z\mapsto\bfA(z).$$
The precise meaning of the operator $\bfA(z)$ for any particular point $z$ is the following: if $v=X(0)\in V,$ then
$$\bfA(z)v=A(z)X(0)$$
in correlations with Fock space functionals whose nodes lie outside the disc $B(0,|z|).$
In other words, $\bfA(z)$ acts from $V,$ the space of correlation functionals in $D\sm\{\phi^{-1}0\},$ to the linear space of correlation functionals in $D\sm\phi^{-1}B(0,|z|).$

\ms The operator product expansion of Fock space fields (in chart $\phi$)
\begin{equation} \label{eq: OPE(A,B)}
A(z)B(w) = \sum_{\nu=-\infty}^\infty \frac{C_\nu(w)}{(z-w)^{\nu+1}},\qquad z\to w
\end{equation}
(note that the indexing in \eqref{eq: OPE(A,B)} is different from that in Section~\ref{sec: OPE def}, see \eqref{eq: OPE(X,Y)}; also recall that there are only finitely many non-zero $C_\nu$'s with positive $\nu$)
has the following operator analogue:
\begin{equation} \label{eq: R OPE}
\RR\,\bfA[z]\,\bfB[w]=\sum \frac{\bfC_\nu[w]}{(z-w)^{\nu+1}}.
\end{equation}
Here $\RR\,\bfA[z]\,\bfB[w]$ is the \emph{radial ordering} of $\bfA[z]$ and $\bfB[w]$ \index{radial ordering} defined as a pair of formal power series in 2 variables associated with the regions $|w|<|z|$ and $|z|<|w|,$
$$\bfA[z]\,\bfB[w] = \sum_m\sum_n \frac{\bfa_m\bfb_n}{z^{m+1}w^{n+1}},\qquad(|w|<|z|),$$
and
$$\bfB[w]\,\bfA[z] = \sum_n\sum_m \frac{\bfb_n\bfa_m}{z^{m+1}w^{n+1}},\qquad(|z|<|w|).$$
The meaning of the right-hand side is more complicated.
First, we replace the negative powers of $(z-w)$ by their Taylor expansions in the corresponding regions, e.g.,
$$\frac1{z-w} = \frac1z + \frac w{z^2} + \cdots, \qquad(|w|<|z|).$$
If $C_\nu=0$ for all but finitely many $\nu$'s, then the right-hand side in \eqref{eq: R OPE} gives us a formal power series in $w,z,$ say
$$\sum\sum\frac{\bft_{m,n}}{z^{m+1}w^{n+1}},\qquad(|w|<|z|),$$
and \eqref{eq: R OPE} means exactly the equality of coefficients: $$\bfa_m\bfb_n = \bft_{m,n}.$$

\ms However, if there are infinitely many $C_\nu\ne0,$ then the coefficients $\bft_{m,n}$ appearing in the double series in the right-hand side will be infinite sums of operators and therefore will not be elements of $\End(V)$ (unless we introduce some topology in $V$).
In fact, one can interpret \eqref{eq: R OPE} as an asymptotic expansion of formal power series;
$$\RR\,\bfA[z]\,\bfB[w]=\sum_{-N}^\infty \frac{\bfC_n(w)}{(z-w)^{n+1}}~+~O\left((z-w)^N\right)
~\textrm{ for all } N>0,$$
see \cite{Kac98} for the meaning of the error term.

\ms Let us give an interpretation of \eqref{eq: R OPE} in the setting of operator algebras arising from holomorphic Fock spaces fields.
Denote by $\bft_{m,n}$ the coefficient of $z^{-m-1}w^{-n-1}$ in the right-hand side of \eqref{eq: R OPE} for $\{|w|<|z|\}.$
(As we mentioned, $\bft_{m,n}$ is an infinite sum of operators in $V.$)

\ms\begin{claim*} For all strings $\XX,$ $p\notin S_\XX,$ and for all $f\in V,$ we have
\begin{equation} \label{eq: R OPE meaning}
\E(\bfa_m\bfb_nf)\XX = \E(\bft_{m,n}f)\XX.
\end{equation}
\end{claim*}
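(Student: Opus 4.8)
The plan is to reduce the identity to residue calculus, in the style of the proof of Proposition~\ref{commutation id}, with the operator product expansion \eqref{eq: OPE(A,B)} of the two generating fields as the only analytic ingredient beyond bookkeeping. Write $f=F(0)$ with $F\in\mathfrak F$, fix a string $\XX$ with $p\notin S_\XX$, and recall $\phi(p)=0$. First I would unwind both sides as contour integrals. From the mode formula \eqref{eq: modes} and the radial ordering built into the product $\bfa_m\bfb_n$ one gets
$$\E\,(\bfa_m \bfb_n f)\,\XX=\frac1{(2\pi i)^2}\oint_{|z|=r_2}z^m\,dz\oint_{|w|=r_1}w^n\,\E\big[A(z)B(w)F(0)\XX\big]\,dw,$$
where $0<r_1<r_2$ are chosen small enough that the disc $\{|z|\le r_2\}$ (in the chart $\phi$) contains no node of $\XX$; the inequality $r_1<r_2$ encodes the convention $|w|<|z|$ of $\RR$. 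On the other side, expanding each negative power of $(z-w)$ in the region $|w|<|z|$ turns the right-hand side of \eqref{eq: R OPE} into a double power series in $z,w$; reading off the coefficient of $z^{-m-1}w^{-n-1}$ and applying \eqref{eq: modes} to each $C_\nu$ writes $\E\,(\bft_{m,n}f)\,\XX$ as an explicit series in the numbers $\E\,(C_\nu*_\bullet F)(0)\,\XX$ with binomial weights.

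The heart of the argument is to evaluate the double integral by contour deformation. For fixed $w$ with $|w|=r_1$, the map $z\mapsto\E[A(z)B(w)F(0)\XX]$ is rational (a quasi-polynomial correlation) with poles only at $z=0$, $z=w$ and the nodes of $\XX$; so, after shrinking $r_1$, its Laurent expansion about $z=w$ converges on the punctured disc $0<|z-w|<r_1$, and there the operator product expansion $A(z)B(w)\sim\sum_\nu C_\nu(w)(z-w)^{-\nu-1}$ applies. I would push the contour $\{|z|=r_2\}$ onto small circles about the singularities it encloses. The circle about $z=w$ contributes, after substituting the expansion, $\sum_\nu\Res_{z=w}\!\big(z^m(z-w)^{-\nu-1}\big)\,\E[C_\nu(w)F(0)\XX]=\sum_{\nu\ge0}\binom m\nu w^{m-\nu}\,\E[C_\nu(w)F(0)\XX]$, whose binomial weights are exactly those produced by the region-$|w|<|z|$ expansion of $(z-w)^{-\nu-1}$; the circle about $z=0$ contributes the term involving $A*_{-m-1}F$, which is just what is needed to reconstitute the full coefficient $\bft_{m,n}$. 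Finally the $w$-integral turns each $\E[C_\nu(w)F(0)\XX]$ into $\E\,(C_\nu*_\bullet F)(0)\,\XX$, matching the second side term by term. This is precisely the manipulation behind \eqref{eq: Leibnitz4v2}, carried out for arbitrary modes.

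The step I expect to cause the real difficulty is the passage to infinite sums. When the OPE of $A$ and $B$ has infinitely many nonzero coefficients, $\bft_{m,n}$ is genuinely an infinite sum of operators, so one must show that $\sum_\nu(\text{weight})\,\E\,(C_\nu*_\bullet F)(0)\,\XX$ converges and may be integrated term by term along $|w|=r_1$. The resolution should be that only finitely many $C_\nu$ with $\nu\ge0$ are nonzero (the singular part of an OPE is always finite, and each $C_\nu$ is quasi-polynomial by Proposition~\ref{OPE coeffs}), while the $C_\nu$ with $\nu\le-1$ are the Taylor coefficients at $z=w$ of the regular part $:\!A(z)B(w)\!:$ of the product, an honest analytic functional near the diagonal; hence the tail of the series is the convergent Taylor expansion of an analytic function, tested in correlations, and the interchange of sum and integral is legitimate. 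Apart from this analytic point, the whole statement is residue bookkeeping.
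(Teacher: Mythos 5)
Your contour set-up is fine, but the deformation does not prove the Claim; it reproves the commutation identity. For fixed $w$ on $|w|=r_1$, pushing the $z$-contour onto small circles gives the residue $\sum_{\nu\ge0}\binom m\nu w^{m-\nu}\,\E[C_\nu(w)F(0)\XX]$ at $z=w$ and the residue $\E[B(w)(\bfa_m f)\XX]$ at $z=0$ (the coefficient of $z^{-m-1}$ in the expansion of $A(z)F(0)$, i.e.\ the value of $A*_{-m-1}F$). After the $w$-integration your identity therefore reads
$\E[(\bfa_m\bfb_n f)\XX]=\E[(\bfb_n\bfa_m f)\XX]+\sum_{\nu\ge0}\binom m\nu\,\E[(\bfc_{\nu,m+n-\nu}f)\XX]$,
which is Borcherds' formula \eqref{eq: Borcherds} tested on correlations (Proposition~\ref{commutation id} again), not \eqref{eq: R OPE meaning}. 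The sentence that the circle about $z=0$ ``is just what is needed to reconstitute the full coefficient $\bft_{m,n}$'' is exactly where the content of the Claim sits, and the pieces do not line up: in the definition of $\bft_{m,n}$ the terms with $\nu\le-1$ are polynomials in $(z-w)$ times $\bfC_\nu[w]$, hence contribute no negative powers of $z$, so for $m\ge0$ your reconstitution would force $\E[(\bfb_n\bfa_m f)\XX]=0$, which fails in general (take $A=B=F=J$, $m=1$, $n=-1$, $f=J(0)$: then $\bfb_{-1}\bfa_1 f=-J(0)$, which correlates nontrivially with $\XX=J(x)$); conversely, for $m<0$ the $\{|w|<|z|\}$-expansion of $(z-w)^{-\nu-1}$, $\nu\ge0$, contains no $z^{-m-1}$ at all, while your residue at $z=w$ does produce $\binom m\nu w^{m-\nu}$, so the asserted coincidence of binomial weights holds only for $0\le\nu\le m$.

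The convergence discussion also points at the wrong difficulty. The finiteness of the singular part is not the issue; the issue is that the regular part of the OPE, tested against $F(0)\XX$, is analytic at $z=w$ but has a pole at $z=0$ (where $F$ is inserted) of order growing with $\nu$, so rearranging $\sum_{\nu\le-1}\E[C_\nu(w)F(0)\XX](z-w)^{-\nu-1}$ into a double series in $z,w$ --- which is what defines the numbers $\E[(\bft_{m,n}f)\XX]$ --- is not an absolutely convergent re-expansion, and this interchange is precisely what your last paragraph waves through. Note also that the paper's own proof uses no contour deformation: it applies the one-variable mode expansion \eqref{eq: radial1} twice (the second time to the functional $A(z)\XX$) to show that $\sum_{m,n}\E[(\bfa_m\bfb_nf)\XX]\,z^{-m-1}w^{-n-1}$ is the expansion of the analytic function $\E[A(z)B(w)F(0)\XX]$ in $0<|w|<|z|<\delta$, argues that the $\bft$-series represents the same function in a region near the diagonal, and compares the two expansions. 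To salvage your route you would have to prove directly that the residue-at-origin term (the normal-ordered piece, cf.\ \eqref{eq: OA OPE}) agrees with the $\nu\le-1$ portion of $\bft_{m,n}$ in the sense in which the latter is defined; that identification carries all of the analytic content of the Claim and cannot be obtained by residue bookkeeping alone.
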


\ms Recall that $\bfa_m\bfb_nf$ is the value of some field in $\mathfrak{F}$ evaluated at $0.$
Similarly, $\bft_{m,n}f$ is an infinite sum of such values.

\ms\begin{proof}[Proof of Claim] Let us derive \eqref{eq: R OPE meaning} from the operator product expansion \eqref{eq: OPE(A,B)}.
We will use the following notation: if $0<\delta\ll1,$ then $\XX\in(\delta)$ means that all nodes of $\XX$ are outside $\phi^{-1}B(0,\delta).$

\ms If $\XX\in(\delta)$ and $0<|w|<\delta,$ then for all $f=F(0)\in V,$ we have
\begin{equation} \label{eq: radial1}
\E[(\bfB(w)f)\XX] = \E[B(w)F(0)\XX].
\end{equation}

The correlation function in the right-hand side is analytic in $0<|w|<\delta.$
The left-hand side is the numerical series
$$\sum\frac{\E(\bfb_nf)\XX}{w^{n+1}}=\sum\frac{\E(B*_{-n-1}F)(0)\XX}{w^{n+1}},$$
which converges in $0<|w|<\delta.$

\ms Suppose that $\XX\in (\delta)$ and $0<|w|<|z|<\delta.$
Then for all $f\in V,$
$$\E[\bfA(z)\bfB(w)f\XX]=\E[A(z)B(w)F(0)\XX].$$
Note that $A(z)\XX\in(\delta_1),$ where $\delta_1=|w|.$
By \eqref{eq: radial1}, we have
\begin{align*}
\E[A(z)B(w)F(0)\XX] &= \E[B(w)F(0)A(z)\XX] = \E[(\bfB(w)f)A(z)\XX] \\
&=\sum\frac{\E[(\bfb_nf)A(z)\XX]}{w^{n+1}} = \sum_n\frac1{w^{n+1}}\sum_m\frac{\E\,\bfa_m\bfb_nf\XX}{z^{m+1}}.
\end{align*}
The double series converges absolutely and represents the analytic function
$$(z,w)\mapsto \E[A(z)B(w)F(0)\XX]\quad\textrm{in } 0<|w|<|z|<\delta.$$
Similarly,
$$\E\sum\frac{\bft_{m,n}f\XX}{z^{m+1}w^{n+1}}$$
converges absolutely and represents the same function (by operator product expansion) in the region
$$\{|z-w|< \delta/2\} \cap \{\delta/2 < |w| < |z| < \delta\}.$$
This implies the equality of coefficients.

The proof of \eqref{eq: R OPE meaning} in the region $|z|<|w|<\delta$ is similar.
\end{proof}

\ms \section{Commutation identity and normal ordering} \label{sec: ::}

\SS The commutation identity in Proposition~\ref{commutation id} can be restated in operator terms as follows.
If we have the operator product expansion \eqref{eq: OPE(A,B)}, then
\begin{equation} \label{eq: commutator OPE}
[\bfA[z],\bfB[w]]=\sum_{k\ge0}\bfC_k[w]\,\frac{\delta^{(k)}(z-w)}{k!},
\end{equation}
where $\delta(z-w)$ denotes the power series
\begin{equation} \label{eq: delta}
\delta(z-w) = \frac1z\sum_{m=-\infty}^\infty\left(\frac wz\right)^m,
\end{equation}
and $\delta^{(k)}(z-w)$ are obtained from \eqref{eq: delta} by differentiating $k$ times with respect to $w.$

\ms To prove \eqref{eq: commutator OPE}, we write
$$[\bfa_m,\bfb_n] = \frac1{2\pi i}\oint_{(0)}\eta^n~\frac1{2\pi i}\oint_{(\eta)}\zeta^mA(\zeta)B(\eta)\,d\zeta\,d\eta.$$
The operator product expansion of $A(\zeta)B(\eta)$ and the binomial expansion of $\zeta^m$ at $\eta$ give us the (Borcherds') formula: \index{Borcherds' formula}
\begin{equation} \label{eq: Borcherds}
[\bfa_m,\bfb_n] =\sum_{k\ge0} \binom{m}{k}\bfc_{k,m+n-k},
\end{equation}
where $\bfc_{k,l}$'s are the mode of $\bfC_k.$
Thus we have
\begin{align} \label{eq: [A[z],B[w]]}
[\bfA[z],\bfB[w]]&=\sum_m\sum_n\frac{[\bfa_m,\bfb_n]}{z^{m+1}w^{n+1}}=\sum_{k\ge0}\sum_l\frac{\bfc_{k,l}}{w^{l+1}} \sum_m\binom mk z^{-m-1}w^{m-k}\\ &=\sum_{k\ge0}\bfC_k[w]\frac{\delta^{(k)}(z-w)}{k!}. \nonumber
\end{align}
Note that \eqref{eq: commutator OPE} implies the axiom~\eqref{eq: Axiom 2}
because the right-hand side in \eqref{eq: commutator OPE} is a finite sum.
(There are only finitely many non-zero $\mathbf{C}_k$'s. 
Compare \eqref{eq: OPE(A,B)} to \eqref{eq: commutator OPE}.)

\bs\SS The formula \eqref{eq: Borcherds} can be used to restate the radial ordering formula~\eqref{eq: R OPE} in terms of \emph{normal ordering}.
By definition \index{normal ordering}
$$:\!\bfA[z]\,\bfB[w]\!:~=\bfA_+[z]\,\bfB[w]+\bfB[w]\,\bfA_-[z],$$
where $\bfA_+[z] = \sum_{m<0}\bfa_mz^{-m-1},$ and $\bfA_-[z] = \bfA[z]-\bfA_+[z]$ is the principal part of the power series.
Then the operator product expansion~\eqref{eq: R OPE} takes the form
\begin{equation} \label{eq: OA OPE}
\RR~\bfA[z]\,\bfB[w]=~:\!\bfA[z]\,\bfB[w]\!:+\sum_{k\ge0} \frac{\bfC_k[w]}{(z-w)^{k+1}}.
\end{equation}
Note that all terms here are well-defined as formal power series.
To prove \eqref{eq: OA OPE} in the region $|z|>|w|,$ first we note that
$$\bfA[z]\,\bfB[w]\,\,-:\!\bfA[z]\,\bfB[w]\!:~ = [\bfA_-[z],\bfB[w]].$$
It follows from \eqref{eq: Borcherds} that
$$[\bfA_-[z],\bfB[w]]=\sum_{m\ge0}\sum_n\frac{[\bfa_m,\bfb_n]}{z^{m+1}w^{n+1}}=\sum_{k\ge0}\sum_l\frac{\bfc_{k,l}}{w^{l+1}} \sum_{m\ge0}\binom mk z^{-m-1}w^{m-k}.$$
On the other hand, the inner sum $\sum_{m\ge0}\binom mk z^{-m-1}w^{m-k}$ is the power series expansion of $(z-w)^{-k-1}$ in the region $|z|>|w|.$
Thus we have
$$[\bfA_-[z],\bfB[w]]=\sum_{k\ge0}\frac{\bfC_k[w]}{(z-w)^{k+1}}.$$
Similarly, to prove \eqref{eq: OA OPE} in the region $|z|<|w|,$ one can use
$$\bfB[w]\,\bfA[z]\,\,-:\!\bfA[z]\,\bfB[w]\!:~ = -[\bfA_+[z],\bfB[w]]$$
and \eqref{eq: Borcherds}.

\ms\SS Normal ordering of operator fields can be expressed in terms of their ``modes":
$$:\!\bfA[z]\,\bfB[w]\!:~ = \sum_m\sum_n \frac{:\!\bfa_m\,\bfb_n\!:}{z^{m+1}w^{n+1}},$$
where
$$:\!\bfa_m\,\bfb_n\!:~=
\begin{cases}
\bfa_m\,\bfb_n \qquad&\textrm{if }~ m < 0;\\
\bfb_n\,\bfa_m \qquad&\textrm{otherwise.}
\end{cases}
$$

\ms The definition can be extended to the case $w=z:$
$$:\!\bfA[z]\,\bfB[z]\!:~ \equiv \sum_m\sum_n \frac{:\!\bfa_m\,\bfb_n\!:}{z^{m+n+2}}.$$
The right-hand side is well-defined as a formal power series:
if $v \in V,$ then all but finitely many terms
in
$$\sum_m\sum_n \frac{:\!\bfa_m\,\bfb_n\!:v}{z^{m+n+2}}$$
are trivial.
Clearly, the operator field $:\!\bfA\,\bfB\!:$ corresponds to the Fock space field $A*B$ under the operator-state correspondence.

\begin{eg*}
The Virasoro field
$T = -\frac12 J*J$ (see Section~\ref{sec: T})
corresponds to the Virasoro operator field
$$\bfT = -\frac12:\!\bfJ\,\bfJ\!:.$$
The modes of $\bfT$ can be stated in terms of those of $\bfJ:$
\begin{equation} \label{eq: l=:jj:}
\bfl_n=-\frac{1}{2}\sum_{k=-\infty}^\infty:\!\bfj_{-k}\,\bfj_{k+n}\!:.
\end{equation}
Since $[\,\bfj_m,\bfj_n] = 0$ unless $m+n=0,$ we can understand the normal ordering
$$:\!\bfj_m\,\bfj_n\!:~=
\begin{cases}
\bfj_m\,\bfj_n \qquad&\textrm{if }~ m\le n;\\
\bfj_n\,\bfj_m \qquad&\textrm{otherwise,}
\end{cases}
$$
i.e., in \emph{Wick's sense}: we put all ``creation" operators on the left, so we apply ``annihilation" operators first.
For example,
$$\bfl_0 = -\frac12\,\bfj_0^2-\sum_{n=1}^\infty \,\bfj_{-n}\bfj_n.$$
\end{eg*}

\ms\SS The construction \eqref{eq: l=:jj:} in the last example is purely algebraic: if we define
$$\bfl_n=\frac{1}{2}\sum_{k=-\infty}^\infty:\!\bfp_{-k}\bfp_{k+n}\!:$$
for any representation of Heisenberg's algebra, then we get a Virasoro algebra representation with $c=1.$
We assume \eqref{eq: Axiom 1} for $\mathbf{p}$ so that the action of $\mathbf{l}_n$ is well defined.
For example, one can use the generators
$$\bfp_0=\mathbf{1}~(\mathbf{1}v=1\textrm{ for all } v\in V),\quad \bfp_n=\pa_{q_n},\quad \bfp_{-n}=nq_n,\qquad (n>0)$$
which give us a Heisenberg's algebra representation in the space of quasi-polynomials $f(q_1,q_2,\cdots)e^{q_0}.$
We also have
\begin{equation} \label{eq: [l,p]}
[\bfl_m,\bfp_n] =-n\bfp_{m+n}.
\end{equation}

\ms This algebraic approach can be applied to construct Virasoro algebra representations with $c\ne1.$
For example, it is easy to verify that the generators
\begin{equation} \label{eq: c-modified ln}
\ti{\bfl}_n = \bfl_n -ib(n+1)\bfj_n
\end{equation}
give a representation with $c=1-12b^2,$ see \cite{KR87} where this modification is called Fairlie's construction. \index{Fairlie's construction}

\ms To prove \eqref{eq: c-modified ln}, we use \eqref{eq: [l,l]}, \eqref{eq: [p,p]} and \eqref{eq: [l,p]}:
\begin{align*}
[\ti{\bfl}_m,\ti{\bfl}_n] &= [\bfl_m,\bfl_n] + b(m+1)[\bfl_n,\bfp_m] -b(n+1)[\bfl_m,\bfp_n] + b^2(m+1)(n+1)[\bfp_m,\bfp_n]\\
&=[\bfl_m,\bfl_n] -b(m-n)(m+n+1)\bfp_{m+n} -b^2 m(m^2-1)\delta_{m+n,0}\\
&=(m-n)(\bfl_{m+n}-b(m+n+1)\bfp_{m+n})+\frac {1-12b^2}{12}m(m^2-1)\delta_{m+n,0}\\
&=(m-n)\ti{\bfl}_{m+n}+\frac{c}{12}m(m^2-1)\delta_{m+n,0}.
\end{align*}

\ms We will discuss these central charge modifications in the context of Fock space fields in the next lecture.

\renewcommand\chaptername{Lecture}
\chapter{Modifications of the Gaussian free field} \label{ch: modifications}

In this lecture we discuss central charge modifications \index{central charge!modification} of the Gaussian free field in a simply connected domain $D$ with a marked boundary point $q.$
These modifications appeared in \cite{RBGW07} in the context of chordal SLE theory.
Similar constructions had been well known in the physics/algebraic literature, in particular in Coulomb gas formalism, see \cite{DFMS97}, Chapter~9.

\ms The modifications of the Gaussian free field are parametrized by real numbers $b.$
We denote by $\FF_{(b)}$ the corresponding OPE families.
The families $\FF_{(b)}$ are $\Aut(D,q)$-invariant and have the central charge $c = 1-12b^2.$
Their Virasoro fields are exactly the algebraic modifications \eqref{eq: c-modified ln} mentioned in Appendix~\ref{appx: Op algebra}.

\ms Certain vertex fields in $\FF_{(b)}$ have the fundamental property of degeneracy at level two -- they produce singular null vectors.
Combining the degeneracy equations with Ward's identities we obtain the equations of Belavin-Polyakov-Zamolodchikov type (BPZ equations), which play an important role in conformal field theory.
Cardy's boundary version of BPZ equations will be used in Lecture~\ref{ch: SLE theory} to relate chordal SLE theory to conformal field theory.

\ms \textbf{Change of notation.}
From now on, we add the subscript $(0)$ to the notation of fields in the OPE family of the Gaussian free field.
(This subscript will indicate the value of the modification parameter $b.$)
Thus $\Phi_{(0)}$ is the new notation for the Gaussian free field in $D,$ and
$$J_{(0)}=\pa \Phi_{(0)}, \quad T_{(0)}=-\frac12J_{(0)}*J_{(0)}, \quad \textrm{etc.}$$

\ms \section{Construction} \label{sec: c}
For a simply connected domain $D$ with a marked boundary point $q\in \pa D,$ we consider a conformal map $$w\equiv w_{D,q}:~(D,q)\to(\mathbb{H},\infty),$$
from $D$ onto the upper half-plane $\H = \{z\in\C:\operatorname{Im} z > 0\}.$
It is important that the function $\arg w':~D\to\mathbb{R}$ does not depend on the choice of the conformal map.
Let us fix a parameter $b\in\mathbb{R}$ and define
\begin{equation} \label{eq: GFFb}
\Phi\equiv \Phi_{(b)}=\Phi_{(0)}-2b\arg w', \qquad J\equiv J_{(b)}=\pa \Phi=J_{(0)}+ib\frac{w''}{w'}.
\end{equation}
Note that $J$ is a pre-Schwarzian form of order $ib,$ and as a form it is conformally invariant with respect to $\Aut(D,q).$

\ss \begin{prop}
The field $\Phi_{(b)}$ has a stress tensor, and its Virasoro field is
$$T\equiv T_{(b)}=-\frac12 J*J+ib\pa J.$$
The central charge of $\Phi_{(b)}$ is
$$c=c(b)=1-12 b^2.$$
\end{prop}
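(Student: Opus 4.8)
The plan is to work throughout in the half-plane uniformization, i.e.\ in the chart in which $w=w_{D,q}$ turns $(D,q)$ into $(\H,\infty)$ with the identity coordinate; since a (pre-)Schwarzian form or differential is determined by its expression in a single chart, this loses nothing, and in this chart $\arg w'\equiv0$ and $w''\equiv0$, so $\Phi_{(b)}=\Phi_{(0)}$ and $J=J_{(b)}=J_{(0)}$ as correlation functionals. Accordingly I would set
$$A:=-\tfrac12\,J_{(0)}\odot J_{(0)}+ib\,\partial J_{(0)}\quad\text{in }(\H,\id),$$
extended to $D$ as \emph{the} holomorphic quadratic differential with this expression, and take $W=(A,\bar A)$ as the candidate stress tensor.

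\emph{Step 1 (the stress tensor).} First I would check $\Phi_{(b)}\in\FF(W)$. Since $\partial\Phi_{(b)}=J$ is a pre-Schwarzian form of order $ib$ (as noted before the proposition), one has $\LL^+_v\Phi_{(b)}=v\,\partial\Phi_{(b)}+ib\,v'$, the transformation rule of a pre-pre-Schwarzian form of order $ib$; so by the remark after Proposition~\ref{Ward's OPEs} (cf.\ Corollary~\ref{OPE4form}) it suffices to verify Ward's OPE in the single chart $(\H,\id)$,
$$A(\zeta)\Phi_{(b)}(z)\sim\frac{ib}{(\zeta-z)^2}+\frac{\partial\Phi_{(b)}(z)}{\zeta-z}.$$
This is a short Wick computation: using $\E[J_{(0)}(\zeta)\Phi_{(0)}(z)]=-\tfrac1{\zeta-z}+O(1)$, the term $-\tfrac12 J_{(0)}\odot J_{(0)}$ reproduces the Gaussian-free-field relation $\sim J_{(0)}(z)/(\zeta-z)$ of Proposition~\ref{OPE4T}(a), while $ib\,\partial J_{(0)}(\zeta)\Phi_{(0)}(z)\sim ib\,\partial_\zeta\E[J_{(0)}(\zeta)\Phi_{(0)}(z)]\sim ib/(\zeta-z)^2$, and $\partial\Phi_{(b)}=J_{(0)}$ in this chart. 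Hence $\Phi_{(b)}\in\FF(W)$; by Proposition~\ref{*n in CFT} the OPE family $\FF_{(b)}$ it generates is contained in $\FF(W)$, so $W$ is a stress tensor for all of $\FF_{(b)}$.

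\emph{Step 2 (the Virasoro field).} That $T_{(b)}:=-\tfrac12\,J*J+ib\,\partial J$ lies in $\FF(W)$ is immediate from Proposition~\ref{*n in CFT}: $J=\partial\Phi_{(b)}$ and $\partial J$ lie in $\FF(W)$, $J*J$ is an OPE coefficient of fields in $\FF(W)$, and $\FF(W)$ is linear. To see that $T_{(b)}-A$ is a non-random holomorphic Schwarzian form, substitute $J=J_{(0)}+ib\,N_w$ with $N_w=w''/w'$ non-random and expand the OPE product, using $f*X=X*f=fX$ for non-random holomorphic $f$:
$$-\tfrac12\,J*J+ib\,\partial J=T_{(0)}+ib\bigl(\partial J_{(0)}-N_w J_{(0)}\bigr)-b^2\bigl(N_w'-\tfrac12 N_w^2\bigr).$$
Here $ib(\partial J_{(0)}-N_wJ_{(0)})$ is a holomorphic quadratic differential (in $(\H,\id)$ it reduces to $ib\,\partial J_{(0)}$), $T_{(0)}$ is a Schwarzian form of order $\tfrac1{12}$ (Proposition~\ref{T in S(1/12)}), and $N_w'-\tfrac12 N_w^2=S_w$, so $-b^2S_w$ is a Schwarzian form of order $-b^2$; hence $T_{(b)}$ is a Schwarzian form of order $\tfrac1{12}-b^2=\tfrac{1-12b^2}{12}$. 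Since $N_w$, $S_w$ and the Schwarzian of $D$ all vanish in $(\H,\id)$, there $T_{(b)}=-\tfrac12 J_{(0)}\odot J_{(0)}+ib\,\partial J_{(0)}=A$; so $T_{(b)}-A$ is a Schwarzian form vanishing in the uniformizing chart, whence $T_{(b)}-A=\tfrac{1-12b^2}{12}\,S_w$, a non-random holomorphic Schwarzian form. Therefore $T_{(b)}$ is the Virasoro field, and its order as a Schwarzian form, being $c/12$ by definition, gives $c=1-12b^2$.

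As an independent check of the central charge — and the route I would take if I wanted to skip the transformation-law bookkeeping above — one computes $\E[A(\zeta)A(z)]$ in $(\H,\id)$ by Wick: the cross term vanishes by parity, $\E[(-\tfrac12 J_{(0)}\odot J_{(0)})(\zeta)(-\tfrac12 J_{(0)}\odot J_{(0)})(z)]=\tfrac12\bigl(\E[J_{(0)}(\zeta)J_{(0)}(z)]\bigr)^2\sim\tfrac{1/2}{(\zeta-z)^4}$, and $-b^2\E[\partial J_{(0)}(\zeta)\partial J_{(0)}(z)]=-b^2\,\partial_\zeta\partial_z\bigl(-\tfrac1{(\zeta-z)^2}\bigr)\sim-\tfrac{6b^2}{(\zeta-z)^4}$, so $c=2\lim_{\zeta\to z}(\zeta-z)^4\E[A(\zeta)A(z)]=1-12b^2$. (This is the Fock-space incarnation of Fairlie's modification $\ti{\bfl}_n=\bfl_n-ib(n+1)\bfj_n$ of Appendix~\ref{appx: Op algebra}.) The one genuinely delicate point is keeping the conformal structure of $\Phi_{(b)}$ straight: it is a pre-pre-Schwarzian-type form of order $ib$, not a scalar, and it is precisely this non-trivial transformation law that forces the extra $ib\,\partial J_{(0)}$ in $A$ and thereby shifts the central charge away from $1$.
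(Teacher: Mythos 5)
Your proposal is correct and follows essentially the paper's own route: the same quadratic differential $A$ (your chart-wise definition in $(\H,\id)$ is exactly the paper's $A_{(0)}+(ib\pa-j)J_{(0)}$, $j=ib\,w''/w'$), the same reduction via Corollary~\ref{OPE4form} to a single Ward's OPE check for the pre-pre-Schwarzian form $\Phi_{(b)}$ in the half-plane chart, and the same identity $T_{(b)}=A+\frac{1-12b^2}{12}S_w$ obtained by expanding $J=J_{(0)}+ibN_w$, with $T_{(b)}\in\FF(W)$ supplied by Proposition~\ref{*n in CFT}. The only differences are cosmetic: you assert rather than verify (by the two-line cocycle computation) that $ib(\pa J_{(0)}-N_wJ_{(0)})$ is a quadratic differential, and your $\E[A(\zeta)A(z)]$ calculation is an extra, correct cross-check of $c=1-12b^2$.
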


\begin{proof} Let us define
$$A\equiv A_{(b)}=A_{(0)}+(ib\pa -j)J_{(0)},\qquad j:=\E[J]=ib{w''}/{w'}. $$
Then $A$ is a holomorphic quadratic differential.
Indeed, $ib\pa J_{(0)}$ and $jJ_{(0)}$satisfy the following transformation laws:
\begin{align*}
ib\pa J_{(0)} &= ibh'' \tilde J_{(0)}\circ h + ib(h')^2 \pa \tilde J_{(0)}\circ h, \\ 
jJ_{(0)} &= ib \Big(\frac{h''}{h'}\Big)h' \tilde J_{(0)}\circ h +(h')^2 (\tilde j\tilde J_{(0)}) \circ h.
\end{align*}

\ms We claim that $W=(A,\bar A)$ is a stress tensor for $\Phi.$
Since $\Phi$ is a pre-pre-Schwarzian form, by Corollary~\ref{OPE4form} all we need is to check Ward's OPE in $\H,$
\begin{equation} \label{eq: OPE(A,Phi)}
A(\zeta) \Phi(z) = (A_{(0)}(\zeta) + ib\pa J_{(0)}(\zeta)-j(\zeta)J_{(0)}(\zeta))\Phi(z)\sim \frac{J(z)}{\zeta-z}+ib\frac{1}{(\zeta-z)^2}.
\end{equation}
However, this is immediate from Proposition~\ref{OPE4T}~\eqref{item: OPE(T,Phi)} and \eqref{eq: OPE(J,Phi)}.

\ms Finally, let us show that
\begin{equation} \label{eq: TA}
T = A + \frac{1-12b^2}{12} S_w,
\end{equation}
where $S_w = (w''/w')'- (w''/w')^2/2$ is the Schwarzian derivative of $w.$
This will conclude the proof: $T$ is a Schwarzian form of order $c/12,$ and $T\in\FF(W)$ by Proposition~\ref{*n in CFT}.
From the expressions of $T$ and $J$ we find
$$T=A+\frac1{12}S_w-\frac{j^2}2+ibj'.$$
The last two terms can be written as
${b^2}N_w^2/2-b^2N_w'=-b^2S_w,$ so we get \eqref{eq: TA}.
\end{proof}

\ms \begin{rmk*} The Virasoro field $T$ is real and has no singularities on $\pa D$ including the point $q.$
This is easy to verify using the formula
$$\E[\pa J(\zeta)\Phi(z)]= \frac1{(\zeta-z)^2}- \frac1{(\zeta-\bar z)^2}\qquad(\textrm{in }\id_\H).$$
In particular, it follows that we can apply Ward's equations as they are stated in Section~\ref{sec: Ward's in H}.
\end{rmk*}

\ms \textbf{Notation.} Denote by $\FF_{(b)}$ the OPE family of the ``bosonic" field \index{bosonic field $\Phi_{(b)}, \widehat\Phi$} $\Phi_{(b)},$ the algebra (over $\C$) spanned by the generators $1,\pa^j\bp^k \Phi_{(b)}$ and $\pa^j\bp^k e^{*\alpha \Phi_{(b)}}\,(\alpha\in\C)$ \index{OPE!family $\FF_{(b)}$ of the bosonic field} under OPE multiplication. 
For example, OPE family $\FF_{(b)}$  contains 
$$1,\quad \Phi_{(b)}*\Phi_{(b)}, \quad J_{(b)}*J_{(b)}, \quad \pa J_{(b)}*(\Phi_{(b)}*\Phi_{(b)}) , \quad  J_{(b)}*e^{*\alpha \Phi_{(b)}},\quad \textrm{etc.}$$
Since the OPE coefficients of two conformally invariant fields are conformally invariant, the fields in $\FF_{(b)}$ are invariant with respect to $\Aut(D,q),$ and $T_{(b)}$ is their Virasoro field.

\ms \section{Vertex fields} \label{sec: vertex field}

Vertex fields \index{vertex field!non-chiral $\VV,\widehat\VV$} in $\FF_{(b)}$ are defined as OPE-exponentials of the bosonic field $\Phi=\Phi_{(b)}.$
If $\alpha\in\C,$ then by definition
$$\VV^\alpha\equiv \VV^\alpha_{(b)}=e^{*\alpha\Phi}=\sum_{n=0}^\infty\frac{\alpha^n}{n!}\Phi^{*n}.$$
We have
$$\VV^\alpha_{(b)}=e^{\alpha\varphi}\VV^\alpha_{(0)}=e^{\alpha\varphi}C^{\alpha^2}e^{\odot\alpha\Phi_{(0)}},$$
where $\varphi:=\E\Phi=-2b\arg w'$ is an imaginary part of a pre-pre-Schwarzian form and $C$ is the conformal radius, which is a $(-1/2,-1/2)$-differential.
This gives the following statement.

\begin{prop}
$\VV^\alpha_{(b)}$ is a primary field of $\FF_{(b)}$ with conformal dimensions
$$\lambda=-\frac{\alpha^2}2+i\alpha b,\qquad \lambda_*=-\frac{\alpha^2}2-i\alpha b.$$
\end{prop}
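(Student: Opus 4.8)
The plan is to reduce the statement to known facts about the OPE family of the Gaussian free field by exploiting the multiplicative decomposition $\VV^\alpha_{(b)}=e^{\alpha\varphi}C^{\alpha^2}e^{\odot\alpha\Phi_{(0)}}$ displayed just above the proposition, where $\varphi=-2b\arg w'$ and $C$ is the conformal radius. First I would establish that $\VV^\alpha_{(b)}\in\FF_{(b)}$: this is immediate since $\VV^\alpha_{(b)}=e^{*\alpha\Phi_{(b)}}$ is by definition an OPE exponential of the bosonic field, and OPE exponentials were explicitly included in $\FF_{(b)}$ in the notation paragraph of Section~\ref{sec: c}. Since $W=(A_{(b)},\bar A_{(b)})$ is a stress tensor for $\Phi_{(b)}$ and OPE coefficients of fields in $\FF(W)$ belong to $\FF(W)$ (Proposition~\ref{*n in CFT}), we get $\VV^\alpha_{(b)}\in\FF(W)$.

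Next I would verify that $\VV^\alpha_{(b)}$ is a differential of the stated degrees. From $\VV^\alpha_{(b)}=e^{\alpha\varphi}\,C^{\alpha^2}\,e^{\odot\alpha\Phi_{(0)}}$ I would compute the transformation law factor by factor: $e^{\odot\alpha\Phi_{(0)}}$ is a scalar (Gaussian free field is a scalar, and Wick exponentials of scalars are scalars, see the examples in Section~\ref{sec: conf F-field}); $C$ is a $(-\tfrac12,-\tfrac12)$-differential, so $C^{\alpha^2}$ is a $(-\tfrac{\alpha^2}{2},-\tfrac{\alpha^2}{2})$-differential; and $e^{\alpha\varphi}$, with $\varphi$ the imaginary part of a pre-pre-Schwarzian form, transforms by $\varphi=\tilde\varphi\circ h+2b\arg h'$ (the transformation law $c=\tilde c\circ h-\log|h'|$ with $c$ replaced by its imaginary counterpart), contributing a factor $e^{2\alpha b\,\arg h'}=(h'/|h'|)^{i\alpha b\cdot(\text{sign})}$—more precisely $|h'|^{i\cdot 0}(h'/\overline{h'})$-type unit factor that I would write out carefully as $(h')^{i\alpha b}(\overline{h'})^{-i\alpha b}$ after combining with $|h'|$ powers. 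Assembling the three factors gives a total power $(h')^{\lambda}(\overline{h'})^{\lambda_*}$ with
$$\lambda=-\frac{\alpha^2}{2}+i\alpha b,\qquad \lambda_*=-\frac{\alpha^2}{2}-i\alpha b,$$
so $\VV^\alpha_{(b)}$ is an $(\lambda,\lambda_*)$-differential.

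Having both $\VV^\alpha_{(b)}\in\FF(W)$ and $\VV^\alpha_{(b)}$ being a differential, Proposition~\ref{primary field} (any two of the three conditions imply the third) yields that $\VV^\alpha_{(b)}$ is primary with the claimed conformal dimensions, which is exactly the assertion.

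The main obstacle I expect is bookkeeping the unit-modulus factor $e^{\alpha\varphi}$: since $\varphi=-2b\arg w'$ is only the imaginary part of a multivalued pre-pre-Schwarzian primitive, one must be careful that $e^{\alpha\varphi}$ combines with $C^{\alpha^2}=|\text{(pre-pre-Schwarzian)}|^{\cdots}$ to produce genuine holomorphic/antiholomorphic powers $(h')^{i\alpha b}$ and $(\overline{h'})^{-i\alpha b}$ rather than something branch-dependent; phrasing this correctly is where the real content of the transformation-law computation sits. An alternative, perhaps cleaner route avoiding the multivaluedness is to compute the OPE $T_{(b)}(\zeta)\VV^\alpha_{(b)}(z)$ directly by Wick's calculus (using $T_{(b)}=-\tfrac12 J_{(b)}*J_{(b)}+ib\,\pa J_{(b)}$ and $J_{(b)}(\zeta)\VV^\alpha_{(b)}(z)\sim -\alpha\VV^\alpha_{(b)}(z)/(\zeta-z)$ plus the $ib\,w''/w'$ correction), read off that it has the primary form $\lambda\VV^\alpha/(\zeta-z)^2+\pa\VV^\alpha/(\zeta-z)$ with the stated $\lambda$, and similarly for the antiholomorphic side; then invoke Proposition~\ref{primary field}. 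I would present the decomposition argument as the main proof and mention the OPE computation as the quick check of the value of $\lambda$.
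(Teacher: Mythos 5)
Your argument is essentially the paper's own proof: the proposition is stated as an immediate consequence of the decomposition $\VV^\alpha_{(b)}=e^{\alpha\varphi}\,C^{\alpha^2}\,e^{\odot\alpha\Phi_{(0)}}$, with the scalar Wick exponential, the $(-\tfrac12,-\tfrac12)$-differential $C$, and the factor $e^{\alpha\varphi}$ (with $\varphi$ the imaginary part of a pre-pre-Schwarzian form) contributing $(h')^{i\alpha b}(\overline{h'})^{-i\alpha b}$, plus membership in $\FF_{(b)}$ by definition of the OPE exponential and Proposition~\ref{*n in CFT}. Your intermediate sign in the transformation law of $\varphi$ is written loosely, but you land on the correct unit factor and hence the stated dimensions, so the proof is correct and matches the paper's route (the Wick-calculus OPE check you mention is only an optional confirmation).
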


\ms Note that the expression for $\VV^\alpha_{(b)}$ in the upper half-plane does not depend on $b.$
For example, the 1-point function is $\E\VV^\alpha=(2y)^{\alpha^2},$ and the 2-point function is
\begin{equation} \label{eq: 2ptfcn4V}
\E\,\VV^\alpha(z_1)\VV^\alpha(z_2)=(4y_1y_2)^{\alpha^2}e^{2\alpha^2G(z_1,z_2)}=(4y_1y_2)^{\alpha^2}\left|\frac{z_1-\bar z_2}{z_1- z_2}\right|^{2\alpha^2}.
\end{equation}
On the other hand, the conformal properties of the vertex fields, as well as their Virasoro fields $T,$ depend on the central charge.

\ms In what follows, we will only consider vertex fields with $\alpha=i\sigma$ purely imaginary ($\sigma\in\R)$ and therefore with real conformal dimensions
$$\lambda=\frac{\sigma^2}2-\sigma b,\quad \lambda_*=\frac{\sigma^2}2+\sigma b.$$

\ms The difference $\lambda-\lambda_*=-2\sigma b$ is called the conformal \emph{spin} of the vertex field. \index{spin}
If the spin is $-1,$ then the direction of the field (in correlations with real Fock space fields) transforms as the direction of a vector field, and so the orbits of the ordinary differential equation
$$\dot z=\VV^{i\sigma}(z)$$ (if this can be defined appropriately)
are natural conformally invariant objects, see \cite{Sheffield05} and \cite{RBGW07}.

\ms Vertex fields with $\sigma=2b$ have conformal dimension $\lambda=0 $; they produce non-zero level one singular vectors $\pa \VV.$
See Remark in Section~\ref{sec: singular vec}.

\ms \section{Level two degeneracy and BPZ equations} \label{sec: degeneracy}

Let $\VV=\VV^{ia}_{(b)}.$
From the algebraic description of level two singular vectors in Proposition~\ref{singular vec} it is easy to see (use $\lambda=a^2/2-ab$ and $c=1-12b^2$) that the field
$$X=T*\VV+\eta\pa^2\VV$$
is a differential (or primary) if $\eta=-1/(2a^2)$ and $2a(a+b)=1.$

\begin{prop} \label{degeneracy} \index{level two degeneracy equation}
If $\VV=\VV^{ia}_{(b)}$ and if $$2a(a+b)=1,$$ then
\begin{equation} \label{eq: degeneracy}
T*\VV=\frac1{2a^2}\pa^2\VV.
\end{equation}
\end{prop}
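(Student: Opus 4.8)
The plan is to combine the algebraic description of level-two singular vectors (Proposition~\ref{singular vec}) with a short Wick's-calculus computation in the half-plane uniformization.

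First I recall from Section~\ref{sec: vertex field} that $\VV := \VV^{ia}_{(b)}$ is a Virasoro primary field in $\FF_{(b)}$ of central charge $c = 1-12b^2$ and conformal dimensions $\lambda = \tfrac{a^2}2-ab$, $\lambda_* = \tfrac{a^2}2+ab$, and from Section~\ref{sec: V generator} that $L_{-2}\VV = T*\VV$, $L_{-1}\VV = \pa\VV$, hence $L_{-1}^2\VV = \pa^2\VV$. Thus
$$X := T*\VV - \frac1{2a^2}\,\pa^2\VV = \Big[\,L_{-2} - \frac1{2a^2}L_{-1}^2\,\Big]\VV$$
is precisely the level-two vector of Proposition~\ref{singular vec} with $\eta = -1/(2a^2)$. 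Using $\lambda = \tfrac{a^2}2 - ab$ and $c = 1-12b^2$, a direct substitution shows that the hypothesis $2a(a+b)=1$ forces both equations in \eqref{eq: level two singular condition}; hence $X$ is a primary field, in particular a $(\lambda+2,\lambda_*)$-differential.

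Since $X$ is a differential, it suffices to prove $X=0$ in a single chart --- a differential vanishing in one chart vanishes in all of them by its transformation law, exactly as in the repeated use of this principle in Lecture~\ref{ch: Ward}. I would carry this out in the half-plane uniformization $(\H,\id)$. There $w=\id$, so $j = ib\,w''/w' = 0$ and $S_w = 0$, and therefore, by the expressions for $A_{(b)}$ and $T_{(b)}$ in Section~\ref{sec: c},
$$T_{(b)} = -\tfrac12\,J\odot J + ib\,\pa J, \qquad \VV^{ia}_{(b)} = e^{-a^2 c}\,e^{\odot ia\Phi} \qquad (J = J_{(0)},\ \Phi = \Phi_{(0)}),$$
where $c = \log(2\,\Im z)$ is the logarithm of the conformal radius of $\H$, with $\pa c = (z-\bar z)^{-1}$ and $\pa^2 c = -(z-\bar z)^{-2}$. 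Using $\E[J(\zeta)\Phi(z)] = \tfrac1{\zeta-\bar z}-\tfrac1{\zeta-z}$ I expand the operator product $T_{(b)}(\zeta)\VV(z)$ by Wick's formula and read off the constant term $T*\VV$; this needs the Taylor coefficients of $\E[J(\zeta)\Phi(z)]$ and of $\E[\pa J(\zeta)\Phi(z)]$ up to order two at $\zeta=z$, together with $\pa c$ and $\pa^2 c$ coming from the prefactor. Separately I expand $\pa^2\VV$ using $\pa e^{\odot ia\Phi} = ia\,J\odot e^{\odot ia\Phi}$. Both sides come out as $\C$-linear combinations of the functionals $e^{\odot ia\Phi}$, $J\odot e^{\odot ia\Phi}$, $\pa J\odot e^{\odot ia\Phi}$, $J\odot J\odot e^{\odot ia\Phi}$; the coefficients of $J\odot e^{\odot ia\Phi}$ and of $J\odot J\odot e^{\odot ia\Phi}$ coincide identically, while equating the coefficient of $\pa J\odot e^{\odot ia\Phi}$ and the non-random coefficient of $(z-\bar z)^{-2}e^{\odot ia\Phi}$ each reduces exactly to $2a(a+b)=1$. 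Hence $X=0$ in $(\H,\id)$, and therefore everywhere, which is the assertion.

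The main obstacle is the bookkeeping for the constant term $T*\VV = C_0$ of the operator product expansion. Unlike the singular part, which is already determined by the fact that $\VV\in\FF(W)$ is a differential (Corollary~\ref{OPE4diff}), this coefficient genuinely involves the second-order Taylor data of the two-point functions, the combinatorics of contracting the two currents of $-\tfrac12 J\odot J$ into the Wick exponential (zero, one, and two contractions), and the terms produced by differentiating the conformal-radius prefactor. It is routine but error-prone; the point of arranging the computation carefully is to make the two residual scalar conditions visibly collapse to the single relation $2a(a+b)=1$.
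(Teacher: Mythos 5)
Your proposal is correct and follows essentially the same route as the paper: the text preceding Proposition~\ref{degeneracy} invokes Proposition~\ref{singular vec} with $\lambda=a^2/2-ab$, $c=1-12b^2$ to see that $T*\VV-\frac1{2a^2}\pa^2\VV$ is a differential, and the proof then verifies its vanishing in the identity chart of $\H$ by exactly the Wick's-calculus expansion you outline (zero, one, and two contractions of $-\tfrac12 J\odot J$ and the $ib\,\pa J$ term against $C^{-a^2}e^{\odot ia\Phi}$), with the two residual scalar conditions collapsing to $2a(a+b)=1$ just as you describe. The only difference is one of completeness: the paper writes out the contraction bookkeeping explicitly, while you sketch it, but the plan and the resulting identities match.
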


\begin{proof}
Since the difference is a differential, it is sufficient to verify \eqref{eq: degeneracy} in the upper half-plane.
The proof is an easy exercise in Wick's calculus.
All computations below refer to the identity chart of the upper half-plane.
In this uniformization we have
$$\Phi=\Phi_{(0)},\quad \pa \Phi=J=J_{(0)},\quad T=T_{(0)}+ib\partial J,\quad T_{(0)}=-\frac12 J\odot J.$$
Let us first compute the $T*\VV$ in the special case $b=0$:
\begin{equation} \label{eq: T0VV}
T_{(0)}*\VV(z)=T_{(0)}\odot \VV(z)+ia(\partial J)\odot \VV(z) -ia\frac{J\odot\VV(z)}{z-\bar z} + \frac32a^2\frac{\VV(z)}{(z-\bar z)^2}.
\end{equation}
Indeed,
\begin{align*}
T_{(0)}(\zeta)\VV(z)&=-\frac 12\sum_{n\ge0}\frac{(ia)^n}{n!}(J(\zeta)\odot J(\zeta))\,\Phi(z)\odot\cdots\odot\Phi(z) C^{-a^2}(z) \\&=~ {\mathrm{I}+\mathrm{II}}+T_{(0)}\odot \VV(z) +o(1),
\end{align*}
where the terms I and II come from 1 and 2 contractions, respectively.
Since
\begin{align*}
{\mathrm{II}} &= \frac12a^2(\E[J(\zeta) \Phi(z)])^2\VV(z) = \frac12a^2 \left(\frac1{(\zeta-z)^2}-\frac2{(\zeta-\bar z)(\zeta-z)}+\frac1{(\zeta-\bar z)^2}\right)\VV(z) \\
& = \frac12a^2 \left(\frac1{(\zeta-z)^2}-\frac2{(z-\bar z)(\zeta-z)}+\frac3{(z-\bar z)^2}\right)\VV(z) + o(1),
\end{align*}
its contribution to $T_{(0)}*\VV$ is
$$\frac32a^2\frac{\VV(z)}{(z-\bar z)^2}.$$
On the other hand, it follows from $\E[J(\zeta) \Phi(z)]=1/{(\zeta-\bar z)}-1/{(\zeta-z)}$ that
$${\mathrm{I}}=ia\left(\frac{1}{\zeta-z}-\frac{1}{\zeta-\bar z} \right)J(\zeta)\odot \VV(z).$$
Thus its contribution to $T_{(0)}*\VV$ is
$$ia(\partial J)\odot \VV(z) -ia\frac{J\odot\VV(z)}{z-\bar z}.$$

\ms
To compute $T*\VV$ in the general case $b\ne0,$ we note that
\begin{equation} \label{eq: dJVV}
(\partial J)*\VV(z)=(\partial J)\odot \VV(z)-ia\frac{\VV(z)}{(z-\bar z)^2},
\end{equation}
which follows from $\E[\partial J(\zeta) \Phi(z)]=-1/{(\zeta-\bar z)^2}+1/{(\zeta-z)^2}.$
From \eqref{eq: T0VV} and \eqref{eq: dJVV}, we get
$$T*\VV(z)=T_{(0)}\odot \VV(z)+i(a+b)(\partial J)\odot \VV(z) -ia\frac{J\odot\VV(z)}{z-\bar z} + \left(\frac32a^2+ab\right)\frac{\VV(z)}{(z-\bar z)^2}.$$
The computation of the right-hand side in \eqref{eq: degeneracy} is easy:
$$\pa \VV(z) = -a^2\frac{ \VV(z)}{z-\bar z} + ia J\odot\VV(z),$$
$$\frac1{2a^2}\pa^2 \VV(z)=T_{(0)}\odot \VV(z)+\frac i{2a}(\pa J)\odot \VV(z) -ia\frac{J\odot\VV(z)}{z-\bar z} + \frac{1+a^2}2 \frac{\VV(z)}{(z-\bar z)^2}.$$
It follows that
$$T*\VV^{ia}(z)-\frac1{2a^2}\pa^2 \VV^{ia}(z) = i\left(a+b- \frac 1{2a} \right) (\pa J)\odot \VV(z) +\left(a^2+ab-\frac12\right)\frac{\VV(z)}{(z-\bar z)^2} = 0
$$
provided that
$2a(a+b)=1.$
\end{proof}

Degenerate singular vectors give rise to (BPZ) equations \index{Belavin-Polyakov-Zamolodchikov (BPZ) equations} for certain correlation functions.

\begin{prop} \label{BPZ eqs}
Let $\VV = \VV^{ia}_{(b)}$ and $2a(a+b)=1.$
Then in the $(\H,\infty)$-uniformization, we have
\begin{align*}
\frac1{2a^2}\pa_z^2\E\,\VV(z) X_1(z_1)\cdots X_n(z_n)~&=~\E\,\VV(z) \LL^+_{v_z}\left[X_1(z_1)\cdots X_n(z_n)\right]
\\&+~\E\, \LL^-_{v_{\bar z}}\left[\VV(z)X_1(z_1)\cdots X_n(z_n)\right],
\end{align*}
where $v_z(\zeta)=1/(z-\zeta)$ and the fields $X_j$ belong to $\FF_{(b)}.$
\end{prop}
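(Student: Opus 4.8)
**The plan is to combine the level-two degeneracy equation from Proposition~\ref{degeneracy} with Ward's equation in the half-plane (Proposition~\ref{Ward equation}).** The point is that both sides of the desired identity are, up to a universal constant, equal to $\E\,(T*\VV)(z)\,X$ where $X = X_1(z_1)\cdots X_n(z_n)$ is the tensor product of the $X_j$'s.

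First I would apply Proposition~\ref{degeneracy}: since $\VV = \VV^{ia}_{(b)}$ with $2a(a+b) = 1$, we have the operator identity $T*\VV = \frac{1}{2a^2}\pa^2\VV$ as Fock space fields. Taking correlations against $X$ gives
\begin{equation*}
\E\,(T*\VV)(z)\,X = \frac1{2a^2}\,\E\,(\pa^2\VV)(z)\,X = \frac1{2a^2}\,\pa_z^2\,\E\,\VV(z)\,X,
\end{equation*}
where the last equality uses that $\pa$ on Fock space fields is characterized by $\E[(\pa Y)(z)\YY] = \pa_z\E[Y(z)\YY]$ (Section~\ref{sec: F-field}), applied twice. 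This handles the left-hand side.

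Next I would evaluate the same quantity $\E\,(T*\VV)(z)\,X$ using Ward's equation. By the Remark after the construction of $\Phi_{(b)}$, the Virasoro field $T = T_{(b)}$ is real and has no singularities on $\pa\H$ including $q = \infty$, so we may use $T$ in place of $A$ in the Ward apparatus of Section~\ref{sec: Ward's in H} (as noted at the end of Section~\ref{sec: T}, Ward's equations hold with $T$ instead of $A$ when $T$ is real and continuous up to the boundary). Since $\VV, X_1, \dots, X_n \in \FF_{(b)} = \FF(W)$, Proposition~\ref{Ward equation} with $Y = \VV$ gives
\begin{equation*}
\E\,(T*\VV)(z)\,X = \E\,\VV(z)\,\LL^+_{v_z}X + \E\,\LL^-_{v_{\bar z}}[\VV(z)X],
\end{equation*}
with $v_z(\zeta) = 1/(z-\zeta)$, exactly as in the statement. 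Equating the two expressions for $\E\,(T*\VV)(z)\,X$ yields the proposition.

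**The main thing to be careful about** is the hypothesis check that licenses replacing $A$ by $T$ in Proposition~\ref{Ward equation}: that proposition is stated for the differential $A$ satisfying the conditions of Proposition~\ref{represent A} (holomorphic quadratic differential in $\H$, continuous and real on the boundary including $\infty$). Here one must invoke the Remark that $T_{(b)}$ itself is real with no boundary singularities, together with the general principle (Section~\ref{sec: T}) that $T$ and $A$ produce the same residue operators $W^+(v)$ for local holomorphic vector fields, so that all the representations and Ward identities go through verbatim with $T$ substituted for $A$. Everything else — the differentiation rule for $\pa$ on correlation functionals, membership of the relevant fields in $\FF(W)$, and the applicability of $\LL^\pm_{v_\zeta}$ — is already established in the excerpt, so no further computation is needed beyond citing Propositions~\ref{degeneracy} and~\ref{Ward equation}.
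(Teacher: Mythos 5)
Your proof is correct and follows exactly the paper's argument: invoke the level-two degeneracy $T*\VV=\frac1{2a^2}\pa^2\VV$ (Proposition~\ref{degeneracy}) to rewrite the left-hand side as $\E[(T*\VV)(z)X]$, then apply Proposition~\ref{Ward equation}. Your care about substituting $T$ for $A$ is also consistent with the paper, since in the $(\H,\infty)$-uniformization $T=A$ and the Remark in Section~\ref{sec: c} confirms $T$ is real and nonsingular on the boundary.
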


\begin{proof}
Denote $X = X_1(z_1)\cdots X_n(z_n).$
Since $T*\VV=\dfrac1{2a^2}\pa^2\VV,$ we have
$$\frac1{2a^2}\pa_z^2\E[\VV(z)X] =\E\left[\frac{\pa^2\VV}{2a^2}(z)X\right]=\E\left[(T*\VV)(z)X\right],$$
so we can apply Proposition~\ref{Ward equation}.
\end{proof}

\ms \begin{eg*} The function
$$f(z,z_1,\cdots, z_n)=\E\,\VV^{ia}(z) \VV^{i\sigma_1}(z_1)\cdots \VV^{i\sigma_n}(z_n),\qquad (z,z_j\in\H),$$
e.g., the 2-point function \eqref{eq: 2ptfcn4V},
satisfies the following 2nd order linear PDE for all values of $b$:
\begin{align*}
\frac1{2a^2}\pa^2_z f&=\left(\frac{\bp_z}{z-\bar z}+\frac{\lambda_*}{(z-\bar z)^2}\right)f\\
&+\sum_{j=1}^n\left(\frac{\pa_j}{z-z_j}+\frac{\lambda_j}{(z-z_j)^2}+\frac{\bp_j}{z-\bar z_j}+\frac{\lambda_{*j}}{(z-\bar z_j)^2}\right)f,
\end{align*}
where $\lambda_j,\lambda_{*j}$ are conformal dimensions of $\VV^{i\sigma_j}$ and $\lambda_*= a^2/2+ab.$
\end{eg*}

\ms If the fields $X_j$ in Proposition~\ref{BPZ eqs} are not differentials (e.g., if they are forms), then the BPZ equations are not necessarily of PDE type.
See e.g., the Friedrich-Werner formula in Section~\ref{sec: eg MO}.

\ms \section{Boundary conditions and insertions} \label{sec: BC} \index{boundary condition modification}

We can further modify our bosonic fields $\Phi_{(b)}$ by conditioning them to have certain (non-random) boundary values.

\begin{prop}\label{sing^4non-chiral}
Let $u$ be a real non-random harmonic function in $D.$
Define
$$\widehat\Phi= \Phi_{(b)}+u,\qquad \widehat J = \pa\widehat\Phi.$$
Then

\ss (a) the field $\widehat \Phi$ has a stress tensor and its Virasoro field is
\begin{equation} \label{eq: T hat}
\widehat T=-\frac12 \widehat J*\widehat J+ib\partial \widehat J=T-(\pa u) J+ib \pa^2u-\frac12(\pa u)^2;
\end{equation}

\ss (b) the vertex fields $\widehat \VV=e^{*ia\widehat\Phi}$ with $2a(a+b)=1$ produce degenerate level two singular vectors. \index{level two degeneracy equation}
\end{prop}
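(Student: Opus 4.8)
The plan is to reduce everything to the already-established unmodified case ($u=0$), namely Proposition~\ref{degeneracy} and the identity $T_{(b)}=A_{(b)}+\frac{1-12b^2}{12}S_w$. First I would observe that, since $u$ is a fixed non-random harmonic function, the field $\widehat\Phi=\Phi_{(b)}+u$ differs from $\Phi_{(b)}$ by a non-random scalar-type summand (more precisely $u$ is a real harmonic function and $\pa u$ is holomorphic where $u$ is harmonic, so $\widehat J=J_{(b)}+\pa u$ with $\pa u$ non-random holomorphic). The strategy for part~(a) is to set
$$\widehat A=-\frac12\,\widehat J\odot\widehat J+(ib\pa-\widehat\jmath)\widehat J,\qquad \widehat\jmath:=\E[\widehat J]=j+\pa u,$$
mirroring the definition of $A_{(b)}$ in the construction section, and to check that $\widehat W=(\widehat A,\overline{\widehat A})$ is a stress tensor for $\widehat\Phi$. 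Because $\widehat\Phi$ is still a pre-pre-Schwarzian form (adding a non-random scalar does not change the transformation law), by Corollary~\ref{OPE4form} it suffices to verify Ward's OPE $\widehat A(\zeta)\widehat\Phi(z)\sim \widehat J(z)/(\zeta-z)+ib/(\zeta-z)^2$ in one chart, say $(\H,\id)$; this is a short Wick's-calculus computation using $\E[\pa u(\zeta)\Phi(z)]$-type terms, exactly parallel to \eqref{eq: OPE(A,Phi)}. Then $\widehat T:=\widehat A+\frac{1-12b^2}{12}S_w$ is a Schwarzian form of order $c/12$ lying in $\FF(\widehat W)$ by Proposition~\ref{*n in CFT}, hence is the Virasoro field of $\widehat\Phi$. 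The explicit formula \eqref{eq: T hat} then follows by expanding $-\frac12\widehat J*\widehat J+ib\pa\widehat J$ with $\widehat J=J+\pa u$ and collecting terms: $-\frac12\widehat J*\widehat J=-\frac12 J*J-(\pa u)J-\frac12(\pa u)^2$ (the $(\pa u)^2$ term is non-random so $*$ and $\odot$ agree there, and there is no cross OPE singularity since $\pa u$ is non-random) and $ib\pa\widehat J=ib\pa J+ib\pa^2 u$, giving $\widehat T=T-(\pa u)J+ib\pa^2 u-\frac12(\pa u)^2$.

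For part~(b), the plan is again to transfer Proposition~\ref{degeneracy}. Write $\widehat\VV=e^{*ia\widehat\Phi}=e^{ia\,u}\,\VV^{ia}_{(b)}$ (since $u$ is non-random, $e^{*ia u}=e^{ia u}$ and the OPE exponential factors), so $\widehat\VV$ is a primary field of $\FF(\widehat W)$ with the same conformal dimensions $\lambda=a^2/2-ab$, $\lambda_*=a^2/2+ab$ (multiplying by the non-random function $e^{ia u}$, whose modulus is $1$ but which is not holomorphic, still yields a differential because $u$ harmonic makes $\pa u$, $\bp u$ behave correctly — or more safely, argue as in Proposition~\ref{singular vec} purely algebraically via the Virasoro generators of $\widehat T$). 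Since $\lambda$ and $c=1-12b^2$ are unchanged and $2a(a+b)=1$ holds, the algebraic criterion \eqref{eq: level two singular condition} of Proposition~\ref{singular vec} is satisfied with the same $\eta=-1/(2a^2)$; therefore $\widehat X:=\widehat T*\widehat\VV+\eta\,\pa^2\widehat\VV=\widehat T*\widehat\VV-\frac1{2a^2}\pa^2\widehat\VV$ is again a primary field, and the level-two degeneracy equation $\widehat T*\widehat\VV=\frac1{2a^2}\pa^2\widehat\VV$ holds. Alternatively one can prove this directly in $(\H,\id)$ by the same Wick computation as in Proposition~\ref{degeneracy}, now carrying the extra $\pa u$, $\bp u$ terms; they combine into exactly the shift $\VV\to\widehat\VV=e^{ia u}\VV$ and $T\to\widehat T$ and cancel under the hypothesis $2a(a+b)=1$.

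The main obstacle, I expect, is bookkeeping: one must be careful that $\pa u$ (and its powers and $\bp u$) are genuinely non-random, so that $*$-products involving them reduce to ordinary pointwise products with no hidden singular OPE contributions, and that the harmonicity of $u$ is used precisely where needed (to make $\pa u$ holomorphic on $D_{\rm hol}$ of the relevant vector fields, and to make the $\bp u$ terms in $\widehat T$ drop out — note $\pa\bp u=0$, so $ib\pa^2 u$ is the only second-derivative term). A secondary subtlety is confirming that multiplying a primary field by the non-random non-holomorphic factor $e^{ia u}$ still produces a differential with the stated dimensions; the cleanest route is not to verify the transformation law by hand but to invoke Proposition~\ref{singular vec}'s purely algebraic characterization in terms of the $L_n$ attached to $\widehat T$, so that part~(b) follows formally from part~(a) plus the unchanged values of $\lambda$, $\lambda_*$, $c$. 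Everything else is routine Wick's calculus of the type already carried out in Section~\ref{sec: degeneracy}.
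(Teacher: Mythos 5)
Your strategy for part (a) is the paper's, but your ansatz for the quadratic differential is wrong as written. With $\widehat J=J_{(0)}+\widehat\jmath$, $\widehat\jmath=j+\pa u$ non-random, and Wick multiplication being linear over non-random functions, your field expands as
\[
\widehat A=-\tfrac12\,\widehat J\odot\widehat J+(ib\pa-\widehat\jmath)\widehat J
=A_{(0)}+ib\pa J_{(0)}-2\widehat\jmath\,J_{(0)}+(\textrm{non-random}),
\]
so in $(\H,\id)$ its OPE with $\widehat\Phi$ has singular part $ib/(\zeta-z)^2+\big(J_{(0)}(z)+2\widehat\jmath(z)\big)/(\zeta-z)$, i.e.\ residue $\widehat J+\widehat\jmath$ rather than $\widehat J$: the Ward's OPE check you propose fails whenever $\widehat\jmath\not\equiv0$, and since the discrepancy $-\widehat\jmath\,J_{(0)}$ is a \emph{random} field it cannot be absorbed into the non-random ambiguity of the stress tensor. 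The source of the error is that $A_{(b)}=A_{(0)}+(ib\pa-j)J_{(0)}$ is built from the centered current and is not equal to $-\frac12 J\odot J+(ib\pa-j)J$; the correct hatted differential is the paper's \eqref{eq: A hat}, $\widehat A=A-(\pa u)J+ib\pa^2u-\frac12(\pa u)^2=A_{(0)}+(ib\pa-\widehat\jmath)J_{(0)}+(\textrm{non-random})$, for which Ward's OPE is immediate from \eqref{eq: OPE(A,Phi)} and \eqref{eq: OPE(J,Phi)}. With that correction, the rest of your outline for (a) (one-chart verification via Corollary~\ref{OPE4form}, then $\widehat T=\widehat A+\frac{1-12b^2}{12}S_w\in\FF(\widehat W)$ by Proposition~\ref{*n in CFT}, plus the elementary expansion giving \eqref{eq: T hat}) coincides with the paper's argument.

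For part (b) the route you call ``cleanest'' has a genuine gap. Proposition~\ref{singular vec} asserts only that, for a primary field of dimension $\lambda$ satisfying \eqref{eq: level two singular condition}, the vector $[L_{-2}+\eta L_{-1}^2]\widehat\VV$ is again \emph{primary}, i.e.\ a singular vector; it does not assert that this vector vanishes, and vanishing (nullity) is exactly what ``degenerate'' means in the statement. The inference ``same $\lambda,\lambda_*,c$, hence the degeneracy equation holds'' is false in general: Proposition~\ref{degenerate O} exhibits vertex fields $\OO^{(2b-a)}$ and $\OO^{(3b+a)}$ with precisely the singular dimensions whose level-two singular vectors are non-zero. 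To get nullity you need extra input beyond dimensions and central charge: either the direct Wick computation in $(\H,\id)$ (the paper's proof: compute $(\pa u\,J)*\widehat\VV$ from \eqref{eq: OPE(J,Phi)}, differentiate $\widehat\VV=e^{iau}\VV$, use $\pa\VV=iaJ\odot\VV-a^2(\pa c)\VV$ and the $u=0$ degeneracy \eqref{eq: degeneracy}, and check that all terms cancel when $2a(a+b)=1$), or the soft argument the paper only sketches: conditioning does not change the singular part of $J(\zeta)\VV(z)$, so $\widehat\VV$ is \emph{current} primary with charge $q=a$, and then Proposition~\ref{degeneracy2} --- not Proposition~\ref{singular vec} --- yields the null vector. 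Your ``alternative'' direct computation is the right fallback, but as the main argument stands, the conclusion of (b) is not established.
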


\ss Conditioning changes neither the conformal nature of the fields nor the central charge.
See Remark in Section~\ref{sec: singular vec} for the meaning of the expression ``the vertex fields produce degenerate level two singular vectors."

\ss \begin{proof} (a) The field
\begin{equation} \label{eq: A hat}
\widehat A = A -(\pa u) J+ib \pa^2u-\frac12(\pa u)^2
\end{equation}
is a holomorphic quadratic differential.
Indeed, denote $f=\pa u$; it is a holomorphic 1-differential.
Since both $ibf'$ and $fJ$ are quadratic differentials with the same cocycle $ibf(\log h')',$
their difference is a quadratic differential.

\ms We claim that $\widehat W=(\widehat A,\overline{\widehat A})$ is a stress tensor for $\widehat\Phi.$
Since $\widehat\Phi$ is still a pre-pre-Schwarzian form, by Corollary~\ref{OPE4form} all we need is to check Ward's OPE (in the $(\H,\infty)$-uniformization) of $\widehat A$ and $\widehat\Phi:$
$$A(\zeta) \Phi(z) -f(\zeta)J(\zeta) \Phi(z) \sim ib\frac{1}{(\zeta-z)^2}+\frac{\widehat J(z)}{\zeta-z}.$$
However, this is immediate from \eqref{eq: OPE(A,Phi)} and \eqref{eq: OPE(J,Phi)}.

\ms From \eqref{eq: TA}, \eqref{eq: T hat}, and \eqref{eq: A hat} we derive
\begin{equation} \label{eq: TA hat}
\widehat T =\widehat A + \frac{1-12b^2}{12} S_w.
\end{equation}
It follows that $\widehat T$ is the Virasoro field because $\widehat T$ is a Schwarzian form of order $c/12,$ and $\widehat T\in \FF(\widehat W)$ by \eqref{eq: T hat} and Proposition~\ref{*n in CFT}.

\ms (b) Denote $\widehat\VV = e^{ia u}\VV,$ $\VV\equiv\VV_{(b)}^{ia},$ $f=\pa u,$ and let
$$\widehat X:=\widehat T*\widehat\VV -\frac1{2a^2}\pa^2 \widehat\VV.$$
It follows from the operator product expansion (see \eqref{eq: OPE(J,Phi)})
$$J(\zeta)\Phi(z)=-\frac1{\zeta-z} + \pa c(z) + (J\odot\Phi)(z) + o(1)$$
that
$$(fJ)*\widehat\VV = f J\odot\widehat\VV + iaf\pa c \widehat\VV -ia f'\widehat\VV.$$
Thus by the relation~\eqref{eq: T hat}, we have
$$\widehat T*\widehat\VV = T*\widehat\VV-f J\odot\widehat\VV -iaf\pa c \widehat\VV + i(a+b) f'\widehat\VV -\frac12f^2 \widehat\VV.$$
Differentiating $\widehat\VV = e^{ia u}\VV,$ we get
$$\frac1{2a^2}\pa^2 \widehat\VV = \frac1{2a^2}\left(iaf'\widehat\VV -a^2f^2\widehat\VV +2iafe^{iau}\pa\VV +e^{iau}\pa^2\VV\right).$$
The degeneracy equation~\eqref{eq: degeneracy} gives
$$\widehat X = -f J\odot\widehat\VV -iaf\pa c \widehat\VV -\frac{i}{a}fe^{iau}\pa\VV.$$
However, $\pa\VV = \pa(C^{-a^2}e^{\odot ia \Phi})= iaJ\odot\VV-a^2\pa c\VV$ in $\H.$
Thus $\widehat X = 0.$

\ms (One can argue that conditioning does not change the singular parts of the operator product expansions of $J(\zeta)\VV(z)$ and $T(\zeta)\VV(z).$
In Appendix~\ref{appx: KZ} we will explain this implies that the degeneracy equation survives under conditioning.)
\end{proof}

\ms \begin{rmks*}
(a) Ward's and BPZ equations for fields with non-trivial boundary conditions are in general not the same as the equations in Propositions~\ref{Ward's in H} and \ref{BPZ eqs}.
This is because the Virasoro field $\widehat T,$ see \eqref{eq: T hat}, may be non-real and/or may have singularities on the boundary.
For example, if
\begin{equation}\label{eq: u}
u = \mathrm{const}\cdot\arg w,\qquad w:(D,p,q)\to(\H,0,\infty),
\end{equation}
then $\widehat T$ has a double pole at the origin, and Ward's equations for differentials take the form
\begin{align*}
\E\, \widehat T(\zeta) \widehat X=\E\, \widehat T(\zeta)\,\E\, \widehat X
&+\sum_j\Big[\big(-\frac1\zeta+\frac1{\zeta-z_j} \big)\pa_j+\frac{\lambda_j }{(\zeta-z_j)^2}\Big]~\E\,\widehat X \\
&+\sum_j\Big[\big(-\frac1\zeta+\frac1{\zeta-\bar z_j}\big)\bp_j+\frac{\lambda_{*j}}{(\zeta-\bar z_j)^2}\Big]~\E\,\widehat X, \quad (\textrm{in } \id_\H),
\end{align*}
where $\widehat X = \widehat X_1(z_1)\cdots\widehat X_n(z_n)$ is the tensor product of differentials in $\FF(\widehat W).$
The BPZ equations can be adjusted accordingly.

\bs (b) In the special case (\cite{SS10})
\begin{equation} \label{eq: BCbySS}
u=2a \arg w, \qquad 2a(a+b) = 1
\end{equation}
of boundary conditions \eqref{eq: u}, we have a different type of BPZ equations -- this will be important for the SLE theory.
The nature of the equations is the following.
We can realize the boundary conditions \eqref{eq: BCbySS} by inserting a chiral vertex which produces a degenerate singular vector.
Chiral vertex fields will be defined in the next lecture.
It is probably worthwhile to explain the idea in a simpler, non-chiral situation.

\ms Fix a point $z_0\in D$ and define
$$\widehat\Phi= \Phi+2aiG_{z_0},$$
where $\Phi=\Phi_{(b)},$ the constants $a$ and $b$ satisfy $2a(a+b)=1,$ and $G_{z_0}$ is the Green's function with pole at $z_0.$
As in Proposition~\ref{sing^4non-chiral}, we can build many other fields from $\widehat\Phi,$ e.g., $\widehat J:=\pa \widehat\Phi$ or $\widehat\VV^\beta:= e^{*\beta\widehat\Phi}.$
As we explained in Appendix~\ref{sec: insert}, we can interpret such hat-fields in terms of an insertion:
$$\E\,\widehat\XX=\E[e^{\odot ia\Phi(z_0)}\XX].$$
If $\XX$ is a string of differentials, then we can apply Ward's and degeneracy equations to derive 2nd order PDE for
$$\E\,\widehat\XX = C^{-a^2}(z_0)\E[\VV_{(b)}^{ia}(z_0)\XX].$$
This equation will involve the insertion point $z_0.$
\end{rmks*}

\renewcommand\chaptername{Appendix}
\chapter{Current primary fields and KZ equations} \label{appx: KZ}

In this appendix we give an algebraic proof of Proposition ~\ref{degeneracy} (characterization of level two degenerate vertex fields).
The proof is based on the fact that vertex fields in $\FF_{(b)}$ are primary fields of the corresponding current algebra.
We also derive the so-called Knizhnik-Zamolodchikov equations (KZ equations) for correlators of current primary fields.

\section{Current primary fields} \label{sec: J primary}

Let $\{J_n\}$ and $\{L_n\}$ denote the modes of the current field $J$ and the Virasoro field $T$ in $\FF_{(b)}$ theory, respectively: \index{mode expansion! of the current field}
$$J_n(z):=\frac1{2\pi i}\oint_{(z)}(\zeta-z)^{n} J(\zeta)~d\zeta$$
and
$$L_n(z):=\frac1{2\pi i}\oint_{(z)}(\zeta-z)^{n+1} T(\zeta)~d\zeta.$$
(We consider them as operators acting on fields in $\FF_{(b)}.$)
Then we have the following equations:
\begin{equation}\label{eq: [J,J]}
[J_m,J_n] = n\delta_{m+n,0};
\end{equation}
\begin{equation}\label{eq: [L,J]}
[L_m,J_n] = -nJ_{m+n}+ibm(m+1)\delta_{m+n,0};
\end{equation}
cf. \eqref{eq: [j,j]} and \eqref{eq: [l,p]}, and also
\begin{equation}\label{eq: LJ}
L_n = -\frac12\sum_{k=-\infty}^\infty\!:J_{-k}J_{k+n}:\!~-ib(n+1)J_n,
\end{equation}
cf. \eqref{eq: l=:jj:} and \eqref{eq: c-modified ln}.

\ms Recall that $X\in\FF_{(b)}$ is (Virasoro) primary if $X$ is a $(\lambda,\lambda_*)$-differential; equivalently:
\begin{equation} \label{eq: T primary}
L_{\ge1}X=0,\qquad L_0X=\lambda X, \qquad L_{-1}X=\partial X,
\end{equation}
and similar equations hold for $\bar X$ (see Proposition~\ref{primary field}).
A (Virasoro) primary field $X$ is called \emph{current primary} \index{current!primary} if \index{primary field!current}
\begin{equation} \label{eq: J primary1}
J_{\ge1}X = J_{\ge1}\bar X = 0,
\end{equation}
and
\begin{equation} \label{eq: J primary2}
J_0X = -iqX, \quad J_0\bar X = i\bar q_*\bar X
\end{equation}
for some numbers $q$ and $q_*$ (``charges" of $X$). \index{charge!of a current primary field}
Charges determine dimensions (see \eqref{eq: L-1}):
\begin{equation} \label{eq: q lambda}
\lambda = \frac12 q^2 -bq,\qquad \lambda_*= \frac12 q_*^2 +bq_*.
\end{equation}

\begin{egs*}
\renewcommand{\theenumi}{\alph{enumi}}
{\setlength{\leftmargini}{2.0em}
\begin{enumerate}
\item \label{eg: V is current primary} \ms The vertex field $\VV^\alpha$ is current primary with charges $q=q_*=-i\alpha,$ see \eqref{eq: OPE(J,V)}.
\item\ms The current $J$ in the case $b=0$ and the Virasoro field $T$ in the case $c=0$ are Virasoro primary, but not current primary.
\end{enumerate}}
\end{egs*}

\begin{prop} \label{J primary}
If $X\in\FF_{(b)}$ is a current primary field, then
\begin{equation} \label{eq: J primary3}
J_{-1}X = -\frac iq\,\pa X, \quad J_{-1}\bar X = \frac i{\bar q_*}\,\pa \bar X.
\end{equation}
\end{prop}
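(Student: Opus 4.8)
The plan is to derive \eqref{eq: J primary3} from the Virasoro primary condition \eqref{eq: T primary} together with the current primary conditions \eqref{eq: J primary1}, \eqref{eq: J primary2}, using the mode relation \eqref{eq: LJ} that expresses $L_n$ as a normally ordered quadratic expression in the $J_k$'s. The key observation is that $\pa X = L_{-1}X$ by \eqref{eq: T primary}, so it suffices to compute $L_{-1}X$ in terms of $J_{-1}X$ and known quantities, and then solve for $J_{-1}X$.

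First I would write out $L_{-1}$ using \eqref{eq: LJ} with $n=-1$:
$$L_{-1} = -\frac12\sum_{k=-\infty}^\infty :\!J_{-k}J_{k-1}\!: ~.$$
Applying this to $X$ and using $J_{\ge1}X = 0$, only finitely many terms survive. In the normal ordering, positive modes sit to the right; so a term $:\!J_{-k}J_{k-1}\!:$ acting on $X$ is nonzero only if the rightmost factor is $J_0$, $J_{-1}$, or some $J_{\le 0}$, and then after one application we must not hit $J_{\ge1}$ again. Concretely the surviving contributions come from $k=0$ (giving $:\!J_0 J_{-1}\!: = J_{-1}J_0$, since $0\ge0$ puts $J_0$ on the right in Wick order... one must be careful with the exact convention in \eqref{eq: LJ}), from $k=1$ (giving $J_{-1}J_0$ again, in the other order), and from $k=-1$ (giving $:\!J_1 J_{-2}\!:$, which acts as $J_{-2}J_1$ and kills $X$). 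I would carefully list the at most two or three nonzero terms, use the commutator $[J_1,J_{-1}]=1$ from \eqref{eq: [J,J]} to reorder when necessary, and use $J_0 X = -iqX$. The result should be of the form $L_{-1}X = (\text{const})\cdot q\, J_{-1}X$ or $L_{-1}X = c_1 q J_{-1}X + c_2 J_{-1}J_0 X$-type expression that collapses to a scalar multiple of $q\,J_{-1}X$; matching with $L_{-1}X = \pa X$ then gives $J_{-1}X = -\frac{i}{q}\pa X$. The computation for $\bar X$ is the complex-conjugate/mirror version using $J_0\bar X = i\bar q_*\bar X$ and the second copy of the relations, yielding the companion formula with $q$ replaced by $\bar q_*$ and sign flipped.

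The main obstacle I expect is bookkeeping with the normal-ordering convention and the index shifts in \eqref{eq: LJ}: one must identify precisely which finitely many terms $:\!J_{-k}J_{k-1}\!:$ give a nonzero contribution on a current primary field, get the ordering right (the convention in the excerpt puts higher-index modes on the right, which for $J$ coincides with the ``annihilation on the right'' Wick convention), and track the single commutator correction carefully. Once that is done the identity drops out essentially immediately. I would also note that the formula requires $q\ne0$ (resp.\ $q_*\ne0$); when $q=0$ the field $X$ satisfies $J_{\ge0}X=0$ and $\pa X = J_{-1}\cdot(\text{something})$ fails — but this degenerate case is not claimed in the proposition, so I would simply state the assumption $q,q_*\ne0$ implicitly as part of ``current primary with charges'' or remark on it. As a sanity check I would verify the formula on $\VV^\alpha$: there $q=q_*=-i\alpha$, $J_{-1}\VV^\alpha = \alpha J\odot\VV^\alpha = \pa\VV^\alpha/(-i/( -i\alpha))\cdots$, which should match $-\frac{i}{q}\pa\VV^\alpha = -\frac{i}{-i\alpha}\pa\VV^\alpha = \frac1\alpha\pa\VV^\alpha$, consistent with $\pa\VV^\alpha = \alpha J\odot\VV^\alpha$ in the chart where $\pa c = 0$.
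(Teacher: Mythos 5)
Your proposal is correct and follows essentially the same route as the paper, which simply records the consequence of \eqref{eq: LJ} on a current primary field as $L_{-1}X=-J_{-1}J_0X$ and then combines $\pa X=L_{-1}X$ with $J_0X=-iqX$. The only superfluous element in your plan is the anticipated commutator $[J_1,J_{-1}]$ correction: the normal ordering in \eqref{eq: LJ} already places the positive mode on the right, so the $k=-1$ term annihilates $X$ outright and no reordering is needed.
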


\begin{proof}
First we note by \eqref{eq: LJ},
\begin{equation}\label{eq: L-1}
L_{-1}X = -J_{-1}J_0X,\qquad L_0X = -\frac12J_0^2X-ibJ_0X.
\end{equation}
It follows from \eqref{eq: T primary}, \eqref{eq: L-1}, and \eqref{eq: J primary2} that
$$\pa X = L_{-1}X= iqJ_{-1}X$$
and the similar equation holds for $\bar X.$
\end{proof}

\begin{prop} \label{degeneracy2} \index{level two degeneracy equation}
Let $V$ be a current primary field in $\FF_{(b)},$ and let $q,q_*$ be charges of $V.$
Then
$$[L_{-2}+\eta L_{-1}^2]V =0 $$
if
$$2q(b+q) = 1,\qquad \eta = -\frac1{2q^2}.$$
\end{prop}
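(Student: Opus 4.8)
The plan is to compute $L_{-2}V$ and $L_{-1}^2V$ separately in terms of the current modes acting on $V$, and then show that the stated relations among $q$, $b$, and $\eta$ force the combination to vanish. The main tool is the Sugawara-type expression \eqref{eq: LJ} together with the commutation relations \eqref{eq: [J,J]} and \eqref{eq: [L,J]}, and the current-primary conditions \eqref{eq: J primary1}, \eqref{eq: J primary2}, supplemented by Proposition~\ref{J primary}, which gives $J_{-1}V = -\frac iq\pa V = -\frac iq L_{-1}V$ (equivalently $L_{-1}V = iqJ_{-1}V$).

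First I would handle $L_{-1}^2V$. Since $L_{-1}V = iqJ_{-1}V$, and iterating, one needs $L_{-1}J_{-1}V$. Using \eqref{eq: LJ} for $L_{-1}$ and the fact that only finitely many modes act nontrivially on $J_{-1}V$ (by the primary conditions and the raising of index), I would expand $L_{-1} = -\frac12\sum_k :J_{-k}J_{k-1}: - ibJ_{-1}$ applied to $J_{-1}V$. The normal-ordered sum, after using $J_{\ge1}V = 0$, $J_0V = -iqV$, reduces to a couple of explicit terms: roughly $L_{-1}J_{-1}V = J_{-1}J_0 J_{-1}V + (\text{contributions from }J_{-2})\cdots$, which one simplifies via $[J_0,J_{-1}] = J_{-1}$ (from \eqref{eq: [J,J]} — careful: $[J_m,J_n]=n\delta_{m+n,0}$, so actually $[J_1,J_{-1}]=-1$ and $[J_0,J_{-1}]=0$, so $J_0$ and $J_{-1}$ commute; the relevant nontrivial bracket is $[J_1,J_{-1}]=-1$). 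So I would carefully track that $L_{-1}^2 V$ becomes a multiple of $J_{-1}^2V$ plus a multiple of $J_{-2}V$. Second, for $L_{-2}V$, apply \eqref{eq: LJ} with $n=-2$: $L_{-2}V = -\frac12\sum_k :J_{-k}J_{k-2}:V + ibJ_{-2}V$; again only $J_{-1}^2V$, $J_0J_{-2}V = -iqJ_{-2}V$, and similar low-mode terms survive, giving $L_{-2}V = -\frac12 J_{-1}^2 V + (iq + ib)J_{-2}V$ or thereabouts (signs to be pinned down). Then $[L_{-2} + \eta L_{-1}^2]V$ becomes a linear combination $\alpha(q,\eta)J_{-1}^2V + \beta(q,b,\eta)J_{-2}V$, and I would set both coefficients to zero: the $J_{-1}^2$-coefficient gives $\eta = -1/(2q^2)$ (matching the Virasoro-level computation where $L_{-1}^2V$ involves $-q^2 J_{-1}^2 V$ type terms), and the $J_{-2}$-coefficient gives $2q(b+q)=1$.

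The main obstacle will be the bookkeeping of the normal-ordered infinite sums in \eqref{eq: LJ}: one must argue carefully that applying $L_n$ (for $n=-1,-2$) to $J_{-1}V$ and to $V$ produces only finitely many nonzero terms, and get every sign and combinatorial factor right when commuting creation modes past $J_{-1}$. A cross-check is available: the Virasoro-algebra computation already done in the proof of Proposition~\ref{singular vec} shows $L_1 L_{-1}^2 V = (4\lambda+2)L_{-1}V$, $L_1 L_{-2}V = 3L_{-1}V$, $L_2 L_{-2}V = (4\lambda + c/2)V$, $L_2 L_{-1}^2 V = 6\lambda V$, and with $\lambda = \frac12 q^2 - bq$ (from \eqref{eq: q lambda}) and $c = 1-12b^2$ the conditions \eqref{eq: level two singular condition} become exactly $2q(b+q)=1$ and $\eta = -1/(2q^2)$ — so the present proposition is really a strengthening of Proposition~\ref{singular vec} from "$[L_{-2}+\eta L_{-1}^2]V$ is primary" to "$[L_{-2}+\eta L_{-1}^2]V = 0$", and the extra input is precisely that $V$ is current primary, which through \eqref{eq: J primary3} lets one express $L_{-1}V$ as a single current mode $iqJ_{-1}V$ and thereby collapse the null vector to zero. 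I would use this consistency as a sanity check on the signs in the direct current-mode computation.
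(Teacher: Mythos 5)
Your proposal is essentially the paper's own proof: both use the Sugawara relation \eqref{eq: LJ} together with the current-primary conditions \eqref{eq: J primary1}, \eqref{eq: J primary2} and Proposition~\ref{J primary} to reduce $[L_{-2}+\eta L_{-1}^2]V$ to a combination of $J_{-2}V$ and $J_{-1}^2V$ (the paper streamlines the $L_{-1}^2V$ step by invoking $[L_{-1},J_{-1}]=J_{-2}$ from \eqref{eq: [L,J]} rather than re-expanding $L_{-1}$), and the vanishing of the two coefficients gives exactly $\eta=-1/(2q^2)$ and $2q(b+q)=1$. One small correction: in \eqref{eq: LJ} the term $-ib(n+1)J_n$ vanishes for $n=-1$, so the extra $-ibJ_{-1}$ in your written expansion of $L_{-1}$ must be dropped (carrying it would corrupt the $J_{-1}^2$-coefficient), though the cross-check against Proposition~\ref{singular vec} that you propose would indeed catch this.
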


\begin{proof}
Since $V$ is Virasoro primary, $L_{-1}V = \pa V,$ see \eqref{eq: T primary}.
By \eqref{eq: J primary3}, we have
$$L_{-1}^2V = L_{-1}\pa V = iqL_{-1}J_{-1}V.$$
It follows from \eqref{eq: LJ}, \eqref{eq: J primary2}, and \eqref{eq: J primary3} that
$$L_{-2}V = -J_{-2}J_0V -\frac12J_{-1}^2V+ibJ_{-2}V = i(b+q)J_{-2}V+\frac i{2q}J_{-1}L_{-1}V.$$

Let $X =[L_{-2}+\eta L_{-1}^2]V.$
Combining the above two equations we see that $X$ is a linear combination of $J_{-2}V, J_{-1}L_{-1}V$ and $L_{-1}J_{-1}V$:
$$X = i(b+q)J_{-2}V+\frac i{2q}J_{-1}L_{-1}V +i \eta qL_{-1}J_{-1}V.$$
On the other hand, by \eqref{eq: [L,J]}, $J_{-2},J_{-1}L_{-1}$ and $L_{-1}J_{-1}$ are not linearly independent:
$$J_{-2}+J_{-1}L_{-1}-L_{-1}J_{-1}=0.$$
Thus $X=0$ if
$$b+q=\frac1{2q}=-\eta q.$$
\end{proof}

\section{KZ equations} \label{sec: KZ} \index{Knizhnik-Zamolodchikov (KZ) equations}

\begin{prop}
Let $X_j = \VV^{i\sigma_j}$ and $X = X_1(z_1)\cdots X_n(z_n)$ be the tensor product of $X_j$'s.
Then the equation
$$\pa_{z_j}\E X=\Big(-\frac{\sigma_j^2}{z_j-\bar z_j}+\sum_{k\ne j}\sigma_j\sigma_k \big( \frac1{z_j-z_k}-\frac1{z_j-\bar z_k}\big)\Big)~\E X$$
holds in the $(\H,\infty)$-uniformization.
\end{prop}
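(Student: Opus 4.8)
The plan is to derive the KZ equation as the correlation version of the current-algebra identity in Proposition~\ref{J primary}, exactly the way BPZ equations are derived from the level-two degeneracy equation. The key observation is that each $X_j=\VV^{i\sigma_j}$ is a current primary field with charges $q_j=q_{*j}=\sigma_j$ (see the example after \eqref{eq: q lambda}, since $\alpha=i\sigma_j$ gives $q=-i\alpha=\sigma_j$), so by \eqref{eq: J primary3} we have $J_{-1}X_j = -\frac{i}{\sigma_j}\pa X_j$, i.e. $\pa X_j = i\sigma_j\,(J_{-1}X_j)$. Thus
$$\pa_{z_j}\E X = i\sigma_j\,\E\big[(J_{-1}X_j)(z_j)\,\textstyle\prod_{k\ne j}X_k(z_k)\big],$$
where $J_{-1}X_j = J*_{-1} X_j$ is the residue OPE coefficient $\frac1{2\pi i}\oint_{(z_j)}\frac{J(\zeta)X_j(z_j)}{\zeta-z_j}\,d\zeta$, evaluated in the identity chart of $\H$.

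First I would make this residue explicit by contour deformation. Inside the correlator $\E\big[J(\zeta)\,X_1(z_1)\cdots X_n(z_n)\big]$, the function $\zeta\mapsto \E[J(\zeta)X]$ is, in $(\H,\id)$, a rational function of $\zeta$ with known singular behaviour: since $J=\pa\Phi_{(b)}$ and $\Phi_{(b)}$ is in $\FF_{(b)}$ with stress tensor $W=(A,\bar A)$, and more directly since $J$ is a $(1,0)$-current whose OPE with each vertex field is $J(\zeta)\VV^{i\sigma_k}(z)\sim \sigma_k\VV^{i\sigma_k}(z)/(\zeta-z)$ (this is \eqref{eq: OPE(J,V)} with $\alpha=i\sigma_k$, together with the reflected pole at $\bar z_k$ coming from the Dirichlet/half-plane image), the only singularities of $\E[J(\zeta)X]$ in $\widehat\C$ are simple poles at the $z_k$ and $\bar z_k$, and $\E[J(\zeta)X]\to0$ as $\zeta\to\infty$ (degree count / $\E J\equiv 0$ in $\H$). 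Hence by summing residues
$$\E[J(\zeta)X] = \sum_{k=1}^n\Big(\frac{\sigma_k}{\zeta-z_k}-\frac{\sigma_k}{\zeta-\bar z_k}\Big)\E X,$$
the image term carrying the opposite sign because $J$ is purely imaginary on $\R$ (so $\overline{J(\zeta)}=-J(\bar\zeta)$ forces the boundary reflection; concretely $\E[J(\zeta)\VV(z)]=\sigma(\frac1{\zeta-z}-\frac1{\zeta-\bar z})$ mirrors $\E[J(\zeta)\Phi(z)]=\frac1{\zeta-\bar z}-\frac1{\zeta-z}$ scaled appropriately).

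Next I would extract the $j$-th residue. Writing $\E[J(\zeta)X]$ as above and picking off $\frac1{2\pi i}\oint_{(z_j)}\frac{d\zeta}{\zeta-z_j}$, the pole at $z_j$ of $\E[J(\zeta)X]$ itself contributes not just $\sigma_j\E X/(\zeta-z_j)$ but, upon dividing by $(\zeta-z_j)$ and taking the residue, also the regular part of $\E[J(\zeta)X]$ at $\zeta=z_j$; that regular part, evaluated at $\zeta=z_j$, is exactly $\big(-\frac{\sigma_j}{z_j-\bar z_j}+\sum_{k\ne j}\sigma_k(\frac1{z_j-z_k}-\frac1{z_j-\bar z_k})\big)\E X$. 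Multiplying by $i\sigma_j$ from $\pa_{z_j}\E X = i\sigma_j\cdot(\text{this residue})$ would give an extra overall $i$ — so the cleaner route is to use instead the direct formula $J_{-1}X_j = J*_{-1}X_j$ and the subtraction-of-singular-part description: $(J*_{-1}X_j)(z_j)$ in correlation equals $\lim_{\zeta\to z_j}\big(\E[J(\zeta)X] - \sigma_j\E X/(\zeta-z_j)\big)$, which is precisely the bracketed expression times $\E X$. Combining with $\pa_{z_j}\E X = i\sigma_j(J_{-1}X_j)$ and noting $q_j=\sigma_j$ so the $-i/q_j$ and $i\sigma_j$ cancel to $+1$, i.e. $\pa_{z_j}\E X = \E[(\pa X_j)(z_j)\prod_{k\ne j}X_k] = i\sigma_j\,\E[(J_{-1}X_j)(z_j)\prod_{k\ne j}X_k]$, yields the stated identity. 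The main obstacle I anticipate is purely bookkeeping: getting the signs on the boundary-reflected poles right (they flip because $J$ is imaginary on $\partial\H$) and correctly matching $J_{-1}X_j = -\frac{i}{q_j}\pa X_j$ with $q_j=\sigma_j$ so that the factors of $i$ cancel rather than survive. Everything else is residue calculus and the already-established facts that $\VV^{i\sigma_j}$ is current primary and that $\E[J(\zeta)X]$ is rational with only the displayed simple poles.
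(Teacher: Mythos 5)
Your strategy is in substance the paper's own: invoke Proposition~\ref{J primary} with charge $q_j=\sigma_j$ to get $\pa X_j=i\sigma_j\,J_{-1}X_j$, hence $\pa_{z_j}\E X=i\sigma_j\cdot\frac1{2\pi i}\oint_{(z_j)}\frac{\E[J(\zeta)X]}{\zeta-z_j}\,d\zeta$, and then exploit the fact that $\zeta\mapsto\E[J(\zeta)X]$ continues by Schwarz reflection to a function with only simple poles at the $z_k,\bar z_k$ and decay at infinity. The paper evaluates the contour integral by deformation, reading the residues off \eqref{eq: J primary1} and \eqref{eq: J primary2}; you instead write the partial-fraction expansion of $\E[J(\zeta)X]$ explicitly, which is an equivalent packaging of the same residue data.

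The genuine problem is that your coefficients are wrong, and the fix you propose does not repair them. From \eqref{eq: OPE(J,V)} with $\alpha=i\sigma_k$ (equivalently from $J_0X_k=-i\sigma_kX_k$, see \eqref{eq: J primary2}) the singular part is $J(\zeta)\VV^{i\sigma_k}(z_k)\sim-\frac{i\sigma_k}{\zeta-z_k}\VV^{i\sigma_k}(z_k)$, not $\frac{\sigma_k}{\zeta-z_k}\VV^{i\sigma_k}(z_k)$, and Wick's calculus in $\id_\H$ gives
$$\E[J(\zeta)X]=\sum_{k}i\sigma_k\Big(\frac1{\zeta-\bar z_k}-\frac1{\zeta-z_k}\Big)\E X,$$
which is $(-i)$ times your displayed formula. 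With the correct expansion, the regular part at $\zeta=z_j$ equals $\Big(\frac{i\sigma_j}{z_j-\bar z_j}+\sum_{k\ne j}i\sigma_k\big(\frac1{z_j-\bar z_k}-\frac1{z_j-z_k}\big)\Big)\E X$, and multiplying by $i\sigma_j$ produces exactly the stated right-hand side with no leftover factor of $i$. The ``extra overall $i$'' you ran into is therefore an artifact of the wrong coefficient, and your ``cleaner route'' cannot remove it: the residue $\frac1{2\pi i}\oint_{(z_j)}\frac{\E[J(\zeta)X]}{\zeta-z_j}\,d\zeta$ and the prescription ``subtract the singular part and let $\zeta\to z_j$'' compute the same number, so switching between them changes nothing, and the cancellation of $-i/q_j$ against $i\sigma_j$ that you invoke is already spent in writing $\pa X_j=i\sigma_jJ_{-1}X_j$ and cannot be used a second time. (A smaller notational slip: in the paper's conventions $J_{-1}X=J*_0X=J*X$, not $J*_{-1}X$; the contour integral you wrote down is nevertheless the correct object.) Once $\sigma_k$ is replaced by $-i\sigma_k$ in the pole coefficients, your argument coincides with the paper's proof.
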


\begin{proof}
It follows from Proposition~\ref{J primary} that
$$\pa_{z_j}\E X = i\sigma_j \E[X_1(z_1)\cdots J*X_j(z_j)\cdots X_n(z_n)]
=i\sigma_j\frac1{2\pi i}\oint_{(z_j)}\frac{\E[J(\zeta)X]}{\zeta-z_j}\,d\zeta.$$
By Schwarz reflection principle ($J$ is purely imaginary on the boundary),
$$\zeta\mapsto \E[J(\zeta) X]$$
has an analytic continuation $f$ to $\C\sm\{z_k,\bar z_k:k=1,\cdots,n\}.$
It is easy to check that the integral of $f(\zeta)/(\zeta-z_j)$ over the circle $|\zeta|=R$ tends to $0$ as $R\to\infty.$
Thus by Green's formula
\begin{align*}
\pa_{z_j}\E X &= -i\sigma_j \frac1{2\pi i}\oint_{(\bar z_j)} \frac{f(\zeta)}{\zeta-z_j}\,d\zeta \\ &-i\sigma_j\sum_{k\ne j} \left(\frac1{2\pi i}\oint_{(z_k)} \frac{f(\zeta)}{\zeta-z_j}\,d\zeta + \frac1{2\pi i}\oint_{(\bar z_k)} \frac{f(\zeta)}{\zeta-z_j}\,d\zeta\right).
\end{align*}
Proposition now follows from \eqref{eq: J primary1} and \eqref{eq: J primary2}.
See Example~\eqref{eg: V is current primary}.
\end{proof}

\ms In contrast to the BPZ equations, KZ equations do not depend on the central charge.

\renewcommand\chaptername{Lecture}
\chapter{Multivalued conformal Fock space fields} \label{ch: chiral}

In the physical literature, chiral fields are described as elements of the ``holomorphic part" of conformal field theory.
We will try to interpret these objects in our ``statistical" setting, in terms of conformal Fock space fields.

\ms The  following example is meant to illustrate the idea of a chiral field.
We define
$$\Phi_n(z) = \sqrt2 \sum_{j=1}^n (G(z,\lambda_j) -G(z,\mu_j)),$$
where $G$ is the Green's function in the unit disc $\D$ and $\{\lambda_j\}_{j=1}^n,$ $\{\mu_j\}_{j=1}^n$ are two independent copies of the eigenvalues in the Ginibre ensemble (the case $Q(z) = |z|^2$ in the random normal matrix model mentioned in Appendix~\ref{appx: Gibbs}), see Figure~\ref{figure: GFFn}.
The random function $\Phi_n$ approximates the Gaussian free field in $\D$ (with zero boundary conditions);
$$\Phi_n\to\Phi$$
as distributional fields.

\begin{figure}[ht]
\begin{center}
\begin{minipage}{.4\textwidth}
\centering
\includegraphics[height=.95\textwidth,width=.95\textwidth]{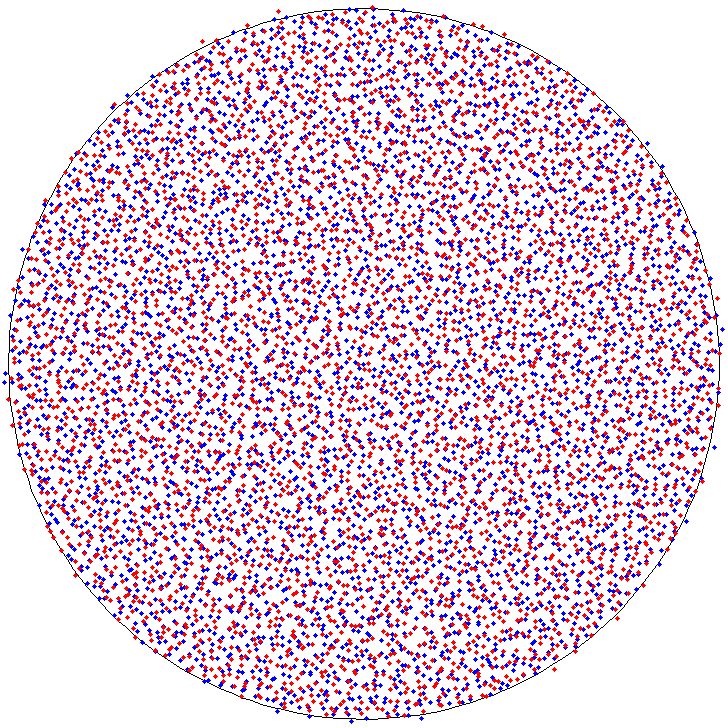}
\end{minipage}
\begin{minipage}{.15\textwidth}
\quad
\end{minipage}
\begin{minipage}{.4\textwidth}
\centering
\includegraphics[height=1.4\textwidth,width=\textwidth]{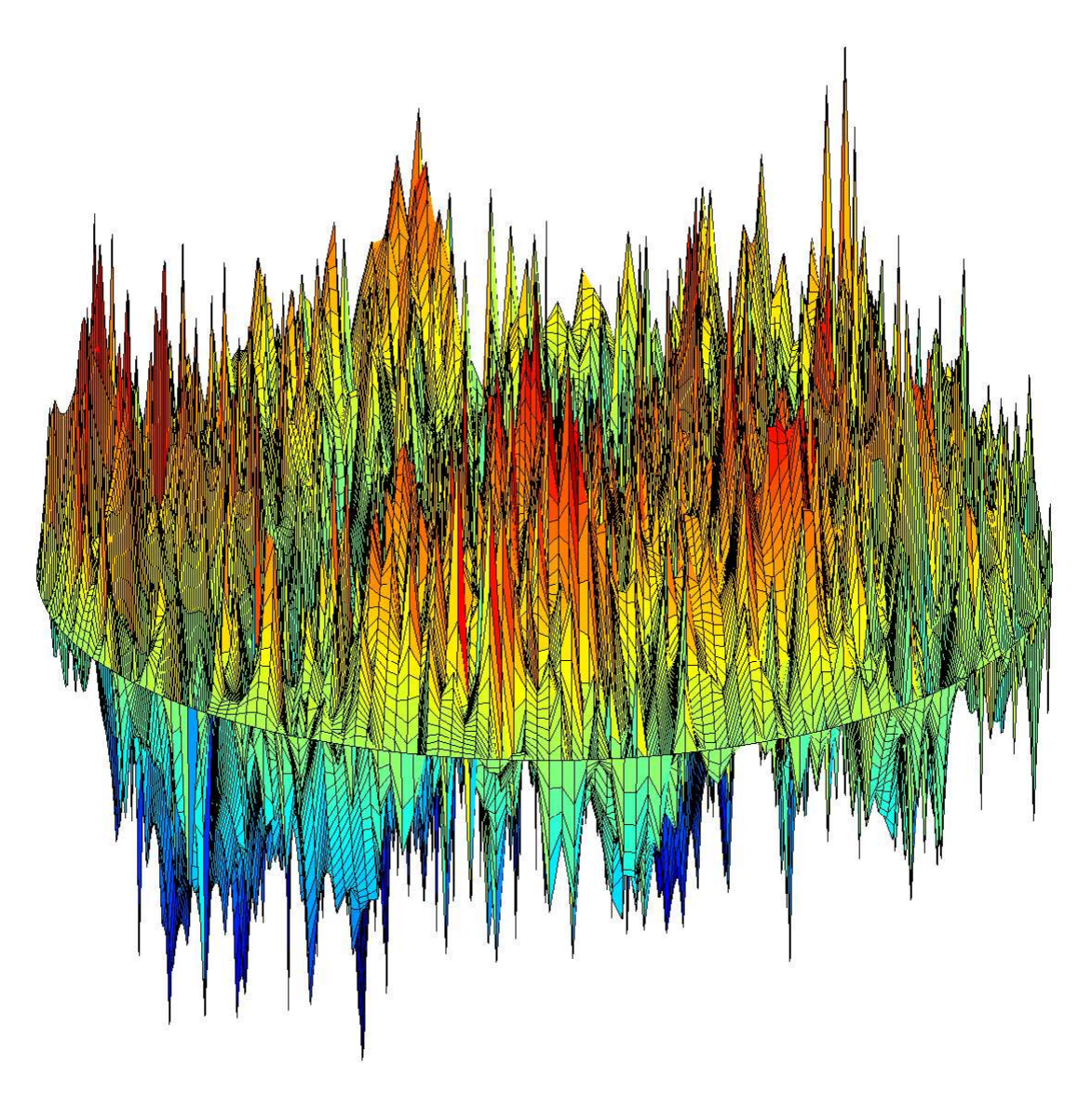}
\end{minipage}
\end{center}
\caption{Eigenvalues and graph of $\Phi_n$}
\label{figure: GFFn}
\end{figure}

Let $\widetilde\Phi_n$ be the harmonic conjugate of $\Phi_n$ and set
$$\Phichiral_n=\frac{\Phi_n+i\widetilde\Phi_n}2.$$
This random function is holomorphic and ramified at many points.
On approximate level, chiral vertex fields are properly normalized exponentials of $\Phichiral_n.$
In the limit $n\to\infty,$ such fields will be ramified everywhere, but in correlations with any particular Fock space functionals, their monodromy group will be finitely generated.

\ms Some of the reasons to study chiral fields will become clear in the last two lectures.

\ms \section{Chiral bosonic fields} \label{sec: chiral Phi} \index{chiral!bosonic field $\Phichiral$} \index{bosonic field $\Phi_{(b)}, \widehat\Phi$!chiral $\Phichiral$}

\SS In this section we will define multivalued fields $\Phichiral\equiv\Phichiral_{(b)}$ in $(D,q),$ a simply connected domain $D$ with a marked boundary point $q.$
As in the previous lecture, $b\in\R$ is a fixed parameter (so $c=1-12b^2$ will be the ``central charge"), and
$$\Phi\equiv \Phi_{(b)}=\Phi_{(0)}-2b\arg w', \qquad J\equiv J_{(b)}=\pa \Phi=J_{(0)}+ib\frac{w''}{w'},$$
where $\Phi_{(0)}$ is the Gaussian free field in $D,$ and $w:(D,q)\to(\H,\infty)$ is a conformal map.
Recall that $J$ is a pre-Schwarzian form of order $ib.$

\ms \textbf{Notation.}
If $\gamma$ is a path in $D$ (or in the closure $\bar D$), then we will write
$$\Phichiral(\gamma)=\int_\gamma J(\zeta)~d\zeta.$$
We think of this ``generalized" Gaussian random variable as a correlation functional in the complement of the curve (we can define correlations with Fock space functionals $\XX$ as long as the set of nodes $S_\XX$ is in $ D\sm\gamma$).
The integral $\int_\gamma J_{(0)}(\zeta)~d\zeta$ does not depend on local coordinates but the extra term in the $b\ne0$ theories requires a specification of coordinate charts at the endpoints of $\gamma.$
In the following discussion we will use the identity chart $\id_D$ unless the opposite is explicitly stated.

\ms If $z,z_0\in D$ (not necessarily distinct), then
$$\Phichiral(z,z_0)=\{\Phichiral(\gamma):\;\gamma\textrm{ is a curve from } z_0\textrm{ to }z\}$$
is a \emph{multivalued} correlation functional \index{Fock space correlation functional!multivalued} \index{multivalued Fock space correlation functional}
$$\XX\mapsto \E[\Phichiral(z,z_0)\XX],$$
where we only consider curves in the complement of $S_\XX.$
Since $J$ is holomorphic, homotopic curves give the same values.
Varying the endpoints, we obtain a \emph{bi-variant} field $\Phichiral$ whose values are multivalued functionals
$$\Phichiral(z,z_0)=\Phichiral_{(0)}(z,z_0)+ib\log w'\big|^{z}_{z_0}.$$

\ms We can ``freeze" the point $z_0$ and consider $\Phichiral(z)\equiv \Phichiral(z,z_0)$ as a function of one variable; more about it later, see Section~\ref{sec: V*}.
Similarly, we can consider the ``monodromy field" $z\mapsto \Phichiral(z,z).$

\ms We can also define the \emph{harmonic conjugate} $\widetilde\Phi$ \index{bosonic field $\Phi_{(b)}, \widehat\Phi$!harmonic conjugate $\widetilde\Phi$} of bosonic field $\Phi$ by the equation
\begin{equation} \label{eq: dual boson}
2\Phichiral=\Phi+i\widetilde\Phi,
\end{equation}
where $\Phi=\Phi(z,z_0)$ is of course $\Phi(z)-\Phi(z_0).$
Then we have
$$\widetilde\Phi=2\,\Im\, \Phichiral=i\Phi-2i\Phichiral=\int*d\Phi=\int i\bar J d\bar z-iJdz.$$
If both endpoints are on the boundary, then $\widetilde\Phi=-2i\Phichiral.$

\ms \SS We can talk about ``branches" of the multivalued field $\Phichiral$ in the sense of correlations with a \emph{fixed} functional $\XX,$ e.g., we have single-valued branches of $\E\Phichiral(z)\XX$ in any simply connected domain $U\subset D\sm S_\XX.$
It is in terms of such branches that we understand conformal properties and define derivatives of the multivalued field $\Phichiral.$
In particular:

\ms (a) the branches of $\Phichiral$ depend on local coordinates, e.g., $\Phichiral$ is a pre-pre-Schwarzian form of order $\pm ib$ with respect to the endpoints;

\ms (b) the branches of $\Phichiral$ have the usual derivatives,
\begin{equation} \label{eq: dPhichiral}
\pa_z\Phichiral(z, z_0)=J(z), \qquad \pa_{z_0}\Phichiral(z, z_0)=-J(z_0),
\end{equation}
and of course $\bar\pa_z\Phichiral=\bar\pa_{z_0}\Phichiral=0,$ so $\Phichiral$ is a
``holomorphic" field;

\ms (c) if a vector field $v$ is analytic at $z$ and $z_0,$ then the branches of $\Phichiral$ at $z$ have the Lie derivative
$$\LL_v(z)\Phichiral(z, z_0)=\LL_v^+(z)\Phichiral(z, z_0)=v(z)J(z)+ib v'(z),$$
and the branches of $\Phichiral$ at $z_0$ have the Lie derivative
$$\LL_v(z_0)\Phichiral(z, z_0)=\LL_v^+(z_0)\Phichiral(z, z_0)=-v(z_0)J(z_0)-ib v'(z_0),$$
so the functional
\begin{equation} \label{eq: LPhichiral}
\LL_v[\Phichiral(z, z_0)]
= v(z)J(z)- v(z_0)J(z_0)+ib v'(z)-ibv'(z_0)
\end{equation}
is single-valued.

\ms \SS As we explained, the correlations of $\Phichiral(z,z_0)$ with single-valued Fock space fields are multivalued analytic functions in $z$ and $z_0$ in the complement of the nodes.
\ss \begin{egs*}

\ss (a) We have
\begin{equation} \label{eq: G^+}
\E[\Phichiral_{(0)}(z, z_0)\Phi_{(0)}(z_1)]=2(\Gchiral(z,z_1)-\Gchiral(z_0,z_1)),
\end{equation}
where $\Gchiral$ is the \emph{complex} Green's function, \index{complex Green's function}
$$2\Gchiral(z,z_1)=G(z,z_1)+i\widetilde G(z,z_1).$$
Here $\widetilde G$ is the harmonic conjugate of the Green's function.
The multivalued holomorphic function $\Gchiral(\cdot,z_1)$ is defined up to a constant.
In the case when we have a marked boundary point $q,$ we usually choose the constant so that $0\in \Gchiral(q, z_1),$ which makes $\Gchiral$ conformally invariant with respect to $\Aut(D,q).$
In terms of a conformal map $w:(D,q)\to (\H,\infty),$ we have
$$\Gchiral(z,z_1)=\frac12\log\frac{w(z)-\overline{w(z_1)}}{w(z)-w(z_1)}.$$
Note that $z$ and $z_1$ appear in $\Gchiral$ asymmetrically, and
\begin{equation} \label{eq: dG^+}
\partial_z\Gchiral(z,z_1)=\partial_zG(z,z_1).
\end{equation}

\ms (b) Differentiating \eqref{eq: G^+} with respect to $z_1$ we obtain the equation
\begin{equation}\label{eq: EPhichiralJ}
\E[\Phichiral(z, z_0)J(z_1)]=\frac{1}{z-z_1}-\frac{1}{z_0-z_1}\qquad\textrm{in }\; \H.
\end{equation}
It follows that the correlations $\E[\Phichiral(z, z_0)J(z_1)]$ are single-valued as functions of all three variables.
Similarly, we can show that the correlations of $\Phichiral$ with the Virasoro field $T$ are single-valued, and this fact is important because it allows us to consider Ward's OPE and apply Virasoro generators to $\Phichiral,$ see below.
\end{egs*}

\ms It is easy to describe the fields which have single-valued correlations with $\Phichiral.$
Consider the ``charge" operator \index{charge!operator}
$$Q=\frac1{2\pi i}\oint J$$
(it is just the 0-th mode of the current and it does not depend on $b$).
For example,
$$QI=0~(I(z)\equiv1),\qquad QJ=0,\qquad QT=0,$$
but
$$ Q\Phi=-I,\qquad Q\VV^\alpha=-\alpha \VV^\alpha.$$
A single-valued Fock space field $X$ has single-valued correlations with
$\Phichiral$ if and only if
$$X\in\ker Q$$
(express the correlations of $X$ with the monodromy field $\Phichiral(z,z)$ in terms of $QX$).

\ms \begin{prop*}
If $X, Y\in\ker Q,$ then all their OPE coefficients, in particular $\pa X$ and $X*Y,$ are in $\ker Q.$
\end{prop*}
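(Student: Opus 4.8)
The natural strategy is to show that the charge operator $Q$ is a derivation of the OPE algebra — more precisely, that $Q$ commutes with all the OPE-coefficient operations $*_n$ in the appropriate sense — and then invoke the fact that $Q$ annihilates $X$ and $Y$. The cleanest route uses the commutation identity (Proposition~\ref{commutation id}) with $Y^1 = J$. Since $J(\zeta)X(z)$ has no singular part whenever $X\in\ker Q$ (this is exactly what $QX=0$ encodes: $QX = J*_{0}X$ in the relevant normalization, and the higher singular coefficients $J*_{n}X$ for $n\le -1$ vanish because $J$ is a $(1,0)$-differential and all current modes $J_{\ge1}$... — more carefully, $J(\zeta)X(z)\sim 0$ means precisely that $X$ has no current descendants in its singular OPE with $J$), the operator product $J(\zeta)X(z)$ is regular as $\zeta\to z$. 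One should first verify this reformulation: $X\in\ker Q$ iff $\Sing_{\zeta\to z}[J(\zeta)X(z)] = 0$, using that $Q$ is the contour integral $\frac{1}{2\pi i}\oint J$ around $z$, which picks out the residue term.

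Granting that reformulation, the main step is: if $J(\zeta)X(z)\sim 0$ and $J(\zeta)Y(z)\sim 0$, and $X(\zeta)Y(z) = \sum_n (\zeta-z)^n C_n(z)$, then each $J(\eta)C_n(z)\sim 0$. I would apply the commutation identity in the form of Example~(a) following Proposition~\ref{commutation id}: with $f_1\equiv1$ (so $Y^1_{f_1}$ on $J$ gives $J*_{-1}$) and $f_2(\eta)=(\eta-z)^{-n-1}$ on $Y^2 = $ the product field, one obtains the Leibniz-type relation $J*_{-1}(X*_nY) = (J*_{-1}X)*_nY + X*_n(J*_{-1}Y)$ (formula~\eqref{eq: Leibnitz4v2}). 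But I actually want the residue term $J*_0$, not $J*_{-1}$; the correct choice is to compute $Q\,(X*_nY)$ directly as a contour integral of $J$ around a small circle containing both $\zeta$ and $z$, deform it into the sum of two contours (one around $\zeta$, one around $z$), and use that $\oint_{(\zeta)} J\,(X(\zeta)Y(z)) = (QX)(\zeta)\,Y(z) = 0$ and $\oint_{(z)} J\,(X(\zeta)Y(z)) = X(\zeta)\,(QY)(z) = 0$. This is the standard ``charge conservation around a contour'' argument. Extracting the coefficient of $(\zeta-z)^n$ in the resulting identity $Q(X(\zeta)Y(z)) = 0$ then gives $QC_n = 0$ for every $n$. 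The anti-holomorphic analogue (contracting with $\bar J$, or just noting $\bar X,\bar Y$ considerations) and the fact that $\pa X$ is the special coefficient $C_1$ up to normalization finish the statement.

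The step I expect to be the main obstacle is making the contour-deformation argument fully rigorous within the Fock-space framework, i.e., justifying that $Q$ applied to the correlation functional $X(\zeta)Y(z)$ can be split as the sum of $Q$ acting ``at $\zeta$'' and ``at $z$'' when the two points are close but distinct. Concretely: one tests against an arbitrary functional $\XX$ with $S_\XX$ outside a disc $B$ containing $\zeta$ and $z$, writes $\E[Q_B(X(\zeta)Y(z))\XX] = \frac{1}{2\pi i}\oint_{\pa B} \E[J(\eta)X(\zeta)Y(z)\XX]\,d\eta$, and deforms $\pa B$ to a pair of small circles around $\zeta$ and $z$ respectively — legitimate because $\eta\mapsto\E[J(\eta)X(\zeta)Y(z)\XX]$ is holomorphic in $B\setminus\{\zeta,z\}$ (here one uses that $J$ is holomorphic, i.e.\ $\bp J\approx0$, and that the only nodes inside $B$ are $\zeta,z$). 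Then the circle around $\zeta$ contributes $\E[(QX)(\zeta)Y(z)\XX]=0$ and the circle around $z$ contributes $\E[X(\zeta)(QY)(z)\XX]=0$. Everything else — expanding in powers of $(\zeta-z)$ and matching coefficients, handling $\bar X$ by complex conjugation symmetry, identifying $\pa X = C_1$ — is routine bookkeeping that I would not spell out in detail.
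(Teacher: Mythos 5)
Your argument is correct and is in substance the paper's own proof: splitting the contour of $\frac1{2\pi i}\oint J$ around the pair $\{\zeta,z\}$ into small circles about $\zeta$ and about $z$ is exactly the mechanism of Proposition~\ref{commutation id}, and what it produces is the Leibniz rule \eqref{eq: Leibnitz4v2} with first field $J$ and $f_1\equiv1$, namely $Q(X*_nY)=(QX)*_nY+X*_n(QY)$ --- which is the one line the paper gives. Two slips in your write-up should be removed, though neither enters the contour computation you actually run. First, the proposed reformulation ``$X\in\ker Q$ if and only if $\Sing_{\zeta\to z}[J(\zeta)X(z)]=0$'' is false: $QX=0$ kills only the residue coefficient, not the whole singular part; for instance $QJ=0$ while $J(\zeta)J(z)\sim-1/(\zeta-z)^2$ by \eqref{eq: sOPE(J,J)}. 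Your deformation step uses only the vanishing of the two residues, which is precisely what $QX=QY=0$ says, so the argument survives, but the ``iff'' (and the hypothesis ``$J(\zeta)X(z)\sim0$'' in your stated main step) must be weakened to the residue statement. Second, the mode bookkeeping is backwards: in the paper's convention the residue operator is $*_{-1}$ (Section~\ref{sec: residue op}), so $Q=J*_{-1}$, and \eqref{eq: Leibnitz4v2} applies verbatim --- the detour you took to avoid it was unnecessary. The steps you defer (matching coefficients in $(\zeta-z)$, and also in $(\bar\zeta-\bar z)$ when $X,Y$ are not holomorphic, plus identifying $\pa X$ as an OPE coefficient) are indeed routine at the paper's level of rigor.
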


\begin{proof}
It follows from Proposition~\ref{commutation id} that
$$Q(X*_nY) = Q(X)*_nY + X*_nQ(Y).$$
\end{proof}

\ms \SS We define correlations of two or more multivalued fields only for \emph{non-intersecting paths}.
For instance, in the case of the $2$-point function of $\Phichiral,$ we consider non-intersecting paths $\gamma$ and $\gamma',$ and set
\begin{equation} \label{eq: cross-ratio}
\E[\Phichiral(\gamma)\Phichiral(\gamma')]=\log\frac{(z-z'_0)(z_0-z')}{(z-z')(z_0-z'_0)}\qquad\textrm{in } \H,
\end{equation}
where the logarithm of the cross-ratio depends on the number of times one curve winds around the other.

\ms\SS Let us finally explain in what sense $\Phichiral_{(b)}$ belongs to the theory $\FF_{(b)},$ the conformal family of Fock space fields generated by $\Phi_{(b)},$ see the previous lecture.
As in the case of single-valued fields, this can be expressed in the form of Ward's OPE
\begin{equation}\label{eq: OPE4Phi+}
T(\zeta)\Phichiral(z, z_0)\sim \frac{ib}{(\zeta-z)^2}+\frac{J(z)}{\zeta-z}, \qquad \zeta\to z,
\end{equation}
where $T\equiv T_{(b)}$ and $\Phichiral\equiv\Phichiral_{(b)},$ (recall that $\Phichiral$ is a pre-pre-Schwarzian form)
or equivalently in the residue form of Ward's identities
\begin{equation}\label{eq: OPE4Phi+2}
\frac1{2\pi i}\oint_{(z)}vT\Phichiral(z, z_0)=\LL_v(z)\Phichiral(z, z_0).
\end{equation}
(Similar statements hold for $z_0.$)

\ms According to our discussion above, the meaning of \eqref{eq: OPE4Phi+} and \eqref{eq: OPE4Phi+2} is the following.
For every single-valued Fock space functional $\XX$ and every curve $\gamma$ connecting $z_0$ and $z$ in $D\sm S_\XX,$ the function
$\zeta\mapsto \E\left[T(\zeta)\Phichiral(\gamma)\XX\right]$ has an analytic continuation to $D\sm( S_\XX\cup\{z, z_0\})$ and the above relations hold for this analytic continuation.
Note that the continuation depends on the homotopy class of $\gamma$ in $D\sm S_\XX$ but the singular parts of the Laurent series do not depend on $\gamma.$

\ms To prove \eqref{eq: OPE4Phi+} we simply integrate with respect to $\eta$ the operator product expansion
$$T(\zeta)J(\eta)\sim \frac{2ib}{(\zeta-\eta)^3}+\frac{J(\zeta)}{(\zeta-\eta)^2},$$
which is equivalent to Ward's OPE for $J,$ see Corollary~\ref{OPE4form}, and interpret the result in the sense explained above.

\ms \section{Chiral bi-vertex fields} \label{sec: chiral bi-vertex} \index{chiral!bi-vertex field $V,\widehat V$}\index{vertex field!chiral bi- $V,\widehat V$}

\subsec{Definition} Our next goal is to construct normalized exponentials of the chiral bosonic fields in such a way that these multivalued fields will be $\Aut(D,q)$-invariant holomorphic differentials and will belong to $\FF_{(b)}$ in the sense explained above.

\ms Recall that in the non-chiral case, the use of OPE multiplication automatically produces fields with these properties.
We cannot directly extend this approach to chiral fields because of the difficulties with the definition of their correlation functions and therefore with operator product expansions.
Without going into details, we will just state the definition of chiral vertex fields and then verify the properties.
(See also Section~\ref{sec: star}.)

\ms So, by definition (for $z\ne z_0, \alpha\in\C$),
\begin{align*}
V^\alpha(z,z_0)&\equiv V_{(b)}^\alpha(z,z_0)=\{V_{(b)}^\alpha(\gamma)\}=\left(\frac{w'(z)w'(z_0)}{(w(z)-w(z_0))^2}\right)^{-\alpha^2/2} e^{\odot \alpha\Phichiral_{(b)}(z,z_0)}\\&=
\left(\frac{w'(z)w'(z_0)}{(w(z)-w(z_0))^2}\right)^{-\alpha^2/2}~\left(\frac{w'(z)}{w'(z_0)}\right)^{i\alpha b}~e^{\odot \alpha\Phichiral_{(0)}(z,z_0)},
\end{align*}
where $w$ is any conformal map $(D,q)\to (\H,\infty)$ and $\gamma$ is a curve from $z_0$ to $z.$

\ms There is no difficulty in interpreting Wick's exponentials $e^{\odot \alpha\Phichiral_{(0)}(z,z_0)}.$
Their correlations with single-valued Fock space fields are given by Wick's calculus and by correlations of $\Phichiral,$ see the previous section.
Clearly, $e^{\odot \alpha\Phichiral_{(0)}(z,z_0)}$ is a scalar field and it is invariant with respect to $\Aut(D).$
It is also clear that the field $e^{\odot \alpha\Phichiral_{(0)}(z,z_0)}$ is holomorphic in both variables.

\ms The function
$$\left(\frac{w'(z)w'(z_0)}{(w(z)-w(z_0))^2}\right)^{-\alpha^2/2}$$
does not depend on the choice of $w.$
As we mentioned in Section~\ref{sec: conf inv}, this is a non-random holomorphic differential of dimensions $-{\alpha^2}/2$ with respect to both variables; it is invariant with respect to $\Aut(D).$
It follows that $V_{(b)}^\alpha(z,z_0)$ is a holomorphic differential of conformal dimension
$$\lambda=-\frac{\alpha^2}2+i\alpha b$$
with respect to $z$ and of conformal dimension
$$\lambda_0=-\frac{\alpha^2}2-i\alpha b$$
with respect to $z_0$; it is $\Aut(D,q)$-invariant.

\ms\subsec{Ward's OPE}
Let us now establish Ward's OPEs for chiral vertex fields.
This will allow us to say that the field $V_{{(b)}}^\alpha$ belongs to $\FF_{(b)}.$
The meaning of this statement was explained in Section~\ref{sec: chiral Phi} -- we need to consider correlations with single-valued Fock space functionals $\XX.$
It is crucial that such correlations are not ramified at $\zeta=z,$ which is a consequence of the corresponding property of the chiral bosonic fields, see \eqref{eq: OPE4Phi+}.

\begin{prop}
We have
\begin{equation} \label{eq: Ward OPEs4V+}
T(\zeta)V^\alpha(z,z_0) \sim \lambda \frac{V^\alpha(z,z_0)}{(\zeta-z)^2} + \frac{\pa_{z}V^\alpha(z,z_0)}{\zeta-z},\qquad (\zeta\to z),
\end{equation}
where $T \equiv T_{(b)}$ and $V \equiv V_{(b)}.$
Similar operator product expansion (with $\lambda_0$) holds as $\zeta\to z_0.$
\end{prop}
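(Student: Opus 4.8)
The plan is to reduce the Ward operator product expansion for the chiral vertex field $V^\alpha_{(b)}$ to the one already available for the chiral bosonic field $\Phichiral_{(b)}$, exactly as in the non-chiral case one reduces the OPE of $\VV^\alpha$ to that of $\Phi$ via Wick's calculus. First I would fix a single-valued Fock space functional $\XX$ and a curve $\gamma$ from $z_0$ to $z$ in $D\sm S_\XX$, so that $V^\alpha(\gamma)=(\text{non-random differential})\cdot e^{\odot\alpha\Phichiral(\gamma)}$ with the non-random factor as displayed in the definition. The key point, already noted in Section~\ref{sec: chiral Phi}, is that the correlation $\zeta\mapsto\E[T(\zeta)\,e^{\odot\alpha\Phichiral(\gamma)}\,\XX]$ is \emph{not} ramified at $\zeta=z$: this follows from \eqref{eq: OPE4Phi+}, i.e. from the single-valuedness of $\E[\Phichiral(z,z_0)T(\zeta)]$, so the Laurent expansion in $\zeta-z$ makes sense.

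Next I would compute $\Sing_{\zeta\to z}\,T(\zeta)\,e^{\odot\alpha\Phichiral(\gamma)}$ by Wick's formula. Writing $e^{\odot\alpha\Phichiral(\gamma)}=\sum_n (\alpha^n/n!)\,\Phichiral(\gamma)^{\odot n}$ and expanding $\Phichiral(\gamma)^{\odot n}$, the contractions of $T(\zeta)$ against the $n$ copies of $\Phichiral(z,z_0)$ are governed by the singular part \eqref{eq: OPE4Phi+}: one contraction contributes $J(z)/(\zeta-z)$ (giving the first-derivative term via $\pa_z$ acting on the Wick exponential and on the non-random prefactor), while two contractions contribute $(ib)^2$ from the double-pole term together with the square of the single contraction, combining to produce the $\lambda=-\alpha^2/2+i\alpha b$ coefficient on $(\zeta-z)^{-2}$. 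Concretely, the bosonic Ward OPE gives $T*\Phichiral$-type coefficients, and the familiar generating-function manipulation (as in the proof of Proposition~\ref{T in S(1/12)} / Proposition~\ref{OPE4T}(d)) turns $\sum$ of contractions into $\lambda\,V^\alpha/(\zeta-z)^2+\pa_z V^\alpha/(\zeta-z)$. One also has to account for the $\zeta$-independent non-random prefactor $\big(w'(z)w'(z_0)/(w(z)-w(z_0))^2\big)^{-\alpha^2/2}\big(w'(z)/w'(z_0)\big)^{i\alpha b}$; its derivative in $z$ supplies exactly the pieces needed to complete $\pa_z V^\alpha$ from $\alpha J(z)\odot(\cdots)$, while it produces no extra singular terms.

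Alternatively — and this is cleaner — I would invoke Corollary~\ref{OPE4diff}: since $V^\alpha_{(b)}(z,z_0)$ has already been shown to be a holomorphic $(\lambda,0)$-differential in $z$ (and a $(\lambda_0,0)$-differential in $z_0$), and since $W=(A,\bar A)$ is a stress tensor for $\Phi_{(b)}$ with $T=T_{(b)}$ its Virasoro field, it suffices to verify the Ward OPE \emph{in one chart}, namely in $(\H,\id)$. In that uniformization the non-random prefactors simplify, $\Phichiral_{(b)}(z,z_0)=\Phichiral_{(0)}(z,z_0)+ib\log(w'/\cdots)$ reduces appropriately, and the computation is a short Wick's-calculus check using $\E[\Phichiral(z,z_0)J(\zeta)]=1/(z-\zeta)-1/(z_0-\zeta)$ from \eqref{eq: EPhichiralJ} together with $T=-\tfrac12 J\odot J+ib\pa J$. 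The statement for $\zeta\to z_0$ is identical after noting the sign change $\pa_{z_0}\Phichiral(z,z_0)=-J(z_0)$ in \eqref{eq: dPhichiral}, which flips $b\mapsto -b$ in the relevant term and yields $\lambda_0=-\alpha^2/2-i\alpha b$.

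The main obstacle is not the algebra but the bookkeeping of multivaluedness: one must be careful that the Laurent expansion in $\zeta-z$ is performed for a \emph{fixed branch} (fixed homotopy class of $\gamma$ relative to $S_\XX$), and verify that the singular coefficients are independent of that branch — which is precisely the content of the remark after \eqref{eq: OPE4Phi+2} and follows because $J$ and $T$ are single-valued and the monodromy of $\Phichiral$ only shifts the regular part. Once this is granted, the residue-form identity $\frac1{2\pi i}\oint_{(z)} v T\,V^\alpha(z,z_0)=\LL_v(z)V^\alpha(z,z_0)=(v\pa_z+\lambda v')V^\alpha(z,z_0)$ follows from \eqref{eq: LPhichiral} and Leibniz's rule for $\LL_v$ applied to the product of the non-random differential and the Wick exponential, completing the proof.
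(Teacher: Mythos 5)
Your proposal is correct and, in its ``cleaner'' alternative, is exactly the paper's proof: reduce to the $(\H,\infty)$-uniformization using the fact that $V^\alpha_{(b)}$ is a holomorphic differential in each variable (so one chart suffices), then compute the singular part by Wick's calculus from $\E[\Phichiral(z,z_0)J(\zeta)]=1/(z-\zeta)-1/(z_0-\zeta)$ and $T=-\tfrac12 J\odot J+ib\,\pa J$, treating the $b$-term separately and handling $\zeta\to z_0$ via the sign flip $\pa_{z_0}\Phichiral=-J(z_0)$, with the non-ramification of $\zeta\mapsto\E[T(\zeta)V^\alpha(\gamma)\XX]$ justified as you say by \eqref{eq: OPE4Phi+}. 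One small slip in your first sketch: the $i\alpha b$ part of $\lambda$ comes from a \emph{single} contraction of the $ib\,\pa J$ piece of $T$ with the exponential (there is no $(ib)^2$ contribution), while the $-\alpha^2/2$ part and the $\alpha^2/(z-z_0)$ correction completing $\pa_z V^\alpha$ come from the double contraction of $-\tfrac12 J\odot J$ --- exactly as your second route, and the paper, compute.
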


\begin{proof}
The proof is by Wick's calculus.
Since chiral vertex fields are differentials, it is sufficient to perform the computation in the half-plane uniformization, which simplifies the argument.
All computations below refer to the global chart of $(\H,\infty)$ as in the proof of Proposition~\ref{degeneracy}.
Thus we have
$$\Phi=\Phi_{(0)},\quad \Phichiral=\Phichiral_{(0)},\quad\pa \Phi=J=J_{(0)},\quad T=T_{(0)}+ib\partial J,\quad T_{(0)}=-\frac12 J\odot J.$$
Let us first consider the case $b=0$:
$$T_{(0)}(\zeta)e^{\odot\alpha\Phichiral}= -\frac 12\sum_{n\ge0}\frac{\alpha^n}{n!}~(J(\zeta) \odot J(\zeta)) \, (\Phichiral\odot\cdots\odot \Phichiral) ~\sim~ \mathrm{I}+\mathrm{II},$$
where the terms I and II come from 1 or 2 contractions, respectively. Thus
$$\mathrm{I}~\sim~-\alpha \E[J(\zeta)\Phichiral] J(\zeta)\odot e^{\odot\alpha\Phichiral}\quad
\textrm { and } \quad
\mathrm{II}~\sim~-\frac12\alpha^2 \E[J(\zeta)\Phichiral]^2 e^{\odot\alpha\Phichiral}.$$
By \eqref{eq: EPhichiralJ},
the singular part of operator product expansion of $T_{(0)}$ and $e^{\odot\alpha\Phichiral}$ at $z$ reads
$$\alpha\frac{J(z)}{\zeta-z}\odot e^{\odot\alpha\Phichiral} -\frac{\alpha^2}{2} \frac{e^{\odot\alpha\Phichiral}}{(\zeta-z)^2} +\frac{\alpha^2}{z-z_0}~\frac{e^{\odot\alpha\Phichiral}}{\zeta-z}.$$
This proves the relation~\eqref{eq: Ward OPEs4V+} when $b=0.$
For $b\ne 0,$ all we need to show is that
$$\Sing_{\zeta\to z}\pa J(\zeta)e^{\odot\alpha\Phichiral}= \frac{\alpha}{(\zeta-z)^2} e^{\odot\alpha\Phichiral}.$$
Since we can differentiate the singular part of operator product expansion, we just need to verify that
$$\Sing_{\zeta\to z}J(\zeta)e^{\odot\alpha\Phichiral}=-\alpha\frac{e^{\odot\alpha\Phichiral}}{\zeta-z}.$$
However, this is immediate from \eqref{eq: EPhichiralJ}.
\end{proof}

\ms\SS Global Ward's identities involving chiral vertex fields have the following meaning.
Let $\XX$ be a string of single-valued Fock space fields in the family $\FF_{(b)}.$
Then for all curves $\gamma$ in $D\sm S_\XX$ and all vector fields $v$ (which are holomorphic at the endpoints $z,z_0$ of $\gamma$ and at the nodes of $\XX$) we have
$$\E[\LL_v(V^\alpha(\gamma)\XX)]=\E[W(v)V^\alpha(\gamma)\XX].$$
This can be established following the same argument as in Lecture~\ref{ch: Ward} and using the fact that the function
$$\zeta\mapsto \E\left[T(\zeta)V^\alpha(\gamma)\XX\right]$$
extends to a single-valued analytic function in $D\sm S_\XX.$

Ward's equations, see Section~\ref{sec: Ward's in H}, also hold for chiral bi-vertex fields with similar interpretation.

\ms \section{Rooted vertex fields} \label{sec: V*} \index{rooted vertex field $V_\star,\widehat V_\star$} \index{vertex field!rooted $V_\star,\widehat V_\star$}

It is important to understand that chiral vertex fields exist only as bi-variant objects.
Nevertheless, we can freeze one of the variables, say $z_0,$ and consider $V^\alpha(z,z_0)$ as a field that depends only on $z.$
This of course results in the appearance of a new marked point unless we use the marked point $q$ that we already have from the central charge modifications ($b\ne0$).
The problem with the choice of $z_0=q$ is that it leads to a divergence that has to be taken care of by some normalization procedure.

\ms In any case, for each $b,$ we construct a one-parameter family of $\Aut(D,q)$-invariant, primary fields $V_\star^\alpha(z)$ in the ``holomorphic part" of $\FF_{(b)}$ theory.
Below we explain the definition and properties of $V_\star^\alpha,$ and clarify some points related to the fact that these fields are boundary differentials.

\ms\subsec{Boundary differentials}
Let $q\in\pa D.$
According to the discussion in Lecture~\ref{ch: conf geom}, we say that a field $F= F(z)$ in $D$ is a \emph{boundary} differential \index{differential!boundary} with respect to $q$ if it depends on the choice of a standard boundary chart (see Section~\ref{sec: conf F-field}) $\phi$ at $q,$ (in addition to local coordinates at $z$) and transforms as follows: \index{boundary differential}
\begin{equation} \label{eq: bdry diff}
(F(z)\,\|\,\phi) = (F(z)\,\|\,\widetilde\phi) (h'(0))^{\lambda_q},
\end{equation}
where $h$ is the transition map between charts $\phi$ and $\widetilde\phi$ satisfying $\phi(q) = \widetilde\phi(q)=0.$
The exponent $\lambda_q$ is called the dimension of $F$ with respect to $q.$
The following example of a boundary differential should help to clarify this concept.

\ms \begin{eg*}
Define a non-random field $F = F(z)$ in $D$ by the equation
$$F = w_\phi^\alpha(w_\phi')^\beta,$$
where $w_\phi:D\to\H$ is a conformal map normalized in a standard boundary chart $\phi, \phi(q) = 0,$ by the condition
$$w_\phi(z) = -\frac1{\phi(z)} + \cdots,\qquad z\to q.$$
Then $F$ is a boundary differential of dimension $\alpha+\beta$ with respect to $q$ (and a $(\beta,0)$-differential with respect to $z$).
\end{eg*}
\ss\begin{proof}
Clearly, $w_\phi = k w_{\widetilde\phi}$ for some $k>0.$
We have
$$k = \lim_{z\to q}\frac{w_\phi(z)}{w_{\widetilde\phi}(z)} = \lim_{z\to q}\frac{\widetilde\phi(z)}{\phi(z)} = \lim_{z\to q}\frac{h\circ \phi(z)}{\phi(z)} = h'(0),$$
so
$F = (h'(0))^{\alpha+\beta} \widetilde F.$
\end{proof}

\ms\subsec{Normalization}
Let us first consider the case $b=0.$
Fix any point $q\in \pa D$ and define
$$V_\star^\alpha(z) = V^\alpha(z,q).$$
In terms of a uniformizing map $w: (D, q)\mapsto (\H,0)$ we have
$$V_\star^\alpha=w^{\alpha^2}(w')^{-\alpha^2/2}~e^{\odot \alpha\Phichiral_{(0)}}\cdot(w'(q))^{-\alpha^2/2}.$$
The derivative $w'(q)$ requires a specification of a standard boundary chart at $q,$ so
$V_\star^\alpha(z)$ is a differential with respect to $z$ and a boundary differential with respect to $q$; both dimensions are equal to $-{\alpha^2}/2.$
If we fix a chart at $q$ and require $w'(q)=1$ in this chart, then the formula simplifies:
$$V_\star^\alpha=w^{\alpha^2}(w')^{-\alpha^2/2}~e^{\odot \alpha\Phichiral_{(0)}}.$$
However, it somewhat hides the fact that we are dealing with a boundary differential.
We obtain an even simpler expression in terms of a uniformization $w: (D, q)\mapsto (\H,\infty)$:
\begin{equation} \label{eq: V*0}
V_\star^\alpha=(w')^{-\alpha^2/2}~e^{\odot \alpha\Phichiral_{(0)}},
\end{equation}
where we require that $w(\zeta)\sim -1/(\zeta-q)$ as $\zeta\to q$ in a fixed boundary chart at $q.$
The formula~\eqref{eq: V*0} of course means that we have
$$V_\star^\alpha(z,\gamma)=\big(w'(z)\big)^{-\alpha^2/2}~e^{\odot \alpha\Phichiral_{(0)}(\gamma)}$$
for all paths $\gamma$ connecting $z$ and $q.$

\ms Let us now consider the case $b\ne0.$
As usual, $q$ denotes the central charge modification point.
By definition,
$$V_\star^\alpha \equiv V_{\star,(b)}^\alpha = (w')^{-\alpha^2/2+i\alpha b} e^{\odot\alpha\Phichiral_{(0)}},$$
where
$w: (D,q)\mapsto (\H,\infty)$ is such that $w(\zeta)\sim -1/(\zeta-q)$ as $\zeta\to q$ in a fixed boundary chart $\phi$ at $q.$
Equivalently,
\begin{equation} \label{eq: V*}
V_\star^\alpha \equiv V_\star^\alpha (z; \gamma)=\lim_{\ve\to 0}~w'(z_\ve)^{i\alpha b}~ V^\alpha(\gamma_\ve),
\end{equation}
where the curve $\gamma_\ve$ is the part of $\gamma $ from $z_\ve$ to $z,$ and the point $z_\ve$ is at (spherical) distance $\ve$ from $q$ in the chart $\phi;$ $V^\alpha \equiv V_{(b)}^\alpha$ is the bi-vertex field.

\ss This last expression represents $V_\star^\alpha$ in terms of a rescaling procedure, and allows us to derive the properties of $V_\star^\alpha$ from those of bi-vertex fields.
In particular, we obtain the following version of Ward's identities.

\ms\subsec{Ward's identities for rooted fields}
\begin{prop}
If $v$ is a non-random vector field smooth up to the boundary, and if $v(q)=v'(q)=0,$ then the Ward's identities
\begin{equation} \label{eq: Ward4V*}
\E\,\LL_v\left[V_\star^\alpha(z) X_1(z_1)\cdots X_n(z_n) \right]~=~\E\left[W(v)V_\star^\alpha(z) X_1(z_1)\cdots X_n(z_n) \right]
\end{equation}
hold true provided that $X_j\in \FF_{(b)},$ and $z$ and $z_j$'s are in $D_{\rm hol}(v).$
\end{prop}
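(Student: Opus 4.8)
The plan is to reduce the Ward identity for the rooted field $V_\star^\alpha$ to the already-established Ward identities for the bi-vertex field $V^\alpha=V_{(b)}^\alpha$ (Proposition in Section~\ref{sec: chiral bi-vertex}), using the rescaling representation \eqref{eq: V*}. First I would write, for small $\ve>0,$
$$\E\,\LL_v\left[w'(z_\ve)^{i\alpha b}\, V^\alpha(\gamma_\ve)\, X_1(z_1)\cdots X_n(z_n)\right]~=~\E\left[W(v)\, w'(z_\ve)^{i\alpha b}\, V^\alpha(\gamma_\ve)\, X_1(z_1)\cdots X_n(z_n)\right],$$
which is just the global Ward identity for bi-vertex fields applied with the endpoint $z_\ve$ as an extra node, noting that $w'(z_\ve)^{i\alpha b}$ is a non-random scalar factor that passes through both $\LL_v$ (up to a correction term, see below) and the insertion of $W(v).$ Then I would let $\ve\to0$ and invoke \eqref{eq: V*} together with continuity of the correlations in the endpoint to obtain \eqref{eq: Ward4V*}.

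The key point — and the reason the hypotheses $v(q)=v'(q)=0$ appear — is controlling the behavior near $q$ as $\ve\to0.$ On the left-hand side, $\LL_v$ acting on the string $V^\alpha(\gamma_\ve)X_1(z_1)\cdots X_n(z_n)$ includes a term coming from the Lie derivative at the moving endpoint $z_\ve$; since $V^\alpha(z,z_0)$ is a differential of dimension $\lambda_0 = -\alpha^2/2 - i\alpha b$ in the second variable, this contributes roughly $v(z_\ve)\pa_{z_0}V^\alpha + \lambda_0 v'(z_\ve) V^\alpha$ evaluated at $z_0=z_\ve,$ plus the Lie derivative of the prefactor $w'(z_\ve)^{i\alpha b}.$ One checks that the prefactor's contribution exactly cancels the $i\alpha b\, v'$ part, leaving a remainder that is $O(v(z_\ve)) + O(v'(z_\ve))$ in the relevant chart $\phi.$ Because $v$ is smooth up to the boundary with $v(q)=v'(q)=0,$ we have $v(z_\ve)=o(1)$ and $v'(z_\ve)=o(1)$ as $z_\ve\to q,$ and these terms vanish in the limit. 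On the right-hand side, the contour/boundary integrals defining $W(v)$ involve $v$ and $\bp v$ away from $\supp$-issues near $q,$ and the $\ve$-dependence enters only through $V^\alpha(\gamma_\ve);$ taking $\ve\to0$ is then legitimate by the definition \eqref{eq: V*} and the continuity of $A$ (equivalently $T$) up to the boundary.

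The main obstacle I anticipate is making the interchange of $\LL_v$ with the $\ve\to0$ limit rigorous: one must show that the boundary contribution to $\LL_v$ at the node $z_\ve,$ after the cancellation of the leading $i\alpha b$ term, genuinely tends to zero rather than to a finite ``anomaly.'' This amounts to a careful Taylor expansion of $v$ and $v'$ near $q$ in a standard boundary chart, combined with the known growth rate of $\E[V^\alpha(z,z_\ve)\XX]$ and $\E[\pa_{z_\ve}V^\alpha(z,z_\ve)\XX]$ as $z_\ve\to q$ (which is controlled because $V^\alpha$ is a boundary differential of dimension $\lambda_q$ with respect to $q,$ so the divergence is precisely of the order absorbed by the $(w')^{i\alpha b}$ normalization). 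Once that estimate is in place, everything else is bookkeeping: commutativity of $\LL_v$ with scalar multiplication, Leibniz's rule for $\LL_v$ on strings (Proposition~\ref{Leibnitz4strings}), and the single-valuedness of $\zeta\mapsto\E[T(\zeta)V^\alpha(\gamma)\XX]$ established in Section~\ref{sec: chiral bi-vertex}, which is what permits the use of global Ward identities in the first place.
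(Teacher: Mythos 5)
Your proposal is correct and is essentially the paper's own argument: reduce to the global Ward identities for bi-vertex fields via the rescaling representation \eqref{eq: V*}, and show that the Lie-derivative contribution at the moving endpoint $z_\ve$ vanishes because it consists of terms with coefficients $v(z_\ve)$ and $v'(z_\ve)$ (which tend to zero by the hypothesis $v(q)=v'(q)=0$) multiplying correlations that stay $O(1)$ thanks to the $w'(z_\ve)^{i\alpha b}$ normalization. The paper phrases this slightly more compactly by observing that the normalized field $w'(z_\ve)^{i\alpha b}V^\alpha(\gamma_\ve)$ is itself a holomorphic differential in $z_\ve$, which is equivalent to your cancellation of the $i\alpha b\,v'$ terms.
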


\begin{proof}
We will use the representation~\eqref{eq: V*} and the fact that Ward's identities hold for bi-vertex fields.
All we need is to show that
$$\lim_{\ve\to0} \LL_v(z_\ve)[w'(z_\ve)^{i\alpha b}V^\alpha(\gamma_\ve)]=0.$$

\ms The field $X(z_\ve)\equiv X(z,z_\ve;\gamma_\ve)=[w'(z_\ve)^{i\alpha b}V^\alpha(\gamma_\ve)]$ is a holomorphic differential in $z_\ve,$ so its Lie derivative has two terms, with $v(z_\ve)$ and $v'(z_\ve).$
At the same time
$$\E\left[X(z_\ve)X_1(z_1)\cdots X_n(z_n)\right] = O(1),\qquad \E\left[\pa X(z_\ve)X_1(z_1)\cdots X_n(z_n)\right] = O(1)$$
as $\ve\to0.$
Since $v(q)=v'(q)=0,$ the Lie derivative at $z_\ve$ tends to zero.
\end{proof}

\ms Since the vector fields $$v_z(\zeta) = \frac1{\zeta-z} \qquad ({\rm in}\;\H)$$ have a triple zero at infinity, we can apply Ward's identities to such fields and derive, as in Section~\ref{sec: Ward's in H}, the corresponding Ward's equations. \index{Ward's!equations}
In particular, we have the following

\begin{prop} \label{Ward4VX}
If $X=X_1(z_1)\cdots X_n(z_n)$ with $X_j\in\FF_{(b)},$ then the equation
$$\E\,(T*V_\star^{\alpha})(z)X~=~\E\, V_\star^{\alpha}(z)\LL^+_{v_z}X~+~ \E\,\LL^-_{v_{\bar z}}[V_\star^{\alpha}(z)X]$$
holds in the $(\H,\infty)$-uniformization.
\end{prop}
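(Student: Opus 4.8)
The plan is to deduce this from Proposition~\ref{Ward equation} (``Ward's equation'' for the OPE product $A*Y$ in the half-plane) by running exactly the argument that produced that proposition, but with the rooted vertex field $V_\star^\alpha$ playing the role of $Y$ and with $T$ in place of $A$. The point is that $V_\star^\alpha$, although it is only a (boundary) differential and not literally a member of $\FF_{(b)}$, satisfies Ward's OPE with $T$ in the $(\H,\infty)$-uniformization, namely
$$T(\zeta)V_\star^\alpha(z)\sim \lambda\,\frac{V_\star^\alpha(z)}{(\zeta-z)^2}+\frac{\pa V_\star^\alpha(z)}{\zeta-z},\qquad \zeta\to z,$$
with $\lambda=-\alpha^2/2+i\alpha b$; this is just the previously established Ward's OPE for the bi-vertex field $V^\alpha(z,z_0)$ (equation~\eqref{eq: Ward OPEs4V+}) after the rescaling~\eqref{eq: V*} that defines $V_\star^\alpha$, using that the extra factor $w'(z_\ve)^{i\alpha b}$ does not depend on $\zeta$. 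Equivalently, by Proposition~\ref{Ward's OPEs}, $\Sing_{\zeta\to z}[T(\zeta)V_\star^\alpha(z)]=(\LL^+_{v_\zeta}V_\star^\alpha)(z)$.

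First I would write, as in the proof of Proposition~\ref{Ward equation},
$$(T*V_\star^\alpha)(z)=\lim_{\zeta\to z}\bigl[T(\zeta)V_\star^\alpha(z)-(\LL^+_{v_\zeta}V_\star^\alpha)(z)\bigr],$$
subtracting the singular part of the OPE. Then I would expand $\E\,[T(\zeta)V_\star^\alpha(z)\,X]$ using the representation of $T$ (equivalently $A$) via Ward's functionals: since $T$ is real and nonsingular on $\pa\H$ (here I use the Remark after Proposition~\ref{sing^4non-chiral} guaranteeing $T=T_{(b)}$ has no boundary singularities, so Proposition~\ref{represent A} applies with $T$ in place of $A$),
$$\E\,[T(\zeta)V_\star^\alpha(z)\,X]=\E\,[W^+_{v_\zeta}V_\star^\alpha(z)\,X]+\E\,[\overline{W^+_{v_{\bar\zeta}}}\,V_\star^\alpha(z)\,X].$$
Next I would apply the global Ward identity for rooted fields (Proposition above, ``Ward's identities for rooted fields'', with the vector field $v_\zeta$, which has a triple zero at $\infty$, so $v_\zeta(q)=v_\zeta'(q)=0$ is satisfied): this converts $\E\,W^+_{v_\zeta}[\cdots]$ into $\E\,\LL^+_{v_\zeta}[V_\star^\alpha(z)X]$ and $\E\,\overline{W^+_{v_{\bar\zeta}}}[\cdots]$ into $\E\,\LL^-_{v_{\bar\zeta}}[V_\star^\alpha(z)X]$. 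By Leibniz's rule for $\LL^+$ on strings,
$$\E\,\LL^+_{v_\zeta}[V_\star^\alpha(z)X]=\E\,V_\star^\alpha(z)\,\LL^+_{v_\zeta}X+\E\,(\LL^+_{v_\zeta}V_\star^\alpha)(z)\,X,$$
so that
$$\E\,[T(\zeta)V_\star^\alpha(z)\,X]-\E\,(\LL^+_{v_\zeta}V_\star^\alpha)(z)\,X=\E\,V_\star^\alpha(z)\,\LL^+_{v_\zeta}X+\E\,\LL^-_{v_{\bar\zeta}}[V_\star^\alpha(z)X].$$
Taking $\zeta\to z$ and recognizing the left side as $\E\,(T*V_\star^\alpha)(z)X$ gives the claim, with $v_z(\zeta)=1/(z-\zeta)$; note $v_{\bar\zeta}\to v_{\bar z}$, matching the statement. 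Here I use that $\LL^+_{v_\zeta}X$ and $\LL^-_{v_{\bar\zeta}}[V_\star^\alpha(z)X]$ depend continuously on $\zeta$ near $z$, which holds because $X_j\in\FF_{(b)}$ and $V_\star^\alpha$ is a differential in $z$.

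The main obstacle, and the only place requiring genuine care, is justifying that the rooted field $V_\star^\alpha$ may be substituted for $Y$ in this machinery even though $V_\star^\alpha\notin\FF_{(b)}$ in the strict sense: one must check that the global rooted Ward identity~\eqref{eq: Ward4V*} indeed holds for the meromorphic fields $v_\zeta$ (not just compactly supported smooth $v$), by the same limiting/contour argument used in Section~\ref{sec: Ward's in H} and already invoked for $V_\star^\alpha$ just before this proposition, and that the subtraction-of-singular-part limit defining $T*V_\star^\alpha$ commutes with correlation against $X$. Both are routine given the continuity of correlations of $V_\star^\alpha$ with single-valued functionals established in Section~\ref{sec: V*}, so I would state them briefly and refer back rather than reprove them.
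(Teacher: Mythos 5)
Your proposal is correct and is essentially the derivation the paper intends: apply the rooted Ward's identities to the meromorphic fields $v_\zeta$ (admissible since they have a triple zero at $q=\infty$) and repeat the proof of Proposition~\ref{Ward equation} with $V_\star^\alpha$ in place of $Y$ and $T=A$ in the $(\H,\infty)$-uniformization. One small citation slip: the fact that $T_{(b)}$ is real and nonsingular on $\pa D$ is the remark following the proposition in Section~\ref{sec: c}, not the remark after Proposition~\ref{sing^4non-chiral}, which concerns $\widehat T$ with nontrivial boundary conditions.
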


\ms The equation obviously extends to the case when $z$ is a boundary point.
It is in this form that we will use Ward's equations in the SLE theory.

\ms \subsec{Level two degeneracy}
In terms of the action of Virasoro generators $L_n,$ see Section~\ref{sec: V generator}, and current generators $J_n,$ see Section~\ref{sec: J primary}, the rooted chiral vertex fields $V_\star^{ia}$ are Virasoro primary holomorphic fields of conformal dimension $\lambda= a^2/2-ab$ and current primary with charge $q = a.$
As we explained in Appendix~\ref{appx: KZ} this implies the following:

\begin{prop} \label{TVd2V} \index{level two degeneracy equation}
We have
$$T_{(b)}*V_\star^{ia}=\frac1{2a^2}\partial^2 V_\star^{ia},$$
provided that $2a(a+b)=1.$
\end{prop}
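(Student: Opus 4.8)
The plan is to deduce Proposition~\ref{TVd2V} from the analysis already carried out for bi-vertex fields together with the general algebraic mechanism of Appendix~\ref{appx: KZ}. The statement to prove is the level two degeneracy equation $T_{(b)}*V_\star^{ia}=\frac1{2a^2}\partial^2 V_\star^{ia}$ under the constraint $2a(a+b)=1$, where $V_\star^{ia}$ is the rooted chiral vertex field. Since both sides are holomorphic differentials with respect to $z$ (and boundary differentials with respect to $q$) of the same conformal dimension, it suffices to verify the identity in the $(\H,\infty)$-uniformization, exactly as in the proof of Proposition~\ref{degeneracy}. The key structural input is that, as asserted in the paragraph preceding the statement, $V_\star^{ia}$ is a Virasoro primary holomorphic field with $\lambda = a^2/2 - ab$ and is current primary with charge $q=a$; that is, $L_{\ge1}V_\star^{ia}=0$, $L_0V_\star^{ia}=\lambda V_\star^{ia}$, $L_{-1}V_\star^{ia}=\partial V_\star^{ia}$, $J_{\ge1}V_\star^{ia}=0$, and $J_0V_\star^{ia}=-iaV_\star^{ia}$.

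First I would recall that, by definition of the Virasoro generators (see \eqref{eq: Ln modes T} and Section~\ref{sec: V generator}), the OPE coefficient $T*V_\star^{ia}$ is precisely $L_{-2}V_\star^{ia}$, while $\partial^2 V_\star^{ia}=L_{-1}^2V_\star^{ia}$ since $L_{-1}$ acts as $\partial$ on this field. So the identity to be proved is exactly
$$\Big[L_{-2}+\Big(-\tfrac1{2a^2}\Big)L_{-1}^2\Big]V_\star^{ia}=0,$$
which is the vanishing of the level two singular vector with $\eta=-1/(2a^2)$. Now I would invoke Proposition~\ref{degeneracy2}: for a current primary field $V$ in $\FF_{(b)}$ with charges $q,q_*$, one has $[L_{-2}+\eta L_{-1}^2]V=0$ provided $2q(b+q)=1$ and $\eta=-1/(2q^2)$. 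Applying this with $q=a$ (the charge of $V_\star^{ia}$), the hypotheses become $2a(a+b)=1$ and $\eta=-1/(2a^2)$, which are exactly the conditions in the statement. This immediately yields the desired equation.

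The one genuine gap to fill is the claim that $V_\star^{ia}$ is current primary with charge $a$, since Proposition~\ref{degeneracy2} is stated for fields in $\FF_{(b)}$ and the rooted vertex field is a multivalued (chiral) object. I would therefore spend the bulk of the argument establishing the current-algebra relations $J_{\ge1}V_\star^{ia}=0$ and $J_0V_\star^{ia}=-iaV_\star^{ia}$ in the sense of singular parts of OPEs with single-valued functionals, using the representation \eqref{eq: V*} of $V_\star^{ia}$ as a rescaled limit of bi-vertex fields $V^{ia}(z,z_0)$. From $\Sing_{\zeta\to z}J(\zeta)e^{\odot\alpha\Phichiral}=-\alpha e^{\odot\alpha\Phichiral}/(\zeta-z)$, established inside the proof of the Ward OPE for $V^\alpha$, and the fact that the prefactor $(w')^{-\alpha^2/2+i\alpha b}$ is non-random holomorphic and hence contributes nothing singular, one reads off $J(\zeta)V_\star^{ia}(z)\sim -ia\,V_\star^{ia}(z)/(\zeta-z)$, i.e. $J_0V_\star^{ia}=-iaV_\star^{ia}$ and $J_nV_\star^{ia}=0$ for $n\ge1$. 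The Virasoro primary property is already recorded (it follows from Ward's OPE \eqref{eq: Ward OPEs4V+} for bi-vertex fields passed to the rooted limit, exactly as in Section~\ref{sec: chiral bi-vertex}). With these in hand, Propositions~\ref{J primary} and~\ref{degeneracy2} apply verbatim — the computations there are purely algebraic consequences of the commutation relations \eqref{eq: [J,J]}, \eqref{eq: [L,J]}, \eqref{eq: LJ}, which hold for the action on any current primary field, chiral or not — and the proof concludes. The main obstacle is thus not any hard estimate but the bookkeeping needed to justify that the chiral/multivalued nature of $V_\star^{ia}$ does not interfere with the algebraic identities: all the relevant OPEs with single-valued test functionals are unramified at the coincidence point, precisely the property emphasized in Section~\ref{sec: chiral bi-vertex}, so the formal manipulations with $L_n$ and $J_n$ are legitimate.
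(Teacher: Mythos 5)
Your proposal is correct and follows essentially the same route as the paper: the text preceding Proposition~\ref{TVd2V} asserts precisely that $V_\star^{ia}$ is Virasoro primary of dimension $a^2/2-ab$ and current primary with charge $q=a$, and then deduces the degeneracy equation from the algebraic Proposition~\ref{degeneracy2} of Appendix~\ref{appx: KZ}, exactly as you do. Your extra verification of the current-primary property via the singular part of the $J(\zeta)V_\star^{ia}(z)$ expansion simply makes explicit a step the paper leaves implicit.
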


\ms This degeneracy equation is the reason for the close relation that exists between SLE and conformal field theory.
Combining this with Ward's equation, we will derive Cardy's equations (see Section~\ref{sec: Cardy}) that correlation functions of any fields in $\FF_{(b)}$ under the insertion of $V_\star^{ia}(\xi), (\xi\in\R)$ are annihilated by the second order differential operators
$$\frac1{2a^2}\pa_\xi^2 - \LL_{v_\xi},$$
which appear in It\^o's calculus (see Section~\ref{sec: SLE MOs}) in the context of SLE martingale-observables. 
We don't claim that $V_\star^{ia}$ is the only field that satisfies such equations and therefore gives rise to relation to SLE.

\renewcommand\chaptername{Appendix}
\chapter{CFT and SLE numerology} \label{appx: numerology}

For convenience of the reader, we give here a summary of useful formulas concerning level two singular and degenerate vertex fields.
We will also introduce an alternative parametrization which is used in the SLE theory.

\ms Recall that the conformal field theories $\FF_{(b)}$ generated by modifications of the Gaussian free field are parametrized by real numbers $b\in\R.$
The central charge of $\FF_{(b)}$ is
$$
c\equiv c(b) = 1-12 b^2.
$$
Parameters $b$ and $b' = -b$ are called \emph{dual} -- they have the same central charge.

\ms Given $b,$ there is a unique positive exponent
$$a\equiv a(b) = \frac{\sqrt{b^2+2}-b}2$$
satisfying the equation
$$2a(a+b)=1.$$
The dual exponent $a'(b):=a(b')$ is
$$a' = \frac1{2a} = a + b.$$
The SLE parameter $\kappa = \kappa(b)$ is defined by the equation
$$\kappa = \frac2{a^2},\quad\textrm{or}\quad \frac ba = \frac\kappa4-1.$$
The dual SLE parameter $\kappa' := \kappa(b')$ satisfies
$$\kappa\kappa'=16.$$
(Duplantier conjectured in \cite{Duplantier00} that $\SLE(\kappa')$ trace should describe the boundary of the hull of $\SLE(\kappa)$ when $\kappa>4.$
Duality of variants of SLE was established by Zhan in \cite{Zhan08b} and Dub\'edat in \cite{Dubedat09b} independently.)

\begin{figure}[ht]
\begin{center}
\includegraphics[width=\textwidth]{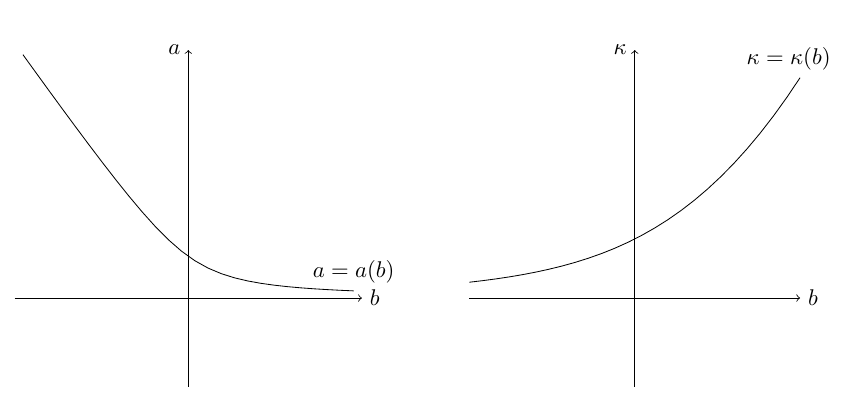}
\end{center}
\caption{Graphs of $a(b)$ and $\kappa(b)$}
\label{figure: ak}
\end{figure}

\ms The correspondence $b \leftrightarrow \kappa$ is a bijection $\R \leftrightarrow\R_+,$ see Figure~\ref{figure: ak}.
In terms of $\kappa$ we have
$$a = \sqrt{2/\kappa},\qquad b = \sqrt{\kappa/8}-\sqrt{2/\kappa},$$
and
$$c =1-\frac32\frac{(\kappa-4)^2}{\kappa} = 1 -6\Big(\sqrt{\kappa/4}-\sqrt{4/\kappa}\Big)^2 = \frac{(3\kappa-8)(6-\kappa)}{2\kappa}. $$

\ms Let us restate the algebraic description of level two singular vectors in Proposition~\ref{singular vec}.

\ms\subsec{Singular conformal dimensions} \index{singular conformal dimension} \index{conformal dimension!singular}
Define the \emph{singular conformal dimensions} $h = h(b)$ and $h' = h'(b)$ as follows:
$$h := \frac{3a^2}2-\frac12 = \frac{6-\kappa}{2\kappa}, \qquad h' := \frac{3}{8a^2}-\frac12 = \frac{3\kappa-8}{16}.$$
Note that $h'(b) = h(b')$ and $h'(b') = h(b),$ so the unordered pair $\{h,h'\}$ is the same for $b$ and $b',$ which means that it depends only on the central charge:
$$\{h,h'\} = \{\frac1{16}(5-c\pm\sqrt{(1-c)(25-c)})\}$$
(use $h+h' = (5-c)/8,$ and $hh' = c/16$).
The traditional notation for $\{h,h'\}$ is $\{h_{1,2}, h_{2,1}\}.$
Let us also denote
$$\eta \equiv \eta(b) := -\frac1{2a^2} = -\frac\kappa4,\qquad \eta' \equiv \eta'(b) := \eta(b') = -\frac4\kappa.$$
Note
\begin{equation} \label{eq: eta}
\eta+\eta' =\frac{c-13}{6},\qquad\eta\eta' = 1,
\end{equation}
and
\begin{equation} \label{eq: eta and h}
\eta = -\dfrac{3}{2(2h+1)}.
\end{equation}

\begin{table}[ht] \renewcommand{\arraystretch}{2.75}
\caption{Selected special cases}
\begin{center}
\begin{tabular}{c|ccccccc}
\toprule \vspace{-1.75em} $\kappa$ & 2 & 8 & 8/3 &6 & 3 & 16/3 & 4 \\
\, & (LERW) & (UST) & (SAW?) & (percolation) & (Ising) & (FK Ising) & (GFF) \\
\midrule $b$ & $-\dfrac12$ & $\dfrac12$ & $-\dfrac{\sqrt3}{6}$ & $\dfrac{\sqrt3}{6}$ & $-\dfrac1{\sqrt{24}}$ & $\dfrac1{\sqrt{24}}$ & 0 \\
$a$ & 1 & $\dfrac12$ & $\dfrac{\sqrt3}2$ & $\dfrac1{\sqrt3}$ & $\dfrac{\sqrt2}{\sqrt3}$ & $\dfrac{\sqrt3}{2\sqrt2}$ & $\dfrac1{\sqrt2}$\\
$h$ & 1 & $-\dfrac18$ & $\dfrac58$ & 0 & $\dfrac12$ & $\dfrac1{16}$ & $\dfrac14$ \\
$-\eta$ & $\dfrac12$ & 2 & $\dfrac23$ & $\dfrac32$ & $\dfrac34$ & $\dfrac43$ & 1 \\
\bottomrule
\end{tabular}
\label{table: numerology}
\end{center}
\end{table}

We can now restate Proposition~\ref{singular vec} as follows.

\begin{prop} \label{singular vec2}
A primary field $\OO\in\FF_{(b)}$ of dimension $\lambda$ produces a level two singular vector if and only if $\lambda = h(b)$ or $\lambda = h'(b).$
The corresponding singular vector is
$[L_{-2} +\eta L_{-1}^2]\OO$ or $[L_{-2} +\eta' L_{-1}^2]\OO,$ respectively.
\end{prop}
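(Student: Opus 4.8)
The plan is to derive Proposition~\ref{singular vec2} directly from Proposition~\ref{singular vec} by substituting the explicit parametrization of $\FF_{(b)}$ in terms of $b$ (equivalently $\kappa$). Recall from Proposition~\ref{singular vec} that for a primary field $V\in\FF(W)$ with central charge $c$ and conformal dimension $\lambda$, the field $X=[L_{-2}+\eta L_{-1}^2]V$ is again primary (hence a singular vector) if and only if the two scalar equations
\begin{equation*}
3+2\eta+4\eta\lambda = 0, \qquad \frac c2 + 4\lambda + 6\eta\lambda = 0
\end{equation*}
hold. So the entire content of Proposition~\ref{singular vec2} is: for $c=c(b)=1-12b^2$, this pair of equations in the unknowns $(\eta,\lambda)$ has exactly two solutions, namely $(\eta,\lambda)=(\eta(b),h(b))$ and $(\eta,\lambda)=(\eta'(b),h'(b))$, with $\eta(b)=-\tfrac1{2a^2}=-\tfrac\kappa4$, $\eta'(b)=-2a^2=-\tfrac4\kappa$, $h(b)=\tfrac{3a^2}2-\tfrac12=\tfrac{6-\kappa}{2\kappa}$, $h'(b)=\tfrac3{8a^2}-\tfrac12=\tfrac{3\kappa-8}{16}$. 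First I would eliminate: from the first equation, $\eta=-\tfrac3{2(1+2\lambda)}$ (this is exactly \eqref{eq: eta and h} with $\lambda=h$), which is valid provided $\lambda\ne-\tfrac12$; the case $\lambda=-\tfrac12$ is excluded since then the first equation reads $3=0$.

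Next I would substitute this expression for $\eta$ into the second equation. This yields a single equation relating $\lambda$ and $c$: plugging $\eta=-\tfrac3{2(1+2\lambda)}$ into $\tfrac c2+4\lambda+6\eta\lambda=0$ and clearing the denominator $2(1+2\lambda)$ gives a quadratic in $\lambda$ with coefficients depending on $c$. A short computation turns it into
\begin{equation*}
16\lambda^2 - 2(5-c)\lambda + c = 0,
\end{equation*}
whose two roots multiply to $c/16$ and sum to $(5-c)/8$; these are precisely $\{h,h'\}=\{\tfrac1{16}(5-c\pm\sqrt{(1-c)(25-c)})\}$ as recorded in the excerpt just before the proposition. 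Then I would verify that substituting $c=1-12b^2$ makes the discriminant a perfect square: $(1-c)(25-c)=12b^2(24+12b^2)=144b^2(2+b^2)$, so $\sqrt{(1-c)(25-c)}=12b\sqrt{b^2+2}$ (up to sign), and $\sqrt{b^2+2}=2a+b$ by the definition $a=a(b)=\tfrac{\sqrt{b^2+2}-b}2$. Feeding this back in, the two roots become $h(b)=\tfrac{3a^2}2-\tfrac12$ and $h'(b)=\tfrac3{8a^2}-\tfrac12$ after simplification using $2a(a+b)=1$; I would also check directly that these match $\tfrac{6-\kappa}{2\kappa}$ and $\tfrac{3\kappa-8}{16}$ via $a=\sqrt{2/\kappa}$. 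Finally, for each root I compute $\eta=-\tfrac3{2(1+2\lambda)}$: for $\lambda=h$ this gives $-\tfrac3{2\cdot3a^2}=-\tfrac1{2a^2}=\eta(b)$, and for $\lambda=h'$ it gives $-\tfrac3{2\cdot\tfrac3{4a^2}}=-2a^2=\eta'(b)$, confirming the stated form of the singular vectors.

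The only genuine subtlety — and the step I would treat most carefully — is the direction of the equivalence: Proposition~\ref{singular vec} already gives the exact biconditional "$X$ is primary $\iff$ \eqref{eq: level two singular condition} holds," so there is nothing to prove there beyond the algebra. The mild obstacle is bookkeeping with signs of square roots and making sure that both roots of the quadratic genuinely arise (i.e.\ that the elimination step $\eta=-\tfrac3{2(1+2\lambda)}$ does not introduce or lose solutions); this is handled by noting the first equation forces $\lambda\ne-\tfrac12$ and that the map $(\eta,\lambda)\mapsto\lambda$ restricted to the solution set is injective because $\eta$ is then determined by $\lambda$. I expect the whole proof to be a half-page of elementary manipulation, with the numerological identities $\eta\eta'=1$, $\eta+\eta'=\tfrac{c-13}6$ (i.e.\ \eqref{eq: eta}) and $hh'=c/16$, $h+h'=(5-c)/8$ serving as useful consistency checks along the way rather than as inputs. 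I would present it as: reduce to the scalar system, solve the system explicitly for general $c$, specialize to $c=1-12b^2$, and read off $(h,\eta)$ and $(h',\eta')$.
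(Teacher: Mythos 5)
Your proposal is correct and follows essentially the same route as the paper: both reduce the statement to the two scalar conditions \eqref{eq: level two singular condition} of Proposition~\ref{singular vec} and solve that system by elementary elimination. The only difference is cosmetic — the paper eliminates $\lambda$ first, obtaining the quadratic $\eta^2-\frac{c-13}{6}\eta+1=0$ whose roots are $\eta(b),\eta(b')$ by \eqref{eq: eta}, while you eliminate $\eta$ first and get $16\lambda^2-2(5-c)\lambda+c=0$ with roots $\{h,h'\}$; in either order the pairing of $\lambda$ with $\eta$ is then read off from the linear relation $\eta=-\tfrac{3}{2(1+2\lambda)},$ exactly as in \eqref{eq: eta and h}.
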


\begin{proof}
From \eqref{eq: level two singular condition}, we find
$$\eta^2 -\frac{c-13}{6}\eta + 1 = 0,$$
so by \eqref{eq: eta}, $\eta = \eta(b)$ or $\eta(b').$
It follows from \eqref{eq: level two singular condition} and \eqref{eq: eta and h} that
$$\lambda = -\frac12 -\frac3{4\eta(b)} = h(b), \textrm{ or }\lambda = -\frac12 -\frac3{4\eta(b')} = h(b').$$
\end{proof}

\ms Let us apply this description to vertex fields in $\FF_{(b)}.$
We will write $\OO^{(\sigma)}$ for a vertex field with exponent $i\sigma,$ e.g.,
$$\OO^{(\sigma)} = V_\star^{i\sigma}.$$
(We can equally consider non-chiral vertex fields $\VV^{i\sigma},$ bi-vertex chiral fields $V^{i\sigma},$ or the fields $\widehat V_\star^{i\sigma}$ which we define in the next lecture.)
The conformal dimension of $\OO^{(\sigma)}$ is
\begin{equation} \label{eq: conf dim}
\lambda=\frac{\sigma^2}2-\sigma b.
\end{equation}

\ms \SS
We call a primary field (level two) degenerate if it produces a null singular vector.
See Remark in Section~\ref{sec: singular vec}.

\begin{prop} \label{degenerate O}
For each $b,$ there are exactly 4 exponents $\sigma$ (except for the case $b=0,\pm1/2$ when some of $\sigma$'s coincide) such that the vertex fields $\OO^{(\sigma)}$ produce level two singular vectors:
$$\phantom{--b}\sigma = a,\quad \sigma = 2b -a, \quad(\lambda= h(b)),$$
and
$$\sigma = -a-b,\quad \sigma = 3b + a, \quad(\lambda= h'(b)).$$
The vertex fields $\OO^{(a)}$ and $\OO^{(-a-b)}$ are degenerate, but $\OO^{(2b-a)}$ and $\OO^{(a+3b)}$ are not (unless $b=0,\pm1/2$).
\end{prop}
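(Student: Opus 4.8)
The plan is to reduce everything to the algebraic description of level two singular vectors. By Proposition~\ref{singular vec2} together with the dimension formula \eqref{eq: conf dim}, the field $\OO^{(\sigma)}$ produces a level two singular vector precisely when its dimension
\[
\lambda = \frac{\sigma^2}{2} - \sigma b
\]
equals $h(b)$ or $h'(b)$. So the first step is to solve the two quadratics $\sigma^2 - 2b\sigma - 2h(b) = 0$ and $\sigma^2 - 2b\sigma - 2h'(b) = 0$. Using the numerology relation $2a(a+b)=1$ and $2h(b) = 3a^2-1$ one checks directly that $\sigma = a$ solves the first ($a^2 - 2ab - (3a^2-1) = 1 - 2a(a+b) = 0$); since the sum of the roots of that quadratic is $2b$, the roots are $a$ and $2b-a$. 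Likewise, with $a' = a+b = 1/(2a)$, $2h'(b) = 3a'^2-1$ and $2a'(a'-b)=2a(a+b)=1$, the exponent $\sigma = -a-b = -a'$ solves the second quadratic, so its roots are $-a-b$ and $3b+a$. This produces the four listed exponents. Checking pairwise when two of $\{a,\,2b-a,\,-a-b,\,3b+a\}$ coincide is elementary: $a = 2b-a$ forces $a=b$, hence $b=1/2$; $-a-b = 3b+a$ forces $a=-2b$, hence $b=-1/2$; any remaining coincidence forces $b=0$ (in which case $a^2 = 1/2$ and the four exponents reduce to $\pm a$).

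\textbf{Degeneracy.} For the two fields claimed to be degenerate I would invoke Propositions~\ref{TVd2V} and~\ref{degeneracy2}. The field $\OO^{(a)} = V_\star^{ia}$ satisfies the level two degeneracy equation $T_{(b)}*\OO^{(a)} = \frac1{2a^2}\,\partial^2\OO^{(a)}$ by Proposition~\ref{TVd2V} (valid since $2a(a+b)=1$), equivalently $[L_{-2}+\eta L_{-1}^2]\OO^{(a)} = 0$ with $\eta = \eta(b)$. For $\OO^{(-a-b)}$, which by the discussion in Appendix~\ref{appx: KZ} is a current primary field in $\FF_{(b)}$ with charge $q = -a-b$, one has $2q(b+q) = 2(-a-b)(-a) = 2a(a+b) = 1$, so Proposition~\ref{degeneracy2} gives $[L_{-2}+\eta L_{-1}^2]\OO^{(-a-b)} = 0$ with $\eta = -1/(2q^2) = -1/(2a'^2) = \eta'(b)$; hence it is degenerate. (As a consistency check, $\OO^{(a)}$ has dimension $h(b)$ and its singular vector carries the parameter $\eta(b)$, while $\OO^{(-a-b)}$ has dimension $h'(b)$ and carries $\eta'(b)$, matching Proposition~\ref{singular vec2} and \eqref{eq: eta and h}.)

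\textbf{Non-degeneracy.} It remains to show $\OO^{(2b-a)}$ and $\OO^{(3b+a)}$ are not degenerate when $b\notin\{0,\pm1/2\}$. Writing $\OO = \OO^{(\sigma)}$, current primary with charge $q=\sigma$, I would use the relations of Appendix~\ref{appx: KZ} -- namely $L_{-1}\OO = \partial\OO = i\sigma J_{-1}\OO$ (Proposition~\ref{J primary}), $[L_{-1},J_{-1}] = J_{-2}$ from \eqref{eq: [L,J]}, and the mode expansion \eqref{eq: LJ} which gives $L_{-2}\OO = i(\sigma+b)J_{-2}\OO - \tfrac12 J_{-1}^2\OO$ -- to rewrite the candidate singular vector as
\[
[L_{-2}+\eta L_{-1}^2]\,\OO^{(\sigma)} \;=\; i\bigl[(\sigma+b)+\eta\sigma\bigr]\,J_{-2}\OO^{(\sigma)} \;+\; \bigl[-\tfrac12 - \eta\sigma^2\bigr]\,J_{-1}^2\OO^{(\sigma)}.
\]
(Note both coefficients vanish when $\sigma = a$ and $\eta=\eta(b)$, recovering degeneracy.) For $\OO^{(2b-a)}$ the relevant parameter is $\eta = \eta(b) = -1/(2a^2)$, and the coefficient $-\tfrac12 - \eta(b)(2b-a)^2 = \frac{(2b-a)^2 - a^2}{2a^2}$ is nonzero unless $(2b-a)^2 = a^2$, i.e. $b\in\{0,1/2\}$; similarly for $\OO^{(3b+a)}$ with $\eta = \eta'(b) = -1/(2a'^2)$ the coefficient is $\frac{(3b+a)^2 - a'^2}{2a'^2}$, nonzero unless $b\in\{0,-1/2\}$. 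The hard part is then the one non-algebraic input: that $J_{-2}\OO^{(\sigma)}$ and $J_{-1}^2\OO^{(\sigma)}$ are linearly independent as Fock space fields, so that a nonvanishing coefficient of $J_{-1}^2\OO^{(\sigma)}$ forces the whole singular vector to be nonzero. I expect to settle this by a short direct Wick-calculus computation on the vertex field -- e.g. comparing the highest chaos components of the two descendants ($J_{-1}^2\OO$ contains a $J\odot J\odot(\cdots)$ term that $J_{-2}\OO = (\partial J)*\OO$ cannot produce), or by evaluating each against a convenient test functional -- which shows the two are never proportional; together with the coefficient computation this yields non-degeneracy off the exceptional values, completing the proof.
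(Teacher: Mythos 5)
Your proposal is correct, and its first half is the paper's own argument: the four exponents come from Proposition~\ref{singular vec2} together with \eqref{eq: conf dim} (your coincidence analysis at $b=0,\pm\frac12$ is right), and the degeneracy of $\OO^{(a)}$ and $\OO^{(-a-b)}$ is obtained from Propositions~\ref{TVd2V} and \ref{degeneracy2}, exactly as in the text. Where you genuinely differ is the non-degeneracy half. The paper exhibits the two singular vectors by a direct Wick computation, as explicit nonzero multiples of $\bigl[A_{(b)}+\mathrm{const}\cdot A_{(0)}\bigr]\odot\OO^{(\sigma)},$ and appeals to the Wiener chaos decomposition; you instead derive from the current-algebra relations of Appendix~\ref{appx: KZ} the uniform identity
\[
[L_{-2}+\eta L_{-1}^2]\,\OO^{(\sigma)} \;=\; i\bigl[(\sigma+b)+\eta\sigma\bigr]J_{-2}\OO^{(\sigma)}+\bigl[-\tfrac12-\eta\sigma^2\bigr]J_{-1}^2\OO^{(\sigma)},
\]
and compute that the $J_{-1}^2$-coefficient is nonzero off the exceptional $b$'s; in $\kappa$-terms your coefficients $\frac{(2b-a)^2-a^2}{2a^2}$ and $\frac{(3b+a)^2-(a+b)^2}{2(a+b)^2}$ equal $(\kappa-4)(\kappa-8)/8$ and $4(\kappa-2)(\kappa-4)/\kappa^2,$ which match the two-current coefficients implicit in the paper's displayed formulas, so the two computations are consistent. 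The step you defer is precisely the paper's chaos-decomposition appeal, and your sketched mechanism is the right (and sufficient) one: $J_{-1}^2\OO^{(\sigma)}=J*(J*\OO^{(\sigma)})$ has a nonvanishing chaos component of the form $J_{(0)}\odot J_{(0)}\odot(\textrm{Wick exponential}),$ while $J_{-2}\OO^{(\sigma)}=(\pa J)*\OO^{(\sigma)},$ being an OPE coefficient of a Gaussian field against an exponential, carries at most one current factor in each chaos term; hence the two-current component of the combination is the $J_{-1}^2$-coefficient times a nonzero functional and cannot cancel (full linear independence of the two descendants is more than you need). One small point to watch: at $\kappa=6$ one has $2b-a=0,$ so $\OO^{(0)}=I$ has charge $0$ and Proposition~\ref{J primary} in the form $J_{-1}X=-(i/q)\pa X$ is not literally applicable; but your derivation only uses $\pa\OO^{(\sigma)}=i\sigma J_{-1}\OO^{(\sigma)},$ which holds trivially there, and your identity then reproduces the familiar fact that the singular vector of $\OO^{(0)}$ is $T\neq0$ at $c=0.$ Net effect: your route gives the non-degeneracy coefficients uniformly in $\sigma$ without redoing a Wick computation for each field, at the price of one short chaos check — the same input the paper invokes.
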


\begin{proof}
The first statement follows from Proposition~\ref{singular vec2} and \eqref{eq: conf dim}.
The second statement follows from Proposition~\ref{degeneracy2} and from the Wiener chaos decomposition of singular vectors
(cf. the proof of Proposition~\ref{degeneracy}) in the following expressions:
\begin{align*}
(L_{-2} + \eta\phantom{'} L_{-1}^2) \OO^{(2b-a)} &=\phantom{1(}\frac{8-\kappa}{2}\phantom{)}\left[A_{(b)}+\phantom{3}\frac{\kappa-6}{2} A_{(0)}\right]\odot \OO^{(2b-a)};\\
(L_{-2} + \eta' L_{-1}^2) \OO^{(3b+a)}&=\frac{4(\kappa-2)}{\kappa}\left[A_{(b)}+\frac{8-3\kappa}{\kappa} A_{(0)}\right]\odot \OO^{(3b+a)},
\end{align*}
where $A_{(0)}$ and $A_{(b)}$ are holomorphic quadratic differentials from the definition of the stress tensor of $\FF_{(0)}$ and $\FF_{(b)}$ theories, see the beginning of Lecture~\ref{ch: modifications}.
\end{proof}

\ms The simplest example of a non-trivial singular vector is the Virasoro field $T$ in the case $c = 0$ (i.e., $\kappa = 8/3$ or $6$).
In this case $T$ is primary and, as a level two singular vector $T$ corresponds to $\OO^{(\sigma=0)}.$
In the case $b=0,$ the differential $J$ appears as a level one current-singular vector.

\begin{rmk*}
Primary fields are of particular importance from the physical point of view. 
Non-zero singular vectors are examples of non-vertex primary fields.
\end{rmk*}

\renewcommand\chaptername{Lecture}
\chapter{Connection to SLE theory}\label{ch: SLE theory}

In this lecture we discuss SLE theory from the point of view of conformal field theory.
Chordal Schramm-Loewner evolution (SLE) equation in $(D,p,q)$ describes conformally invariant random curves from $p$ to $q$ satisfying the so-called domain ``Markov property."
The corresponding probability laws are parametrized by a single parameter $\kappa >0.$
We will establish the following fact and explain some of its consequences: if the numbers $a$ and $b$ are related to $\kappa$ as
$$a = \sqrt{2/\kappa},\qquad b = a(\frac\kappa4-1) = \sqrt{\kappa/8}-\sqrt{2/\kappa}$$
(see Appendix~\ref{appx: numerology}), then under the insertion of a boundary vertex $V_\star^{ia}(p),$ all fields in the theory $\FF_{(b)}$ satisfy the field ``Markov property" with respect to the SLE filtration.

\ms In Section~\ref{sec: Chordal SLE} we recall some basic definitions and facts of SLE theory.
In Sections~\ref{sec: BC changing} - \ref{sec: Cardy} we interpret the insertion of $V_\star^{ia}(p)$ as a ``boundary condition changing operator" and derive Cardy-type equations for correlation functions under this insertion.
In Section~\ref{sec: SLE MOs} we prove the field ``Markov property": all correlation functions
$$\frac{\E[V_\star^{ia}(p) X_1(z_1)\cdots X_n(z_n)]}{\E[V_\star^{ia}(p)]},\qquad (X_j\in\FF_{(b)})$$
are SLE martingale-observables.
In the last section we consider several example of SLE observables.
Further examples, related to vertex fields are discussed in the next lecture.

\ms \section{Chordal SLE} \label{sec: Chordal SLE} \index{SLE}
\ms\SS Let $(D,p,q)$ be a simply connected domain with two marked boundary points and $\kappa$ be a positive parameter.
The chordal Schramm-Loewner evolution with parameter $\kappa,$ $\SLE(\kappa),$ is a conformally invariant law on random curves in $D$ from $p$ to $q.$
It is described by the Loewner equation driven by the standard one-dimensional Brownian motion $B_{\kappa t}.$
More precisely, for each $z \in D,$ let $g_t(z)$ be the solution (which exists up to a time $\tau_z\in(0,\infty]$) of the equation
$$\partial_t g_t(z) = \frac{2}{g_t(z) -\sqrt{\kappa}B_t}, \quad B_0=0,$$
where $g_0:(D, p,q)\to (\H, 0,\infty)$ is a given conformal map.
Then it is known that for all $t,$
$$w_t(z):=g_t(z) -\sqrt{\kappa}B_t$$
is a well-defined conformal map from the domain
\begin{equation} \label{eq: Dt}
D_t := \{z \in D: \tau_z>t\}
\end{equation}
onto the upper half-plane $\mathbb{H},$
where the SLE stopping time $\tau_z$ (the solution $g_t(z)$ of the Loewner equation exists for $t<\tau_z$) satisfies
\begin{equation} \label{eq: SLE stopping time}
\lim_{t\uparrow \tau_z} w_t(z) = 0.
\end{equation}
The SLE curve $\gamma$ is defined by the equation
\begin{equation} \label{eq: SLE curve}
\gamma_t \equiv \gamma(t) := \lim_{z\to0} g_t^{-1}(z + \sqrt{\kappa}B_t);
\end{equation}
the limit exists for all $t$ almost surely.
Also almost surely, the SLE curve is a continuous path $\gamma:[0,\infty) \to \overline{D}$
(assuming local connectivity of $\pa D$)
such that $D_t$ is the unbounded component of $D\setminus\gamma[0,t]$ for all $t\ge 0.$
The SLE path $\gamma$ is simple for $\kappa\in[0,4]$;
self-intersecting but non-self-crossing for $\kappa\in(4,8)$; and
space-filling for $\kappa\ge8.$
For the proof of these facts and other basic properties of SLE, see \cite{RS05}, and also \cite{Lawler05}, \cite{Lawler09}, \cite{Werner04}, \cite{Werner09}.
Beffara proved that the Hausdorff dimension of $\SLE(\kappa)$ trace is almost surely
$\operatorname{min}(1+\kappa/8,2),$ see \cite{Beffara08}.

\begin{figure}[ht]
\begin{center}
\begin{minipage}{.4\textwidth}
\vspace{0pt}
\centering
\includegraphics[width=\textwidth]{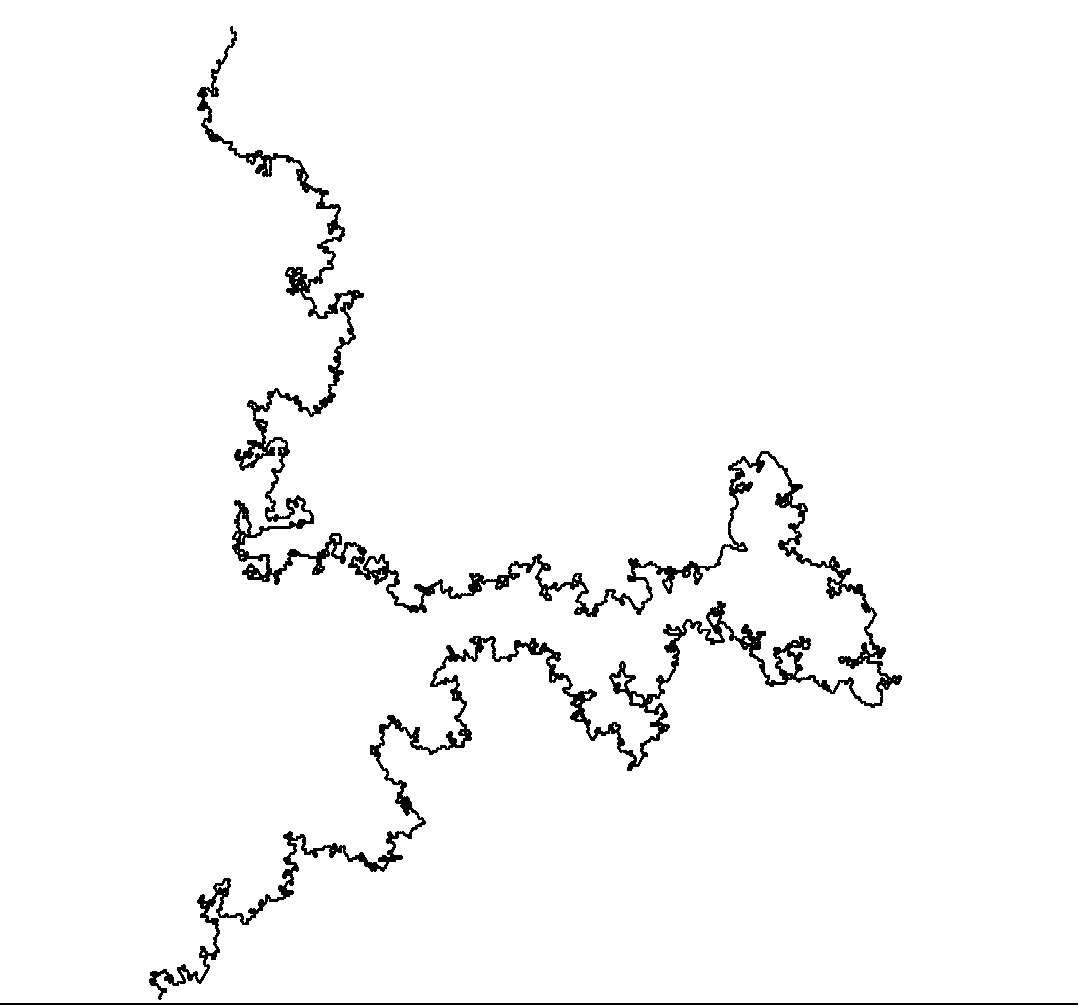} \\
$\kappa\le4$
\end{minipage}
\begin{minipage}{.1\textwidth}
\quad
\end{minipage}
\begin{minipage}{.4\textwidth}
\vspace{0pt}
\centering
\includegraphics[width=\textwidth]{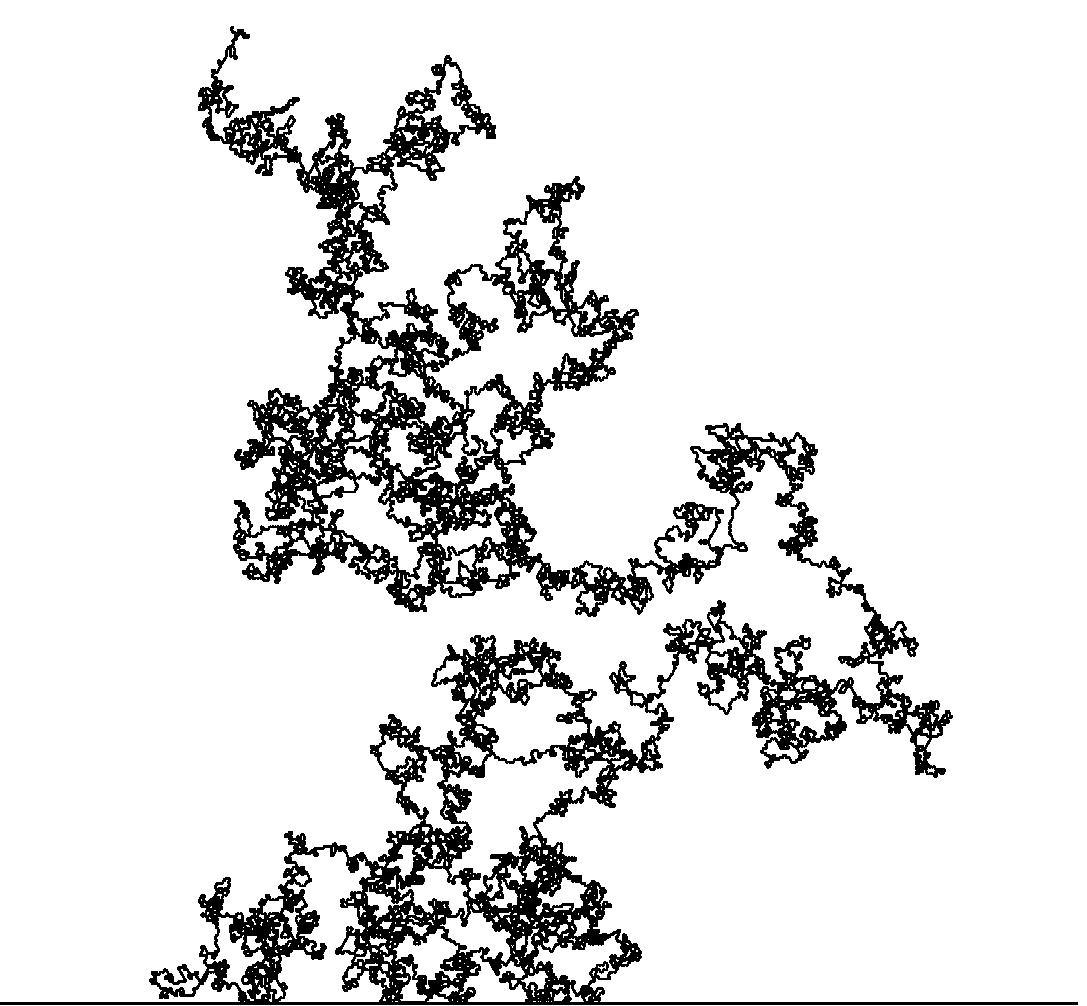} \\
$4<\kappa<8$
\end{minipage}
\end{center}
\caption{The phases of SLE; from \cite{Kang07a}}
\label{figure: phase}
\end{figure}

\ms \subsec{Domain ``Markov property"} \index{domain Markov property}
If $\kappa=0,$ the (non-random) curve $\gamma$ is just the hyperbolic geodesic from $p$ to $q$ in $D.$
It satisfies the equation
$$\gamma[t,\infty)=\gamma_{D_t,\gamma_t,q}[0,\infty).$$
If $\kappa>0,$ then the same equation holds for the laws on random curves, which is a direct consequence of the SLE construction.
Alternatively, one can express this property by the equation
\begin{equation} \label{eq: domain Markov}
\mathrm{Law}~\left( \gamma[t,\infty)~|~\gamma[0, t]\right)~=~\mathrm{Law}~\gamma_{D_t,\gamma_t,q}[0,\infty).
\end{equation}
(One should understand this and similar statements in the sense of conditional expectations with respect to the filtration by the Brownian motion in the definition of SLE.)
Schramm's principle states that $\SLE(\kappa)$ are the only conformally invariant laws on non-self-crossing curves satisfying \eqref{eq: domain Markov}.

\ms \subsec{Field ``Markov property"} \index{field Markov property}
Again we first look at the ``classical" case $\kappa=0.$
Consider the (non-random) field
$$f(z)\equiv f_{D,p,q}(z)=\arg \psi'(z),\qquad \psi:(D,p,q)\to(\mathbb{C}\sm \R_+, 0,\infty).$$
(Note that this field is a pre-pre-Schwarzian form.)
Since $\R_-$ is the hyperbolic geodesic in $\mathbb{C}\sm \R_+,$ it is clear that the field $f$ has the (``Markov") property
$$f\big|_{D_t}~=~f_t\equiv f_{D_t, \gamma_t,q}.$$
Moreover, many other fields, e.g., $f^2,$ $e^f,$ $\pa f,$ etc., will have the same property with respect to $\SLE(0).$

\ms In the case $\kappa>0,$ we want to define a similar property of a collection $\FF=\{F_j\}$ of \emph{random} conformal Fock space fields:
\begin{equation} \label{eq: Field Markov}
\mathrm{Law}~\left( \FF~|~\gamma[0, t]\right)~=~\mathrm{Law}~\FF_t,\qquad \FF_t:=\FF_{D_t,\gamma_t,q}.
\end{equation}
(Here we assume invariance of $\FF$ with respect to $\Aut(D, p,q),$ and define the fields $F_t$ as explained in Section~\ref{sec: conf inv}. 
While the field $X_t$ in Section~\ref{sec: Lie} means the pull-back of $X$ with respect to the local flow, $F_t$ in this Lecture indicates the field in the SLE triple $(D_t, \gamma_t,q).$
See Section~\ref{sec: SLE MOs}.)
On the level of correlation functions, the equation \eqref{eq: Field Markov} should mean
\begin{equation} \label{eq: SLE mart1}
``\E\left[F_1(z_1)\cdots F_n(z_n)~|~\gamma[0, t]\,\right]"=\E\left[F_{1t}(z_1)\cdots F_{nt}(z_n)\right],
\end{equation}
but in order to interpret the left-hand side we need to have both random fields and SLE curves be defined on the same probability space.
One way to proceed is to couple $\SLE(\kappa)$ and the Gaussian free field, see \cite{SS10} and \cite{Dubedat09}, but instead of going into the analytic details of such a coupling we just note that if this coupling is defined properly, then the processes
\begin{equation} \label{eq: SLE mart2}
f_t(z_1,\cdots, z_n)= \E\left[F_{1t}(z_1)\cdots F_{nt}(z_n)\right]
\end{equation}
are local SLE martingales, and take this last property as a definition.

\ms \subsec{Martingale-observables}
A stochastic process $M_t$ is called a martingale with respect to a filtration $\AA_t$ (an increasing family of $\sigma$-algebras, e.g., the $\sigma$-algebras generated by the Brownian motion up to time $t$) if $M_t$ is $\AA_t$-measurable for all $t,$ $\E\,|M_t|<\infty$ for all $t$ and if
$$\E\,[M_t\,|\,\AA_s] = M_s\quad\textrm{for all } t\ge s.$$
For an $L^1$ random variable $M,$ the process $M_t = \E[M\,|\,\AA_t]$ is a martingale.
Thus if $F_{1}(z_1)\cdots F_{n}(z_n)$ in \eqref{eq: SLE mart1} could be replaced by an $L^1$ random variable, then the processes \eqref{eq: SLE mart2} would be martingales.

\ms We refer to any textbook on stochastic calculus, e.g., \cite{RY99} for the definition of local martingales.
In particular, for a smooth function $h,$ the stochastic integral
$$\int_0^t h(B_s)\,dB_s$$
is a local martingale.
In this respect, recall It\^o's formula.

\ms \textbf{It\^o's formula.} \index{It\^o's formula} \emph{If $f$ is in $C^{1,2},$ then almost surely}
\begin{equation} \label{eq: Ito's Formula}
f(t,B_t) -f(0,B_0) = \int_0^t f'(s,B_s)\,dB_s + \int_0^t \dot f(s,B_s)\,ds + \frac{1}{2} \int_0^t f''(s,B_s)\,ds.
\end{equation}
The term 
$$\int_0^t \dot f(s,B_s)\,ds + \frac12 \int_0^t f''(s,B_s)\,ds$$
is called the drift term of  $f(t,B_t)$ and the process $f(t,B_t)$ is called a local martingale if its drift term vanishes.

\ms By definition, a collection $\FF$ of fields has the ``Markov property" with respect to $\SLE(\kappa)$ if for all $F_j\in\FF$ and all $z_j\in D,$ the processes \eqref{eq: SLE mart2} are local martingales.
We say that the non-random fields $f(z_1,\cdots,z_n)=\E\left[F_{1}(z_1)\cdots F_{n}(z_n)\right]$ are $\SLE(\kappa)$ \emph{martingale-observables}. \index{martingale-observables}

\ms It is easy to verify by It\^o's calculus that any particular correlation function is a martingale-observable, but our goal is to describe a large collection of SLE observables by means of conformal field theory (Ward's and level two degeneracy equations).

\ms \section{Boundary condition changing operators} \label{sec: BC changing}
We use this term for the correspondence $\XX \mapsto \widehat\XX$ resulting from the insertion of a chiral bi-vertex field with endpoints on the boundary.
This operation changes the boundary values of Fock space fields.
The term is borrowed from physics, see e.g., \cite{Cardy05}, and in many (but not all) cases we have a good match with the physical formulas (as we understand them).

\bs\SS Let us recall the set-up:
{\setlength{\leftmargini}{2.0em}
\begin{itemize}
\ms\item $(D,q)$ is a simply connected domain $D$ with a marked boundary point $q\in\pa D;$
\ss\item $b\in\R$ is a fixed parameter and $\FF_{(b)}=\FF_{(b)}(D,q)$ is the OPE family of the bosonic field $\Phi_{(b)},$ see Section~\ref{sec: c}; the notation for the standard fields $\Phi,J,T,V^{i\sigma},$ etc. refers to the family in $\FF_{(b)};$
\ms\item the vertex field $V_\star^{ia}$ with $a>0,2a(a+b)=1,$ rooted at the modification point $q,$ is a holomorphic differential of dimension
\begin{equation} \label{eq: h12}
h:=\frac{a^2}2-ab,
\end{equation}
with respect to $z$ and a boundary differential with respect to $q,$ see Section~\ref{sec: V*};
\ss\item we normalize this field in a \emph{fixed} boundary chart at $q;$
\ms\item the vertex field $V_\star^{ia}$ produces a degenerate singular vector:
$$T*V_\star^{ia} = \frac1{2a^2}\pa^2V_\star^{ia},$$ see Section~\ref{sec: V*}.
\end{itemize}}
\ms Let now $p\in\pa D,$ $p\ne q,$ and denote by $\tau$ the arc of $\pa D$ from $q$ to $p$ oriented in the counterclockwise direction.
We use $\tau$ to define the value $V_\star^{ia}(p)=V_\star^{ia}(p;\tau).$
In the half-plane uniformization consistent with the fixed boundary chart at $q=\infty,$ $p=\xi\in\R,$ we have
$$V_\star^{ia}(\xi)=e^{\odot ia\Phichiral_{(0)}[\tau]},$$
where $\tau$ is the half-line $(-\infty, \xi).$

\bs\SS The insertion of $V_\star^{ia}(p)$ is an operator
$$\XX\mapsto\widehat\XX$$
on Fock space functionals/fields.
By definition, this correspondence is given by the formula
\begin{equation} \label{eq: BC} \index{bosonic field $\Phi_{(b)}, \widehat\Phi$}
\widehat\Phi=\Phi+2ia\Gchiral(p,z)
\end{equation}
and the rules
\begin{equation} \label{eq: BCrules}
\pa\XX\mapsto\pa\widehat\XX,\qquad \bp\XX\mapsto\bp\widehat\XX,\qquad\alpha\XX+\beta\YY\mapsto \alpha\widehat\XX+\beta\widehat\YY,\qquad \XX\odot\YY\mapsto\widehat\XX\odot\widehat\YY.
\end{equation}
If $w:(D, p,q)\to (\H, 0,\infty)$ is a conformal map, then
$$2ia\Gchiral(p,z)=2a\arg w(z).$$

\ms \textbf{Notation.} We denote by $\widehat \FF_{(b)}$ the image of $\FF_{(b)}$ under this correspondence.

\ms Fields in $\widehat \FF_{(b)}$ are $\Aut(D,p,q)$-invariant because $\arg w$ is $\Aut(D,p,q)$-invariant and fields in $\FF_{(b)}$ are invariant with respect to $\Aut(D,q).$

\ms Denote
$$\widehat \E[\XX]:=\frac{\E [V_\star^{ia}(p)\XX]}{\E [V_\star^{ia}(p)]}=\E [e^{\odot ia \Phichiral_{(0)}(p)}\XX],$$
see Section~\ref{sec: insert} for the motivation of this notation.
As in Section~\ref{sec: insert}, we prove by induction the following:
\ms \begin{prop}
Let $\widehat\XX\in \widehat \FF_{(b)}$ correspond to the string $\XX\in \FF_{(b)}$ under the map given by \eqref{eq: BC} and \eqref{eq: BCrules}.
Then
\begin{equation} \label{eq: hat E=E hat}
\widehat \E[\XX]=\E[\widehat\XX].
\end{equation}
\end{prop}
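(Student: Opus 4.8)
The plan is to mimic the induction already carried out in Proposition~\ref{insert} (Section~\ref{sec: insert}), now with the complex parameter $\alpha = ia$ and the marked boundary point $p$ replacing the interior point $z_0$. The base of the induction is the case $\XX = 1$: here $\widehat\E[1] = 1 = \E[1] = \E[\widehat 1]$ by the normalization built into $\widehat\E$. For the inductive step I would use that $\FF_{(b)}$ is generated (as a graded algebra under Wick's multiplication, closed under $\pa$ and $\bp$) from $\Phi = \Phi_{(b)}$, so any string $\XX\in\FF_{(b)}$ is a $\odot$-product of derivatives of $\Phi$; by the rules~\eqref{eq: BCrules} the map $\XX\mapsto\widehat\XX$ is the unique algebra homomorphism commuting with $\pa,\bp$ that sends $\Phi(z)$ to $\Phi(z) + 2ia G^+(p,z)$, exactly as in \eqref{eq: rules4^} with $2\alpha G(\cdot,z_0)$ replaced by $2ia G^+(p,z)$ (using $\pa_z G^+(z,z_1) = 2\pa_z G(z,z_1)$, see \eqref{eq: dG^+}). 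So the statement reduces to a Wick's-formula computation identical in structure to the one at the end of the proof of Proposition~\ref{insert}.

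Concretely, first I would record that for a single derivative field $X_j = \pa^{\beta_j}\bp^{\widetilde\beta_j}\Phi$ evaluated at $z_j$,
\begin{equation*}
\widehat\E[X_j(z_j)] = ia\,\E[X_j(z_j)\,\Phichiral_{(0)}(p)] = \E[\widehat X_j(z_j)],
\end{equation*}
where the middle expression is computed by differentiating \eqref{eq: G^+}: $\E[\Phichiral_{(0)}(p)\,\pa^{\beta}\bp^{\widetilde\beta}\Phi(z)] = \pa_z^{\beta}\bp_z^{\widetilde\beta}\,G^+(z,p)$ after one fixes the additive constant in $G^+$ by the convention $0\in G^+(q,\cdot)$ (this is exactly the normalization that makes $V_\star^{ia}$ rooted at $q$, and it matches \eqref{eq: BC}). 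Then, for a general string $\XX = X_1(z_1)\odot\cdots\odot X_n(z_n)$, I would expand $\widehat\E[\XX] = \sum_{k\ge0}\frac{(ia)^k}{k!}\E[\Phi^{\odot k}(p)\,\XX]$ and apply Wick's formula for tensor products \eqref{eq: Wick's4FCF}: every $\Phi(p)$ factor must contract with exactly one $X_j(z_j)$ (no $\Phi(p)$–$\Phi(p)$ or $X_j$–$X_j$ contractions survive at the boundary, the former because $e^{\odot ia\Phi(p)} = 1$ on $\pa D$), forcing $k = n$ and producing exactly $\prod_j \E[\Phi(p) X_j(z_j)]$ together with the Wick product of the surviving derivative fields; comparing with the Wick expansion of $\widehat X_1(z_1)\odot\cdots\odot\widehat X_n(z_n)$ gives \eqref{eq: hat E=E hat}. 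For infinite combinations such as the exponentials we allow in $\FF_{(b)}$, the identity extends by the same summation argument used for $e^{\odot\alpha\Phi(z_0)}$ in Proposition~\ref{insert}.

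Alternatively — and this is the cleaner route if one is willing to quote the approximation machinery — I would invoke the insertion proposition of Section~\ref{sec: insert} directly: $\widehat\E[\,\cdot\,] = \E[e^{\odot ia\Phichiral_{(0)}(p)}\,\cdot\,]$ is, by that proposition applied with the real distribution $\rho$ supported on the boundary, the same as integrating against the shifted field $2ia G^+(p,\cdot) + \Phi$, which is precisely the field $\widehat\Phi$ of \eqref{eq: BC}; the rules \eqref{eq: BCrules} then propagate the shift through $\pa,\bp,\odot$, giving $\widehat\E[\XX] = \E[\widehat\XX]$. The one point requiring care — and the place I expect the only real friction — is the passage from an interior insertion point $z_0\in D$ to a boundary point $p\in\pa D$: one must check that $G^+(p,z)$ and its $z$-derivatives are the correct boundary limits (finite, since $p\ne z$ and $p\notin S_\XX$), that the additive-constant convention $0\in G^+(q,\cdot)$ is consistent with normalizing $V_\star^{ia}$ in the fixed boundary chart at $q$, and that $\E[V_\star^{ia}(p)]$ is finite and nonzero so that $\widehat\E$ is well defined; once these boundary-regularity points are dispatched (they are exactly the content of the discussion preceding \eqref{eq: BC} and of the boundary-value conventions in Section~\ref{sec: conf F-field}), the combinatorial identity is routine.
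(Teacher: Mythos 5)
Your strategy is the paper's own: the text proves this proposition by repeating the induction of Proposition~\ref{insert}, with the interior insertion $e^{\odot\alpha\Phi(z_0)}$ replaced by the boundary chiral insertion $e^{\odot ia\Phichiral_{(0)}(p)}$ and the Green's function replaced by the complex Green's function; your third paragraph (quote the insertion mechanism of Section~\ref{sec: insert} directly) is the same reduction. The plan is therefore fine, but one step of your execution is wrong as written and your justification for it is off.

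In the inductive step you expand $\widehat\E[\XX]=\sum_{k\ge0}\frac{(ia)^k}{k!}\E[\Phi^{\odot k}(p)\,\XX].$ The inserted object is not $\Phi(p)$ but the chiral field $\Phichiral_{(0)}(p)=\Phichiral_{(0)}(p,q),$ the integral of $J_{(0)}$ along the arc from $q$ to $p,$ so the expansion must be in powers $\Phichiral_{(0)}^{\odot k}(p).$ As written, the expansion is trivial: $\Phi$ has Dirichlet boundary values, so $e^{\odot ia\Phi(p)}=1$ for $p\in\pa D$ (your own parenthetical says exactly this), and your right-hand side collapses to $\E[\XX],$ which is not $\E[\widehat\XX].$ Relatedly, the absence of self-contractions of the inserted factors is not a boundary effect: Wick powers carry no self-contractions by definition, precisely as in the proof of Proposition~\ref{insert}; the only nontrivial pairings are $\E[\Phichiral_{(0)}(p)\,X_j(z_j)],$ which are nonzero, and a nonvanishing expectation then forces $k=n$ with each inserted factor contracted against a distinct $X_j.$ Finally, mind the asymmetry of $\Gchiral$: by \eqref{eq: G^+} (with the convention $0\in\Gchiral(q,\cdot)$) the relevant kernel is $\Gchiral(p,z_j),$ with the chiral endpoint in the \emph{first} slot, whereas the function $\Gchiral(z_j,p)$ that you wrote carries no information for $p\in\pa D$ (its real part $G(\cdot,p)$ vanishes identically since $w(p)\in\R$). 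With these corrections the one-node identity reads $\widehat\E[X_j(z_j)]=ia\,\pa^{\beta_j}\bp^{\widetilde\beta_j}\E[\Phichiral_{(0)}(p)\Phi(z_j)]=\E[\widehat X_j(z_j)],$ whose value in the $(\H,0,\infty)$-uniformization is the shift $2a\arg w$ encoded in \eqref{eq: BC}, and the rest of your computation reproduces the paper's argument, including the extension to Wick exponentials by the same summation.
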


\ms\SS\textbf{Examples:} \quad
\renewcommand{\theenumi}{\alph{enumi}}
{\setlength{\leftmargini}{2.0em}
\begin{enumerate}
\ms \item The current $\widehat J$ is a pre-Schwarzian form of order $ib,$ \index{current!field $J,\widehat J$}
$$\widehat J = J -ia\dfrac{w'}{w} = J_{(0)} -ia\dfrac{w'}{w} + ib\dfrac{w''}{w'}.$$
In the $(\H,0,\infty)$-uniformization, $\hat\jmath(z):=\widehat \E\, J(z)=-\dfrac{ia}z;$
\ms\item The Virasoro field $\widehat T$ is a Schwarzian form of order $c/{12},$ \index{Virasoro!field $T, \widehat T$}
\begin{align*}
\widehat T &= -\dfrac12\widehat J*\widehat J + ib\pa\widehat J = T + ia\frac{w'}{w}J_{(0)}+ h\Big(\dfrac{w'}{w}\Big)^2\\
&= A_{(0)} -\hat\jmath J_{(0)}+ib\pa J_{(0)}+ \dfrac{c}{12}Sw + h\Big(\dfrac{w'}{w}\Big)^2,
\end{align*}
where $h=a^2/2-ab$ is the conformal dimension of $V_\star^{ia},$ see \eqref{eq: h12}.
In the $(\H,0,\infty)$-uniformization, $\widehat \E\, T(z)=h\dfrac1{z^2};$
\ms\item The non-chiral vertex field $\widehat\VV^\alpha$ is a differential of conformal dimensions $(-\alpha^2/2+i\alpha b,-\alpha^2/2-i\alpha b),$ \index{vertex field!non-chiral $\VV,\widehat\VV$}
\begin{equation} \label{eq: hat VV}
\widehat\VV^\alpha=e^{2\alpha a\arg w}\VV^\alpha = e^{2\alpha a\arg w -2\alpha b\arg w'}C^{\alpha^2}e^{\odot\alpha\Phi_{(0)}}.
\end{equation}
In the $(\H,0,\infty)$-uniformization,
$\widehat \E\, \VV^\alpha=(2y)^{\alpha^2}e^{2\alpha a\arg z};$
\ms\item The bi-vertex field $\widehat V^\alpha(z, z_0)$ is a $-\alpha^2/2\pm i\alpha b$-differential in both variables, \index{vertex field!chiral bi- $V,\widehat V$} \index{chiral!bi-vertex field $V,\widehat V$}
$$\widehat V^\alpha(z, z_0)=
\left(\dfrac{w'(z)w'(z_0)}{(w(z)-w(z_0))^2}\right)^{-\alpha^2/2}\,\left(\dfrac{w'(z)}{w'(z_0)}\right)^{i\alpha b}\,\left(\dfrac{w(z)}{w(z_0)}\right)^{-i\alpha a}\,e^{\odot \alpha\Phichiral_{(0)}(z,z_0)}.$$
In the $(\H,0,\infty)$-uniformization,
$\widehat \E \,V^\alpha(z, z_0)=(z-z_0)^{\alpha^2}z^{-i\alpha a} z_0^{i\alpha a}.$
\end{enumerate}}

\ms \section{Cardy's equations} \label{sec: Cardy} \index{Cardy's equations}
In this section we will derive equations for correlation functions of fields in $\widehat\FF_{(b)}.$
These equations are similar to those in Proposition~\ref{BPZ eqs}.
As usual, we will state them in the case $(D,q) = (\H,\infty).$

\ms For $\xi\in\R$ and the tensor product
$$X=X_1(z_1)\cdots X_n(z_n),$$
of fields $X_j\in\FF_{(b)}$ ($z_j\in\H$), we denote
$$\widehat\E_\xi X=\E [e^{\odot ia \Phichiral_{(0)}(\xi)}X],$$
so $\widehat\E X=\widehat{\E}_\xi X\big|_{\xi=0}.$

\begin{prop} \label{PDE's4bdry}
If $2a(a+b)=1,$ then in the identity chart of $\H,$ we have
\begin{equation} \label{eq: PDE's4bdry}
\frac1{2a^2}\partial^2_\xi \widehat\E_\xi X= \widehat\E_\xi[\LL_{v_\xi}X],\qquad v_\xi(z):=\frac1{\xi-z},
\end{equation}
where $\pa_\xi=\pa+\bar\pa$ is the operator of differentiation with respect to the real variable $\xi.$
\end{prop}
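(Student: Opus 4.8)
The plan is to obtain \eqref{eq: PDE's4bdry} by feeding the level-two degeneracy equation for the rooted vertex field (Proposition~\ref{TVd2V}) into the rooted form of Ward's equation (Proposition~\ref{Ward4VX}), everything evaluated at the boundary point $\xi$. First I would recast the claim as a statement about $\E[V_\star^{ia}(\xi)X]$. By the definition of $\widehat\E_\xi$ we have $\widehat\E_\xi X=\E[V_\star^{ia}(\xi)X]/\E[V_\star^{ia}(\xi)]$, and since $V_\star^{ia}$ is $\Aut(\H,\infty)$-invariant, in particular invariant under the translations $z\mapsto z+c$, the normalizing correlation $\E[V_\star^{ia}(\xi)]$ is independent of $\xi$. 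Thus \eqref{eq: PDE's4bdry} is equivalent to
$$\frac1{2a^2}\,\partial_\xi^2\,\E[V_\star^{ia}(\xi)X]=\E[V_\star^{ia}(\xi)\,\LL_{v_\xi}X],$$
where $\LL_{v_\xi}$ acts on the string $X=X_1(z_1)\cdots X_n(z_n)$ (the vector field $v_\xi$ is holomorphic at each $z_j\in\H$ and has a triple zero at $q=\infty$).

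Next, Proposition~\ref{TVd2V} gives, under the hypothesis $2a(a+b)=1$, the identity of fields $T*V_\star^{ia}=\frac1{2a^2}\partial^2V_\star^{ia}$. Taking correlations against $X$, using $\E[(\partial Y)(z)\YY]=\partial_z\E[Y(z)\YY]$ twice, and noting that $\zeta\mapsto\E[V_\star^{ia}(\zeta)X]$ is holomorphic in $\H$ near $\R$ and extends smoothly to $\R$ (single-valued for $X$ supported in $\H$ in a fixed homotopy class, so the $\bar\partial$ part of $\partial_\xi=\partial+\bar\partial$ kills it and $\partial_\xi$ reduces to $\partial$ on it), one obtains
$$\frac1{2a^2}\,\partial_\xi^2\,\E[V_\star^{ia}(\xi)X]=\E[(T*V_\star^{ia})(\xi)X];$$
the boundary evaluation is legitimate because $T$ is real and has no singularities on $\pa D$ including $q$. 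Now apply Proposition~\ref{Ward4VX} with $\alpha=ia$ and $z=\xi$ (the proposition extends to $z$ a boundary point), and observe that $\xi$ is real so $v_{\bar\xi}=v_\xi$:
$$\E[(T*V_\star^{ia})(\xi)X]=\E[V_\star^{ia}(\xi)\,\LL^+_{v_\xi}X]+\E[\LL^-_{v_\xi}(V_\star^{ia}(\xi)X)].$$

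It remains to simplify the anti-linear term. By Leibniz's rule for $\LL^-$ (Proposition~\ref{Leibnitz4strings}) it equals $\E[(\LL^-_{v_\xi}V_\star^{ia}(\xi))X]+\E[V_\star^{ia}(\xi)\,\LL^-_{v_\xi}X]$; since $V_\star^{ia}$ is a holomorphic differential, \eqref{eq: Lie4diff} shows its anti-linear Lie derivative is $\bar v_\xi\,\bar\partial V_\star^{ia}\approx0$, so the first summand vanishes in correlation with $X$. Combining this with $\LL_{v_\xi}=\LL^+_{v_\xi}+\LL^-_{v_\xi}$ (see \eqref{eq: Lpm}) gives $\E[(T*V_\star^{ia})(\xi)X]=\E[V_\star^{ia}(\xi)\,\LL_{v_\xi}X]$, which together with the previous display proves the reduced identity; dividing by the $\xi$-independent factor $\E[V_\star^{ia}(\xi)]$ finishes the proof. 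The step that needs the most care is the passage to the boundary: one must justify that $V_\star^{ia}(\xi)$, $T(\xi)$ and the OPE coefficient $(T*V_\star^{ia})(\xi)$ are well defined in correlations with strings of fields in $\H$ (via the fixed boundary chart at $q$, the reality and regularity of $T$ on $\pa D$, and single-valuedness of the correlations in question), and that on $\R$ the real derivative $\partial_\xi$ coincides with the complex derivative $\partial$, so that the degeneracy equation for the field $\partial^2 V_\star^{ia}$ can be read as a differential equation in $\xi$. The rest is the purely formal composition of Propositions~\ref{TVd2V} and~\ref{Ward4VX}.
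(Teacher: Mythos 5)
Your argument is correct and follows essentially the same route as the paper: feed the level two degeneracy equation (Proposition~\ref{TVd2V}) into Ward's equation for rooted vertex fields (Proposition~\ref{Ward4VX}), kill the term $\LL^-_{v}V_\star^{ia}$ by holomorphy, use $\xi=\bar\xi$ to combine $\LL^+_{v_\xi}+\LL^-_{v_\xi}=\LL_{v_\xi}$, and identify $\pa_\xi^2$ with $\pa^2$ on the holomorphic correlation. The only cosmetic difference is that the paper works with $R_z=\E[V_\star^{ia}(z)X]$ at an interior point $z$ and passes to the limit $z\to\xi$ along a path close to $\tau$, whereas you evaluate directly at the boundary point, invoking the boundary extension of Proposition~\ref{Ward4VX} that the paper itself records.
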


\begin{proof} Denote
$$R_\xi\equiv R_\xi(z_1,\cdots, z_n)=\widehat{\E}_\xi X.$$
Since $R_\xi$ does not depend on the boundary chart (at $\infty$) in which we normalize $V_\star^{ia},$ we can assume $V_\star^{ia}(\xi)=e^{\odot ia \Phichiral_{(0)}[\tau]},$ where $\tau$ is the half-line $(-\infty, \xi),$ and therefore
$$R_\xi=\E\left[V_\star^{ia}(\xi)X\right].$$
Denote
$$R_z\equiv R(z;z_1,\cdots, z_n)=\E\left[V_\star^{ia}(z)X\right],\qquad z\in\H,$$
where for $z$ close to $\xi$ we use a path from $\infty$ to $z$ close to $\tau$ so that $R_\xi=\lim_{z\to\xi}R_z.$
By the level two degeneracy equation (Proposition~\ref{TVd2V}), we have
$$\frac1{2a^2}\pa^2\E\left[V_\star^{ia}(z)X\right]= \E\left[(T*V_\star^{ia})(z)X\right].$$

\ms Applying Ward's equation (Proposition~\ref{Ward4VX}) to the right-hand side (we can apply this because $T=A=T_{(0)} + ib\pa J_{(0)}$ satisfies the conditions in Proposition~\ref{represent A}), we conclude
\begin{align*}
\frac1{2a^2}\pa^2\E\left[V_\star^{ia}(z)X\right]&=\E\left[ V_\star^{ia}(z)\LL^+_{v_z} X\right]+ \E\left[\LL^-_{v_{\bar z}}\left( V_\star^{ia}(z) X\right)\right] \\&= \E\left[ V_\star^{ia}(z)\LL^+_{v_z} X\right]+ \E\left[V_\star^{ia}(z)\LL^-_{v_{\bar z}} X\right],
\end{align*}
where we use Leibniz's rule and the fact that $\LL^-_{v}V_\star^{ia}(z)=0$ (because $V_\star^{ia}$ is a holomorphic differential).

\ms Let us now take the limit $z\to\xi.$
Since $\pa_\xi=\pa+\bar\pa$ and the field $V_\star^{ia}$ is holomorphic, the $\pa^2$-derivative in the left-hand side converges to $\partial^2_\xi R_\xi.$
On the other hand, since $\xi=\bar\xi,$ the right-hand side converges to
$$\E\left[ V_\star^{ia}(\xi)\LL_{v_\xi} X\right]=\widehat\E_\xi[\LL_{v_\xi}X].$$
\end{proof}

\begin{cor*} We have 
\begin{equation} \label{eq: BPZ-C eq}
[(\sum\pa_j+\bp_j)^2-2a^2\LL_{v_{_0}}]\widehat\E\,X=0.
\end{equation}
\end{cor*}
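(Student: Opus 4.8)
The plan is to obtain \eqref{eq: BPZ-C eq} by specializing Proposition~\ref{PDE's4bdry} to $\xi=0$ and then trading the derivative in the root point $\xi$ for derivatives in the insertion points $z_j$. First I would set $\xi=0$ in \eqref{eq: PDE's4bdry}, which gives
$$\frac1{2a^2}\bigl(\partial_\xi^2\,\widehat\E_\xi X\bigr)\big|_{\xi=0}=\widehat\E\,[\LL_{v_0}X],\qquad v_0(z)=\frac1{0-z}.$$
The right-hand side here is what \eqref{eq: BPZ-C eq} writes as $\LL_{v_0}\widehat\E\,X$: indeed $\widehat\E$ commutes with the operators $\pa_j,\bp_j$ and with multiplication by non-random functions of $z_j$, hence with the Lie-derivative operator $\LL_{v_0}=\sum_j\bigl[v_0(z_j)\pa_j+\bar v_0(z_j)\bp_j+\lambda_j v_0'(z_j)+\lambda_{*j}\bar v_0'(z_j)\bigr]$ (equivalently, use \eqref{eq: hat E=E hat} together with property~(c) of Lie derivatives from Section~\ref{sec: basic Lv}, noting that the insertion rules \eqref{eq: BCrules} commute with $\LL_v$). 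So it remains only to identify $\partial_\xi^2$ at $\xi=0$ with $\bigl(\sum_j\pa_j+\bp_j\bigr)^2$.

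For this I would use translation invariance. The fields $X_j$ lie in $\FF_{(b)}$ and are therefore invariant with respect to $\Aut(D,q)=\Aut(\H,\infty)$, in particular under the real translations $\tau_t(z)=z+t$; the rooted vertex $V_\star^{ia}$ is likewise $\Aut(D,q)$-invariant, and since $\tau_t$ fixes $q=\infty$ with derivative $1$ in a standard boundary chart there (a quick chart computation shows the relevant transition map has derivative $1$ at $q$), the insertion at $p=\xi$ picks up no anomalous factor. Translations have unit Jacobian, so every differential and boundary-differential prefactor is trivial, and therefore
$$\widehat\E_{\xi+t}\bigl[X_1(z_1+t)\cdots X_n(z_n+t)\bigr]=\widehat\E_{\xi}\bigl[X_1(z_1)\cdots X_n(z_n)\bigr].$$
Differentiating in $t$ at $t=0$, and recalling $\partial_\xi=\pa+\bp$ because $\xi$ and $t$ are real, yields $\partial_\xi\,\widehat\E_\xi X=-\sum_j(\pa_j+\bp_j)\,\widehat\E_\xi X$, hence $\partial_\xi^2\,\widehat\E_\xi X=\bigl(\sum_j\pa_j+\bp_j\bigr)^2\widehat\E_\xi X$ identically in $\xi$. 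Evaluating at $\xi=0$ and multiplying by $2a^2$ then gives $\bigl[(\sum_j\pa_j+\bp_j)^2-2a^2\LL_{v_0}\bigr]\widehat\E\,X=0$, which is \eqref{eq: BPZ-C eq}.

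The translation-invariance step is the only place that needs real care: one must check that the root point $p=\xi$ moves together with the insertion points and that no conformal anomaly (a Schwarzian term for $\widehat T$-type components, or a nontrivial $h'(0)^{\lambda_q}$ coming from $V_\star^{ia}$ being a boundary differential at $q$) enters when passing from $\xi$ to $\xi+t$. This is exactly why one works with translations — they have unit derivative everywhere, including, after the chart computation, at $q$ — rather than with general elements of $\Aut(\H,\infty)$; the remaining manipulations are routine.
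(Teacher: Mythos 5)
Your proposal is correct and takes essentially the same route as the paper: the paper likewise writes $R_\xi = R(z_1-\xi,\cdots,z_n-\xi)$ by translation invariance, so that $\pa_\xi\big|_{\xi=0}R_\xi=-\sum_j(\pa_j+\bp_j)R$, and then combines this with Proposition~\ref{PDE's4bdry} at $\xi=0$. Your extra checks (that the transition map at $q=\infty$ has derivative $1$ so no boundary-differential factor appears, and that $\widehat\E$ commutes with $\LL_{v_0}$) are sound and merely make explicit what the paper leaves implicit.
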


\ms \begin{proof}
We will write $R$ for $R_{\xi=0}.$
By translation invariance,
$$R_\xi = R(z_1-\xi,\cdots,z_n-\xi)$$
and therefore
$$\pa_\xi\big|_{\xi=0}R_\xi = -\sum\pa_j R -\sum\bp_j R = -\sum\pa_{x_j}R.$$
\end{proof}

\ms We will refer to equations \eqref{eq: BPZ-C eq} as Cardy's equations, cf. \cite{Cardy84}; they are Cardy's boundary version of BPZ-type equations in Proposition~\ref{BPZ eqs}.

\ms \textbf{Examples.} (a) If $X_j$'s are differentials of conformal dimensions $(\lambda_j,\lambda_{*j})$ (e.g., $X_j$'s are vertex fields), then the Lie derivative $\LL_{v_{_0}}$ acts on $\widehat\E X$ as a differential operator
$$\LL_{v_{_0}}\widehat\E X= \sum\Big(-\frac{\pa_j}{z_j}+\frac{\lambda_j}{z_j^2}-\frac{\bp_j}{\bar z_j}+\frac{\lambda_{*j}}{\bar z_j^2}\Big)~\widehat\E X,$$
see \eqref{eq: Lie4diff}, and Cardy's equation~\eqref{eq: BPZ-C eq} is a linear 2nd order PDE:
$$\sum\Big(-\frac{\pa_j}{z_j}+\frac{\lambda_j}{z_j^2}-\frac{\bp_j}{\bar z_j}+\frac{\lambda_{*j}}{\bar z_j^2}\Big)~\widehat\E X =\frac1{2a^2}(\sum\pa_j+\bp_j)^2\widehat\E X.$$

\ms (b) The 1-point function $R(z) = \E\,\widehat T(z)$ is a Schwarzian form of order $c/12,$ see Section~\ref{sec: BC changing}.
Recall that
$$\LL_v X = (v\pa + 2v')X + \mu v'''$$
for a Schwarzian form $X$ of order $\mu,$ see \eqref{eq: Lie4S-form}.
Thus
$$\LL_{v_{_0}}R= \Big(-\frac{\pa}{z}+\frac{2}{z^2}\Big)R + \frac{c/2}{z^4},$$
and Cardy's equation~\eqref{eq: BPZ-C eq} is
$$\pa^2 R = 2a^2 \LL_{v_{_0}}R.$$
Since $R(z) = h/z^2,$ we have the identity
$$6h = 2a^2\big(4h + \frac c2\big).$$
One can directly check this identity from $h = a^2/2-ab, c= 1-12b^2,$ and $2a(a+b)=1.$

\ms (c) The last example can be generalized to the $n$-point function of $\widehat T,$
$$R(z_1\cdots,z_n) = \E[\widehat T(z_1) \cdots \widehat T(z_n) \,\|\,\id_\H].$$
Denote $\bfs{z} = (z_1,\cdots,z_n),$ $\bfs{z}_j = (z_1,\cdots,\hat z_j,\cdots,z_n).$
By \eqref{eq: Lie4S-form} and Leibniz's rule (Proposition~\ref{Leibnitz4strings}), we have
$$\LL_{v_{_0}}R=\sum_{j=1}^n\Big(-\frac{\pa_j}{z_j}+\frac2{z_j^2}\Big)R(\bfs{z}) + \frac{c}2 \sum_{j=1}^n\frac{R(\bfs{z}_j)}{z_j^4}.$$
Cardy's equation~\eqref{eq: BPZ-C eq} gives us the following recursive formula:
\begin{equation} \label{eq: Cardy FW}
\frac1{2a^2}\big(\sum\partial_j\big)^2 R(\bfs{z}) = \sum_{j=1}^n\Big(-\frac{\pa_j}{z_j}+\frac2{z_j^2}\Big)R(\bfs{z}) + \frac{c}2 \sum_{j=1}^n\frac{R(\bfs{z}_j)}{z_j^4}.
\end{equation}
See \cite{FW02} for this equation in the case $c=0.$

\ms \section{SLE martingale-observables} \label{sec: SLE MOs} \index{martingale-observables}
\ms\SS We defined martingale-observables in Section~\ref{sec: Chordal SLE}.
Let us discuss this definition in more detail.
Suppose $M$ is a non-random field of $n$ variables in the half-plane, and suppose that $M$ is invariant with respect to $\Aut(\H,0,\infty).$
As we explained in Section~\ref{sec: conf inv}, conformal invariance allows us to define $M$ for any triple $(D,p,q):$
$$(M_{D,p,q}\,\|\,\id) = (M\,\|\,w^{-1}),$$
where $w$ is a conformal map from $(D,p,q)$ to $(\H,0,\infty),$
so we can think of $M$ as a function or as a collection $\{M_{D,p,q}\}$ of fields.

\ms Consider now chordal SLE curve $\gamma$ in $D$ from $p$ to $q$ so we have a family of domains $(D_t,\gamma_t,q)$ with marked boundary points, see \eqref{eq: Dt} and \eqref{eq: SLE curve}.
A non-random field $M$ is a martingale-observable if for any $z_1,\cdots ,z_n\in D,$ the process
\begin{equation}\label{eq: MO}
M_t(z_1,\cdots, z_n)=M_{D_t,\gamma_t,q}(z_1,\cdots, z_n)
\end{equation}
(stopped when any $z_j\not\in D_t$) is a local martingale (on SLE probability space).
It is important that we compute $M_t(z_1,\cdots,z_n)$ in \eqref{eq: MO} in local coordinates chart that do not change with $t.$
For instance we can use the identity chart of $D,$ and then for $(\lambda,0)$-differentials, we have
$$M_t(z) = (w_t'(z))^\lambda M(w_t(z)).$$
Similarly, if $M$ is a Schwarzian form of order $\mu,$ then
$$M_t(z) = (w_t'(z))^2M(w_t(z)) + \mu S_{w_t}(z).$$
To verify the local martingale condition, it is enough to check that the stochastic differential $dM_t$ has no drift (i.e., no $dt$-term).

\begin{eg*}
The simplest example of an SLE martingale-observable is the $1$-point function of the bosonic field in the case $\kappa=4,$
$$M(z) = \widehat\E[\Phi_{(0)}(z)] = \sqrt2 \arg w(z).$$
We have
$$d\arg w_t(z) = -\sqrt\kappa\,\Im\frac1{w_t}\,dB_t + \big(2-\frac\kappa2\big)\Im\frac1{w_t^2}\,dt,$$
so the drift disappears if $\kappa = 4.$
\end{eg*}

\ms\SS Special cases of the following statement appeared in \cite{BB03} and \cite{RBGW07}.

\begin{prop} \label{MO}
If $X_j\in \FF_{(b)},$ then the non-random fields
$$M(z_1,\cdots, z_n)=\widehat\E [X_1(z_1)\cdots X_n(z_n)]$$
are martingale-observables for $\SLE(\kappa).$
\end{prop}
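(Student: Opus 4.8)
The idea is to show that the It\^o drift of the process
$$M_t(z_1,\cdots,z_n) = \widehat\E\big[X_{1,t}(z_1)\cdots X_{n,t}(z_n)\big]$$
vanishes, by reducing everything to Cardy's equation~\eqref{eq: BPZ-C eq}. First I would record how the hatted correlation functions transform under the SLE flow. Write $X = X_1(z_1)\cdots X_n(z_n)$ and recall that, by the definition of martingale-observables in Section~\ref{sec: SLE MOs} and conformal invariance (Section~\ref{sec: conf inv}), the process $M_t$ is obtained by evaluating the non-random conformal field $\widehat\E X$ in the moving chart $w_t:(D_t,\gamma_t,q)\to(\H,0,\infty)$; equivalently, $M_t$ is the pullback under $w_t$ of the function $R(z_1,\cdots,z_n) = \widehat\E X$ computed in $(\H,0,\infty)$, with the appropriate differential/form factors $(w_t')^{\lambda_j}(\overline{w_t'})^{\lambda_{*j}}$ (and Schwarzian correction terms if some $X_j$ is a form) inserted at each node. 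Since all fields in $\FF_{(b)}$ are built from $\Phi_{(b)}$ by Wick multiplication, differentiation, and OPE coefficients, and hatting commutes with these operations by \eqref{eq: BCrules}, it suffices to treat $X$ as a tensor product of the standard generators; the transformation law of $\widehat\E X$ under $\Aut(\H,0,\infty)$ is the one dictated by the conformal type of the $X_j$'s, and crucially $\widehat\E X$ is $\Aut(\H,0,\infty)$-invariant as a field of several variables.

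Next I would compute $dM_t$ using It\^o's formula. Recall $w_t(z) = g_t(z) - \sqrt\kappa B_t$, so $dw_t(z) = \big(\tfrac{2}{w_t(z)} - \sqrt\kappa\,dB_t\big)$ in the sense $dg_t(z) = \tfrac{2}{w_t(z)}dt$, $d(\sqrt\kappa B_t) = \sqrt\kappa\,dB_t$. The key observation is that the time evolution $t\mapsto w_t$ is, infinitesimally, the composition of (i) the deterministic Loewner vector field $v_L(\zeta) = 2/\zeta$ (the $dt$-part of $g_t$) and (ii) the random translation by $-\sqrt\kappa B_t$, whose generator is the constant vector field $\zeta\mapsto -1$. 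Both are holomorphic vector fields on $\H$ away from the nodes. By the Lie-derivative interpretation of how conformal fields change under reparametrization (equation~\eqref{eq: Lie} and Proposition~\ref{Leibnitz4strings}), pulling $R$ back along the flow contributes $\LL_{v_L}$ in the $dt$-drift and $\LL_{-1}$ (translation) in the $dB_t$-part, together with the It\^o second-order term $\tfrac12\kappa\,\LL_{-1}^2$ in the drift. Concretely, writing $R_\xi$ for the correlation with the marked point at $\xi$ (so $R = R_0$), the martingale is $R_{\sqrt\kappa B_t}$ evaluated in the $w_t$-chart, and after collecting terms the drift of $M_t$ is proportional to
$$\Big[\tfrac\kappa2\,\pa_\xi^2 - \LL_{v_L}\Big]R_\xi\Big|_{\xi=\sqrt\kappa B_t},$$
pulled back by $w_t$; here the Loewner term $\LL_{v_L}$ combines with translation-invariance of $R_\xi$ in exactly the way that produces the vector field $v_{v_\xi}(z) = 1/(\xi-z)$ on the configuration variables. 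Since $\kappa = 2/a^2$, this drift is precisely $a^{-2}$ times the left side of Cardy's equation~\eqref{eq: BPZ-C eq} (equivalently Proposition~\ref{PDE's4bdry}), hence zero. I would also need to observe, as in the Example at the start of Section~\ref{sec: SLE MOs}, that the $B_t$-martingale part is a genuine (local) martingale because $R_\xi$ depends smoothly on all variables and $w_t$ is smooth, so the stochastic integral is well defined up to the stopping time when some $z_j$ leaves $D_t$.

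The bookkeeping is the only real work here, and the main obstacle is getting the conformal-weight and Schwarzian-correction factors to interact correctly with the It\^o expansion — that is, verifying that the ``chart does not move with $t$'' prescription of \eqref{eq: MO}, combined with the transformation laws \eqref{eq: Xt4diff}, \eqref{eq: Xt4S-form}, reproduces exactly the operators $\pa_\xi^2$ and $\LL_{v_\xi}$ appearing in Cardy's equation, with the right constant $1/(2a^2)$ and no leftover drift from the weight factors. This is where one must be careful that the Loewner drift $v_L(\zeta)=2/\zeta$ is singular at the marked point and that the vertex insertion $V_\star^{ia}$ sits at $q=\infty$: the fact that $v_L$ and the constant vector field both have the required vanishing at $\infty$ (triple zero, as used for $v_z$ in Section~\ref{sec: V*}) is what legitimizes applying Ward's equations in the form of Proposition~\ref{Ward4VX}, which was already built into Cardy's equation. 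Once the drift is identified with the Cardy operator applied to $R_\xi$, the proof is complete; the degeneracy equation (Proposition~\ref{TVd2V}) and Ward's identities have already done the substantive work inside Proposition~\ref{PDE's4bdry}. I would therefore present the argument as: (1) express $M_t$ as the pullback of $R_\xi$; (2) compute $dM_t$ by It\^o, separating drift and martingale parts; (3) identify the drift with $(2a^2)^{-1}$ times $[(\sum\pa_j+\bp_j)^2 - 2a^2\LL_{v_0}]\widehat\E X$; (4) invoke \eqref{eq: BPZ-C eq}.
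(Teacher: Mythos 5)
Your plan follows essentially the same route as the paper's proof: write $M_t$ as the pullback of the $\xi$-dependent correlation $R_\xi$ under the Loewner chart, apply It\^o's formula, express the explicit time-derivative as a Lie derivative along the Loewner vector field via \eqref{eq: Lie}, and kill the drift with Cardy's equation (Proposition~\ref{PDE's4bdry}). One sign needs fixing: with $v_L(\zeta)=2/(\zeta-\xi)$ the Loewner contribution to the drift enters with a plus sign, $+\LL_{v_L}R_\xi=-2\LL_{v_\xi}R_\xi$, so the drift is $\tfrac\kappa2\pa_\xi^2R_\xi-2\LL_{v_\xi}R_\xi$ rather than $\big[\tfrac\kappa2\pa_\xi^2-\LL_{v_L}\big]R_\xi$; it is this corrected expression that equals $a^{-2}$ times the left-hand side of \eqref{eq: BPZ-C eq}, as your verbal identification (correctly) asserts.
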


\ss \begin{proof}
Let $g_t$ be the SLE conformal maps, $g_t(\gamma_t)=\xi_t,$ and $\xi_t=\sqrt\kappa B_t.$
Denote
$$R_\xi(z_1,\cdots, z_n)\equiv\widehat\E_\xi[X_1(z_1)\cdots X_n(z_n)]=\E[e^{\odot ia\Phichiral_{(0)}(\xi)}X_1(z_1)\cdots X_n(z_n)].$$
Then
$$M_t = m(\xi_t,t), \qquad m(\xi,t) =\big(R_\xi\,\|\,g_t^{-1}\big).$$
Note that the function $m(\xi,t)$ is smooth in both variables.
By It\^o's formula we have
$$dM_t=\pa_\xi\Big|_{\xi=\xi_t}~m(\xi,t)\,d\xi_t+\frac\kappa2\pa^2_\xi\Big|_{\xi=\xi_t}~m(\xi,t)\,dt+L_t\, dt,$$
where
$$
L_t:=\frac d{ds}\Big|_{s=0}\big(R_{\xi_t}\,\|\,g_{t+s}^{-1}\big)
=\frac d{ds}\Big|_{s=0}\big(R_{\xi_t}\,\|\,g_t^{-1}\circ f_{s,t}^{-1}\big).
$$
The time-dependent flow $f_{s,t} = g_{t+s}\circ g_t^{-1}$ satisfies
$$\frac{d}{ds}f_{s,t}(\zeta) = \frac{2}{f_{s,t}(\zeta)-\xi_{t+s}}$$
or
$$ f_{s,t} = \id -2sv_{\xi_t} + o(s)\qquad(\textrm{as } s\to0),$$
where $v_{\xi}(z)=1/(\xi-z).$
Since the fields in $\FF_{(b)}$ depend smoothly on local charts, it follows from \eqref{eq: Lie} that
$$L_t=\big(\LL_vR_{\xi_t}\,\|\,g_{t}^{-1}\big),$$
where $v=-2v_{\xi_t}.$
By Ward's equation~\eqref{eq: PDE's4bdry}, we get
$$L_t=-2\big(\LL_{v_{\xi_t}}R_{\xi_t}\,\|\,g_{t}^{-1}\big)=-\frac1{a^2}\big(\pa^2_\xi\Big|_{\xi=\xi_t} R_{\xi}\,\|\,g_{t}^{-1}\big).$$
Thus the drift term of $dM_t$ vanishes.
\end{proof}

\begin{rmk*}
If we insert the degenerate vertex field $V_{\star,(b)}^{-i(a+b)}$ (see Proposition~\ref{degenerate O}) instead of $V_{\star,(b)}^{ia},$ then we get martingale-observables for the dual SLE theory, i.e., $\SLE(\kappa'), (\kappa\kappa'=16).$
\end{rmk*}

\section{Examples} \label{sec: eg MO}

\ms \SS\textbf{Example 1 (Schramm-Sheffield's observables)} \index{martingale-observables!Schramm-Sheffield's} \index{Schramm-Sheffield's observables}
The 1-point functions of the bosonic fields
$$\widehat\varphi(z) = \widehat\E[\Phi_{D,p,q}(z)] = 2a\arg w(z) -2b\arg w'(z), \quad w:(D,p,q)\to(\H,0,\infty),$$
were introduced as SLE martingale-observables by Schramm and Sheffield, see \cite{SS10}.
By It\^o's calculus,
\begin{align*}
\widehat\varphi_t(z) &= \widehat\E[\Phi_{D_t,\gamma_t,q}(z)]= 2a\arg w_t(z) -2b\arg w_t'(z) \\
&= 2a\arg w(z) -2b\arg w'(z) -2\sqrt2\int_0^t\Im \frac1{w_s(z)}~dB_s.
\end{align*}
The fact that the $2$-point functions
$$ \widehat\E[\Phi(z_1)\Phi(z_2)]=2G(z_1,z_2)+\widehat\varphi(z_1)\widehat\varphi(z_2)$$
are martingale-observables is essentially equivalent to the following special case of Hadamard's variation formula
\begin{equation} \label{eq: Hadamard}
d G_{D_t}(z_1,z_2)=-4\Im\frac1 {w_t(z_1)}\Im\frac1 {w_t(z_2)}\,dt=-\frac12\,d\langle \widehat\varphi(z_1),\widehat\varphi(z_2)\rangle_t.
\end{equation}

Schramm and Sheffield used \eqref{eq: Hadamard} to construct a coupling of SLE and the Gaussian free field such that
$$\E[\,\widehat\Phi_{D,p,q}\,|\,\gamma[0,t]\,]=\widehat\Phi_{D_t,\gamma_t,q}.$$

\ms Let us outline the main idea and explain how  Schramm-Sheffield's coupling is related to the fact that all $n$-point functions
$$M(z_1,\cdots, z_n)=\widehat\E [\Phi(z_1)\cdots\Phi(z_n)]$$
are martingale-observables.
For simplicity, we consider the case $\kappa\le4$ (then SLE curves are Jordan, and for all $z\in D,$ $z\in D_t$ almost surely).
For a fixed $t,$ we define a random field $\Psi_t$ in $D$ as follows.
Let $\GFF(D_t)$ denote the Gaussian free field in $D_t$  independent of SLE (e.g., consider the pull back of $\GFF(\H)$ by some conformal map from $D_t$ to $\H$)
and set
$$\Psi_t = \widehat\varphi_t + \GFF(D_t).$$
It is in fact easy to define $\Psi_t$ as a distributional field in $D,$ so that the probability space of $\Psi_t$ is the product of probability spaces of SLE and GFF.

\begin{claim*}
For all t, the correlation functions of $\Psi_t$ are identical to those of $\widehat\Phi.$
\end{claim*}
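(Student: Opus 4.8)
The plan is to compute the correlation functions of $\Psi_t = \widehat\varphi_t + \GFF(D_t)$ directly, using the independence of the two summands, and compare them with the correlation functions of $\widehat\Phi$ as computed by $\widehat\E.$ Write $\E_t$ for the expectation on the product space (SLE $\times$ GFF in $D_t$) and recall that $\GFF(D_t)$ is centered with $\E[\GFF(D_t)(z_1)\GFF(D_t)(z_2)]=2G_{D_t}(z_1,z_2)$ and all odd moments zero, jointly Gaussian, and independent of the SLE filtration $\gamma[0,t].$ Since $\widehat\varphi_t$ is $\gamma[0,t]$-measurable and $\GFF(D_t)$ is conditionally a GFF given $\gamma[0,t],$ Wick's formula applies fiberwise: conditionally on $\gamma[0,t],$
$$\Psi_t(z_1)\cdots\Psi_t(z_n) = \big(\widehat\varphi_t(z_1)+\GFF(D_t)(z_1)\big)\cdots\big(\widehat\varphi_t(z_n)+\GFF(D_t)(z_n)\big),$$
and taking the conditional expectation over the GFF factor (the $\widehat\varphi_t$'s are constants at this stage) we obtain a sum over pairings of the GFF slots, each pair contributing $2G_{D_t},$ with the unpaired slots contributing $\widehat\varphi_t.$ That is, $\E_t[\Psi_t(z_1)\cdots\Psi_t(z_n)\mid\gamma[0,t]]$ equals exactly the expression one gets by applying Wick's formula to $n$ copies of a Gaussian with mean $\widehat\varphi_t$ and covariance $2G_{D_t},$ which by Proposition~\ref{insert} (insertion calculus) and the conformal-transport definition of $\FF_{(b)}$-fields is precisely $\widehat\E[\Phi_{D_t,\gamma_t,q}(z_1)\cdots\Phi_{D_t,\gamma_t,q}(z_n)] = M_t(z_1,\cdots,z_n).$

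Next I would take the (unconditional) expectation over the SLE probability space. By Proposition~\ref{MO}, each $M(z_1,\cdots,z_n)=\widehat\E[\Phi(z_1)\cdots\Phi(z_n)]$ is an $\SLE(\kappa)$ martingale-observable, so the process $M_t(z_1,\cdots,z_n)$ is a local martingale; since $\kappa\le4$ the SLE curves are Jordan and $z_j\in D_t$ almost surely for all $t,$ so there is no stopping issue, and a standard uniform-integrability bound (the correlation functions are polynomials in $\arg w_t,\arg w_t',G_{D_t}$ with Hadamard-type control) upgrades it to a genuine martingale with $M_0 = \widehat\E[\Phi_{D,p,q}(z_1)\cdots\Phi_{D,p,q}(z_n)].$ Therefore
$$\E_t[\Psi_t(z_1)\cdots\Psi_t(z_n)] = \E[M_t(z_1,\cdots,z_n)] = M_0(z_1,\cdots,z_n) = \widehat\E[\Phi(z_1)\cdots\Phi(z_n)],$$
which is the claim. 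One should also record the cruder consistency check at $n=2$: here the claim reduces to $\E[\widehat\varphi_t(z_1)\widehat\varphi_t(z_2)] + \E[2G_{D_t}(z_1,z_2)] = \widehat\varphi(z_1)\widehat\varphi(z_2) + 2G_D(z_1,z_2),$ which follows from the Hadamard variation formula~\eqref{eq: Hadamard}: $d\langle\widehat\varphi(z_1),\widehat\varphi(z_2)\rangle_t = -2\,dG_{D_t}(z_1,z_2),$ so that $\widehat\varphi_t(z_1)\widehat\varphi_t(z_2) + 2G_{D_t}(z_1,z_2)$ is a local martingale, consistent with the general statement.

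The main obstacle is the integrability/uniform-integrability step needed to pass from ``local martingale'' to ``martingale'' (equivalently, to justify that $\E[M_t] = M_0$), together with the distributional interpretation of $\Psi_t$ as an honest random field in $D$ so that its $n$-point functions are well-defined and the fiberwise Wick computation is legitimate. For $\kappa\le4$ this is manageable: $w_t$ and its derivatives are controlled on compact subsets of $D$ uniformly in $t$ on compact time intervals, and one can argue via an exhaustion of $D$ and dominated convergence, or simply localize and note that the identity of correlation functions is a pointwise statement that only needs the martingale property up to a sequence of stopping times exhausting $[0,\infty).$ The remaining steps — the fiberwise application of Wick's formula and the identification of the conditional correlations with $\widehat\E$ — are routine given Proposition~\ref{insert}, the definition of $\widehat\E,$ and the conformal-invariance transport of Section~\ref{sec: conf inv}.
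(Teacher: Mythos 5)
Your proposal follows the paper's own argument: condition on the SLE and apply Wick's calculus to the GFF component to identify $\E[\Psi_t(z_1)\cdots\Psi_t(z_n)]$ with $\E_{_\SLE}M_t,$ then upgrade the local martingale $M_t$ to a genuine martingale (the paper's ``simple estimates'') so that $\E_{_\SLE}M_t = M_0 = \E[\widehat\Phi(z_1)\cdots\widehat\Phi(z_n)].$ One small caution: your parenthetical alternative of merely localizing along stopping times does not by itself yield $\E[M_t]=M_0$ at a fixed time $t$; the uniform-integrability/domination estimate you describe first is the step actually needed, exactly as in the paper.
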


\begin{proof}
We first show that
$$\E[\Psi_t(z_1)\cdots\Psi_t(z_n)] = \E_{_\SLE}\, M_t(z_1,\cdots,z_n)$$
by applying Wick's calculus to the GFF component of
$\E = \E_{_\SLE}\otimes\E_{_\GFF}.$
Then we verify that $M_t$ is a global martingale (this requires some simple estimates from complex analysis and stochastic calculus).
It follows that
$$\E_{_\SLE}\, M_t(z_1,\cdots,z_n) = \, M_0(z_1,\cdots,z_n) = \E [\widehat\Phi(z_1)\cdots\widehat\Phi(z_n)].$$
\end{proof}

See \cite{SS10} for the version of this statement in the case $4<\kappa<8,$ and for the limiting case $t=\infty.$
A more subtle question of uniqueness of SLE/GFF coupling was settled by Dub\'edat~\cite{Dubedat09}.

\ms \SS\textbf{Example 2 (Friedrich-Werner's formula)} \index{martingale-observables!Friedrich-Werner's formula} \index{Friedrich-Werner's formula}
Let us apply Ward's equations to the function
$$\E\,[\,\widehat T(z_1)\cdots \widehat T(z_n)\,\|\,\id_\H\,] = \E\,[\,V_\star^{ia}(0)T(z_1)\cdots T(z_n)\,\|\,\id_\H\,]$$
by replacing one of $T(z_j)$'s in the right-hand side by the corresponding Lie derivatives, see Propositions~\ref{Ward identity} and \ref{represent A}.
(As usual, $\id_\H$ is the identity chart in the upper half-plane $\H.$)
Denote $\bfs{z} = (z_1,\cdots,z_n),$ $\bfs{z}_j = (z_1,\cdots,\hat z_j\cdots,z_n),$ and
$$R(\xi;\bfs{z}) = \E\,[\,V_\star^{ia}(\xi)\,T(z_1)\cdots T(z_n)\,];$$
this non-random field is a boundary differential in $\xi.$
Then we have
\begin{equation} \label{eq: recursion4T}
R(\xi;z,\bfs{z})=\LL_{v_z}R(\xi;\bfs{z}),\quad (\textrm{in }\id_\H),
\end{equation}
where $v_z(\zeta) = 1/(z-\zeta).$
In particular, at $\xi=0,$ setting $R(z,\bfs{z})\equiv R(0;z,\bfs{z}),$ we get a recursive formula
$$R(z,\bfs{z})= \frac{h}{z^2}R(\bfs{z})+ \sum_{j=1}^n\Big(\big(-\frac1z+\frac1{z-z_j}\big)\pa_j+\frac2{(z-z_j)^2}\Big)R(\bfs{z}) +\frac{c}2 \sum_{j=1}^n\frac{R(\bfs{z}_j)}{(z-z_j)^4}.$$
In the case $\kappa = 8/3, (c=0, h = 5/8)$ and $z_j = x_j \in \R,$ this equation coincides with Friedrich-Werner's formula (\cite{FW03}) for the function

\ms
$$B(\bfs{x}) = \lim_{\ve\to0}\frac{\P(\textrm{SLE}(8/3)\textrm{ hits all }[x_j,x_j+i\ve\sqrt2])} {\ve^{2n}}.$$

\ms

(Also, the equation~\eqref{eq: Cardy FW} coincides with their ``dynamical" formula in \cite{FW02}.)
Since $B = R = 1$ for $n=0,$ we conclude that
$$B(x_1,\cdots,x_n)=\E\,[\,\widehat T(x_1)\cdots \widehat T(x_n)\,\|\,\id_\H\,].$$
One can in fact interpret the argument in \cite{FW03} in terms of Lie derivatives and directly relate it to the equation \eqref{eq: recursion4T}.

\ms We will use the \emph{restriction property} \index{restriction property} of SLE(8/3); see \cite{LSW03} or Example 3 below:
{\setlength{\leftmargini}{2.0em}
\begin{itemize}
\ms \item the law of SLE(8/3) in $\H$ conditioned to avoid a fixed hull $K$ is identical to the law of SLE(8/3) in $\H\sm K;$
\ms \item equivalently, there exists $\lambda$ such that for all $K,$
$$\P(\SLE(8/3) \textrm{ avoids }K) = [\Psi_K'(0)]^\lambda,$$
where $\Psi_K$ is the conformal map $(\H\sm K,0,\infty)\to (\H,0,\infty)$ satisfying $\Psi_K'(\infty)=1.$
(The restriction exponent $\lambda$ of $\SLE(8/3)$ is equal to $5/8.$)
\end{itemize}}

\ms Define the non-random field $A(\xi,x_1,\cdots,x_n)$ of $n+1$ variables as follows:
{\setlength{\leftmargini}{2.0em}
\begin{itemize}
\ms \item $A$ is a boundary differential of conformal dimension $\lambda$ with respect to $\xi$ and of conformal dimension $2$ with respect to $x_j$;
\ms \item $(A(\xi,x_1,\cdots,x_n)\,\|\,\id_\H) = B(x_1-\xi,\cdots,x_n-\xi).$
\end{itemize}}

\ms \begin{claim*} We have 
$$A(0;x,\bfs{x}) = \LL_{v_x} A(0;\bfs{x}).$$
\end{claim*}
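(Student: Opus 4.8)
The plan is to show that the function $A(\xi;x_1,\dots,x_n)$, which encodes the Friedrich--Werner probability $B$, transforms under a change of target configuration exactly the way the correlation function $R(\xi;\bfs z)=\E[V_\star^{ia}(\xi)T(z_1)\cdots T(z_n)]$ does, so that the recursive structure \eqref{eq: recursion4T} for $R$ is inherited by $A$. The key identity to establish is the infinitesimal version: adding one more marked point $x$ amounts to applying the Lie derivative $\LL_{v_x}$ (with $v_x(\zeta)=1/(x-\zeta)$) to the $n$-point object, both evaluated in $\id_\H$ at $\xi=0$.

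First I would recall that $B(x_1,\dots,x_n)$ is, up to the normalizing power of $\ve$, the probability that $\SLE(8/3)$ hits all the infinitesimal slits $[x_j,x_j+i\ve\sqrt2]$; by the restriction property of $\SLE(8/3)$ quoted just above (with restriction exponent $\lambda=5/8$), conditioning the curve to avoid a hull $K$ replaces the law in $\H$ by the law in $\H\setminus K$ and multiplies the avoidance probability by $[\Psi_K'(0)]^\lambda$. Taking $K$ to be the union of the $n$ slits and differentiating in the slit parameter identifies $B$ with a product of the form $[\Psi'_{\bfs x}(0)]^\lambda \prod_j \Psi'_{\bfs x}(x_j)^2 \times(\text{lower-order data})$, i.e. exactly a boundary differential of dimension $\lambda$ in $\xi$ and dimension $2$ in each $x_j$; this is the content of the defining bullets for $A$. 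The transformation law then tells us how $A$ behaves under any conformal map fixing $0$ and $\infty$, in particular under the maps $\Psi_K$ associated to adding a slit at a new point $x$.

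The main step is then a direct computation: take the map $\Psi_{[x,x+i\ve\sqrt2]}$ removing the new slit, write it as $\id - 2\ve^2 v_x + o(\ve^2)$ in the standard Loewner normalization (this is the same expansion $f_{s,t}=\id - 2sv_{\xi_t}+o(s)$ used in the proof of Proposition~\ref{MO}, with $s=\ve^2$ up to a constant), and expand $A(0;x,\bfs x)$, which by the restriction property equals the avoidance probability for $K=[x,x+i\ve\sqrt2]$ times the $A$-value in the slit domain, i.e. $[\Psi'(0)]^\lambda \cdot (A\,\|\,\Psi^{-1})(0;\bfs x)$. Differentiating at $\ve=0$ and using that $A$ is a boundary differential of dimension $\lambda$ in $\xi$ and dimension $2$ in each $x_j$, the $\ve^2$-coefficient is precisely $\LL_{v_x}A(0;\bfs x)$ by formula \eqref{eq: Lie4diff} (and the $\lambda$-weight at $\xi=0$ contributes the $h/z^2$-type term once one checks $\lambda = h(b)$ at $\kappa=8/3$, consistent with Appendix~\ref{appx: numerology}). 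Equating the two expansions of $A(0;x,\bfs x)$ gives the claimed identity.

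I expect the main obstacle to be making the passage from the combinatorial/probabilistic definition of $B$ (a limit of hitting probabilities) to the clean statement that $A$ is a genuine boundary differential with the stated dimensions --- that is, justifying that the $\ve\to0$ limit exists, is conformally covariant, and that the restriction-property manipulation commutes with the limit. Once $A$ is known to be a boundary differential of the right weights, the identity $A(0;x,\bfs x)=\LL_{v_x}A(0;\bfs x)$ follows formally by the same Loewner-flow expansion that underlies Proposition~\ref{MO}, so the analytic content is entirely in the first (restriction-property) step; the rest is bookkeeping with \eqref{eq: Lie4diff} and the numerology $c=0$, $h=5/8$, $\lambda=5/8$ at $\kappa=8/3$.
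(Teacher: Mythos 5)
There is a genuine gap at the heart of your argument: the identity you assert, namely that $A(0;x,\bfs x)$ ``by the restriction property equals the avoidance probability for $K=[x,x+i\ve\sqrt2]$ times the $A$-value in the slit domain, i.e.\ $[\Psi'(0)]^\lambda\,(A\,\|\,\Psi^{-1})(0;\bfs x)$,'' is false and cannot be the starting point. The right-hand side of that equation tends, as $\ve\to0$, to the $n$-point quantity $A(0;\bfs x)$, not to the $(n+1)$-point quantity $A(0;x,\bfs x)$; the Lie derivative only ever appears as the \emph{first-order correction} of that transformed $n$-point object, so with your formula there is nothing to divide by $\ve^2$ and no way for $\LL_{v_x}A(0;\bfs x)$ to emerge as an equality with $A(0;x,\bfs x)$. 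The step you are missing is the complementary-event decomposition for the hitting probabilities:
$$\P(x,\bfs x)\;=\;\P(\bfs x)\;-\;\P(\neg x)\,\P(\bfs x\,|\,\neg x),$$
where $\P(\neg x)=\Psi'(0)^{\lambda}$ by the restriction formula and, by the restriction property plus conformal covariance, $\P(\bfs x\,|\,\neg x)\approx \ve^{2n}\,\Psi'(x_1)^2\cdots\Psi'(x_n)^2\,A(\Psi(0);\Psi(x_1),\cdots,\Psi(x_n))$ (note the $\Psi'(x_j)^2$ factors, which your formula also drops). Combining this with the defining normalization $\P(x,\bfs x)\approx\ve^{2(n+1)}A(0;x,\bfs x)$ and $\P(\bfs x)\approx\ve^{2n}A(0;\bfs x)$ gives
$$A(0;x,\bfs x)\;\approx\;\frac1{\ve^2}\Big(A(0;\bfs x)-\Psi'(0)^\lambda\Psi'(x_1)^2\cdots\Psi'(x_n)^2\,A(\Psi(0);\Psi(x_1),\cdots,\Psi(x_n))\Big),$$
and only this difference quotient converges to $\LL_{v_x}A(0;\bfs x)$. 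This subtraction of the two $n$-point quantities is the entire content of the claim; without it the argument does not go through.

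Two smaller points. First, the exact constant matters: $\Psi(z)=\sqrt{(z-x)^2+2\ve^2}+x=z-\ve^2 v_x(z)+o(\ve^2)$, not $\id-2\ve^2v_x+o(\ve^2)$; the length $\ve\sqrt2$ of the slits is chosen precisely so that the difference quotient above has coefficient $1$, and leaving the normalization ``up to a constant'' would only give $A(0;x,\bfs x)=\mathrm{const}\cdot\LL_{v_x}A(0;\bfs x)$, which is weaker than the claim. Second, your opening paragraph tries to \emph{derive} from the restriction property that $A$ is a boundary differential of dimensions $\lambda$ and $2$; in the setting of the claim this is not something to prove -- $A$ is \emph{defined} as the boundary differential with those dimensions whose value in $\id_\H$ is $B(x_1-\xi,\cdots,x_n-\xi)$ -- and the product formula you propose there for $B$ is not correct either, so that paragraph should simply be dropped.
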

(So $B$ and $R$ satisfy the same recursive equation and are therefore equal.)

\ms \begin{proof}[Proof of Claim]
Denote $\bfs{x} = (x_1,\cdots,x_n).$ We write $\P(\bfs{x})$ for the probability that SLE(8/3) path hits all segments $[x_j,x_j+i\ve\sqrt2]$ and $\P(\bfs{x}\,|\,\neg x)$ for the same probability conditioned on the event that the path avoids $[x,x+i\ve\sqrt2].$
By construction,
\begin{equation} \label{eq: FW1}
\P(x,\bfs x) \approx \ve^{2(n+1)} A(0;x,\bfs x).
\end{equation}
On the other hand, by the restriction property of SLE(8/3), we have
\begin{align} \label{eq: FW2}
\P(&x,\bfs x) = \P(\bfs x)- \P(\bfs x\,|\,\neg x)(1-\P(x)) \\
&\approx \ve^{2n} (A(0;\bfs{x}) -\Psi'(0)^\lambda \Psi'(x_1)^2\cdots\Psi'(x_n)^2 A(\Psi(0);\Psi(x_1),\cdots, \Psi(x_n))), \nonumber
\end{align}
where
$$\Psi(z) = \sqrt{(z-x)^2+2\ve^2}+x.$$
It follows from \eqref{eq: FW1} and \eqref{eq: FW2} that
\begin{align*}
A(0;x,\bfs x) &= \frac{1}{\ve^2}(A(0;\bfs x)-\Psi'(0)^\lambda\Psi'(x_1)^2\cdots\Psi'(x_n)^2 A(\Psi(0);\Psi(x_1),\cdots, \Psi(x_n)))\\
&\approx \LL_v A(0;\bfs x),
\end{align*}
where $v$ is the vector field of flow
$\Psi_t(z) = \sqrt{(z-x)^2-2t}+x.$
Clearly, $v = v_x.$
\end{proof}

\ms \SS\textbf{Example 3 (Lawler-Schramm-Werner's restriction formula)} \index{restriction formula}
Restriction property of $\SLE(8/3)$ follows, by optional stopping theorem, from the fact that
for each compact hull $K,$
$$M_t = (\E\,V_\star^{ia}(\gamma_t;D_t\sm K)\,\|\,g_t)$$
is a local martingale.
This is a special case of Lawler-Schramm-Werner's formula

\begin{equation} \label{eq: LSW}
\textrm{the drift term of } dM_t= \frac c6 S_{h_t}(\xi_t) M_t\,dt,\quad (\xi_t = \sqrt\kappa B_t),
\end{equation}
which holds for all $\kappa\le4.$
On the event $\gamma[0,\infty)\cap K = \emptyset,$ a conformal map $h_t:\Omega_t = g_t(D_t\sm K)\to\H$ is defined by
$$h_t = \widetilde g_t \circ \Psi_K \circ g_t^{-1},$$
where $\widetilde g_t$ is a Loewner map from $\widetilde D_t = D\sm\widetilde\gamma[0,t]$ onto $\H,$
$\widetilde\gamma(t) = \Psi_K \circ \gamma(t),$ and
$\Psi_K$ is the conformal map $(\H\sm K,0,\infty)\to (\H,0,\infty)$ satisfying $\Psi_K'(\infty)=1.$
Let $\widetilde \xi_t = h_t(\xi_t).$
Then $h_t$ satisfies
\begin{equation} \label{eq: ht}
\dot h_t(z) = \frac{2h_t'(\xi_t)^2}{h_t(z)-\widetilde \xi_t} - \frac{2h_t'(z)}{z-\xi_t},
\end{equation}
see e.g., \cite{LSW03}.

\begin{figure}[ht]
\begin{center}
\includegraphics[width=\textwidth]{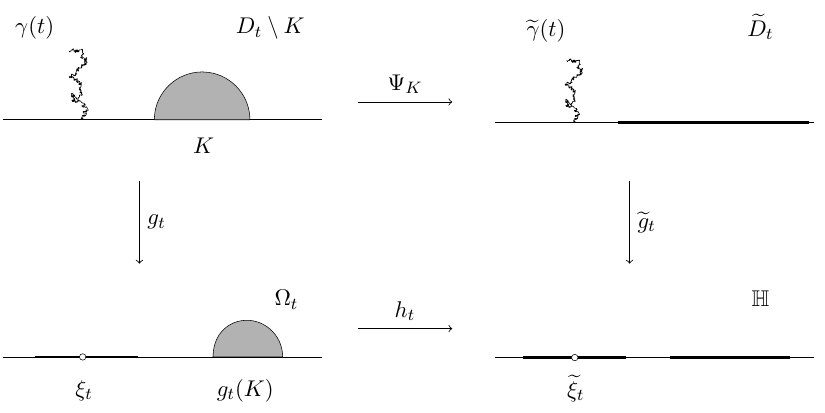}
\end{center}
\caption{The conformal maps, $\Psi_K,g_t,\widetilde g_t,$ and $h_t$}
\label{figure: restriction}
\end{figure}

\ms We now explain why the central charge and the Schwarzian derivative appear in the formula \eqref{eq: LSW}.
To prove this formula, denote
$$F(z,t):=(\E\,V_{\Omega_t}(z)\,\|\,\id).$$
Then $F(z,t)=(\E\,V_{\H}\,\|\,h_t^{-1})(z)$ and
$$M_t= F(\xi_t,t) = (\E\,V_{\Omega_t}(\xi_t)\,\|\,\id).$$
The function $F$ is smooth in both variables, so by It\^o's formula,
$$\textrm{the drift term of }dM_t = \dot F(\xi_t,t)\,dt + \frac\kappa2F''(\xi_t,t)\,dt.$$
For the first term of the right-hand side, we represent $\dot F$ in terms of the Lie derivatives: 
\begin{align*}
\dot F(z,t) &= \frac{d}{ds}\Big|_{s=0} (\E\, V_{\H}\,\|\, h_{t+s}^{-1}) (z)= \frac{d}{ds}\Big|_{s=0} (\E\, V_{\H}\,\|\, h_{t}^{-1}\circ f_{s,t}^{-1}) (z) \\
&= (\E\,\LL_v V_{\H}\,\|\,h_t^{-1})(z),
\end{align*}
where $f_{s,t} = h_{t+s}\circ h_{t}^{-1}$ and
$$(v\,\|\,\id_\H) = \frac{d}{ds}\Big|_{s=0} f_{s,t} = \dot h_t \circ h_t^{-1}.$$
It follows from \eqref{eq: ht} that
$$(v\,\|\,\id_\H)(\zeta)=-2h_t'(\xi_t)^2\frac1{\widetilde\xi_t-\zeta} + 2h_t'(h_t^{-1}(\zeta))\frac{1}{\xi_t-h_t^{-1}(\zeta)}.$$
By Proposition~\ref{Ward's in H} we have
$$\dot F(z,t) = -2h_t'(\xi_t)^2 h_t'(z)^\lambda(\E\, A_\H(\widetilde \xi_t)\,V_\H (h_t(z))\,\|\,\id_\H) +2 (\E\,\LL_{v_{\xi_t}} V_{\Omega_t}\,\|\,\id_{\Omega_t}\big)(z),$$
where $\lambda$ is the conformal dimension of $V.$
It follows from conformal invariance that
$$\dot F(z,t) =-2(\E\, A_{\Omega_t}(\xi_t)\,V_{\Omega_t}(z)\,\|\,\id) + 2 (\E\,\LL_{v_{\xi_t}} V_{\Omega_t}\,\|\,\id\big)(z).$$
Let us now apply Proposition~\ref{Ward's OPEs} to the right-hand side of the above equation:
$$\dot F(\xi_t,t)=\lim_{z\to\xi_t}\dot F(z,t)=-2(\E\,A_{\Omega_t}*V_{\Omega_t}(\xi_t)\,\|\,\id).$$
At the same time, we have
$$\frac\kappa2F''(\xi_t,t) = \frac\kappa2(\E\,\pa_\xi^2\Big|_{\xi=\xi_t}V_{\Omega_t}(\xi)\,\|\,\id) = 2(\E\,T_{\Omega_t}*V_{\Omega_t}(\xi_t)\,\|\,\id).$$

\renewcommand\chaptername{Lecture}
\chapter{Vertex observables}\label{ch: VO}

In this last lecture we will look at some examples of ``primary" SLE observables, i.e., observables that have a covariant dependence on local coordinates.
This is the type of dependence that appears as a result of rescaling or normalization of probabilities and expectations, in particular in lattice models.
As we explained in the previous lecture, correlators of primary fields in $\widehat\FF_{(b)}$-theories are examples of such observables.
In this lecture we will expand our collection of primary fields by considering normalized tensor products of chiral vertex fields and their conjugates.
Further ``primary" observables can be obtained from singular vectors and, in some cases, by such operations as differentiation, integration, and ``screening."
By It\^o's calculus, ``primary" SLE observables are solutions to 2nd order linear differential equations, which in general are not easy to solve.
The knowledge of a large collection of (multi-point) primary fields allows us, in some cases, to identify particular solutions by calculus of conformal dimensions.
This is somewhat related to ``Coulomb gas" methods in the physical literature.

\bs\section{Holomorphic 1-point vertex fields} \label{sec: 1-pt V}

\ms \subsec{Definition}
We want to construct holomorphic single variable differentials in $\widehat\FF_{(b)}$-theory.
Chiral vertex fields rooted at $q,$ the central charge modification point, considered under the boundary condition changing operation (see Section~\ref{sec: BC changing}) seem to be natural candidates.
The problem with this construction is the divergence at $q$ so, as in Section~\ref{sec: V*}, where we defined rooted vertex fields, we will use a certain normalization procedure.
(We will use it again to define correlators and, more generally, tensor product of such fields in the next section.)

\ms The idea is to start with a (well-defined) chiral {\it bi}-vertex field,
\begin{equation} \label{eq: hat bi-vertex}
\widehat V^{i\sigma}(z,z_0)=w^{\sigma a}(w')^{\sigma^2/2-\sigma b}\,w_0^{-\sigma a}(w'_0)^{\sigma^2/2+\sigma b} (w-w_0)^{-\sigma^2}\,e^{\odot i\sigma\Phichiral_{(0)}(z,z_0)},
\end{equation}
see the last example in Section~\ref{sec: BC changing} and normalize it so that the limit exists as $z_0\to q.$
We denote this limit by $\widehat V_\star^{i\sigma}(z).$
Then it satisfies the equations of $\widehat\FF_{(b)}$-theory, in particular Cardy's equations.
It turns out that by simply ignoring all terms in \eqref{eq: hat bi-vertex} involving the point $z_0,$ we arrive to the following correct definition: \index{vertex field!rooted $V_\star,\widehat V_\star$} \index{rooted vertex field $V_\star,\widehat V_\star$}
$$\OO^{(\sigma)}(z)\equiv\widehat V_\star^{i\sigma}(z)=w^{\sigma a}(w')^\lambda\,e^{\odot i\sigma\Phichiral_{(0)}(z,q)},$$
where $\lambda = \sigma^2/2-\sigma b.$
(Instead of $\widehat\OO^{(\sigma)}$ we write $\OO^{(\sigma)}$ for $\widehat V_\star^{i\sigma}$ in this lecture. See Appendix~\ref{appx: numerology}.)
More accurately, the expression for $\OO^{(\sigma)}$ can be described as a limit similar to the formula \eqref{eq: V*} in Section~\ref{sec: V*}.
The vertex fields $\OO^{(\sigma)}$ are invariant with respect to $\Aut(D,p,q).$

\begin{prop} \label{VM}
The 1-point function $M=\E\,\OO^{(\sigma)}$ is an SLE martingale-observable.
\end{prop}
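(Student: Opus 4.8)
The plan is to mimic the proof of Proposition~\ref{MO}, the only genuinely new feature being that the field $V_\star^{i\sigma}$ rooted at $q$ is not literally a member of $\FF_{(b)}$. By conformal invariance we may take $(D,p,q)=(\H,0,\infty)$ and use the chordal SLE maps $g_t$, $w_t=g_t-\xi_t$, $\xi_t=\sqrt\kappa B_t$. Writing $\OO^{(\sigma)}(z)=\widehat V_\star^{i\sigma}(z)$ and recalling that $\widehat\E[\,\cdot\,]=\E[V_\star^{ia}(\xi)\,\cdot\,]$ in the boundary chart normalized at $q=\infty$ (Section~\ref{sec: BC changing}), set, for fixed $z$,
\[
R_\xi(z)=\E\bigl[V_\star^{ia}(\xi)\,V_\star^{i\sigma}(z)\bigr],\qquad m(\xi,t)=\bigl(R_\xi\,\|\,g_t^{-1}\bigr)(z),
\]
so that $M_t(z)=m(\xi_t,t)$ is smooth in $(\xi_t,t)$. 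It\^o's formula gives
\[
dM_t=\partial_\xi\big|_{\xi=\xi_t}m\,d\xi_t+\frac\kappa2\,\partial_\xi^2\big|_{\xi=\xi_t}m\,dt+L_t\,dt,\qquad L_t=\frac d{ds}\Big|_{s=0}\bigl(R_{\xi_t}\,\|\,g_{t+s}^{-1}\bigr),
\]
and, exactly as in Proposition~\ref{MO}, the Loewner flow $f_{s,t}=g_{t+s}\circ g_t^{-1}=\id-2sv_{\xi_t}+o(s)$ with $v_\xi(z)=1/(\xi-z)$ together with \eqref{eq: Lie} yields $L_t=\bigl(\LL_{v}R_{\xi_t}\,\|\,g_t^{-1}\bigr)$ for $v=-2v_{\xi_t}$.

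Everything then reduces to the Cardy-type identity
\[
\frac1{2a^2}\,\partial_\xi^2 R_\xi(z)=\bigl(\LL_{v_\xi}R_\xi\bigr)(z),
\]
the analogue of Proposition~\ref{PDE's4bdry} with the extra rooted insertion $V_\star^{i\sigma}(z)$ present. To prove it I would apply the level two degeneracy of $V_\star^{ia}$ (Proposition~\ref{TVd2V}) to get $\frac1{2a^2}\partial_\xi^2[V_\star^{ia}(\xi)V_\star^{i\sigma}(z)]=(T*V_\star^{ia})(\xi)\,V_\star^{i\sigma}(z)$; then Ward's equation for rooted fields (Proposition~\ref{Ward4VX}, applicable since $T=A=T_{(0)}+ib\,\partial J_{(0)}$ is real and singularity-free on $\partial\H$, cf. Lecture~\ref{ch: modifications}, so that Proposition~\ref{represent A} holds), using that $V_\star^{i\sigma}$ is a holomorphic differential so $\LL^-V_\star^{i\sigma}=0$; and finally $v_\xi=v_{\bar\xi}$ for $\xi\in\R$, hence $\LL^+_{v_\xi}+\LL^-_{v_\xi}=\LL_{v_\xi}$, together with $\E[\LL_vX]=\LL_v\E[X]$. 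Substituting this identity into the drift of $dM_t$ gives $\bigl(\frac\kappa2-\frac1{a^2}\bigr)\partial_\xi^2 m=0$, since $\kappa=2/a^2$; thus $M_t$ is a local martingale and $M=\E\,\OO^{(\sigma)}$ an $\SLE(\kappa)$ martingale-observable.

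The step requiring care — and the main obstacle — is invoking Ward's equation in the presence of two chiral vertex insertions, one at $p=\xi$ and one at the rooted point entering $V_\star^{i\sigma}(z)$, whereas Proposition~\ref{Ward4VX} is stated for tensor products of fields in $\FF_{(b)}$. I would handle this by carrying out the entire argument first for the chiral \emph{bi}-vertex field $\widehat V^{i\sigma}(z,z_0)$, for which the needed Ward's equation is available (Section~\ref{sec: chiral bi-vertex}), and then passing to the limit $z_0\to q$ in the normalization that defines $\widehat V_\star^{i\sigma}$ (Section~\ref{sec: V*}). As in the proof of the Ward identities for rooted fields there, this limit creates no extra drift: the normalizing factor does not depend on $t$, and $v_{\xi_t}$ has a triple zero at $q=\infty$, so the Lie-derivative contribution at the rooting point vanishes in the limit. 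Alternatively, since the expectation of a Wick exponential is $1$, one has $M(z)=w(z)^{\sigma a}w'(z)^{\lambda}$ in the fixed boundary chart at $q$, and the vanishing of the drift of $w_t(z)^{\sigma a}w_t'(z)^{\lambda}$ can be checked directly from the Loewner equation; this needs the same bookkeeping of conformal dimensions and is no shorter.
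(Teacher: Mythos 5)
Your argument is correct and rests on the same key idea as the paper's proof -- reduce to the chiral bi-vertex field, for which Cardy's/Ward's equations are available, and pass to the limit $z_0\to q$ -- but the execution differs in where the limit is taken. You take the limit at the level of the identities: you re-derive the Cardy-type equation $\frac1{2a^2}\pa_\xi^2R_\xi=\LL_{v_\xi}R_\xi$ for the correlation with the rooted insertion (justifying it, as in the proof of Ward's identities for rooted fields, by the triple zero of $v_\xi$ at $q=\infty$ killing the Lie-derivative contribution at the frozen endpoint), and then rerun the It\^o computation of Proposition~\ref{MO}. The paper instead takes the limit at the level of the stochastic differentials: it starts from the already-known local martingale $R_t(z,z_0)$ of the bi-vertex observable, factors $R_t=M_tE_t$ with $E_t$ collecting all $z_0$-dependent terms, observes that the drift of $M_t$ equals that of $-M_t\,dE_t/E_t-dM_t\cdot dE_t/E_t,$ and notes $dE_t/E_t\to0$ as $z_0\to q$; this avoids ever formulating a Cardy identity for the doubly-rooted correlation. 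Your route costs the extra (but routine) estimate that the $z_0$-terms in the Ward identity vanish in the limit, and one small caveat: the phrase ``the normalizing factor does not depend on $t$'' is only accurate for the time-independent constant used to define $\widehat V_\star^{i\sigma}$; the factor $E_t$ in the paper's factorization does depend on $t,$ which is precisely why the paper computes $dE_t/E_t$ rather than discarding it outright. Your fallback -- checking directly that $w_t^{\sigma a}(w_t')^\lambda$ is driftless from the Loewner equation -- is the elementary verification the paper explicitly sets aside in favor of the structural argument.
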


\ss It is of course very easy to verify this statement by It\^o's calculus but the point is that we can get it as a limit of Cardy's equations for bi-vertex fields.
Indeed,
let $R_t(z,z_0)$ denote the 1-point function martingale of the bi-vertex field.
We have
$$R_t(z,z_0)=M_t(z) E_t(z,z_0),$$
where
$$E_t(z,z_0)=w_t(z_0)^\alpha(w_t'(z_0))^\beta(w_t(z)-w_t(z_0))^\gamma $$
with appropriate exponents.
Since $R_t$ is a local martingale, the drift of $M_t$ is equal to that of
$$-M_t\frac{dE_t}{E_t}-dM_t\cdot\frac{dE_t}{E_t}.$$
It is trivial to see that
$$ \frac{dE_t}{E_t}\to 0\qquad( z_0\to q),$$
for any combination of the exponents, and this proves the statement.
(We will refer to this argument again in the next section.)

\ms\subsec{Conformal dimensions}
The normalization procedure in the definition of $\OO^{(\sigma)}$ produces covariance with respect to $q.$

\begin{prop} \label{hol 1-pt MO}
The 1-point function $M=\E\,\OO^{(\sigma)}=w^{\sigma a}(w')^\lambda$ is an ${\Aut}(D,p,q)$-invariant holomorphic $\lambda$-differential with respect to $z$
and a boundary $\lambda_q$-differential with respect to $q,$ where
$$\lambda=\frac{\sigma^2}2-\sigma b,\qquad \lambda_q=\lambda+\sigma a.$$
Every martingale-observable with the stated properties is the correlation function of some vertex field $\OO^{(\sigma)}.$
\end{prop}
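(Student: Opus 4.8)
The plan is to prove the two halves separately: for the forward direction I would compute the expectation and then read off the conformal type from the resulting closed form; for the converse I would run a uniqueness argument in the $(\H,0,\infty)$-uniformization and then use It\^o's calculus to pin down the exponents.

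\emph{Forward direction.} By the definition of the rooted field, $\OO^{(\sigma)}(z)=w^{\sigma a}(w')^{\lambda}\,e^{\odot i\sigma\Phichiral_{(0)}(z,q)}$ with $w=w_{D,p,q}:(D,p,q)\to(\H,0,\infty)$ and the prefactor $w^{\sigma a}(w')^{\lambda}$ non-random. Since $e^{\odot\xi}=e^{\xi-\frac12\E\xi^2}$ has expectation $1$ for any centered (generalized) Gaussian $\xi$, and since the limiting normalization in the definition of $\OO^{(\sigma)}$ is exactly what turns the exponential into a legitimate functional, one gets $M=\E\,\OO^{(\sigma)}=w^{\sigma a}(w')^{\lambda}$. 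From this closed form the conformal properties are immediate: $w$ and $w'$ are holomorphic and nonvanishing in $z$, so $\bp M=0$; the boundary-differential example of Section~\ref{sec: V*}, applied with exponents $\alpha=\sigma a$ and $\beta=\lambda$, says precisely that $w^{\sigma a}(w')^{\lambda}$ is a $(\lambda,0)$-differential in $z$ and a boundary differential of dimension $\sigma a+\lambda=\lambda_q$ in $q$; and $\Aut(D,p,q)$-invariance is the one already built into $\OO^{(\sigma)}$, the ingredients $\arg w$ and $\Phichiral_{(0)}(z,q)$ being $\Aut(D,p,q)$-invariant and the remaining freedom (the overall positive scaling of $w$) being exactly the boundary-chart dependence at $q$ accounted for by $\lambda_q$.

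\emph{Converse.} Let $M$ be a martingale-observable that is holomorphic, $\Aut(D,p,q)$-invariant, a $\lambda$-differential in $z$, and a boundary $\lambda_q$-differential in $q$. I would first match the $q$-dependence: since $w^{\mu}(w')^{\lambda}$ is a $(\lambda,0)$-differential in $z$ and a boundary $(\mu+\lambda)$-differential in $q$, the choice $\mu=\lambda_q-\lambda$ gives it the conformal type of $M$. Then the ratio $g:=M/\bigl(w^{\lambda_q-\lambda}(w')^{\lambda}\bigr)$ is a well-defined non-random conformal field whose denominator is holomorphic and nonvanishing, hence $g$ is holomorphic, a $(0,0)$-differential in $z$, a boundary $0$-differential in $q$ (so genuinely independent of the chart at $q$), and $\Aut(D,p,q)$-invariant. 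Transporting to $(\H,0,\infty)$, $g$ becomes a holomorphic function on $\H$ invariant under all dilations $z\mapsto cz$, $c>0$; therefore $g$ is constant on $(0,\infty)$ and, by holomorphy, constant on $\H$. Thus $M=\mathrm{const}\cdot w^{\lambda_q-\lambda}(w')^{\lambda}$. Finally the martingale-observable hypothesis constrains the exponents: computing the drift of $M_t=\bigl(w_t^{\lambda_q-\lambda}(w_t')^{\lambda}\,\|\,\id\bigr)$ by It\^o's formula along the Loewner flow, and using $\kappa=2/a^2$ and $b/a=\kappa/4-1$, the vanishing of the drift turns out to be equivalent to $\lambda=\tfrac{\sigma^2}{2}-\sigma b$ with $\sigma:=(\lambda_q-\lambda)/a\in\R$. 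With these relations $\lambda_q-\lambda=\sigma a$, so $M=\mathrm{const}\cdot w^{\sigma a}(w')^{\lambda}=\mathrm{const}\cdot\E\,\OO^{(\sigma)}$, the constant being $1$ in the standard normalization (or fixed by matching the behaviour as $z\to p$). If one reads ``with the stated properties'' as already including $\lambda=\tfrac{\sigma^2}{2}-\sigma b$ and $\lambda_q=\lambda+\sigma a$, this last It\^o step is not needed.

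\emph{Main obstacle.} The delicate point throughout is the book-keeping of the dependence on the marked boundary point $q$: ``boundary $\lambda_q$-differential in $q$'' and ``$\Aut(D,p,q)$-invariant'' must be invoked \emph{together}, because $\Aut(\H,0,\infty)$ acts nontrivially on boundary charts at $\infty$, and it is only this combined statement — not invariance as a bare $(\lambda,0)$-differential in $z$ — that legitimately reduces $g$ to an honest invariant scalar. Once that reduction is set up correctly, the homogeneity argument over $(\H,0,\infty)$ and the drift computation are routine.
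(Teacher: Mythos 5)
Your proof is correct. The forward half is the same as the paper's, which simply appeals to the boundary-differential example of Section~\ref{sec: V*} for the value of $\lambda_q$ and to the $\Aut(D,p,q)$-invariance already built into $\OO^{(\sigma)}$. For the converse you take a genuinely different route: the paper writes $M_t=f(w_t)(w_t')^\lambda$, uses driftlessness via It\^o to get a Cauchy--Euler equation for $f$, hence $f=C_1z^{\alpha_1}+C_2z^{\alpha_2}$ (or a power plus power$\,\cdot\log$), and then uses only the qualitative fact that $M$ is a boundary differential at $q$ to discard the second term, the indicial equation being exactly $\lambda=\sigma^2/2-\sigma b$ with $\alpha=\sigma a$; you instead exhaust the conformal data first, showing by the ratio argument that invariance together with the prescribed dimensions at $z$ and $q$ already force $M=\mathrm{const}\cdot w^{\lambda_q-\lambda}(w')^\lambda$, and only then invoke It\^o, which collapses to the single algebraic constraint $\lambda=\sigma^2/2-\sigma b$ with $\sigma a=\lambda_q-\lambda$ --- the same indicial relation, but appearing as a consistency condition rather than an ODE to be solved. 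What each buys: the paper's ordering never needs the specific value of $\lambda_q$ (being a boundary differential of \emph{some} dimension suffices), whereas yours avoids solving any ODE and makes explicit that, under the narrow reading in which $\lambda$ and $\lambda_q$ are already those of $\OO^{(\sigma)}$, the martingale hypothesis is superfluous for the converse --- a remark the paper does not make. One small repair in your reduction: the positive real axis lies on $\pa\H$, so ``$g$ constant on $(0,\infty)$'' is not quite the right intermediate step; rather, $g(cz)=g(z)$ for all $c>0$ gives $zg'(z)=0$ on $\H$ (or constancy on the ray $i(0,\infty)\subset\H$ plus the identity theorem), hence $g$ is constant, and your conclusion stands. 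Both arguments, like the paper's, determine $M$ only up to the multiplicative constant $C_1$, fixed by the normalization of $\OO^{(\sigma)}$.
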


\begin{proof}
See Example in Section~\ref{sec: V*} for the value of $\lambda_q.$
The last part, first mentioned in Smirnov, see \cite{Smirnov06}, follows from It\^o's formula.
Let $M_t = f(w_t)(w_t')^\lambda.$
Driftlessness of $M_t$ reduces to Cauchy-Euler equation in $(\H,0,\infty).$
Thus $M(z) = C_1z^{\alpha_1} + C_2z^{\alpha_2}$ or $C_1z^{\alpha} + C_2z^{\alpha}\log z$ in the $(\H,0,\infty)$-uniformization.
We can take $C_2 = 0$ because $M$ is a boundary differential at $q.$
\end{proof}

\ms Note that

\ms (a) $\lambda=0$ if and only if $\sigma=2b$ (or $\sigma=0$);

\ms (b) $\lambda=1$ if and only if $\sigma=-2a$ or $\sigma=\dfrac1a=2(a+b)$;

\ms (c) $\lambda_q=0$ if and only if $\sigma=2(b-a)$ (or $\sigma=0$).

\ms \subsec{Special cases} Suppose $\sigma\ne0.$

\ms (a) If $\lambda = 0$ (i.e., $M$ is a scalar), then
$$M = w^\beta,\qquad\beta=\lambda_q=1-4/\kappa,\qquad (\kappa\ne4)$$
(a conformal map onto a wedge).
These ``wedge" observables are some of the simplest (and well-known) SLE martingale-observables.
For example, if $\kappa=2,$ then the observable
\begin{equation} \label{eq: LERW observable}
\Im\,M_{D,p,q} = \frac{P(z,p)}{N_qP(\cdot,p)}
\end{equation}
plays an important role in the theory of loop-erased random walks \index{loop-erased random walk (LERW)} (LERWs), see \cite{LSW04}.
Here $P$ is the Poisson kernel, and $N_qP(\cdot,p)$ is its normal derivative at $q.$

\ms \begin{rmk*}
For $\kappa>4,$ the wedge observables $M$ have the following probabilistic interpretation.
Let $\eta_\ve$ denote a point on $\pa D$ at distance $\ve$ from $q$ (in a local chart $\phi$).
Then
$$\lim_{\ve\to 0}\frac{\P(\tau_z=\tau_{\eta_\ve})}{\ve^\beta} = \const~\Im\, (M^{(2b)}(z)\,\|\,\phi),$$
where $\const$ is the normalization constant, and also
$$\lim_{\ve\to 0}\frac{\P(\tau_z>\tau_{\eta_\ve})}{\ve^\beta}$$
is a linear combination of real and imaginary parts of $(M^{(2b)}(z)\,\|\,\phi).$
These facts follow from Cardy's formula \eqref{eq: Cardy's formulae} which we discuss later in this lecture;
$\tau_z$ and $\tau_{\eta_\ve}$ are the SLE stopping times \eqref{eq: SLE stopping time}.
In particular, it is clear from this interpretation why wedge observables are scalars with respect to $z$ and have non-trivial conformal dimensions with respect to $q.$
\end{rmk*}

\ms (b) If $\lambda_q=0,$ then
$$M =(w'/w)^\lambda = (f')^\lambda,\qquad \lambda=8/\kappa-1,$$
where $f = \log w$ is a map onto the strip.
Smirnov used this observable with $\kappa=16/3$ in his study of the random cluster version of the Ising model, see \cite{Smirnov10}.

\ms \begin{rmk*}
If $\kappa\in (4,8),$ then the boundary values of this $M$ have the following interpretation (another special case of Cardy's formula, see Proposition~\ref{bdry MO}).
For $\eta\in\pa D\sm\{p,q\}$ let $\P(\eta,\ve)$ denote the probability that the SLE curve hits the boundary interval with endpoint $\eta$ of length $\ve,$ where $\ve\ll1$ is measured in a local chart $\phi.$
Then
$$\lim_{\ve\to 0}\frac {\P(\eta,\ve)}{\ve^\lambda} =\const\,(M^{(2b-2a)}(\eta)\,\|\,\phi),$$
where $\const$ is the normalization constant which depends on $(D, p,q)$ and $\kappa.$
Clearly, if the limit exists and is non-trivial, then it gives a boundary martingale-observable of conformal dimensions $\lambda$ at $\eta$ and zero at $q.$
On the other hand, $M^{(2b-2a)}$ is the only observable with these properties.
This argument certainly does not prove the existence of the limit but it provides a quick ``physical" answer.
(See the formula \eqref{eq: Beffara's observable} involving Beffara's observables for a similar statement at interior points.)
\end{rmk*}

\ms (c) If $\lambda +\lambda_q=0,$ then $\sigma=2b-a$ and
$$M = (w'/w^2)^\lambda = (f')^\lambda,\qquad \lambda=3/\kappa-1/2,$$
where $f=-1/w$ is a map onto the half-plane.
In the case $\kappa=2,$ this observable $M$ is the derivative of the LERW observable \eqref{eq: LERW observable}.
In the case $\kappa=3,$ $M$ plays a crucial role in Smirnov's work on spin Ising model, see \cite{CS09}.
In both cases, one can explain the relation $\lambda +\lambda_q=0$ from the point of view of discrete models -- it comes from the rescaling of the corresponding partition functions.
Smirnov suggested that the two series (b) and (c) of SLE observables describe the general random cluster and $O(N)$ models, respectively.

\bs\section{Normalized tensor products} \label{sec: star} \index{normalized tensor product} \index{tensor product!normalized}

\ms \subsec{Definition}
To define the product of $\OO^{(\sigma_1)}(z_1)$ and $\OO^{(\sigma_2)}(z_2),$ $z_1\ne z_2,$ we again need normalization because $\E[\Phichiral_{(0)}(z_1,q)\Phichiral_{(0)}(z_2,q)]$ diverges.
Applying the same idea as in Section~\ref{sec: 1-pt V} (normalizing the product of bi-vertex fields properly and then taking a limit), we define
\begin{equation} \label{eq: OstarO1}
\OO^{(\sigma_1)}(z_1)\star\OO^{(\sigma_2)}(z_2)=M_1M_2(w_1-w_2)^{\sigma_1\sigma_2}e^{\odot i\sigma_1\Phichiral_{(0)}(z_1)+ i \sigma_2\Phichiral_{(0)}(z_2)},
\end{equation}
where $M_j = \E[\OO^{(\sigma_j)}]=w_j^{\sigma_j a}(w_j')^{\sigma_j^2/2-\sigma_j b}$ and $w_j=w(z_j),w_j'=w'(z_j).$
Again, a simple way to express this definition is to say that we ignore all $q$-terms in the cross-ratio \eqref{eq: cross-ratio} when we compute the correlation function of Wick's exponentials.
The term $(w_1-w_2)^{\sigma_1\sigma_2}$ appears from the following computation:
if $\alpha_1,\alpha_2$ are two non-intersecting paths connecting from $q$ to $z_1,z_2,$ respectively, then ignoring the $q$-terms, we have
\begin{align*}
\E[\Phichiral_{(0)}(z_1){\Phichiral_{(0)}(z_2)}] &= \int_{\alpha_1}\int_{\alpha_2}\E[J_{(0)}(\zeta){J_{(0)}(\eta)}]\,d\zeta\,{d\eta}\\
&=\int_{\alpha_2} \frac{1}{\zeta-\eta}\Big|_{\zeta=q}^{\zeta=z_1}\,d\eta=-\log(z_1-z_2), \qquad (\textrm{in }\H).
\end{align*}

\ms Note the monodromy in the correlation function of the tensor product. There is no difficulty in extending the concept of martingale-observable to multivalued functions -- each continuous branch should be a martingale-observable.

\bs In a similar way, we can use $\E[\Phichiral_{(0)}(z_1)\overline{\Phichiral_{(0)}(z_2)}]=\log(z_1-\bar z_2)$ to define
\begin{equation} \label{eq: OstarO2}
\OO^{(\sigma_1)}(z_1)\star\overline{\OO^{(\sigma_2)}(z_2)}=M_1\overline{M}_2(w_1-\bar w_2)^{\sigma_1\bar \sigma_2}e^{\odot i\sigma_1\Phichiral_{(0)}(z_1)+\overline{i\sigma_2\Phichiral_{(0)}(z_2)}}.
\end{equation}

\bs\subsec{General 1-point vertex fields} This last definition~\eqref{eq: OstarO2} can be extended to the case $z=z_1=z_2,$ cf. \eqref{eq: J*bar J}, so if we denote
$$\OO^{(\sigma, \sigma_*)}(z)=\OO^{(\sigma)}(z)\star\overline{\OO^{(\bar\sigma_*)}(z)},$$
then we have
$$\OO^{(\sigma, \sigma_*)}(z)=(w-\bar w)^{\sigma\sigma_*}\,w^{\sigma a}\,\bar w^{\sigma_* a}\,(w')^\lambda\,\left(\overline{w'}\right)^{\lambda_*}e^{\odot i\sigma\Phichiral_{(0)}-i\sigma_*\overline{\Phichiral_{(0)}}},$$
where
$$
\lambda=\frac{\sigma^2}2-\sigma b,\qquad \lambda_*=\frac{\sigma_*^2}2-\sigma_* b
$$
(the conformal dimensions with respect to $z$).

\bs \subsec{Special cases} Up to constant factors, we have the following relations:

\ms (a) The non-rooted bi-vertex chiral field of $\widehat\FF_{(b)}$-theory,
$$\OO^{(\sigma)}(z_1)\star\OO^{(-\sigma)}(z_2)=\widehat V^{i\sigma}(z_1,z_2).$$
This can be shown by direct substitution $\sigma_1 = \sigma, \sigma_2 = -\sigma$ into \eqref{eq: OstarO1}.
We get \eqref{eq: hat bi-vertex} because $\Phichiral_{(0)}(z_1)-\Phichiral_{(0)}(z_2)= \Phichiral_{(0)}(z_1,z_2).$
Alternately, we can take the limit $z_0\to q$ in the identity
$$\widehat V^{i\sigma}(z_1,z_0)\widehat V^{-i\sigma}(z_2,z_0) = \widehat V^{i\sigma}(z_1,z_2);$$

\ms (b) The non-rooted and non-chiral vertex field of $\widehat\FF_{(b)}$-theory,
$$\OO^{(\sigma,-\sigma)}=\widehat \VV^{i\sigma},$$
see \eqref{eq: hat VV}.
This follows from the identity $\Phichiral_{(0)}+\overline{\Phichiral_{(0)}} = 2\,\Re\,\Phichiral_{(0)} = \Phi_{(0)};$

\ms (c) The dual vertex field of $\widehat\FF_{(b)}$-theory, \index{vertex field!dual $\OO^{(\sigma,\sigma)}$} \index{dual vertex field $\OO^{(\sigma,\sigma)}$}
$$\OO^{(\sigma,\sigma)} = (w-\bar w)^{\sigma^2}|w|^{2\sigma a}|w'|^{2\lambda} e^{\odot - \sigma\widetilde\Phi_{(0)}}.$$
(It is the properly normalized and modified exponential of the harmonic conjugate bosonic field $\widetilde\Phi,$
see \eqref{eq: dual boson}.)
Note that $\OO^{(\sigma,\sigma)}$ is a real field if $\sigma\in\R.$
(In general $\OO^{(\sigma,\sigma_*)}$ is real if and only if $\sigma_*=\bar\sigma.$)

\bs \subsec{Multipoint vertex fields} \index{vertex field!multipoint}
We can extend the definition to products of $n$ fields, e.g.,
$$\OO^{(\sigma_1)}(z_1)\star\cdots\star\OO^{(\sigma_n)}(z_n)=M_1\cdots M_n\,\prod_{j<k}(w_j-w_k)^{\sigma_j\sigma_k} e^{\odot i\sigma_1\Phichiral_{(0)}(z_1)+\cdots+ i\sigma_n\Phichiral_{(0)}(z_n)}.$$
In particular, we have
\begin{equation} \label{eq: n-pt O}
\E\,\OO^{(\sigma_1,\sigma_{*1})}(z_1)\star\cdots\star\OO^{(\sigma_n,\sigma_{*n})}(z_n)=
\prod_j \E\,\OO^{(\sigma_j,\sigma_{*j})}(z_j)
\prod_{j<k} L_{j,k}(z_j,z_k),
\end{equation}
where
$$L_{j,k}(z_j,z_k)=(w_j-w_k)^{\sigma_j\sigma_k}(\bar w_j-\bar w_k)^{\sigma_{*j}\sigma_{*k}}(w_j-\bar w_k)^{\sigma_j\sigma_{*k}}(\bar w_j-w_k)^{\sigma_{*j}\sigma_k}.$$
In general, the operation $\star$ is commutative (in the sense of multivalued functions), associative, and real
(i.e., $\overline{A\star B}=\bar A\star\bar B$).

\ms \subsec{Cardy's equations and SLE martingale-observables}
It is not difficult to show that Cardy's equations survive under the normalization procedure.
\begin{prop}
Normalized correlations $M$ of chiral vertex fields (and their complex conjugates) satisfy Cardy's equations:
$$\LL_{v_0}M = \frac1{2a^2}(\sum\pa_j+\bp_j)^2M.$$
\end{prop}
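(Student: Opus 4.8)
The plan is to reduce the Cardy equation for a normalized $n$-point vertex correlator to the Cardy equations already established for chiral \emph{bi}-vertex fields (Proposition~\ref{PDE's4bdry} and its Corollary~\eqref{eq: BPZ-C eq}), using the same ``ignore the $q$-terms and pass to a limit'' device that was used in Section~\ref{sec: 1-pt V} to prove Proposition~\ref{VM} and Proposition~\ref{hol 1-pt MO}. The point is that the normalized product $\OO^{(\sigma_1)}(z_1)\star\cdots\star\OO^{(\sigma_n)}(z_n)$ is, by definition \eqref{eq: n-pt O}, the $z_0\to q$ limit (after multiplication by the appropriate power of a conformal factor at $z_0$) of a tensor product of genuine chiral bi-vertex fields $\widehat V^{i\sigma_j}(z_j,z_0)$, all rooted at the same auxiliary point $z_0$. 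Bi-vertex fields lie in $\widehat\FF_{(b)}$ and are differentials, so the Cardy equation \eqref{eq: BPZ-C eq} applies to $\widehat\E\big[\prod_j \widehat V^{i\sigma_j}(z_j,z_0)\big]$ for each fixed $z_0$, with the Lie derivative $\LL_{v_0}$ acting on all the variables including $z_0$.

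First I would write $R(z_0) := \widehat\E\big[\prod_{j=1}^n \widehat V^{i\sigma_j}(z_j,z_0)\big]$ in the $(\H,0,\infty)$-uniformization, and record that by \eqref{eq: n-pt O}-type bookkeeping it factors as $R(z_0) = M(z_1,\dots,z_n)\cdot E(z_0; z_1,\dots,z_n)$, where $M$ is the sought normalized correlator and $E$ collects exactly the terms involving $z_0$ (powers of $w(z_0)$, $w'(z_0)$, and cross-factors $(w(z_j)-w(z_0))^{\bullet}$, $(\bar w(z_j)-w(z_0))^{\bullet}$). Applying $\LL_{v_0}$ (as a first-order differential operator in $z_1,\dots,z_n,z_0$ plus the differential weights at each point) and using Leibniz's rule for $\LL_v$ on tensor products (Proposition~\ref{Leibnitz4strings}), the equation \eqref{eq: BPZ-C eq} for $R$ becomes
$$
\LL_{v_0}^{(z_\bullet)}M\cdot E + M\cdot\LL_{v_0}^{(z_0)}E \;=\; \frac1{2a^2}\big(\textstyle\sum_j\pa_j+\bp_j+\pa_{z_0}+\bp_{z_0}\big)^2\big(M E\big),
$$
where $\LL_{v_0}^{(z_\bullet)}$ is the part of the Lie derivative acting on the $z_j$'s and $\LL_{v_0}^{(z_0)}$ the part acting on $z_0$.

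Next I would divide through by $E$ and take the limit $z_0\to q$ (i.e.\ $w(z_0)\to\infty$ in the $(\H,\infty)$-normalization used to define the rooted fields). The key technical claim — and the main obstacle — is the same one that made Propositions~\ref{VM} and \ref{hol 1-pt MO} work: every term produced by differentiating $E$ and then dividing by $E$ tends to zero as $z_0\to q$, for \emph{any} combination of exponents. Concretely, $\partial_{z_0}E/E$, $\partial_{z_j}E/E$, and the second-order analogues $\partial_{z_0}^2 E/E$, $(\partial_{z_0}E)(\partial_{z_j}E)/E^2$, etc.\ all involve factors like $w'(z_0)/w(z_0)$ or $w'(z_0)/(w(z_j)-w(z_0))$ which vanish as $w(z_0)\to\infty$; one must check that no compensating growth of $E$ or of the weights survives, and that the $\LL_{v_0}^{(z_0)}E/E$ term — including the differential weight of $\widehat V$ at $z_0$ against the vector field $v_0(\zeta)=1/(0-\zeta)=-1/\zeta$, which has a triple zero at $\infty$ — also vanishes. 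This last point is exactly why the normalization point must be the marked point $q$: the vanishing of $v_0$ and its first derivative at $\infty$ kills the potentially divergent $q$-contributions, precisely as in the proof of the Ward identities for rooted fields in Section~\ref{sec: V*}. Granting this vanishing, the limit of the displayed identity is
$$
\LL_{v_0}M \;=\; \frac1{2a^2}\big(\textstyle\sum_j\pa_j+\bp_j\big)^2 M,
$$
which is the asserted Cardy equation; the case involving complex-conjugate factors $\overline{\OO^{(\sigma_*)}}$ is identical, using $\LL^-$ and \eqref{eq: OstarO2} in place of \eqref{eq: OstarO1}, since $\overline{\OO^{(\sigma)}}$ is an anti-holomorphic differential and $\LL_{v_0}$ splits as $\LL^+_{v_0}+\LL^-_{v_0}$. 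Finally, I would remark that an alternative, more self-contained route is to verify the equation directly by Wick's calculus on the explicit formula \eqref{eq: n-pt O}: the degeneracy relation of Proposition~\ref{TVd2V} together with the Ward equation of Proposition~\ref{Ward4VX} gives $\frac1{2a^2}\pa^2 \widehat\E[\OO^{(\sigma)}(z)\,\XX] = \widehat\E[\OO^{(\sigma)}(z)\LL^+_{v_z}\XX] + \widehat\E[\LL^-_{v_{\bar z}}(\OO^{(\sigma)}(z)\XX)]$, and then translation invariance converts $\pa_\xi$ at $\xi=0$ into $-\sum(\pa_j+\bp_j)$ exactly as in the Corollary after Proposition~\ref{PDE's4bdry}; but the limiting argument above is shorter and reuses the machinery already in place.
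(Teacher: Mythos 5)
Your proposal is correct and follows essentially the route the paper intends: the text proves this by asserting that Cardy's equations survive the normalization procedure, i.e., one applies the equations to strings of bi-vertex fields rooted at an auxiliary point $z_0$, factors out the $z_0$-dependent part $E$, and lets $z_0\to q$, exactly the $dE_t/E_t\to0$ mechanism used for Proposition~\ref{VM}. Your detailed check that all $E$-derivative terms (and the Lie-derivative weight at $z_0$, thanks to the triple zero of $v_0$ at $q=\infty$) vanish in the limit is precisely the content the paper leaves to the reader, and your alternative direct verification matches the paper's worked example.
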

\begin{eg*}
Let $M = \E\,X,$ where $$X(z_1,\cdots,z_n)=\OO^{(\sigma_1,\sigma_{*1})}(z_1)\star\cdots\star\OO^{(\sigma_n,\sigma_{*n})}(z_n).$$
In $(\H,0,\infty),$ $M$ reads as
$$\prod_j z_j^{\sigma_ja}\bar z_j^{\sigma_{*j}a}(z_j-\bar z_j)^{\sigma_j\sigma_{*j}}\,\prod_{j< k}(z_j-z_k)^{\sigma_j\sigma_k}(\bar z_j-\bar z_k)^{\sigma_{*j}\sigma_{*k}}(z_j-\bar z_k)^{\sigma_j\sigma_{*k}}(\bar z_j-z_k)^{\sigma_{*j}\sigma_k}$$
and satisfies the 1st order linear PDE
$$\frac1{a}\sum_j(\pa_j+\bp_j)M = \sum_j \Big(\frac{\sigma_j}{z_j}+ \frac{\sigma_{*j}}{\bar z_j}\Big)M.$$
Cardy's equation for $M$ is the 2nd order PDE
\begin{align*}
\frac1{2a^2}(\sum\pa_j+\bp_j)^2M &= -(a+b)\sum\Big(\frac{\sigma_j}{z_j^2}+ \frac{\sigma_{*j}}{\bar z_j^2}\Big)M + \frac12\Big(\sum \frac{\sigma_j}{z_j}+ \frac{\sigma_{*j}}{\bar z_j}\Big)^2M\\
&= \sum\Big(-\frac{\pa_j}{z_j}+\frac{\lambda_j}{z_j^2}-\frac{\bp_j}{\bar z_j}+\frac{\lambda_{*j}}{\bar z_j^2}\Big)M = \LL_{v_0}M.
\end{align*}
\end{eg*}

\begin{prop}
Normalized correlations of chiral vertex fields (and their complex conjugates) are SLE observables.
\end{prop}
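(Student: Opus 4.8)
The plan is to reduce the statement to the case of rooted chiral vertex fields $\OO^{(\sigma)}$ and their conjugates $\overline{\OO^{(\sigma)}}$, for which the relevant equations (Ward's and the level two degeneracy) are already available, and then propagate the martingale property through the normalization $\star$ by the same limiting device used in Proposition~\ref{VM}. Concretely, write the normalized correlator as
$$M(z_1,\cdots,z_n)=\widehat\E\big[\OO^{(\sigma_1)}(z_1)\star\cdots\star\OO^{(\sigma_n)}(z_n)\big],$$
where some of the factors may be replaced by conjugates; by the product formulas \eqref{eq: n-pt O} and \eqref{eq: OstarO1}, \eqref{eq: OstarO2}, $M$ is an explicit $\Aut(D,p,q)$-invariant (multivalued) non-random field. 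First I would recall that each single-variable factor $\OO^{(\sigma_j)}$, being a rooted chiral vertex field in $\widehat\FF_{(b)}$, satisfies the level two degeneracy equation of Proposition~\ref{TVd2V} (via Proposition~\ref{degeneracy2}: it is Virasoro primary of dimension $\lambda_j = \sigma_j^2/2 - \sigma_j b$ and current primary with charge $\sigma_j$), and that Ward's equations hold for strings containing such fields (Proposition~\ref{Ward4VX} together with the boundary-point extension mentioned right after it).

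Next I would run the martingale computation exactly as in the proof of Proposition~\ref{MO}. Let $g_t$ be the SLE maps, $\xi_t = \sqrt\kappa B_t$, and set $R_\xi = \widehat\E_\xi[\cdots]$ so that $M_t = (R_{\xi_t}\,\|\,g_t^{-1})$. It\^o's formula gives
$$dM_t = \pa_\xi\big|_{\xi_t} m(\xi,t)\,d\xi_t + \frac\kappa2 \pa_\xi^2\big|_{\xi_t} m(\xi,t)\,dt + L_t\,dt,$$
with $L_t = (\LL_v R_{\xi_t}\,\|\,g_t^{-1})$ and $v = -2 v_{\xi_t}$, $v_\xi(z) = 1/(\xi - z)$. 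The drift vanishes provided $\tfrac\kappa2 \pa_\xi^2 R_\xi = \tfrac1{a^2}\LL_{v_\xi} R_\xi$, i.e.\ $\tfrac1{2a^2}\pa_\xi^2 R_\xi = \LL_{v_\xi} R_\xi$ (using $\kappa = 2/a^2$) --- this is precisely Cardy's equation \eqref{eq: PDE's4bdry}. So the content reduces to: \emph{normalized correlations of chiral vertex fields satisfy Cardy's equation}, which is the statement of the Proposition immediately preceding the one we are proving. Thus the cleanest route is to invoke that Proposition and then apply the It\^o argument above verbatim; I would write the proof in that order, citing Proposition~\ref{MO} for the It\^o/drift bookkeeping since it is identical.

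For completeness I would also indicate why Cardy's equations survive the normalization, in case one wants a self-contained argument rather than a citation. The idea, already sketched in Section~\ref{sec: 1-pt V} for one-point functions, is that the normalized correlator is a limit $z_0\to q$ of correlators of well-defined chiral \emph{bi}-vertex fields $\widehat V^{i\sigma}$, for which Cardy's equations hold by Proposition~\ref{PDE's4bdry} (the bi-vertex fields are differentials in both variables and their $z$-dependence satisfies the level two degeneracy equation via Proposition~\ref{sing^4non-chiral}(b) or Proposition~\ref{degeneracy2}). One writes the bi-vertex correlator as $M(z_1,\cdots,z_n)\cdot E(z_0;z_1,\cdots,z_n)$ where $E$ collects all factors involving $z_0$; since $E\to \const$ and the Lie-derivative and $\pa_\xi^2$ operators act only on the first $n$ variables, Cardy's equation for the product descends to Cardy's equation for $M$ in the limit. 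The main obstacle here is the usual one with chiral/multivalued fields: one must be careful that correlations with single-valued Fock space functionals are unramified at the coincidence points $\zeta = z_j$ (so that Ward's OPE and the degeneracy equation make sense), that each continuous branch of the multivalued observable is separately a local martingale, and that the limit $z_0\to q$ and the differential operators genuinely commute --- but all of these points are handled by the discussion in Sections~\ref{sec: chiral Phi}, \ref{sec: chiral bi-vertex}, and \ref{sec: star}, so it is more bookkeeping than a genuine difficulty. I would therefore present the proof compactly: reduce to Cardy's equation via the preceding Proposition, then quote the It\^o computation of Proposition~\ref{MO}.
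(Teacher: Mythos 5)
Your proof is correct in its essentials, but the mechanism you foreground differs somewhat from the one the paper actually uses. The paper's proof is a one-line appeal to the argument of Proposition~\ref{VM}: the multi-point correlator of chiral \emph{bi}-vertex fields is already a martingale-observable of the $\widehat\FF_{(b)}$-theory (Proposition~\ref{MO} in its chiral interpretation), one factors $R_t(z_0;z_1,\cdots,z_n)=M_t(z_1,\cdots,z_n)\,E_t(z_0;\cdot)$ and observes that the drift of $M_t$ is controlled by $dE_t/E_t\to0$ as $z_0\to q$; this transfers the local-martingale property directly and never requires knowing that Cardy's equations survive the normalization, nor re-running It\^o. You instead invoke the preceding proposition (Cardy's equations for normalized correlations) and repeat the drift computation of Proposition~\ref{MO}; your ``Route B'' is essentially the deterministic counterpart of the paper's stochastic factorization. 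Both routes are legitimate: yours makes explicit that driftlessness is exactly Cardy's equation \eqref{eq: PDE's4bdry} (with translation invariance to pass between $\xi=0$ and general $\xi$), while the paper's route sidesteps the need to commute the differential operators with the limit $z_0\to q$ and the smooth-chart-dependence bookkeeping of the It\^o argument.

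Two local corrections. First, your preliminary claim that \emph{each} factor $\OO^{(\sigma_j)}$ (and likewise each bi-vertex $\widehat V^{i\sigma_j}$) ``satisfies the level two degeneracy equation of Proposition~\ref{TVd2V} via Proposition~\ref{degeneracy2}'' is false for general exponents: degeneracy requires $2\sigma(\sigma+b)=1$, i.e.\ $\sigma=a$ or $\sigma=-(a+b)$. Fortunately nothing in your argument uses it: the only degeneracy needed is that of the inserted boundary vertex $V_\star^{ia}$ at the SLE tip, which is already packaged inside Cardy's equations (Proposition~\ref{PDE's4bdry} and its normalized-correlation version); the observable factors need only belong to $\FF_{(b)}$ in the extended chiral sense so that Ward's identities apply to strings containing them. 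You should delete or reword that sentence, and the analogous parenthetical in your Route B. Second, your intermediate drift identity should read $\frac\kappa2\pa_\xi^2R_\xi=2\LL_{v_\xi}R_\xi$, not $\frac1{a^2}\LL_{v_\xi}R_\xi$; the final form $\frac1{2a^2}\pa_\xi^2R_\xi=\LL_{v_\xi}R_\xi$ that you then quote is the correct one, so this is only a slip in the displayed intermediate step.
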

This follows from the argument in the proof of Proposition~\ref{VM}.

\ms \subsec{Conformal dimensions at $q$}
The next proposition gives a necessary and sufficient condition for a multipoint vertex field to be a 0-boundary differential at $q.$

\begin{prop} \label{dim calculus}
Let $X(z_1,\cdots,z_n)=\OO^{(\sigma_1,\sigma_{*1})}(z_1)\star\cdots\star\OO^{(\sigma_n,\sigma_{*n})}(z_n).$
Then $X$ is a boundary differential at $q$ of dimension
$$\lambda_q(X)=(a-b)\Sigma+\frac12\Sigma^2,\qquad\Sigma:=\sum_j(\sigma_j+\sigma_{*j}).$$
In particular,
$$\lambda_q(X)=0\qquad\textit{if and only if}\qquad \Sigma=0\quad {\rm or}\quad \Sigma=2b-2a.$$
\end{prop}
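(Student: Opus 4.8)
The plan is to compute the transformation behavior of $X$ under a change of standard boundary chart at $q$ directly from the explicit formula for the normalized tensor product. Recall from Section~\ref{sec: star} that the fields $\OO^{(\sigma_j,\sigma_{*j})}$ are built out of $w = w_\phi$, a conformal map $(D,p,q)\to(\H,0,\infty)$ normalized in the boundary chart $\phi$ at $q$ so that $w(\zeta)\sim -1/(\zeta-q)$ as $\zeta\to q$, together with the single-valued pieces $e^{\odot i\sigma_j\Phi^+_{(0)}(z_j)}$ and the cross-terms $(w_j-w_k)^{\sigma_j\sigma_k}$ etc.\ that are $\Aut(D,p,q)$-invariant in their dependence on $q$. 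So the only source of $q$-dependence (beyond the covariant dependence on the $z_j$'s, which is irrelevant here) is through the normalization of $w$. If $h$ is the transition map between two standard boundary charts $\phi$ and $\widetilde\phi$ at $q$ with $\phi(q)=\widetilde\phi(q)=0$, then $w_\phi = k\, w_{\widetilde\phi}$ for some $k>0$, and as in the Example in Section~\ref{sec: V*} one has $k = h'(0)$.

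First I would isolate, in the $(\H,0,\infty)$-uniformization formula displayed just before the statement,
$$\prod_j z_j^{\sigma_j a}\bar z_j^{\sigma_{*j}a}(z_j-\bar z_j)^{\sigma_j\sigma_{*j}}\prod_{j<k}(z_j-z_k)^{\sigma_j\sigma_k}(\bar z_j-\bar z_k)^{\sigma_{*j}\sigma_{*k}}(z_j-\bar z_k)^{\sigma_j\sigma_{*k}}(\bar z_j-z_k)^{\sigma_{*j}\sigma_k},$$
exactly those monomials whose total homogeneity degree in $w$ I must track. Under $w_\phi = k\,w_{\widetilde\phi}$, rescaling $z_j\mapsto k z_j$ (and $\bar z_j \mapsto k\bar z_j$), the factor $z_j^{\sigma_j a}$ contributes $k^{\sigma_j a}$, the factor $\bar z_j^{\sigma_{*j}a}$ contributes $k^{\sigma_{*j}a}$, and the factor $(z_j-\bar z_j)^{\sigma_j\sigma_{*j}}$ contributes $k^{\sigma_j\sigma_{*j}}$; each cross-term $(z_j\pm z_k)^{\sigma\sigma'}$ contributes $k^{\sigma\sigma'}$. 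Meanwhile the covariant factors $(w_\phi')^{\lambda_j}$, which account for the $(\lambda_j,\lambda_{*j})$-differential dependence on $z_j$, pick up a factor $k^{\lambda_j}$ that is precisely what makes $X$ a $(\lambda_j,\lambda_{*j})$-differential in $z_j$ and is \emph{not} part of $\lambda_q$. Hence the genuine $q$-weight is the total power of $k$ coming from the $z_j$-homogeneous part: summing the $a$-linear contributions gives $a\sum_j(\sigma_j+\sigma_{*j}) = a\Sigma$; the $\sigma\sigma_*$ self-terms and the cross-terms together sum to $\tfrac12\Big(\sum_j(\sigma_j+\sigma_{*j})\Big)^2 - b$-independent corrections; the $-b$ in each $\lambda_j$ is absorbed since $\lambda_j = \sigma_j^2/2-\sigma_j b$ already counts against the $z_j$-differential weight, except for the part that is genuinely attached to $w$ rather than $w'$, which contributes $-b\Sigma$. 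Collecting, $\lambda_q(X) = a\Sigma - b\Sigma + \tfrac12\Sigma^2 = (a-b)\Sigma + \tfrac12\Sigma^2$, which is the claimed formula.

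The remaining assertion is pure algebra: $\lambda_q(X)=0$ iff $\Sigma\big((a-b)+\tfrac12\Sigma\big)=0$, i.e.\ $\Sigma=0$ or $\Sigma = -2(a-b) = 2b-2a$, as stated. The main obstacle — and the step that needs the most care — is the correct bookkeeping of which powers of $k$ belong to $\lambda_q$ and which belong to the $z_j$-differential weights $\lambda_j,\lambda_{*j}$; in particular one must verify that the $w'$-factors contribute $k^{\lambda_j}$ in a way that is \emph{entirely} accounted for by the statement ``$X$ is a $(\lambda_j,\lambda_{*j})$-differential'' and does not leak into $\lambda_q$, while the $w$-factors (the $z_j^{\sigma_j a}$ and the $-b$-shift hidden in $\sigma_j$) \emph{do} contribute to $\lambda_q$. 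A clean way to organize this is to write $w_\phi(\zeta) = -1/(\zeta-q) + O(1)$ explicitly, substitute into the formula for $X$ in an arbitrary chart, and read off the overall $(h'(0))$-power as the coefficient that multiplies the chart-independent remainder; comparing with the defining relation \eqref{eq: bdry diff} then yields $\lambda_q$. Once the power count is pinned down, everything follows by collecting exponents, exactly as in the displayed Cardy-equation example preceding the proposition.
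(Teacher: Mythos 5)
Your overall strategy --- tracking the power of $k=h'(0)$ picked up when the standard boundary chart at $q$ changes and $w_\phi=k\,w_{\widetilde\phi}$ --- is the right one, and it is essentially the paper's argument (the paper factors $\E\,X=\prod_j\E\,\OO^{(\sigma_j,\sigma_{*j})}(z_j)\,\prod_{j<k}L_{j,k}$, notes that $X/\E\,X$ has zero boundary dimension at $q,$ and adds dimensions). But your bookkeeping contains a genuine error: you assert that the factors $(w')^{\lambda_j}(\overline{w'})^{\lambda_{*j}}$ belong entirely to the $z_j$-differential weights and ``do not leak into $\lambda_q$.'' This is false. A change of the boundary chart at $q$ replaces $w$ by $kw$ while the chart at $z_j$ is held fixed, so $w'$ is replaced by $kw'$ as well, and these factors contribute $k^{\lambda_j+\lambda_{*j}}$ to the $q$-dependence. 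That is exactly the Example in Section~\ref{sec: V*} ($w^\alpha(w')^\beta$ is a boundary differential of dimension $\alpha+\beta$ at $q,$ not $\alpha$), and it is forced by the case $n=1,$ $\sigma_*=0$ of the very formula you are proving: there $\lambda_q=\lambda+\sigma a$ (Proposition~\ref{hol 1-pt MO}), whereas your convention would give $\lambda_q=\sigma a.$ The differential property at $z_j$ concerns changes of the chart at $z_j,$ which is a separate operation and does not absorb any of this.

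Because of this, your count does not actually close. Reading homogeneity off the $(\H,0,\infty)$-expression hides the $w'$-factors (there $w'\equiv1$), and what that expression yields is $a\Sigma+\sum_j\sigma_j\sigma_{*j}+\sum_{j<k}\Sigma_j\Sigma_k=a\Sigma+\tfrac12\Sigma^2-\tfrac12\sum_j(\sigma_j^2+\sigma_{*j}^2),$ with $\Sigma_j=\sigma_j+\sigma_{*j}.$ The missing piece, $\tfrac12\sum_j(\sigma_j^2+\sigma_{*j}^2)-b\Sigma=\sum_j(\lambda_j+\lambda_{*j}),$ is precisely the $w'$-contribution you discarded; trying to recover only its $-b\Sigma$ part by splitting $\lambda_j$ into ``a part attached to $w$'' and ``a part attached to $w'$'' is not a meaningful decomposition, since $\lambda_j$ is the exponent of $w_j'$ in its entirety. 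So the intermediate sums you describe do not add up to $(a-b)\Sigma+\tfrac12\Sigma^2,$ and the final ``collecting'' step is unjustified. The fix is to work with the general-chart formula \eqref{eq: n-pt O}: the Wick-exponential part of $X/\E\,X$ is chart-independent at $q,$ each $\E\,\OO^{(\sigma_j,\sigma_{*j})}$ has $\lambda_q=\sigma_j\sigma_{*j}+a\Sigma_j+\lambda_j+\lambda_{*j}=(a-b)\Sigma_j+\tfrac12\Sigma_j^2,$ and each $L_{j,k}$ has $\lambda_q=\Sigma_j\Sigma_k$; summing gives the stated result. Your closing algebra ($\lambda_q=0$ iff $\Sigma=0$ or $\Sigma=2b-2a$) is fine.
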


\begin{proof}
Let $M_j(z_j)=\E\,\OO^{(\sigma_j,\sigma_{*j})}(z_j)$ and $\Sigma_j = \sigma_j+\sigma_{*j}.$
Since $X/(\E\,X)$ is a 0-boundary differential at $q,$ it follows from \eqref{eq: n-pt O} that
\begin{align*}
\lambda_q(X)&= \sum_j \lambda_q(M_j) + \sum_{j<k} \lambda_q(L_{j,k})\\
&=\sum_j \Big((a-b)\Sigma_j+\frac12\Sigma_j^2\Big) + \sum_{j<k}\Sigma_j\Sigma_k= (a-b)\Sigma+\frac12\Sigma^2.
\end{align*}
\end{proof}

\section{1-point martingale-observables} \label{sec: 1-pt MOs}

\ms \subsec{Vertex observables}
Recall the expression for the 1-point observables that we get from vertex fields:
$$M^{(\sigma, \sigma_*)}\equiv \E\,\OO^{(\sigma, \sigma_*)}=(w-\bar w)^{\sigma\sigma_*}~w^{\sigma a}~\bar w^{\sigma_*a}~(w')^\lambda~\left(\overline{w'}\right)^{\lambda_*},$$
where
\begin{equation} \label{eq: dim O}
\lambda=\frac{\sigma^2}2-\sigma b,\qquad \lambda_*=\frac{\sigma_*^2}2-\sigma_* b.
\end{equation}
We can rewrite $M$ in terms of the conformal radius:$$M\, = \,i^{\sigma\sigma_*}\, C^{\sigma\sigma_*}\,|w'|^{\sigma\sigma_*}w^{\sigma a}\, \bar w^{\sigma_*a}\,(w')^\lambda\, \left(\overline{w'}\right)^{\lambda_*}.$$
Recall that $C=|w-\bar w|/|w'|$; and $(C\,\|\,\id_D)\asymp \dist(\cdot, \pa D).$
If we ignore the constant factor $i^{\sigma\sigma_*},$ then
$$\overline{\OO^{(\sigma, \sigma_*)}}\,=\,\OO^{(\bar \sigma_*, \bar \sigma)}.$$

\ms It follows that the 1-point observable $M^{(\sigma, \sigma_*)}$ has non-trivial boundary values on $\pa D\sm\{p,q\}$ only if either $\sigma$ or $\sigma_*$ is zero.

\ms \SS
Unlike the holomorphic case, see Proposition~\ref{hol 1-pt MO}, not all 1-point martingale-observables can be represented by vertex fields.
For example, Schramm's observable
$$M(z)=\P\{z~\textrm{is to the left of}~\gamma\}, \qquad(0<\kappa<8,\,z\in D)$$
is not of the form $M=M^{(\sigma, \sigma_*)}.$
(The winding number of the positively oriented closed curve $\gamma\,\cup\arc{qp}$ around $z$ is well-defined because $z$ is off the curve $\gamma$ almost surely.
This winding number is either 0 or 1 since $\gamma$ is almost surely non-self-crossing.
We say that $z$ is to the left of $\gamma$ if this winding number is 1.)
Indeed, Schramm's observable has dimensions $\lambda=\lambda_*=\lambda_q=0$ but the scalar field $I\,(I(z)\equiv1)$ is the only vertex field with such dimensions.

\ms In addition to vertex observables, we can consider expectations (and correlations) of singular vectors (so we get ``primary" observables).
The simplest is the case of level one singular vectors, which are (up to conjugation) just the $\pa_z$-derivative of scalar fields.
It is of course always true that derivatives of martingale-observables are martingale-observables.
Conversely, if a martingale-observable $X$ is a (1,0)-differential, then for any curve $\alpha,$ $\int_\alpha X$ is a local martingale.
We continue this discussion in the next section.

\bs \subsec{Example: Schramm's observables} \index{martingale-observables!Schramm's} \index{Schramm's observables}
Let $M(z)=\P\{z$ is to the left of $\gamma\}.$
Then $\pa M$ is a martingale-observable with
$$\lambda=1,\qquad \lambda_*=0,\qquad \lambda_q=0.$$
It is easy to find a vertex field with these conformal dimensions (applying \eqref{eq: dim O} and Proposition~\ref{dim calculus}), namely $\OO^{(-2a, 2b)},$ so its correlation function 
$$\E\,\OO^{(-2a, 2b)}=\left(\frac{w-\bar w}{|w|}\right)^{\frac8\kappa-2}\frac{w'}w$$
is a natural candidate for $\pa M$ (up to a constant).
(If $\kappa = 2,$ then
$$\E\,\OO^{(a,2b)} =\E\,\OO^{(1,-1)} = \frac{ww'}{\bar w(w-\bar w)}$$
is the other possible martingale-observable.
However, $M$ is the function of $\arg w$ only, so it cannot be a ``primitive" of $\E\,\OO^{(1,-1)}.$)

Let $N$ be a primitive of $\E\,\OO^{(-2a, 2b)},$ i.e., $\pa N=\E\,\OO^{(-2a, 2b)}.$
In $(\H,0,\infty),$ on a circle $z=re^{i\theta}$ we have
$$dN=(\sin\theta)^{\frac8\kappa-2}d\theta,$$
therefore
$$N(z)=\int_0^\theta(\sin t)^{\frac8\kappa-2}dt.$$
Taking into account the boundary values of $M,$ we get the presumptive formula
$$M(z) = \dfrac{~\displaystyle \int_{\underset{\,}{0}}^\theta (\sin t)^{\frac8\kappa-2}\,dt~} {~\displaystyle \int_0^{\overset{\,}{\pi}} (\sin t)^{\frac8\kappa-2}\,dt~},\qquad \theta=\arg w(z).$$
We only need to apply the optional stopping time (and some basic SLE properties) to justify it rigorously.

\bs \subsec{Example: Beffara's observables} \index{martingale-observables!Beffara's} \index{Beffara's observables}
Let $\kappa< 8$ and $\P(z,\ve)$ denote the probability that the SLE curve hits the disc at $z$ of size $\ve,$ where $\ve\ll1$ is measured in a local chart $\phi.$
We define
\begin{equation} \label{eq: Beffara's MO}
(M(z)\,\|\,\phi)=\lim_{\ve\to 0}\frac {\P(z,\ve)}{\ve^\delta},
\end{equation}
where $\delta$ is some positive number.
If the limit exists and is non-trivial (for all $z\in D$), then $M$ is a real martingale-observable of conformal dimensions
$$\lambda=\lambda_*=\delta/2, \qquad \lambda_q=0.$$

It is clear that the vertex field $\OO^{(b-a,b-a)}$ has those properties.
(The only other possibility $(\sigma,\sigma_*)= \pm(\sqrt\delta,-\sqrt\delta)$ with $\kappa=4$ gives wrong boundary values.)
We can conjecture that $M$ is $\E\,\OO^{(b-a,b-a)}$ up to a constant. Then
$$\delta=2\lambda= (b-a)^2-2b(b-a)=1-\frac{\kappa}{8}$$
and
$$M(z)=\const ~y^{\frac\kappa8+\frac8\kappa-2}\cdot|z|^{1-\frac8\kappa}\qquad ({\rm in}\;\H).$$
The justification of this conclusion (in particular, the existence of the limit \eqref{eq: Beffara's MO}) is not as easy, e.g., see \cite{LW10}.
Beffara proved the estimate
\begin{equation} \label{eq: Beffara's observable}
\P(z,\ve)\asymp \ve^{1-\frac{\kappa}{8}}~\E\,\OO^{(b-a,b-a)}
\end{equation}
and used it together with the second moment estimate to derive that the Hausdorff dimension of SLE curves is almost surely $1+\kappa/8,$ see \cite{Beffara08}.

\bs\section{Multi-point observables} \label{sec: n-pt MOs}
There are many natural, geometrically defined SLE multi-point observables, e.g., various multi-point generalizations of Schramm's and Beffara's observables, or the Friedrich-Werner observables in the case $\kappa\ne8/3.$
Can vertex fields (``Coulomb gas formalism") be useful in the (heuristic) identification of (at least some) of them?
The answer is not obvious but in any case it is clear that it would be useful to construct, in addition to vertex observables, as many ``primary" observables as possible.
For example, Friedrich-Werner's formula involves correlations of a singular vector.
One can consider singular vectors (of all levels) and one can also modify correlations of primary fields by ``screening," one of the basic operations in Coulomb gas formalism.
Let us start with a historically important:

\ms \subsec{Example: Cardy's observables} \index{martingale-observables!Cardy's} \index{Cardy's observables}
Let $\kappa>4,$ $z\in D,$ $\eta\in~\arc{qp}~\subset \pa D,$ where $\arc{qp}$ is the positively oriented arc from $q$ to $p.$
Three ``geometric" observables:
$$M(z,\eta)=\P(\tau_z<\tau_\eta), \quad\P(\tau_z=\tau_\eta), \quad\P(\tau_z>\tau_\eta), $$
are all real with all conformal dimensions zero.
No such vertex exists except the scalar field $I.$
We will argue as in the case of Schramm's observable in Section~\ref{sec: 1-pt MOs}.
We try to identify the derivative $\pa M$ with a multi-point vertex field which has conformal dimensions
\begin{equation} \label{eq: Cardy's dim}
\lambda=1, \quad \lambda_*=0,\quad \lambda_\eta=0,\quad \lambda_q=0.
\end{equation}
By dimension calculus, see \eqref{eq: dim O} and Proposition~\ref{dim calculus}, we immediately find that the vertex field $\OO^{(-2a)}(z)\star\OO^{(2b)}(\eta)$ satisfies \eqref{eq: Cardy's dim}.
Let $N$ be a martingale-observable (scalar in all variables) such that
$$\pa_z N(z,\eta)=\const\,\E\,\OO^{(-2a)}(z)\star\OO^{(2b)}(\eta) =\const\, w^{-\frac4\kappa}\,w_\eta^{1-\frac4\kappa}\,(w-w_\eta)^{\frac8\kappa-2}\,w',$$
where $w = w(z)$ and $w_\eta = w(\eta).$
We will take
$$N(z)=\frac{~\displaystyle \int_{\underset{\,}{p}}^z\E\,\OO^{(-2a)}(\zeta)\star\OO^{(2b)}(\eta) ~}{~\displaystyle \int_p^{\overset{\,}{q}}\E\,\OO^{(-2a)}(\zeta)\star\OO^{(2b)}(\eta) ~},$$
where $\zeta$ is the integration variable.
The integral converges because $\kappa>4.$
By Schwarz-Christoffel, $N$ is a conformal map onto the triangle with angles $\left(1-4/\kappa\right)\pi$ at $N(p)=0$ and $N(q)=1,$ and $\left(8/\kappa-1\right)\pi$ at $N(\eta)$ with $\Re\,N(\eta) = 1/2.$
Using the stopping time $\tau=\tau_z\wedge\tau_\eta$ and the fact (basic complex analysis) that
$$N_\tau = 0 ~\textrm{ if }~\tau_z<\tau_\eta,\quad N_\tau=1 ~\textrm{ if }~\tau_z>\tau_\eta,\quad N_\tau=\frac12+i\,\Im\,N(\eta) ~\textrm{ if }~ \tau_z=\tau_\eta,$$
we easily justify Cardy's formulae
\begin{equation} \label{eq: Cardy's formulae}
\P(\tau_z>\tau_\eta)=\Re\,N(z)-\frac{\Im\,N(z)}{2\,\Im\,N(\eta)}, \qquad\P(\tau_z=\tau_\eta)=\frac{\Im\, N(z)}{\Im\,N(\eta)}.
\end{equation}

\ms\SS \emph{Cardy's formulae} in the boundary case ($z = \eta_0\in \pa D$) describe the probability that the SLE curves hit some boundary intervals.

\ms \begin{prop}
If $\eta_0\in~\arc{\eta p},$ then
$$\P(\gamma~\textrm{hits}~\arc{\eta\eta}_0) =\frac{~\displaystyle \int_{\underset{\,}{\eta_{_0}}}^{\eta} \E\,\OO^{(-2a)}(\zeta)\star\OO^{(2b)}(\eta)~} {~\displaystyle \int_p^{\overset{\,}{\eta}} \E\,\OO^{(-2a)}(\zeta)\star\OO^{(2b)}(\eta)~},$$
where $\zeta$ is the integration variable.
\end{prop}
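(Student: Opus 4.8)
The plan is to mimic exactly the argument used for Cardy's interior observables, replacing the interior point $z$ by a boundary point $\eta_0$. First I would recall that, as established in the interior case, the function $N(z,\eta)$ defined by the normalized integral of $\E\,\OO^{(-2a)}(\zeta)\star\OO^{(2b)}(\eta)$ along a path from $p$ to $z$ is a scalar SLE martingale-observable in $z$ (and in $\eta$). The driftlessness of $N_t$ comes from the fact that $\pa_z N = \const\cdot\E\,\OO^{(-2a)}(z)\star\OO^{(2b)}(\eta)$ is itself a martingale-observable (a $(1,0)$-differential in $z$), by the general results of Lecture~\ref{ch: SLE theory} on correlations of vertex fields: normalized correlations of chiral vertex fields are SLE observables, and integrating a $(1,0)$-differential observable along a fixed curve yields a (scalar) observable. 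The convergence of the integral at the endpoint $\zeta = p$ requires $\kappa>4$ (the exponent $8/\kappa - 2 > -1$ fails otherwise), which is consistent with the boundary-hitting regime.

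Next I would carry out the geometric identification. By the Schwarz-Christoffel representation, $N(\cdot,\eta)$ is a conformal map of $D$ onto a triangle: the exponents $w^{-4/\kappa}$, $w_\eta^{1-4/\kappa}$, $(w-w_\eta)^{8/\kappa-2}$, together with the derivative factor $w'$, produce interior angles $(1-4/\kappa)\pi$ at $N(p)=0$ and $N(q)=1$, and $(8/\kappa-1)\pi$ at $N(\eta)$, with $N$ mapping the arc $\arc{qp}$ (containing $\eta$) to the two sides meeting at $N(\eta)$ and the arc $\arc{pq}$ (the other arc) to the base. For the boundary version I would take $\eta_0$ on the sub-arc $\arc{\eta p}$, so that $N(\eta_0)$ is a real point on the segment $[N(p),N(\eta)]=[0,N(\eta)]$ of the boundary of the triangle, and the quantity
$$
\frac{\int_{\eta_0}^{\eta}\E\,\OO^{(-2a)}(\zeta)\star\OO^{(2b)}(\eta)}{\int_p^{\eta}\E\,\OO^{(-2a)}(\zeta)\star\OO^{(2b)}(\eta)} = \frac{N(\eta)-N(\eta_0)}{N(\eta)}
$$
is a ratio of lengths along that side. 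Then I would run the optional stopping theorem: let $\tau = \tau_\eta \wedge \tau_{\eta_0}$ be the SLE stopping time at which the curve swallows $\eta$ or $\eta_0$ (whichever comes first, using \eqref{eq: SLE stopping time}). On $\{\gamma \text{ hits } \arc{\eta\eta_0}\}$ one has $\tau_{\eta_0} \le \tau_\eta$ and $N_\tau$ takes the value $N(\eta)$ (the vertex); on the complementary event $\tau_\eta < \tau_{\eta_0}$ and $N_\tau = N(\eta_0)$. Hence $\E\,[N_\tau] = N(\eta)\,\P(\gamma \text{ hits }\arc{\eta\eta_0}) + N(\eta_0)\,(1-\P(\gamma\text{ hits }\arc{\eta\eta_0}))$, and since $N_0 = N(\eta_0)$ by the normalization (the integral from $p$ to $\eta_0$ divided by the integral from $p$ to $\eta$... actually $N_0$ evaluated at the initial configuration), solving for the probability gives the stated formula.

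The main obstacle will be the analytic justification of the optional stopping argument — specifically, verifying that $N_t$ is a genuine (uniformly integrable or at least bounded up to $\tau$) martingale so that optional stopping applies, and pinning down the precise boundary values $N_\tau$ as $t\uparrow\tau$. This requires: (i) a careful continuity/boundedness estimate on the conformal map $N$ near the vertices of the triangle (standard but delicate complex analysis, since the map degenerates at $p,q,\eta$), and (ii) identifying, almost surely on each event, the limiting position of $\gamma_t$ relative to $\eta$ and $\eta_0$ and hence the limit of $w_t(\gamma_t) = \xi_t$ relative to $w_t(\eta), w_t(\eta_0)$. The martingale property itself is not the difficulty — that is guaranteed by the vertex-field machinery (Proposition~\ref{MO} and the $\star$-product results) — but converting the \emph{local} martingale property into the \emph{global} statement needed for optional stopping, in the boundary regime $\kappa > 4$ where the curve interacts with $\pa D$, is where the real work lies. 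One expects this to be handled exactly as in the LSW and Schramm arguments cited in the text, so I would simply invoke those techniques rather than redo them.
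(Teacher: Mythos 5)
There is a genuine error in your optional-stopping step: the identification of the terminal values $N_\tau$ and of the two events is wrong. First, the complementary event is \emph{not} $\{\tau_\eta<\tau_{\eta_0}\}$: since $\eta_0\in\arc{\eta p}$ lies between $\eta$ and the launch point $p$, one always has $\tau_{\eta_0}\le\tau_\eta$, so $\{\tau_{\eta_0}>\tau_\eta\}$ is empty; the complement of the hitting event is the simultaneous-swallowing event $\{\tau_{\eta_0}=\tau_\eta\}$. (This impossibility is in fact the whole point of the paper's proof, which simply writes $\P(\gamma~\textrm{hits}~\arc{\eta\eta}_0)=\P(\tau_{\eta_0}<\tau_\eta)=1-\P(\tau_{\eta_0}=\tau_\eta)$ and then quotes the boundary case of the already-established Cardy formulae \eqref{eq: Cardy's formulae}, $\P(\tau_{\eta_0}=\tau_\eta)=N(\eta_0)/N(\eta)$, since $N(\eta_0)$ lies on the triangle side $[0,N(\eta)]$.) Second, your terminal values are swapped and incorrect: on the hitting event $\{\tau_{\eta_0}<\tau_\eta\}$ the point $\eta_0$ is swallowed while $\eta$ survives, so $w_t(\eta_0)\to0$ by \eqref{eq: SLE stopping time} and $N_\tau=N(p)=0$, not $N(\eta)$; on $\{\tau_{\eta_0}=\tau_\eta\}$ both collapse together and $N_\tau$ is the third vertex $N(\eta)$ (exactly as in the interior case $\tau_z=\tau_\eta$ of the preceding example), not $N(\eta_0)$. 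With the values you wrote, your own bookkeeping $\E[N_\tau]=N(\eta)\P+N(\eta_0)(1-\P)=N_0=N(\eta_0)$ forces $\P=0$, so it does not ``give the stated formula.''

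With the corrected values the argument closes immediately: $N(\eta_0)=N_0=\E[N_\tau]=0\cdot\P+N(\eta)\,(1-\P)$, hence $\P(\gamma~\textrm{hits}~\arc{\eta\eta}_0)=1-N(\eta_0)/N(\eta)=\bigl(\textstyle\int_{\eta_0}^{\eta}\bigr)\big/\bigl(\int_p^{\eta}\bigr)$, the common normalization $\int_p^q$ cancelling. Your preliminary points (martingale property of $N$ via the vertex-field machinery, convergence of the integral for $\kappa>4$, boundedness of $N$ on the closed triangle which makes optional stopping legitimate) are fine and match the paper's setup; but note the paper does not redo the optional-stopping analysis for the boundary point at all — it deduces the boundary statement directly from the interior Cardy formulae plus the observation that $\tau_{\eta_0}>\tau_\eta$ cannot occur, which is both shorter and avoids the delicate boundary-limit identification you attempted.
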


\begin{proof}
The event $\{\tau_{\eta_0} > \tau_\eta\}$ does not occur.
Thus we get
$$\P(\gamma~\textrm{hits}~\arc{\eta\eta}_0) = \P(\tau_{\eta_0} < \tau_\eta) = 1- \P(\tau_{\eta_0} = \tau_\eta) = 1-{N(\eta_0)}/{N(\eta)}.$$
\end{proof}

The denominator in the right-hand side is a constant by conformal invariance.
In the $(\H,0,\infty)$-uniformization, we have
\begin{equation} \label{eq: SLE hits I}
\P(\gamma~\textrm{hits}~[x-\ve,x])
= \frac1{C_\kappa} \int_0^{\frac{\ve}{x}} t^{\frac8\kappa-2} (1-t)^{-\frac4\kappa} \,dt,
\end{equation}
where $C_\kappa =\displaystyle \int_0^1 t^{\frac8\kappa-2} (1-t)^{-\frac4\kappa} \,dt,$
cf. \cite{RS05}.

\begin{prop}
Let $\eta_0\in~\arc{q\eta}.$
Denote by $\P(\ve;\eta_0,\eta)$ the probability that $\gamma$ hits a (boundary) $\ve$-neighborhood of $\eta_0$ but not the arc $\arc{\eta_0\eta},$ then (up to the same constant as in the previous proposition)
$$\lim_{\ve\to0}\frac{\P(\ve;\eta_0,\eta)}\ve = {\E\,\OO^{(-2a)}(\eta_0)\star\OO^{(2b)}(\eta)}.$$
\end{prop}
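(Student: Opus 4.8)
The plan is to deduce this boundary estimate from the previous proposition, which gives the exact formula for the probability that $\gamma$ hits the arc $\arc{\eta_0\eta}$, via a differentiation argument. First I would set up notation consistent with the previous statement: fix $\eta_0\in\arc{q\eta}$, and for small $\ve>0$ let $\eta_\ve$ denote the point on $\pa D$ at (chart) distance $\ve$ from $\eta_0$ on the side toward $\eta$ (so that $\eta_\ve\in\arc{\eta_0\eta}$). The key decomposition is that the event ``$\gamma$ hits the $\ve$-neighborhood of $\eta_0$'' is, up to events of negligible probability as $\ve\to0$, the disjoint union of the event ``$\gamma$ hits $\arc{\eta_0\eta}$'' and the event ``$\gamma$ hits the $\ve$-neighborhood of $\eta_0$ but not $\arc{\eta_0\eta}$'' (the latter being exactly $\P(\ve;\eta_0,\eta)$). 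Hence
$$\P(\gamma\textrm{ hits }\arc{\eta_\ve\eta}) = \P(\gamma\textrm{ hits }\arc{\eta_0\eta}) + \P(\ve;\eta_0,\eta) + o(\ve).$$
By the previous proposition applied at $\eta_\ve$ and at $\eta_0$, the difference of the first two terms equals, up to the constant $C = \int_p^\eta \E\,\OO^{(-2a)}(\zeta)\star\OO^{(2b)}(\eta)$,
$$\int_{\eta_0}^{\eta_\ve} \E\,\OO^{(-2a)}(\zeta)\star\OO^{(2b)}(\eta),$$
the integral being along $\pa D$ from $\eta_0$ to $\eta_\ve$.

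Next I would argue that the integrand $\zeta\mapsto \E\,\OO^{(-2a)}(\zeta)\star\OO^{(2b)}(\eta)$, viewed as a $1$-form in $\zeta$ (it is a $(1,0)$-differential in $\zeta$ by the conformal-dimension computation $\lambda=1,\lambda_*=0$ behind \eqref{eq: Cardy's dim}), is continuous and non-vanishing in a neighborhood of $\eta_0$ along the boundary arc $\arc{q\eta}$ — here one uses that $\eta_0$ is an \emph{interior} point of that arc, so the factors $w(\zeta)^{-4/\kappa}$ and $(w(\zeta)-w(\eta))^{8/\kappa-2}$ are bounded away from $0$ and $\infty$, while $w'(\zeta)\,d\zeta$ is the smooth length element in the half-plane chart. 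Therefore, by the fundamental theorem of calculus (in the chosen local chart $\phi$, parametrizing the boundary by arclength so that $\eta_\ve$ is at parameter distance $\ve$ from $\eta_0$),
$$\lim_{\ve\to0}\frac1\ve\int_{\eta_0}^{\eta_\ve}\E\,\OO^{(-2a)}(\zeta)\star\OO^{(2b)}(\eta) = \big(\E\,\OO^{(-2a)}(\eta_0)\star\OO^{(2b)}(\eta)\,\|\,\phi\big).$$
Dividing the displayed decomposition by $\ve$, letting $\ve\to0$, and absorbing $1/C$ into the implicit constant (``up to the same constant as in the previous proposition'') then yields the claimed limit.

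The main obstacle I anticipate is not the conformal field theory but the probabilistic control of the error term $o(\ve)$: one must show that the events ``$\gamma$ hits $\arc{\eta_0\eta}$'' and ``$\gamma$ hits the $\ve$-neighborhood of $\eta_0$ but not $\arc{\eta_0\eta}$'' are genuinely almost disjoint and together exhaust ``$\gamma$ hits the $\ve$-neighborhood of $\eta_0$'' up to probability $o(\ve)$. The overlap consists of configurations where $\gamma$ hits both $\arc{\eta_0\eta}$ and the portion of the $\ve$-neighborhood on the $q$-side of $\eta_0$; since $\gamma$ is non-self-crossing (for $\kappa<8$) and $\eta_0$ lies strictly between $q$ and $\eta$, such configurations force $\gamma$ to hit a boundary interval of length $\asymp\ve$ situated strictly inside $\arc{q\eta}$, which by \eqref{eq: SLE hits I} (the boundary Cardy formula, applied in the appropriate conformal image) has probability $O(\ve^{8/\kappa-1})=o(\ve)$ for $\kappa<8$. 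Making this rigorous requires invoking the strong Markov property of SLE at the first hitting time of $\arc{\eta_0\eta}$ together with the boundary-hitting estimate \eqref{eq: SLE hits I}; modulo these standard SLE inputs, the rest is the elementary calculus argument above. One should also remark, as the statement does, that the denominator $C$ is constant by conformal invariance, so the normalization matches that of the preceding proposition; this is immediate from the $\Aut(D,p,q)$-invariance of the vertex fields $\OO^{(\sigma)}$ established in Section~\ref{sec: 1-pt V}.
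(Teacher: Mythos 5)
There is a genuine gap at the central step. The previous proposition is stated (and proved) only for $\eta_0\in\arc{\eta p}$, i.e., for arcs whose variable endpoint lies on the $p$-side of $\eta$, with the insertion $\OO^{(2b)}$ attached to the fixed $q$-side endpoint $\eta$. In the present statement $\eta_0\in\arc{q\eta}$, so the arcs $\arc{\eta_0\eta}$ and $\arc{\eta_\ve\eta}$ you want to compare extend from $\eta$ \emph{toward} $q$, and ``the previous proposition applied at $\eta_\ve$ and at $\eta_0$'' is not available. Worse, the formula you substitute for it, namely $\P(\gamma\textrm{ hits }\arc{\eta_1\eta})=C^{-1}\int_{\eta_1}^{\eta}\E\,\OO^{(-2a)}(\zeta)\star\OO^{(2b)}(\eta)$ with the fixed constant $C=\int_p^{\eta}\E\,\OO^{(-2a)}(\zeta)\star\OO^{(2b)}(\eta)$, is false for $\eta_1\in\arc{q\eta}$: in the $(\H,0,\infty)$-uniformization the integrand $\zeta^{-4/\kappa}\,\eta^{1-4/\kappa}\,(\zeta-\eta)^{8/\kappa-2}$ has one constant argument on the boundary arc between $p$ and $\eta$ and a different constant argument beyond $\eta$ (the factor $(\zeta-\eta)^{8/\kappa-2}$ picks up the phase $(8/\kappa-2)\pi$, which is not a multiple of $\pi$ for $\kappa\in(4,8)$), so your ratio is not even real. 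The correct expression for such arcs is $\P(\gamma\textrm{ hits }\arc{\eta_1\eta})=1-\P(\tau_{\eta_1}=\tau_\eta)=1-\Im N(\eta_1)/\Im N(\eta)$ (the event $\{\tau_{\eta_1}<\tau_\eta\}$ does not occur because $\eta$ separates $\eta_1$ from $p$ on $\pa D$), and this reduction to $\P(\tau_{\eta_1}=\tau_\eta)$ is exactly the idea your proof is missing and the one the paper's one-line proof uses: $\P(\ve;\eta_0,\eta)$ is an increment of the distribution function $\eta_1\mapsto\P(\tau_{\eta_1}=\tau_\eta)$, so the limit is its density, which by Cardy's formula \eqref{eq: Cardy's formulae} and $\pa N=\const\,\E\,\OO^{(-2a)}(\cdot)\star\OO^{(2b)}(\eta)$ equals the stated correlation up to a constant (taking $\Im$ only affects the constant, since the correlation has constant argument along $\arc{q\eta}$).

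There are also two set-up slips. You place $\eta_\ve$ on the $\eta$-side of $\eta_0$, so that $\arc{\eta_\ve\eta}\subset\arc{\eta_0\eta}$ and your displayed identity has the wrong sign; and in any case $\{\gamma\textrm{ hits }\arc{\eta_0\eta}\}$ is not a subevent of $\{\gamma\textrm{ hits the }\ve\textrm{-neighborhood of }\eta_0\}$, so the ``disjoint union'' as you phrase it is not a decomposition of that event. The point $\eta_\ve$ must be taken at distance $\ve$ on the $q$-side of $\eta_0$; then $\{\gamma\textrm{ hits }\arc{\eta_\ve\eta}\}$ is \emph{exactly} the disjoint union of $\{\gamma\textrm{ hits }\arc{\eta_0\eta}\}$ and the event defining $\P(\ve;\eta_0,\eta)$, because once the curve first reaches the boundary strictly beyond $\eta_0$ the whole arc $\arc{\eta_0\eta}$ is swallowed and can never be visited afterwards ($\kappa<8$). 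Hence the $o(\ve)$ error term, the strong Markov argument, and the appeal to \eqref{eq: SLE hits I} that you anticipate as the main obstacle are unnecessary. With these corrections your difference-quotient scheme does work, but only after replacing your hitting formula by $1-\Im N(\cdot)/\Im N(\eta)$, i.e., after passing through $\P(\tau_{\eta_0}=\tau_\eta)$ as in the paper.
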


Indeed, the limit is the probability density function of $\P(\tau_{\eta_0} = \tau_\eta).$

\begin{prop} \label{bdry MO}
Let $\kappa\in(4,8).$
For $\eta\in\pa D\,(\eta\ne p,q)$ let $I(\eta,\ve)$ denote the boundary interval with endpoint $\eta$ of length $\ve,$ where $\ve\ll1$ is measured in a local chart $\phi.$
Then
$$\lim_{\ve\to 0}\frac1{\ve^{\frac8\kappa-1}} \P(\gamma~\textrm{hits}~I(\eta,\ve))=\const\,(M^{(2b-2a)}(\eta)\,\|\,\phi),$$
where $\const$ is the normalization constant which depends on $(D, p,q)$ and $\kappa.$
\end{prop}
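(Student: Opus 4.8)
The statement is a boundary version of Beffara's observable (cf. the formula \eqref{eq: Beffara's observable}), so the plan is to reduce the hitting probability to the 1-point martingale-observable $M^{(2b-2a)}$ by a combination of conformal invariance, optional stopping, and a local estimate near $\eta$. First I would pass to the half-plane uniformization $w:(D,p,q)\to(\H,0,\infty)$ consistent with the local chart $\phi$ at $\eta$; then $\eta$ corresponds to a real point $x\ne0,$ and by the example in Section~\ref{sec: 1-pt V} (special case (b)), the vertex observable $M^{(2b-2a)}$ is, up to a constant, the holomorphic boundary differential $(f')^\lambda$ with $f=\log w$ and $\lambda = 8/\kappa-1$; in particular $(M^{(2b-2a)}\,\|\,\id_\H)$ extends continuously and positively to $x\in\R\sm\{0\}.$ The key point is that $\lambda = 8/\kappa-1>0$ exactly when $\kappa\in(4,8),$ which is both the range in which $\lambda$ is a legitimate positive hitting exponent and the range in which SLE touches the boundary, so $\P(\gamma~\textrm{hits}~I(\eta,\ve))$ is genuinely of order $\ve^\lambda$ and not $o(\ve^\lambda)$ or $O(1).$

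The core argument proceeds through the conformal map that uniformizes the complement of the boundary interval. Following the Lawler-Schramm-Werner restriction computation recalled in Example~3 of Section~\ref{sec: eg MO}, let $K = K(\eta,\ve)$ be the hull ``$I(\eta,\ve)$ thickened infinitesimally'' (more precisely, $K$ is a small boundary hull attached at $x$), and let $\Psi_K:(\H\sm K,0,\infty)\to(\H,0,\infty)$ be the hydrodynamically normalized conformal map. Then for $\kappa\le8$ one shows that the process $M_t = \big(\E\,V_\star^{ia}(\gamma_t;D_t\sm K)\,\big\|\,g_t\big)$ is a local martingale whose drift is given by the Schwarzian term $\frac c6 S_{h_t}(\xi_t)M_t\,dt$ as in \eqref{eq: LSW}; for the \emph{boundary} version one instead directly verifies that the $0$-boundary-differential property at $q$ of $M^{(2b-2a)}$ forces $M_t$ (appropriately normalized) to have no drift, because the vector field generating $\Psi_K$-conjugation near $q$ vanishes to order 3 there and Ward's identities for the rooted fields (Proposition~\ref{Ward4VX}) apply. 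By the optional stopping theorem applied at $\tau = \tau_\eta,$ the event $\{\gamma~\textrm{hits}~I(\eta,\ve)\}$ coincides up to null events with $\{\gamma[0,\infty)\cap K\ne\emptyset\}$ (as $\ve\to0$), and
$$\P(\gamma~\textrm{hits}~I(\eta,\ve)) = 1 - \frac{\E\,[M_\infty]}{M_0} = 1 - \big(\Psi_K'(0)\big)^{\lambda_p}\big(\Psi_K'(x)\big)^{\lambda}\cdot\frac{(M^{(2b-2a)}\circ\Psi_K)(x)}{(M^{(2b-2a)})(x)} + o(\ve^\lambda),$$
where $\lambda_p$ is the exponent at $p$ (which by the $0$-dimension-at-$q$ bookkeeping and the boundary-differential transformation rule \eqref{eq: bdry diff} contributes only lower-order terms). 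Expanding $\Psi_K$ near $x$ — it is, to leading order, a square-root map of the type $\Psi(z)=\sqrt{(z-x)^2+c\ve^2}+x$ as in Example~2 — one gets $\Psi_K'(x)\asymp$ const and the leading contribution is $\const\cdot\ve^\lambda\cdot(M^{(2b-2a)}(\eta)\,\|\,\phi)$ after translating back through the chart $\phi.$

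I would organize the writeup as: (i) reduce to $(\H,0,\infty)$ and record the explicit form $M^{(2b-2a)}=(w'/w)^\lambda$ with $\lambda = 8/\kappa-1$; (ii) set up the hull $K(\eta,\ve)$ and the restriction/martingale identity via the LSW drift computation \eqref{eq: LSW} combined with Propositions~\ref{Ward4VX} and \ref{TVd2V}, checking that the $0$-boundary-differential property at $q$ kills the drift; (iii) apply optional stopping at $\tau_\eta$ and extract the $\ve^\lambda$ asymptotics from the Taylor expansion of $\Psi_{K(\eta,\ve)}$ near $x$; (iv) restore the chart $\phi$ and identify the constant. The main obstacle is step (ii)–(iii): justifying that optional stopping is legitimate (uniform integrability of $M_t$ up to $\tau_\eta$, which needs the a priori bound $\P(\gamma~\textrm{hits}~I(\eta,\ve)) = O(\ve^{\lambda'})$ for some $\lambda'>0$ — this is where $\kappa<8$ matters, so that $\gamma$ is not space-filling — together with complex-analytic estimates on $\Psi_K'$ and on the boundary behaviour of $M^{(2b-2a)}$ near $p,q$), and controlling the error term uniformly as $\ve\to0$ so that the limit exists and is non-trivial. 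The non-triviality (positivity of the constant) follows from positivity of $(M^{(2b-2a)}(\eta)\,\|\,\phi)$ established in step (i) and from the explicit positive Beta-integral normalization $C_\kappa$ appearing, exactly as in \eqref{eq: SLE hits I}; indeed the boundary Cardy formula \eqref{eq: SLE hits I} gives the one-interval case directly, and differentiating it in the interval length recovers the claimed density, which is an alternative and shorter route that I would present as a cross-check or even as the main argument if the reader is willing to take \eqref{eq: SLE hits I} as given.
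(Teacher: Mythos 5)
The short route you mention only at the very end is precisely the paper's proof, and it is already complete: the hitting probability is conformally invariant, $\ve$ rescales by the derivative of the transition map, and $M^{(2b-2a)}$ is a boundary differential of dimension $8/\kappa-1$ at $\eta$ and $0$ at $q,$ so both sides of the claimed identity transform identically under a change of chart and it suffices to verify the statement in the $(\H,0,\infty)$-uniformization. There it follows at once from \eqref{eq: SLE hits I}, since $\int_0^{\ve/x}t^{8/\kappa-2}(1-t)^{-4/\kappa}\,dt\sim(\ve/x)^{8/\kappa-1}/(8/\kappa-1)$ as $\ve\to0$ (here $\kappa<8$ guarantees integrability at $0$) and $(M^{(2b-2a)}\,\|\,\id_\H)(x)=x^{1-8/\kappa}.$ Since \eqref{eq: SLE hits I} is established immediately above the proposition (via the Cardy observable, the Schwarz--Christoffel map and optional stopping), taking it as given is not a shortcut but the intended argument; you should promote this from ``cross-check'' to the main (and only) proof, keeping your steps (i) and (iv) as the chart bookkeeping.

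Your primary route, by contrast, has genuine gaps. The drift formula \eqref{eq: LSW} is derived for $\kappa\le4,$ whereas here $\kappa\in(4,8)$; more importantly the drift $\frac c6 S_{h_t}(\xi_t)M_t\,dt$ does not vanish when $c\ne0$ (in $(4,8)$ only $\kappa=6$ has $c=0$), and your claim that the $0$-boundary-differential property at $q$ together with Proposition~\ref{Ward4VX} kills it is unsupported: the Schwarzian term comes from the fact that $T$ is a form of order $c/12$ evaluated in the perturbed domain, so it is proportional to $S_{h_t}$ at the tip $\xi_t$ and is insensitive to the behaviour of the conjugating map at $q$; the cubic vanishing of $v_z$ at $q=\infty$ only legitimizes Ward's identities for rooted fields, it does not remove a central-charge drift. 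In addition, a boundary interval is not a hull: replacing $I(\eta,\ve)$ by a thickened hull $K(\eta,\ve)$ changes the event, and hitting such a hull is governed by a different exponent than the boundary exponent $8/\kappa-1,$ so the identification of events up to $o(\ve^{8/\kappa-1}),$ the uniform integrability needed for optional stopping, and the exponent bookkeeping at $p$ and $q$ would all require substantial extra work --- none of which is needed, because \eqref{eq: SLE hits I} already encodes the exact one-interval hitting law.
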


\ss\begin{proof}
It suffices to show this in the $(\H,0,\infty)$-uniformization.
Then the statement follows from \eqref{eq: SLE hits I}.
\end{proof}

\ms \subsec{Screening}
This is a general method of constructing primary observables which we already used in the case of Schramm's and Cardy's observables.
The simplest situation is as follows.
Suppose we want to find 2-point boundary $\SLE(\kappa)$ martingale-observables $M(\eta_1, \eta_2)$ with given dimensions let's say $\lambda, \lambda,$ and 0, at $\eta_1,\eta_2\in\pa D,$ and $q,$  respectively.
It is not difficult to write down a differential equation in $(\H,0,\infty):$
\begin{equation} \label{eq: PDE4M}
\frac\kappa4(\pa_{\eta_1}+\pa_{\eta_2})^2M+ \Big(\frac{\pa_{\eta_1}}{\eta_1} + \frac{\pa_{\eta_2}}{\eta_2}\Big)M-\lambda\Big(\frac1{\eta_1^2}+\frac1{\eta_2^2}\Big)M=0.
\end{equation}
We will try to guess some solutions.

\ms Vertex observables give us solutions only for some special values of $\lambda,$ e.g.,
$$\E\,\OO^{(b-a)}(\eta_1)\star\OO^{(b-a)}(\eta_2)\qquad \textrm{for }\lambda=1/2-\kappa/{16}.$$
(If $\kappa =4,$ then we actually have solutions
$$\E\,\OO^{(\sqrt{2\lambda})}(\eta_1)\star\OO^{(-\sqrt{2\lambda})}(\eta_2).)$$
We can also consider singular vectors (like $T$ for $\kappa=8/3,\lambda=2$ as in Friedrich-Werner's formula) but, again, they only work in some special cases.

\ms We will now discuss a different type of 2-point boundary martingale-observables $M(\eta_1, \eta_2).$
Consider the field
$$N(\eta_1,\eta_2;\zeta) =\E \,\OO^{(\sigma_1)}(\eta_1)\star\OO^{(\sigma_2)}(\eta_2)\star\OO^{(s)}(\zeta),$$
where we choose
$$s=-2a\quad{\rm or}\quad s=\frac1a=2a+2b$$
(so that the ``screening" field $\OO^{(s)}$ is a 1-differential).
In the $(\H,0,\infty)$-uniformization, we have
\begin{equation} \label{eq: N}
N(\eta_1,\eta_2;\zeta)=\eta_1^{\sigma_1 a}\eta_2^{\sigma_2 a}\zeta^{sa}(\eta_1-\eta_2)^{\sigma_1\sigma_2}(\zeta-\eta_1)^{\sigma_1s}(\zeta-\eta_2)^{\sigma_2s}.
\end{equation}
By proposition~\ref{dim calculus}, $\lambda_q=0$ if and only if $$\sigma_1+\sigma_2+s=0\quad{\rm or}\quad 2b-2a.$$
The idea is to integrate out the variable $\zeta.$
The following lemma is obvious.

\begin{lem}
For any path $\alpha$ in $\pa D,$ the field
\begin{equation} \label{eq: screening}
M(\eta_1,\eta_2)=\int_{\alpha} N(\eta_1,\eta_2;\zeta)\qquad\text{(integration with respect to $\zeta$)}
\end{equation}
is a martingale-observable with conformal dimensions $\lambda_j ={\sigma_j^2}/2-\sigma_j b,\lambda_q=0.$
\end{lem}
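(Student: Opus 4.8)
The statement to prove is that the field $M(\eta_1,\eta_2) = \int_\alpha N(\eta_1,\eta_2;\zeta)$ obtained by integrating the three-point vertex correlator against the screening variable $\zeta$ is an $\SLE(\kappa)$ martingale-observable with conformal dimensions $\lambda_j = \sigma_j^2/2 - \sigma_j b$ at $\eta_j$ and $\lambda_q = 0$ at $q.$ The plan is to deduce everything from the two facts already established for normalized multi-point vertex fields: first, that $N(\eta_1,\eta_2;\zeta) = \E[\OO^{(\sigma_1)}(\eta_1)\star\OO^{(\sigma_2)}(\eta_2)\star\OO^{(s)}(\zeta)]$ is itself a martingale-observable (the proposition on normalized correlations of chiral vertex fields being SLE observables, which in turn rests on the argument in the proof of Proposition~\ref{VM} handling the $q$-normalization), and second, that $N$ is a holomorphic differential of dimension $\lambda_s = s^2/2 - sb = 1$ in the variable $\zeta$ when $s = -2a$ or $s = 1/a = 2a+2b$ (these being the two solutions of $s^2/2 - sb = 1,$ as recorded in the list of special cases of $\lambda$). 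With $\lambda_s = 1$ the integrand $N(\cdot,\cdot;\zeta)\,d\zeta$ is a genuine (coordinate-independent) $1$-form in $\zeta,$ so the integral over a fixed path $\alpha$ is well-defined and transforms correctly in the remaining variables.

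First I would record the conformal transformation law. Since $N$ is an $\Aut(D,p,q)$-invariant field which is a $\lambda_j$-differential at $\eta_j,$ a $\lambda_q$-boundary differential at $q,$ and a $1$-differential at $\zeta,$ integrating $d\zeta$ against $N$ kills the $\zeta$-dependence: under a conformal change of chart $h$ at $\eta_1,\eta_2$ (and a boundary chart at $q$), $M = \int_\alpha N\,d\zeta$ acquires exactly the factors $(h'(\eta_1))^{\lambda_1}(h'(\eta_2))^{\lambda_2}(h'(0))^{\lambda_q}$ because the $\zeta$-integral is invariant; here one uses $\lambda_q(N) = 0,$ which by Proposition~\ref{dim calculus} holds precisely when $\sigma_1 + \sigma_2 + s \in \{0,\ 2b-2a\}.$ Thus $M$ is an $\Aut(D,p,q)$-invariant field with the claimed dimensions $\lambda_1,\lambda_2$ at $\eta_1,\eta_2$ and $0$ at $q.$ This part is routine dimension bookkeeping.

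Next I would establish the martingale property. The cleanest route is to pass $\frac{d}{dt}$ (or, rigorously, the $\SLE$ stochastic differential $dM_t$) through the integral sign: $M_t(\eta_1,\eta_2) = \int_{\alpha_t} N_t(\eta_1,\eta_2;\zeta)\,d\zeta,$ where $\alpha_t = g_t(\alpha)$ (and $N_t$ is $N$ transported to $(D_t,\gamma_t,q)$, computed in a $t$-independent chart). Because $N(\cdot;\zeta)\,d\zeta$ is a $1$-form in $\zeta,$ the value of the integral does not depend on the choice of representative path within its homotopy class in $D_t\setminus\{\eta_1,\eta_2\},$ so we may hold $\alpha$ fixed in the identity chart and write $M_t = \int_\alpha (N_t\,\|\,\id)\,d\zeta.$ Then $dM_t = \int_\alpha d(N_t(\zeta))\,d\zeta,$ and since $N_t(\zeta)$ is a martingale for each fixed $\zeta$ (the screening point is just another marked point of a vertex correlator), its It\^o differential has zero drift; exchanging the finite-variation/martingale decomposition of It\^o's formula with the $\zeta$-integral over the compact path $\alpha$ (a dominated-convergence / Fubini-for-stochastic-integrals argument, valid because $N_t(\zeta)$ and its first $\xi$-derivatives are uniformly bounded on $\alpha$ for $t$ in a compact interval, exactly as in the proof of Proposition~\ref{MO}) shows that $dM_t$ has zero drift as well. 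Hence $M_t$ is a local martingale, i.e.\ $M$ is an $\SLE(\kappa)$ martingale-observable.

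\textbf{Main obstacle.} The genuinely delicate point is the interchange of the stochastic differential with the $\zeta$-integral together with the control of the endpoints of $\alpha.$ If $\alpha$ is a contour that begins or ends at one of the $\eta_j$ (or at $p$ or $q$), then $N(\cdot;\zeta)$ has an integrable but non-trivial singularity there — $(\zeta-\eta_j)^{\sigma_j s}$ with $\Re(\sigma_j s) > -1$ being the convergence condition, exactly the situation met for Schramm's and Cardy's observables — and one must check that this singularity is integrable uniformly in $t$ and does not generate boundary terms under differentiation. For a path $\alpha$ bounded away from all marked points this is immediate; for the cases of interest (e.g.\ $\kappa > 4$) one needs the explicit exponent bookkeeping to see the integral converges and the estimates survive. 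Everything else — the homotopy-invariance, the dimension count, the driftlessness of $N_t(\zeta)$ — is a direct citation of results already proved in the excerpt.
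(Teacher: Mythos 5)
Your proposal is correct and fills in exactly the argument the paper has in mind — the paper simply declares this lemma obvious, the point being that the screening exponent $s$ makes $N(\cdot,\cdot;\zeta)\,d\zeta$ a coordinate-independent $1$-form in $\zeta,$ so the dimensions at $\eta_1,\eta_2,q$ pass through the integral, while driftlessness of $N_t(\zeta)$ for each fixed $\zeta$ plus differentiation under the integral sign gives the local martingale property of $M_t.$ Your caveat about endpoint singularities of $\alpha$ is also consistent with the paper, which defers that issue to the integrability restrictions (e.g.\ $s\sigma_1>-1,$ $s(a+\sigma_1+\sigma_2)<-1$) stated immediately after the lemma.
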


\ms We are free to choose the path $\alpha$ as we wish but since we do not want to have any additional marked points, the natural choices for $\alpha$ are the following arcs:
$$\arc{\eta_1q},\qquad \arc{\eta_2 q}, \qquad \arc{\eta_1\eta_2}$$
(or integer combinations of those; in some other cases, we can also consider small half-loops around $\eta_j$ or $q$).
However, there are integrability restrictions: e.g., to use $\alpha=\,\arc{\eta_1q}$ we need
$$s\sigma_1>-1\qquad\textrm{and}\qquad s(a+\sigma_1+\sigma_2)<-1,$$
see \eqref{eq: N}.

\ms \begin{eg*}
Suppose $\lambda\ge-b^2/2.$
If we choose 
\begin{equation} \label{eq: sigma12}
s=-2a,\qquad\sigma_1 = b-\sqrt{b^2+2\lambda},\qquad \sigma_2 = b+\sqrt{b^2+2\lambda},
\end{equation}
then we have
$$\lambda_1=\lambda_2=\lambda, \qquad \lambda_q=0.$$
The integrability condition is always satisfied at $\eta_1$ and it is satisfied at $q$ if and only if $\kappa >4.$
It follows that for $\kappa>4$ and $\lambda\ge -(\kappa-4)^2/(16\kappa)$ we have solutions \eqref{eq: screening} with $\alpha=\,\arc{\eta_1q}.$
On the other hand, the integrability condition is satisfied at $\eta_2$ if $\sigma_2 < a+b,$ so for all $\kappa >0$ and $\lambda$ such that
$$-\frac{(\kappa-4)^2}{16\kappa}\le\lambda< \frac12\Big(1-\frac\kappa8\Big),$$
we have solutions \eqref{eq: screening} with $\alpha=\arc{\,\eta_1\eta_2}.$
These solutions generalize Cardy's observables mentioned earlier, which correspond to the case $\lambda=0.$
For example, one of these solutions is LSW/KPZ (Lawler-Schramm-Werner/Knizhnik-Polyakov-Zamolodchikov) martingale-observable. 

\ms For $\eta_1,\eta_2\in\R_+$ with $\eta_2<\eta_1$ and $\lambda > 0,$ let
$$M(\eta_1,\eta_2):= \lim_{t\to\tau}\E \Big(\frac{w_t'(\eta_1)w_t'(\eta_2)}{(w_t(\eta_1)-w_t(\eta_2))^2}\Big)^\lambda,$$
where $\tau =\tau_{\eta_1} \wedge \tau_{\eta_2}.$
We identify $M$ with a martingale-observable obtained by a method of screening.
To see this, let
$$u(\eta_1,\eta_2):=(\eta_1-\eta_2)^{2\lambda}M(\eta_1,\eta_2).$$
The non-negative bounded process
$$\Big(\frac{(\eta_1-\eta_2)^{2}w_t'(\eta_1)w_t'(\eta_2)}{(w_t(\eta_1)-w_t(\eta_2))^2}\Big)^\lambda =\exp\Big(-2\lambda\int_0^t \big(\frac1{w_s(\eta_1)}-\frac1{w_s(\eta_2)}\big)^2\,ds\Big)$$
is decreasing and has the limit as $t\to\tau.$
The existence of $M$ follows from Lebesgue's dominated convergence theorem.  
It is obvious that boundary dimensions of $M$ at $\eta_1,\eta_2,$ and $q,$ are $\lambda,\lambda,$ and $0$ respectively. 
It can be shown that $M_t$ is a martingale. 
Thus $M$ is a solution to \eqref{eq: PDE4M} and therefore $u$ satisfies
\begin{equation} \label{eq: PDE4u}
\frac\kappa4(\pa_{\eta_1}+\pa_{\eta_2})^2u+ \Big(\frac{\pa_{\eta_1}}{\eta_1} + \frac{\pa_{\eta_2}}{\eta_2}\Big)u-\lambda\Big(\frac1{\eta_1}-\frac1{\eta_2}\Big)^2u=0.
\end{equation}
On the other hand, it follows from the scaling property of chordal SLE that
$$u(\eta_1,\eta_2) = u(1,\eta_2/\eta_1).$$
Define $f$ on $(0,1)$ by $f(x) = u(1,x).$
It follows from \eqref{eq: PDE4u} that $f$ satisfies the second order ODE,
$$f''(x) = \frac2\kappa\,\frac{(-2+\kappa)x-2}{x(1-x)}\,f'(x)+\frac4\kappa\,\frac\lambda{x^2}\,f(x).$$
A general solution of this equation is
$$f(x) = C_+ x^{q_+} F(1-\frac4\kappa, 2q_+, \frac4\kappa + 2q_+,x) + C_- x^{q_-} F(1-\frac4\kappa, 2q_-, \frac4\kappa + 2q_-,x),$$
where exponents $q_\pm$ are given by 
$$q_\pm = \frac12-\frac2\kappa \pm \sqrt{(\frac12-\frac2\kappa)^2+\frac{4\lambda}\kappa}.$$
Since $f$ is bounded on $(0,1),$ $C_-=0.$
The other constant $C_+$ is determined by the condition $f(1-) = 1.$
Thus 
$$M(\eta_1,\eta_2) = (\eta_1-\eta_2)^{-2\lambda} \Big(\frac{\eta_2}{\eta_1}\Big)^{q} {F(1-\dfrac4\kappa, 2q, \dfrac4\kappa + 2q,\frac{\eta_2}{\eta_1})}/{F(1-\dfrac4\kappa, 2q, \dfrac4\kappa + 2q,1)},$$
where $q = q_+$ is the LSW/KPZ exponent (e.g., see Section~6.9 in \cite{Lawler05}).
With the choice of $s,\sigma_1,\sigma_2$ in \eqref{eq: sigma12}, and $\alpha = (\eta_1,\infty),$ $M$ has the representation \eqref{eq: screening} in the identity chart of $\H$ (up to constant) for $\kappa > 4$ (the integrability condition at $q$).
\end{eg*}

\ms \begin{rmk*} This construction can be extended to the case of several screening fields; it can be applied to singular vectors, etc. 
\end{rmk*}

\backmatter

\def\cprime{$'$}
\providecommand{\bysame}{\leavevmode\hbox to3em{\hrulefill}\thinspace}
\providecommand{\MR}{\relax\ifhmode\unskip\space\fi MR }
\providecommand{\MRhref}[2]{%
  \href{http://www.ams.org/mathscinet-getitem?mr=#1}{#2}
}
\providecommand{\href}[2]{#2}


\input{KM1.ind}

\end{document}